\newcounter{ipotesi}
 \makeatletter \@addtoreset{equation}{section}
\newtheorem{thm}{Theorem}[section]
{\rm}
{\rm}
\newtheorem{lemm}[thm]{Lemma}
\newtheorem{coro}[thm]{Corollary}
\newtheorem{prop}[thm]{Proposition}
\newtheorem{rmk}[thm]{Remark}{\rm}
\newtheorem*{thm-main}{Main Theorem}
\newcounter{parentenv}
\newcommand{\vp}{\varrho}
\newcommand{\R}{{\mathbb R}}
\newcommand{\N}{{\mathbb N}}
\newcommand{\Z}{{\mathbb Z}}
\newcommand{\C}{{\mathbb C}}
\newcommand{\Rd}{\mathbb R^d}
\newcommand{\f}{{\bf f}}
\newcommand{\uu}{{\bf u}}
\newcommand{\Le}{{\rm{{Le}}}}
\newcommand{\gap}{\hspace{1pt}}
\begin{document}

\title[Instabilities in a combustion model]{Instabilities in a combustion model\\ with two free interfaces}

\author[D. Addona]{Davide Addona}
\address{Dipartimento di Matematica e Applicazioni, Universit\`a degli Studi di Milano Bicocca, via R. Cozzi 55, I-20125 Milano (Italy)}

\author[C.-M. Brauner]{Claude-Michel Brauner}
\address{School of Mathematical Sciences, University of Science and Technology of China, Hefei 230026 (China), and
Institut de Math\'ematiques de Bordeaux, Universit\'e de Bordeaux, 33405 Talence Cedex (France).}

\author[L. Lorenzi]{Luca Lorenzi}
\address{Plesso di Matematica e Informatica, Dipartimento di Scienze Matematiche, Fisiche e Informatiche, Universit\`a di Parma, Parco Area delle Scienze 53/A, I-43124 Parma (Italy)}

\author[W. Zhang]{Wen Zhang$^\dag$}\address{School of Science, East China University of Technology, Nanchang 330013 (China). Former affiliation: School of Mathematical Sciences, Xiamen University, Xiamen 361005 (China).}
\thanks{$\dag$ Corresponding author}
\email{davide.addona@unimib.it}
\email{claude-michel.brauner@u-bordeaux.fr}
\email{luca.lorenzi@unipr.it}
\email{zhangwenmath@126.com}

\date{}

\keywords{Free boundary problems with two free interfaces, traveling wave solutions, instability, fully nonlinear parabolic problems, analytic semigroups, dispersion relation}
\subjclass[2000]{Primary: 35R35; Secondary:  35B35, 35K50, 80A25}

\maketitle

\begin{abstract}
We study in a strip of $\R^2$ a combustion model of flame propagation with stepwise temperature kinetics and zero-order reaction, characterized by two free interfaces, respectively the ignition and the trailing fronts. The latter interface presents an additional difficulty because the non-degeneracy condition is not met. We turn the system to a fully nonlinear problem which is thoroughly investigated. When the width $\ell$ of the strip is sufficiently large, we prove the existence of a critical value $\Le_c$ of the Lewis number $\Le$, such that the one-dimensional, planar, solution is unstable for $0<\Le<\Le_c$. Some numerical simulations confirm the analysis.
\end{abstract}

\section{Introduction}
This paper is devoted to the analysis of cellular instabilities of planar traveling fronts  for a thermo-diffusive model of flame
propagation with stepwise temperature kinetics and zero-order reaction. In non-dimensional form, the model reads:
\begin{eqnarray}\label{eq:i1}
\left\{
\begin{array}{l}
\Theta_t=\Delta \Theta+W(\Phi,\Theta), \\[2mm]
\Phi_t=\frac{1}{{\Le}} \Delta \Phi-W(\Phi,\Theta),
\end{array}
\right.
\end{eqnarray}
where $\Theta$ and $\Phi$ are appropriately normalized temperature and concentration of deficient reactant,
${\Le}$ is the Lewis number and $W(\Phi, \Theta)$ is a reaction rate given by
\begin{eqnarray}\label{eq:i2}
W(\Theta,\Phi)=
\left\{
\begin{array}{lllll}
A,  & \mbox{if} & \Theta\ge \Theta_i & \mbox{and}& \Phi>0, \\[2mm]
0, & \mbox{if} &\Theta<\Theta_i & \mbox{and/or} & \Phi=0.
\end{array}
\right.
\end{eqnarray}
Here, $0<\Theta_i<1$ is the ignition temperature and $A>0$ is a normalizing factor.

Combustion models involving discontinuous reaction terms, including the system \eqref{eq:i1}-\eqref{eq:i2},
have been used  by physicists and engineers since the very early stage of the development of the combustion science
(see Mallard and Le Ch\^atelier \cite{i4}),
primarily due to their relative simplicity and mathematical tractability (see, e.g., \cite{i7,i5,i6}, and
more recently \cite{i8, BGKS15}). These models have drawn several mathematical studies on systems with discontinuous nonlinearities and related Free Boundary Problems which include, besides the pioneering work of K.-C. Chang \cite{C78}, the references \cite{BRS95,G95,GH92,GM93a,GM93b,NS87,NS89}, to mention a few of them. In particular, models with ignition temperature were introduced in the mathematical description of the propagation of premixed flames to solve the so-called ``cold-boundary difficulty'' (see, e.g., \cite[Section 2.2]{BL83}, \cite{BSN85}).

More specifically, in this paper we consider the free interface problem associated to the model \eqref{eq:i1}-\eqref{eq:i2}. The domain is the strip $\mathbb{R} \times (-{\ell}/{2},{\ell}/{2})$, the spatial coordinates are denoted by $(x,y)$, $t>0$ is the time. The free interfaces are respectively the {\it ignition interface} $x =F(t,y)$ and the {\it trailing interface} $x = G(t,y)$, $G(t,y)<F(t,y)$, defined by
$\Theta(t, F(t,y),y) = \theta_i$, $\Phi(t, G(t,y),y) = 0$.
The system reads as follows, for $ t>0$ and $y\in (-\ell/2,\ell/2)$:
\begin{eqnarray}
\left\{
\begin{array}{ll}
\displaystyle\frac{\partial\Theta}{\partial t}(t,x,y) = \Delta \Theta(t,x,y),\quad &x<G(t,y),\\[3mm]
\Phi(t,x,y)=0, \quad &x<G(t,y),\\[2mm]
\displaystyle\frac{\partial\Theta}{\partial t}(t,x,y) = \Delta \Theta(t,x,y) + A, \quad &G(t,y)<x<F(t,y),\\[3mm]
\displaystyle\frac{\partial\Phi}{\partial t}(t,x,y) = (\Le)^{-1}\Delta \Phi(t,x,y) - A, \quad &G(t,y)<x<F(t,y), \\[3mm]
\displaystyle\frac{\partial\Theta}{\partial t}= \Delta \Theta(t,x,y), \quad &x>F(t,y),\\[3mm]
\displaystyle\frac{\partial\Phi}{\partial t}= (\Le)^{-1}\Delta \Phi(t,x,y), \quad &x>F(t,y),
\end{array}
\right.
\label{system-1}
\end{eqnarray}
where the normalizing factor $A$ will be fixed below. The functions $\Theta$ and $\Phi$ are continuous across the interfaces for $t>0$, as well as their normal derivatives. As $x\to \pm \infty$, it holds
\begin{equation}\label{infty}
  \Theta(t,-\infty,y) = \Phi(t,+\infty,y) =1, \qquad\;\, \Theta(t,+\infty,y) = 0.
\end{equation}
Finally, periodic boundary conditions are assumed at $y=\pm \ell/2$.

As was noted in earlier studies (see \cite{BGKS15,BGZ}), this system is very different from models arising in conventional thermo-diffusive combustion. Two are the principal differences. (i) The first one is that in the model considered here, the reaction zone is of order unity, whereas in the case of Arrhenius kinetics the reaction zone is infinitely thin. This fact suggests to refer to flame fronts for stepwise temperature kinetics as thick flames, in contrast to thin flames arising in Arrhenius kinetics.
(ii) The second, even more important difference, is that, in  the case of Arrhenius kinetics, there is a single interface separating burned and unburned gases. In contrast to that, in case of the stepwise temperature kinetics given by \eqref{eq:i2}, there are two interfaces, namely the {\it ignition interface} where $\Theta=\Theta_i$ located at $x=F(t,y)$, and {\it trailing interface} at $x=G(t,y)$ being defined as a largest value of $x$ where the concentration is
equal to zero. As a consequence of (i),  the normal derivatives are continuous across both interfaces, in contrast to classical models with Arrhenius kinetics where jumps occur at the flame front (see e.g.,
\cite[Section 11.8]{BL83} and \cite{MS79,S80} for the related Kuramoto-Sivashinsky equation). There have been a number of mathematical works in the latter case based on the method of \cite{BHL00}
that we are going to extend below, see in particular \cite{BHL13,BHLS10,BLSX10,BL00,lorenzi-1,lorenzi-2,lorenzi-3} for the flame front, and the references therein.
Finally, note that Free Boundary Problems with two interfaces have already been considered in the literature, especially in Stefan problems, see e.g.,
\cite{DL10,DL10-bis,WZ17} (one-dimensional problem) and \cite{DG11} (radial solutions).

The above system admits a one-dimensional traveling wave (planar) solution $(\Theta^{(0)},\Phi^{(0)})$ which propagates with constant positive velocity $V$ (see \cite[Section 4]{BGKS15}). It is convenient to choose the normalizing factor $A= 1/R$ in such a way that $V=1$, where the positive number $R = R(\theta_i)$ is given by:
\begin{equation}
\theta_i R = 1 - e^{-R}, \quad 0<\theta_i<1.
\label{theta-i-R}
\end{equation}
Thus, in the moving frame coordinate $x'=x-t$, the system for the travelling wave solution reads as follows:
\begin{eqnarray*}
\left\{
\begin{array}{ll}
D_{x'}\Theta^{(0)} + D_{x'x'}\Theta^{(0)}= 0,   & {\rm in}~(-\infty,0],\\[1mm]
D_{x'}\Theta^{(0)}+ D_{x'x'}\Theta^{(0)}=- R^{-1}, & {\rm in}~(0,R),\\[1mm]
\Le\, D_{x'}\Phi^{(0)}+D_{x'x'}\Phi^{(0)}=\Le R^{-1}, & {\rm in}~(0,R),\\[1mm]
D_{x'}\Theta^{(0)}+ D_{x'x'}\Theta^{(0)}= 0, &{\rm in}~[R,+\infty),\\[1mm]
\Le\, D_{x'}\Phi^{(0)}_{x'}+D_{x'x'}\Phi^{(0)}=0, &{\rm in}~[R,+\infty),
\end{array}
\right.
\end{eqnarray*}
whose solution is
\begin{equation}
\Theta^{(0)}(x')=
\left\{
\begin{array}{ll}
1, & x'\le 0,\\[2mm]
\displaystyle 1 + \frac{1-x' - e^{-x'}}{R}, &x'\in (0,R),\\[2mm]
\displaystyle \theta_i e^{R-x'}, & x'\ge R,
\end{array}
\right.
\qquad\;\,
\Phi^{(0)}(x')=
\left\{
\begin{array}{ll}
0, & x'\le 0,\\[2mm]
\displaystyle\frac{e^{-\Le x'} -1}{\Le\, R} + \frac{x'}{R}, &x'\in (0,R),\\[2mm]
\displaystyle 1 + \frac{1- e^{ \Le R}}{\Le\, R\, e^{\Le x'}}, & x'\ge R.
\end{array}
\right.
\label{planar-front}
\end{equation}

\begin{figure}[ht]
\centering
\includegraphics[height=5cm, width=8cm]{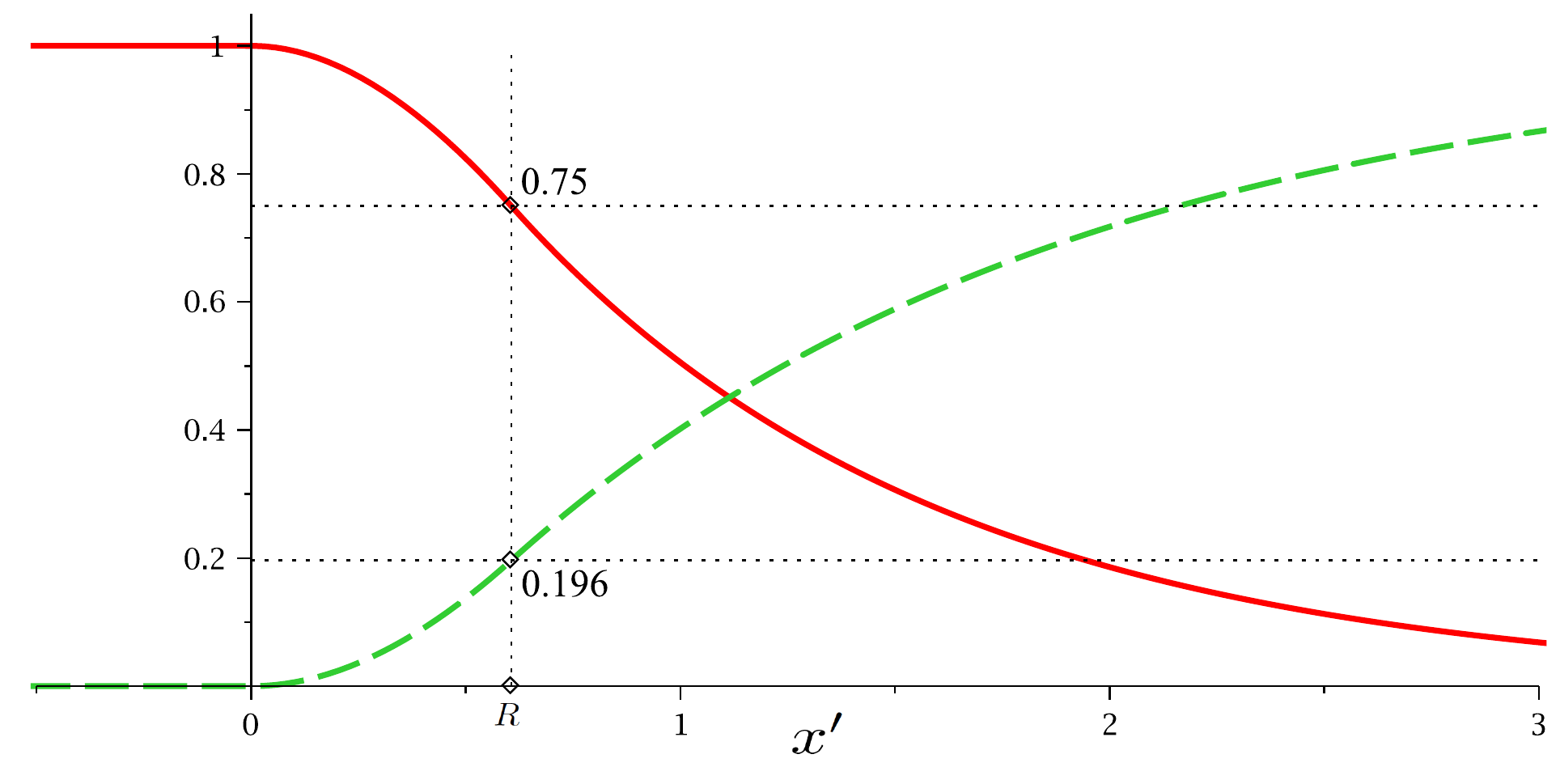}
\caption{$\Theta^{(0)}$ (solid curve) and $\Phi^{(0)}$ (dashed curve) with $\theta_i = 0.75 $, $ \Le = 0.75$ ($R = 0.60586$). } \label{Theta&Phi}
\end{figure}

The existence of traveling fronts poses a natural question of one and multidimensional stability,
or especially instabilities of such fronts. It is known (see \cite{MS79,S80}) that diffusional-thermal instabilities of planar flame fronts, when the Lewis number is less than unity, generates cellular flames and pattern formation. In this paper, we focus our attention on instabilities of the traveling wave $(\Theta^{(0)},\Phi^{(0)})$, and thus for the ignition and the trailing interfaces. Earlier studies have shown (see \cite{BGZ}) that instabilities depend on the Lewis number and occur only when the width of the strip $\ell$ is large enough (in \cite{BGKS15}, $\ell$ is taken to infinity), which motivates the present study.

The main result of the paper is the following:

\begin{thm-main}
\label{thm-main}
Let $0<\theta_i<1$ be fixed. There exist $\ell_0(\theta_i)$ sufficiently large, such that, whenever $\ell>\ell_0(\theta_i)$,
there exists a  critical value of the Lewis number, say $\Le_c\in (0,1)$ $($see \eqref{critic1}$)$. If $\Le\in (0,\Le_c)$,
then the traveling wave solution to problem \eqref{system-1}-\eqref{infty} is unstable with respect to smooth and sufficiently small two dimensional perturbation.
Further, also the ignition and the trailing interfaces are pointwise unstable.
\end{thm-main}

The paper is organized as follows. In Section \ref{sect-2}, we introduce the main notation and the functional spaces. Section \ref{sect-3} is devoted to transforming problem \eqref{system-1}-\eqref{infty} in a {\it fully nonlinear} for problem for the perturbation of the traveling wave solution $(\Theta^{(0)},\Phi^{(0)})$ in \eqref{planar-front}, set in a fixed domain (see \eqref{asta}). We determine that the ignition interface meets the transversality (or non-degeneracy) condition of \cite{BHL00}. Unfortunately, this is not the case of the trailing interface which is of different nature. In short, the idea is to differentiate the mass fraction equation, taking advantage of the structure of the problems.

Then, in Section \ref{sect-4} we collect some tools that are needed to prove the main result. The theory of analytic semigroups plays a crucial role in all our analysis. For this reason, one of the main tools of this section is a generation result: we will show that a suitable realization of the linearized (at zero) elliptic operator associated with the fully nonlinear problem \eqref{asta} generates an analytic semigroup and we characterize the interpolation spaces.
This will allow us to prove an optimal regularity result for classical solutions to problem \eqref{asta}.
Section \ref{sect-6} contains the proof of Theorem \ref{thm-main}. Finally, Section \ref{sect-7} is devoted to a numerical method and computational results which show two-cell patterns (see \cite{BGZ} for further results).

\section{Notation, functional spaces and preliminaries}
\label{sect-2}

In this section, we collect all the notation, the functional spaces and the preliminary results that we use throughout the paper

\subsection{Notation}
\label{sub-notation}
We find it convenient to set, for each $\tau>0$,
\begin{eqnarray*}
\begin{array}{lll}
S=\R\times (-\ell/2,\ell/2),\quad\;\, &S_{\tau}^+=(\tau,+\infty)\times (-\ell/2,\ell/2),\quad\;\, &S_{\tau}^-=(-\infty,\tau)\times (-\ell/2,\ell/2),\\[1mm]
H_{\tau}^{-}=(-\infty,\tau)\times\R,\quad\;\, &H_{\tau}^{+}=(\tau,+\infty)\times\R, \quad\;\, &R_T=[0,T]\times [-\ell/2,\ell/2].
\end{array}
\end{eqnarray*}

\paragraph{\it{\bf Functions.}}
Given a function $f:(a,b)\to\R$ and a point $x_0\in (a,b)$, we denote by $[f]_{x_0}$ the jump of $f$ at $x_0$, i.e., the difference $f(x_0^+)-f(x_0^-)$ whenever defined.
For each function $f:[-\ell/2,\ell/2)\to\C$ we denote by $f^{\sharp}$ its $\ell$-periodic extension to $\R$. If $f$ depends also on $x$ running in some interval  $I$,
we still denote by $f^{\sharp}$ its periodic (with respect to $y$) extension to $I\times\R$.
For every $f\in L^2((-\ell/2,\ell/2))$ and $k\in\Z$, we denote by $\hat f_k$ the $k$-th Fourier coefficient of $f$, i.e.,
\begin{eqnarray*}
\hat f_k=\frac{1}{\ell}\int_{-\frac{\ell}{2}}^{\frac{\ell}{2}}f\overline{e_k}dy,
\end{eqnarray*}
where $e_h(y)=e^{\frac{2h\pi i}{\ell}y}$ for each $h\in\Z$ and $y\in\R$.
When $f$ depends also on the variable $x$ running in some interval $I$, $\hat f_k(x)$ stands for the $k$-th Fourier coefficient of the function
$f(x,\cdot)$.

The time and the spatial derivatives of a given function $f$ are denoted by $D_tf$ ($=f_t$), $D_xf$ ($=f_x$), $D_yf$ ($=f_y$) $D_{xx}f$ ($=f_{xx}$) $D_{xy}f$ ($=f_{xy}$) and $D_{yy}f$ ($=f_{yy}$), respectively. If $\beta=(\beta_1,\beta_2)$ with $\beta_1,\beta_2\in\N\cup\{0\}$, then we set $D^{\gamma}=D^{\gamma_1}_xD^{\gamma_2}_y$.

Finally, we denote by $\chi_A$ the characteristic function of the set $A\subset\Rd$ ($d\ge 1$).

\medskip

\paragraph{\bf Miscellanea}
Throughout the paper, we denote by $c_{\lambda}$ a positive constant, possibly depending on $\lambda$ but being independent of $k$, $n$, $x$ and the functions that we will consider, which may vary from line to line. We simply write $c$ when the constant is independent also of $\lambda$.

The subscript ``$b$'' stands for bounded. For instance $C_b(\Omega;\C)$ denotes the set of bounded and continuous function from $\Omega$ to $\C$.
When we deal with spaces of real-valued functions we omit to write ``$\C$''.

Vector-valued functions are displayed in bold.

\subsection{Main function spaces}
Here, we collect the main function spaces used in the paper pointing out the (sub)section where they are used for the first time.
\medskip

\paragraph{\bf The spaces $\boldsymbol{\mathcal X}$ and $\boldsymbol{\mathcal X}_{k+\alpha}$  (Section \ref{sect-4})}

By $\boldsymbol{\mathcal X}$ we denote the set of all pairs ${\bf f}=(f_1,f_2)$, where $f_1:\overline{S}\to\C$ and $f_2:\overline{S_0^{+}}\to\C$ are bounded functions,
$f_1\in C(\overline{S_0^-};\C)\cap C([0,R]\times [-\ell/2,\ell/2];\C)\cap C(\overline{S_R^+};\C)$, $f_2\in C([0,R]\times [-\ell/2,\ell/2];\C)\cap C(\overline{S_R^+};\C)$
and $\lim_{x\to \pm\infty}f_1(x,y)=\lim_{x\to +\infty}f_2(x,y)=0$ for each $y\in [-\ell/2,\ell/2]$. It is endowed with the sup-norm, i.e., $\|\f\|_{\infty}=\|f_1\|_{L^{\infty}(S;\C)}+
\|f_2\|_{L^{\infty}(S_0^+;\C)}$.

For each $\alpha\in (0,1]$, $\boldsymbol{\mathcal X}_{\alpha}$ denotes the subset of $\boldsymbol{\mathcal X}$ of all $\f$ such that
(i) $f_1\in C^{\alpha}_b(\overline{S_0^-};\C)\cap C^{\alpha}_b([0,R]\times[-\ell/2,\ell/2];\C)\cap C^{\alpha}_b(\overline{S_R^+};\C)$, (ii) $f_2\in C^{\alpha}_b([0,R]\times[-\ell/2,\ell/2];\C)\cap C^{\alpha}_b(\overline{S_R^+};\C)$, (iii) $f_j(\cdot,-\ell/2)=f_j(\cdot,\ell/2)$ (and $\nabla f_j(\cdot,-\ell/2)=
 \nabla f_j(\cdot,\ell/2)$ if $\alpha=1$) for $j=1,2$. It is endowed with the norm
$\|{\bf f}\|_{\alpha}
=\|f_1\|_{C^{\alpha}_b(\overline{S_0^-};\C)}+\sum_{j=1}^2(\|f_j\|_{C^{\alpha}_b([0,R]\times[-\ell/2,\ell/2];\C)}
+\|f_j\|_{C^{\alpha}_b(\overline{S_R^+};\C)})$.

For $k\in\N$ and $\alpha\in (0,1)$,
$\boldsymbol{\mathcal X}_{k+\alpha}$ ($k\in\N$, $\alpha\in (0,1)$) denotes the set of all ${\bf f}\in \boldsymbol{\mathcal X}$ such that $D^\beta{\bf f}=(D^{\gamma}f_1,D^{\gamma}f_2)\in\boldsymbol{\mathcal X}$, $D^{\gamma}f_j(\cdot,-\ell/2)=D^{\gamma}f_j(\cdot,\ell/2)$ for each $|\gamma|\leq k$, $j=1,2$, and
$D^{\gamma}{\bf f}\in \boldsymbol{\mathcal X}_{\alpha}$ for $|\gamma|=k$. It is endowed with the norm
$\|{\bf f}\|_{k+\alpha}:=\sum_{|\gamma|< k}\|{D^{\gamma}{\bf f}}\|_{\infty}+\sum_{|\gamma|=k}\|D^{\gamma}{\bf f}\|_{\alpha}$.

\medskip

\paragraph{\bf The spaces $\boldsymbol{\mathcal Y}_{\alpha}(a,b)$ and $\boldsymbol{\mathcal Y}_{2+\alpha}(a,b)$ (Section \ref{sect-5})}

For $\alpha\in (0,1)$ and $0\le a<b$, we define by $\boldsymbol{\mathcal Y}_{\alpha}(a,b)$ the space of all pairs ${\bf f}=(f_1,f_2)$ such that
$f_1:[a,b]\times\overline{S}\to\R$, $f_2:[a,b]\times \overline{S_0^+}\to\R$ and
\begin{eqnarray*}
\|{\bf f}\|_{\boldsymbol{\mathcal Y}_{\alpha}(a,b)}=\sup_{a<t<b}\|{\bf f}(t,\cdot,\cdot)\|_{\alpha}+\sup_{(x,y)\in S}\|f_1(\cdot,x,y)\|_{C^{\alpha/2}((a,b))}+\sup_{(x,y)\in S^+_0}\|f_2(\cdot,x,y)\|_{C^{\alpha/2}((a,b))}<+\infty.
\end{eqnarray*}

Similarly, $\boldsymbol{\mathcal Y}_{2+\alpha}(a,b)$ denotes the space of all the pairs ${\bf u}$ such that $D^{\gamma_1}_tD^{\gamma_2}_xD^{\gamma_3}_y{\bf u}$ belongs to
$\boldsymbol{\mathcal Y}_{\alpha}(a,b)$ for every $\gamma_1, \gamma_2, \gamma_3\ge 0$ such that $2\gamma_1+\gamma_2+\gamma_3\leq 2$. These are Banach spaces with the norms
$\|\cdot\|_{\boldsymbol{\mathcal Y}_{\alpha}(a,b)}$ and
$\|{\bf u}\|_{\boldsymbol{\mathcal Y}_{2+\alpha}(a,b)}
=  \sum_{2\gamma_1+\gamma_2+\gamma_3\leq 2}\|D^{\gamma_1}_tD^{\gamma_2}_xD^{\gamma_3}_y{\bf u}\|_{\boldsymbol{\mathcal Y}_{\alpha}(a,b)}$.

If $a=0$ and $b=T$ then we simply write $\boldsymbol{\mathcal Y}_{\alpha}$ and $\boldsymbol{\mathcal Y}_{2+\alpha}$ instead of $\boldsymbol{\mathcal Y}_{\alpha}(0,T)$ and $\boldsymbol{\mathcal Y}_{2+\alpha}(0,T)$, respectively.

\section{Derivation of the fully nonlinear problem}\label{FNL1}
\label{sect-3}

\subsection{The system on a fixed domain}\label{fix-domain}
To begin with, we rewrite System \eqref{system-1} in the coordinates $t'=t$, $x'=x-t$, $y'=y$, $D_t=D_{t'}-D_{x'}$. Next, we look for the free interfaces respectively as:
\begin{equation*}
 G(t',y') = g(t',y'), \qquad\;\, F(t',y') = R + f(t',y'),
\end{equation*}
where $f$ and $g$ are small perturbations. In other words, the space variable $x'$ varies from $-\infty$ to $g(t',y')$, from $g(t',y')$ to $R + f(t',y')$, and eventually from $R + f(t',y')$ to $+\infty$. As usual, it is convenient to transform the problem on a variable domain to a problem on a fixed domain. To this end, we define a coordinate transformation in the spirit of \cite[Section 2.1]{BHL00}:
\begin{equation*}
t'=\tau, \qquad\;\, x'=\xi+\beta(\xi)g(\tau,\eta)+\beta(\xi-R)f(\tau,\eta),\qquad\;\, y'=\eta,
\end{equation*}
where $\beta$ is a smooth mollifier, equal to unity in a small neighborhood of $\xi=0$, say $[-\delta,\delta]$, and has compact support contained in $(-2\delta,2\delta)\subset (-R,R)$. When $x'=g$, $\xi=0$, and $\xi=R$ when $x'=R+f$.
Then, the trailing front and the ignition front are fixed at $\xi=0$ and $\xi=R$, respectively. Thanks to the translation invariance, \eqref{planar-front} holds with the variable $\xi$.
For convenience, we introduce the notation
\begin{equation}
\vp(\tau,\xi,\eta) = \beta(\xi)g(\tau,\eta)+\beta(\xi-R)f(\tau,\eta)
\label{notation}
\end{equation}
and we expand
$(1+\vp_\xi)^{-1}= 1-\vp_\xi + (\vp_{\xi})^2(1+\vp_\xi)^{-1}$. It turns out that
\begin{align*}
&D_{x'}=D_{\xi} - \vp_\xi D_{\xi}+(\vp_{\xi})^2(1+\vp_\xi)^{-1}D_{\xi},\\
&D_{t'}=D_{\tau}-\vp_\tau D_{\xi}+ \vp_\tau\vp_\xi D_{\xi}-\vp_\tau(\vp_{\xi})^2(1+\vp_\xi)^{-1}D_{\xi},\\
&D_{y'}=D_{\eta}-\vp_\eta D_{\xi}+ \vp_\eta\vp_\xi D_{\xi}-\vp_\eta(\vp_{\xi})^2(1+\vp_\xi)^{-1}D_{\xi}
\end{align*}
and System \eqref{system-1} reads:
\begin{gather}\label{Model2-i}
\begin{split}
\Theta_\tau = & \Theta_{\xi}+ \Delta\Theta+ (\vp_\eta^2 - \vp_\xi^2 -2 \vp_\xi)(1+\vp_\xi)^{-2}\Theta_{\xi\xi}-2 \vp_\eta (1+\vp_\xi)^{-1}\Theta_{\xi\eta}  \\
   &+ \big[(\vp_\tau - \vp_\xi-\vp_{\eta\eta})(1+\vp_\xi)^{-1} + 2 \vp_\eta \vp_{\xi\eta} (1+\vp_\xi)^{-2}  - \vp_{\xi\xi} (1+ \vp_\eta^2) (1+\vp_\xi)^{-3}  \big ] \Theta_\xi,\\[2mm]
   \Phi = & 0
  \end{split}
\end{gather}
in $(0,+\infty)\times (-\infty,0)\times (-\ell/2,\ell/2)$,
\begin{gather}\label{Model2-ii}
  \begin{split}
\Theta_\tau =& \Theta_{\xi}+ \Delta\Theta+(\vp_\eta^2 - \vp_\xi^2 -2 \vp_\xi)(1+\vp_\xi)^{-2}\Theta_{\xi\xi}  -2 \vp_\eta (1+\vp_\xi)^{-1}\Theta_{\xi\eta}  + R^{-1}  \\
   &+ \big [(\vp_\tau - \vp_\xi-\vp_{\eta\eta})(1+\vp_\xi)^{-1} + 2 \vp_\eta \vp_{\xi\eta} (1+\vp_\xi)^{-2}  - \vp_{\xi\xi} (1+ \vp_\eta^2) (1+\vp_\xi)^{-3}  \big ] \Theta_\xi,\\[2mm]
\Phi_\tau =& \Phi_{\xi}+ \Le^{-1}\Delta\Phi+\Le^{-1}(\vp_\eta^2 - \vp_\xi^2 -2 \vp_\xi)(1+\vp_\xi)^{-2}\Phi_{\xi\xi}-2\Le^{-1} \vp_\eta (1+\vp_\xi)^{-1}\Phi_{\xi\eta}  - R^{-1}\\
&+[(\vp_\tau\! -\! \vp_\xi\!-\!\Le^{-1}\vp_{\eta\eta})(1+\vp_\xi)^{-1}\! +\! 2\Le^{-1} \vp_\eta \vp_{\xi\eta} (1+\vp_\xi)^{-2}-\Le^{-1} \vp_{\xi\xi} (1+ \vp_\eta^2) (1+\vp_\xi)^{-3}] \Phi_\xi
\end{split}
\end{gather}
in $(0,+\infty)\times (0,R)\times (-\ell/2,\ell/2)$ and
\begin{gather}\label{Model2-iii}
  \begin{split}
\Theta_\tau =& \Theta_{\xi}+\Delta\Theta+ (\vp_\eta^2 - \vp_\xi^2 -2 \vp_\xi)(1+\vp_\xi)^{-2} \Theta_{\xi\xi}  -2 \vp_\eta (1+\vp_\xi)^{-1} \Theta_{\xi\eta}  \\
   &+ \big [(\vp_\tau - \vp_\xi-\vp_{\eta\eta})(1+\vp_\xi)^{-1} + 2 \vp_\eta \vp_{\xi\eta} (1+\vp_\xi)^{-2}  - \vp_{\xi\xi} (1+ \vp_\eta^2) (1+\vp_\xi)^{-3}  \big ] \Theta_\xi ,\\[2mm]
\Phi_\tau =& \Phi_{\xi}+ \Le^{-1}\Delta\Phi+\Le^{-1}(\vp_\eta^2 - \vp_\xi^2 -2 \vp_\xi)(1+\vp_\xi)^{-2}\Phi_{\xi\xi}-2\Le^{-1} \vp_\eta (1+\vp_\xi)^{-1}\Phi_{\xi\eta}\\
&+  \big [(\vp_\tau\! -\! \vp_\xi\!-\!\Le^{-1}\vp_{\eta\eta})(1+\vp_\xi)^{-1}\! +\! 2\Le^{-1} \vp_\eta \vp_{\xi\eta} (1+\vp_\xi)^{-2}-\Le^{-1} \vp_{\xi\xi} (1+ \vp_\eta^2) (1+\vp_\xi)^{-3}  \big ] \Phi_\xi
 \end{split}
\end{gather}
in $(0,+\infty)\times (R,+\infty)\times (-\ell/2,\ell/2)$.
Moreover, $\Theta$ and $\Phi$ are continuous at the (fixed) interfaces $\xi=0$ and $\xi=R$, and so are their first-order derivatives. Thus,
\begin{equation*}
[\Theta(\tau,\cdot,\eta)]_0= [\Theta_\xi(\tau,\cdot,\eta)]_0=\Phi(\tau,0,\eta)=\Phi_\xi(\tau,0,\eta)=0
\end{equation*}
and
\begin{equation*}
\Theta (\tau,R,\eta)=\theta_i, \quad [\Theta(\tau,\cdot,\eta)]_R=[\Theta_\xi(\tau,\cdot,\eta)]_R= [\Phi(\tau,\cdot,\eta)]=[\Phi_\xi(\tau,\cdot,\eta)]=0.
\end{equation*}
Conditions \eqref{infty} hold at $\xi=\pm\infty$ and periodic boundary conditions are assumed at $\eta=\pm \ell/2$.

\subsection{Elimination of the interfaces}
\label{sect-3.2}
From now on, with a slight abuse of notation, which does not cause confusion, we write $t$ instead of $\tau$ and $x$, $y$ instead of $\xi$ and $\eta$.

In the spirit of \cite{BHL00,lorenzi-2}, we introduce the splitting:
\begin{gather}
 \Theta(t,x,y) = \Theta^{(0)}(x) + \vp(t,x,y)\Theta^{(0)}_x(x) + u(t, x, y),\label{splitTheta}\\
\Phi(   t,x,y) = \Phi^{(0)}(x) + \vp(t,x,y) \Phi^{(0)}_x(x) + v(t,x, y),\label{splitPhi}
\end{gather}
which is a sort of Taylor expansion of $(\Theta,\Phi)$ around the travelling wave solution $(\Theta^{(0)},\Phi^{(0)})$. Thus, the pair $(u,v)$ plays the role of a remainder and, since we are interested in stability issues, we can assume that $u$ and $v$ are ``sufficiently small'' in a sense which will be made precise later on.

A long but straightforward computation reveals that the pair $(u,v)$ satisfies the differential equations
\begin{gather}\label{PertTheta+}
  \begin{split}
      u_t = & u_x + \Delta u+  \vp_t (1+\vp_x)^{-1}(\vp \Theta^{(0)}_{xx} + u_x)-(1+\vp_x)^{-3}\vp_{xx} (1+\vp_y^2) (\vp \Theta^{(0)}_{xx} + u_x)  \\
      & -(1+\vp_x)^{-1}\big [(\vp_{x} + \vp_{yy}) (\vp \Theta^{(0)}_{xx}+u_x) +2 \vp_y (\vp_y \Theta^{(0)}_{xx}+u_{xy})\big ]\\
      & + (1+\vp_x)^{-2} \big [2 \vp_{y} \vp_{xy} (\vp \Theta^{(0)}_{xx}+u_x) +   (\vp_{y}^2 -\vp_{x}^2) (\vp \Theta^{(0)}_{xxx}+\Theta^{(0)}_{xx}+u_{xx})\\
       &\phantom{+ (1+\vp_x)^{-2} \big [\;\,}- 2 \vp_x (\vp \Theta^{(0)}_{xxx} -\vp_y^2 \Theta^{(0)}_{xx}+u_{xx})\big ],
  \end{split}
\end{gather}
in $(0,+\infty)\times (\R\setminus\{0,R\})\times (-\ell/2,\ell/2)$ and
\begin{gather}\label{PertPhi}
  \begin{split}
     v_t = & v_x + \Le^{-1}\Delta v+  \vp_t (1+\vp_x)^{-1}
      (\vp \Phi^{(0)}_{xx} + v_x)-\Le^{-1}(1+\vp_x)^{-3}\vp_{xx} (1+\vp_y^2) (\vp \Phi^{(0)}_{xx} + v_x)\\
      & -  \Le^{-1}(1+\vp_x)^{-1}\big [(\Le\, \vp_{x} + \vp_{yy}) (\vp \Phi^{(0)}_{xx}+v_x) + 2 \vp_y (\vp_y \Phi^{(0)}_{xx}+v_{xy})\big ] \\
      & + \Le^{-1}(1+\vp_x)^{-2} \big [2 \vp_{y} \vp_{xy} (\vp \Phi^{(0)}_{xx}+v_x) +   (\vp_{y}^2 -\vp_{x}^2) (\vp \Phi^{(0)}_{xxx}+\Phi^{(0)}_{xx}+v_{xx})\\
       &\phantom{+ \Le^{-1}(1+\vp_x)^{-2}\big [\;\,}- 2 \vp_x (\vp \Phi^{(0)}_{xxx} -\vp_y^2 \Phi^{(0)}_{xx}+v_{xx})\big ]
  \end{split}
\end{gather}
in $(0,+\infty)\times [(0,R)\cup(R,+\infty)]\times (-\ell/2,\ell/2)$ and $v=0$ in $(0,+\infty)\times (-\infty,0)\times (-\ell/2,\ell/2)$.

Two steps are still needed:
\par
\noindent
(a) we have to determine the jump conditions satisfied by $u$ and $v$;
\par
\noindent
(b) again in the spirit of \cite{BHL00,lorenzi-2}, we have to get rid of the function $\varphi$ from the right-hand sides of \eqref{PertTheta+} and \eqref{PertPhi}.
As we will see, some difficulties appear and, to overcome them, we will differentiate the differential equation \eqref{PertPhi}.

\subsubsection{\bf The ignition interface  $\boldsymbol{x=R}$.} Note that $\Theta^{(0)}$, $\Phi^{(0)}$ belong to $C^1(\R)$. Thus, $[u]_R=[v]_R= 0$. Moreover,
$\Theta^{(0)}_{x}(R)= -\theta_i$ and $\Phi^{(0)}_{x}(R)=(1-\exp(-\Le R))/R$, so that they do not vanish at the interface $x=R$. The latter is a kind of transversality or non-degeneracy condition (see \cite{BHL00}). Evaluating \eqref{splitTheta} at $x=R$, we get $u(R) = \theta_i f$.

Next, differentiating
\eqref{splitTheta} and \eqref{splitPhi} for $x \neq R$, and taking the jumps across $x=R$, it is not difficult to show that
$[u_x]_R = -R^{-1}f$ and $[v_x]_R= R^{-1}\Le\, f$. This is a key point since we are able to express $f$ in terms of $u$ and write
\begin{equation}\label{elim-f}
f(t,y)={\theta_i}^{-1} u(t,R,y).
\end{equation}
Summing up, the interface conditions at $x=R$ are the following:
\begin{equation}\label{IC1}
[u]_R=[v]_R= 0, \quad u(R) +\theta_i R [u_x]_R =0,  \qquad\;\, \Le[u_x]_R + [v_x]_R =0.
\end{equation}

\subsubsection{\bf The trailing interface $\boldsymbol{x=0}$.} Taking the jump at $x=0$ of both sides of \eqref{splitTheta} and \eqref{splitPhi}, we get the conditions $[u]_0=v(0)=0$ for
$u$ and $v$. The trailing interface has a different nature with respect to the ignition interface. Indeed, since $\Theta^{(0)}_{x}(0)=\Phi^{(0)}_{x}(0)=0$, the non-degeneracy condition of \cite{BHL00} is not verified and  we are able to express $g$ in terms neither of $u$ or $v$.
On the other hand, $\Theta^{(0)}_{xx}(0^+)= -R^{-1}$ and $\Phi^{(0)}_{xx}(0^+)=R^{-1}\Le$, so that they do not vanish.
Differentiating \eqref{splitTheta} and \eqref{splitPhi} for $x \neq 0$, and taking the jumps yields:
$[u_x]_0 = R^{-1}g$, $v_x(\cdot,0^+,\cdot) = - R^{-1}g\Le$.
Hence, we get the additional interface condition $\Le\,[u_x]_0 + v_x(\cdot,0^+,\cdot) =0$, so that the interface conditions at $x=0$ are
\begin{equation}
[u]_0=v(0)=0,\quad \Le\,[u_x]_0 + v_x(\cdot,0^+,\cdot) =0.
\label{IC2}
\end{equation}
We can also write
\begin{equation}\label{elim-g}
g(t,y)= - R\,{\Le}^{-1} v_x(t, 0^+, y).
\end{equation}
Although the front $g$ could be eliminated, the method of \cite{BHL00} is not applicable since,
in contrast to \eqref{elim-f},  $g$ is related to the derivative of $v$ in the equation \eqref{elim-g}.

 \subsection{Differentiation and new interface conditions}
 To overcome the difficulty pointed out above, the trick is to differentiate \eqref{PertPhi} with respect to $x$, taking advantage of the structure of the system
 and consider the problem satisfied by the pair $(u,v_{x})$.
From \eqref{IC1} and \eqref{IC2} we get the following interface conditions for $u$ and $w=v_x$:
\begin{gather}\label{IC-diff}
\begin{split}
&[u]_0=0, \qquad\;\, \Le\, [u_x]_0 + w(0^+) =0,\\
&[u]_R= 0, \qquad\;\, u(R) +\theta_i R [u_x]_R =0,  \qquad\;\, \Le[u_x]_R + [w]_R =0.
\end{split}
\end{gather}

We missed two jump conditions: one at the trailing interface and the other one at the ignition interface.
To obtain the additional condition at the trailing interface $x=0$,
we differentiate \eqref{splitPhi} twice in a neighborhood of $x=0$ and take the trace at $x=0^+$. Using \eqref{elim-g}, we get
\begin{equation}\label{new-interface1}
\Phi_{xx}(\cdot,0^+,\cdot) = \Le\, R^{-1} +\Le\, w(\cdot,0^+,\cdot) + w_x(\cdot,0^+,\cdot).
\end{equation}
To get rid of $\Phi_{xx}(\cdot,0^+,\cdot)$ from the left-hand side of \eqref{new-interface1}, we observe that, for $x>0$ sufficiently small, the second equation in \eqref{Model2-ii} reduces to
\begin{gather}\label{neighbor-0}
\begin{split}
\Phi_t =& \Phi_{x}+ \Le^{-1}\Delta\Phi+ \Le^{-1}g_y^2 \Phi_{xx}
- 2\Le^{-1} g_y \Phi_{xy}  - R^{-1}+(g_t -\Le^{-1}g_{yy}) \Phi_x.
\end{split}
\end{gather}
Taking the trace of \eqref{neighbor-0} at $x=0^+$ it is easy to check that
$\Phi_{xx}(\cdot,0^+,\cdot)(1+g_y^2)  = \Le\, R^{-1}$.
Hence, using \eqref{elim-g} and \eqref{new-interface1} we get the additional interface condition
\begin{equation}\label{new-interface2}
\Le\, w(\cdot,0^+,\cdot) + w_x(\cdot,0^+,\cdot)= \Le\,R^{-1}\{[1+R^2\Le^{-2}(w_y(\cdot,0^+,\cdot))^2]^{-1}-1\}.
\end{equation}

We likewise identify the additional interface condition at the ignition interface $x=R$.
Differentiating twice \eqref{splitPhi} in a neighborhood of $x=R$, taking the jump at $x=R$ and using \eqref{elim-f}, \eqref{IC1} gives
\begin{equation}\label{new-interface3}
[\Phi_{xx}]_R = -R^{-1}\Le+\Le\, [w]_R + [w_x]_R.
\end{equation}
We need to compute $[\Phi_{xx}]_R$: in a neighborhood of $R^-$, the second equation in \eqref{Model2-ii} yields
\begin{align*}
\Phi_t =\Phi_{x}+ \Le^{-1}\Delta\Phi
+ \Le^{-1}(f_y)^2 \Phi_{xx}- 2\Le^{-1} f_y \Phi_{xy}  - R^{-1}+(f_t -\Le^{-1}f_{yy}) \Phi_x,
\end{align*}
while in a neighborhood of $R^+$ from the second equation in \eqref{Model2-iii} we get
\begin{align*}
\Phi_t =& \Phi_{x}+ \Le^{-1}\Delta\Phi+ \Le^{-1}(f_y)^2 \Phi_{xx}
- 2\Le^{-1} f_y \Phi_{xy}+(f_t -\Le^{-1}f_{yy}) \Phi_x.
\end{align*}
Using the previous two equations it can be easily shown that $[\Phi_{xx}]_R (1+ (f_y)^2) = -R^{-1}\Le$, which, together with \eqref{elim-f} and \eqref{new-interface3}, gives
\begin{equation}\label{new-interface4}
\Le [w]_R + [w_x]_R=- \Le\, R^{-1}\{[1+\theta_i^{-2}(u_y(\cdot,R,\cdot))^2]^{-1}-1\}.
\end{equation}
This is the additional condition we were looking for.

\subsection{Elimination of $\vp$ and its time and spatial derivatives} Formulae \eqref{elim-f} \eqref{elim-g} enable the elimination of the fronts $f$ and $g$ from the differential equations satisfied by $u$ and $w$. First, they allow to write the following formula for $\vp$ (see \eqref{notation}):
\begin{equation}\label{vp1}
\vp(t,x,y) = {\theta_i}^{-1}\beta(x-R)u(t,R,y) - R\,{\Le}^{-1}\beta(x) w(t, 0^+, y).
\end{equation}
Differentiation of \eqref{vp1} with respect to $x$ and $y$ is benign.
The right-hand sides of \eqref{PertTheta+} and \eqref{PertPhi} depend also on $\varrho_t$. Hence, we need to compute such a derivative and
express it in terms of (traces of) spatial derivatives of $u$ and $w$. Since
\begin{align}\label{stefan1}
\vp_{t}(t,x,y) = {\theta_i}^{-1}\beta(x-R)u_{t}(t,R,y) - R\,{\Le}^{-1}\beta(x) w_{t}(t, 0^+, y),
\end{align}
we need to get rid of $u_{t}(t,R,y)$ and $w_{t}(t,0^+,y)$. For simplicity, we forget the arguments $t$ and $y$.
We evaluate \eqref{PertTheta+} at $x=R^+$ (it would be equivalent at $x=R^-$). Recalling that all the derivatives of $\vp$ with respect of $x$ vanish and taking
\eqref{vp1} into account, we get
\begin{align*}
u_t(R) = & u_x(R^+) +\Delta u(R^+)+\theta_i^{-1}u_t(R) (u(R)+ u_x(R^+))-\theta_i^{-1}u_{yy}(R)u_x(R^+)\\
& -2\theta_i^{-1}u_y(R)u_{xy}(R^+)+\theta_i^{-2}(u_{y}(R))^2(u_{xx}(R^+)-u(R)-\theta_i)-\theta_i^{-1}u(R)u_{yy}(R).
\end{align*}
Since $\theta_i$ is fixed, assuming that the perturbations are small we may invert and write
\begin{align}\label{stefan2}
u_t(R)=[1\!-\!{\theta_i}^{-1}(u(R)\!+\!u_x(R^+))]^{-1}
[&u_x(R^+)+ \Delta u(R^+)-\theta_i^{-1}u_{yy}(R)u_x(R^+)-\theta_i^{-1}u(R)u_{yy}(R)\notag\\
&-2\theta_i^{-1}u_y(R)u_{xy}(R^+)+\theta_i^{-2}(u_{y}(R))^2(u_{xx}(R^+)-u(R)-\theta_i)].
\end{align}

Similarly, differentiating and evaluating \eqref{PertPhi} at $x=0^+$ we get
\begin{align*}
w_t(0^+) = & w_x(0^+) + \Le^{-1}\Delta w(0^+)-R\Le^{-1}w_t(0^+)(\Le\, w(0^+)+w_{x}(0^+))\\
& +R\Le^{-2}\big [w_{yy}(0^+)(\Le\, w(0^+)+w_{x}(0^+))+2w_y(0^+) w_{xy}(0^+)\big ] \\
& + R^2\Le^{-3}(w_{y}(0^+))^2(-\Le^2w(0^+)+R^{-1}\Le^2+w_{xx}(0^+)),
\end{align*}
so that
\begin{align}
w_t(0^+)=\{&\Le\, w_x(0^+) +\Delta w(0^+)+R\Le^{-1}\big [w_{yy}(0^+)(\Le\, w(0^+)+w_{x}(0^+))+2w_y(0^+) w_{xy}(0^+)\big ]\notag\\
&+R^2\Le^{-2}(w_{y}(0^+))^2(-\Le^2w(0^+)+R^{-1}\Le^2+w_{xx}(0^+))\}\notag\\
&\qquad\quad\times[\Le + R(\Le\, w(0^+) +w_{x}(0^+))]^{-1}.
\label{core-2}
\end{align}
A related remark is that Equation \eqref{stefan1} for $\vp_{t}$, together with formulas \eqref{stefan2}-\eqref{core-2}, may be viewed as a \textit{second-order Stefan condition}, see \cite{BL18}.

\subsection{The final system} Using \eqref{PertTheta+}, \eqref{PertPhi}, \eqref{IC-diff}, \eqref{new-interface2}, \eqref{new-interface4}-\eqref{core-2}, we can
write the final problem for ${\bf u}=(u,w)$, which is {\it fully nonlinear} since the nonlinear part of the differential equations contains traces at $\xi=0^+$ and $R$ of (first- and) second-order derivatives of the unknown $\uu$ itself.

Summing up, the pair ${\bf u}=(u,w)$ solves the nonlinear system
\begin{align}
\left\{
\begin{array}{lll}
D_t{\bf u}(t,\cdot,\cdot)={\mathscr L}{\bf u}(t,\cdot,\cdot)+\mathscr F({\bf u}(t,\cdot,\cdot)),& t\geq0,\\[1mm]
\mathscr B({\bf u}(t,\cdot))=\boldsymbol{\mathscr H}(\uu(t,\cdot)), & t\geq0,
\end{array}
\right.
\label{asta}
\end{align}
and satisfies periodic boundary conditions at $y=\pm\ell/2$, where
\begin{equation}
{\mathscr L}{\bf v}=(\Delta\zeta+\zeta_x,\Le^{-1}\Delta \upsilon+\upsilon_x),
\label{operatore-L}
\end{equation}
\begin{equation}
{\mathscr B}{\bf v}=\left (
\begin{array}{l}
\zeta(0^+,\cdot)-\zeta(0^-,\cdot)\\[1mm]
\zeta(R^+,\cdot)-\zeta(R^-,\cdot)\\[1mm]
{\rm Le}[\zeta_x(0^+,\cdot)-\zeta_x(0^-,\cdot)]+\upsilon(0^+,\cdot)\\[1mm]
{\rm Le}\,\upsilon(0^+,\cdot)+\upsilon_x(0^+,\cdot)\\[1mm]
\zeta(R^+,\cdot)+\theta_iR[\zeta_x(R^+,\cdot)-\zeta_x(R^-,\cdot)]\\[1mm]
{\rm Le}[\zeta_x(R^+,\cdot)-\zeta_x(R^-,\cdot)]+\upsilon(R^+,\cdot)-\upsilon(R^-,\cdot)\\[1mm]
{\rm Le}[\upsilon(R^+,\cdot)-\upsilon(R^-,\cdot)]+\upsilon_x(R^+,\cdot)-\upsilon_x(R^-,\cdot)
\end{array}
\right ),
\label{boundary-B}
\end{equation}
\begin{align*}
\mathscr F({\bf v})
= & (\Lambda({\bf v})\mathscr F_1({\bf v})-\mathscr F_2({\bf v}),
\Lambda({\bf v})D_x\mathscr G_1({\bf v})+D_x\Lambda({\bf v})\mathscr G_1({\bf v})-{\rm Le}^{-1}D_x\mathscr G_2({\bf v})), \\[1mm]
{\mathscr F}_1({\bf v})=& \frac{\theta_i^{-1}\beta_R\Theta^{(0)}_{xx}\zeta(R^+,\cdot)-R{\rm Le}^{-1}\beta\Theta^{(0)}_{xx}\upsilon(0,\cdot)+\zeta_x}{1+\theta_i^{-1}\beta'(\cdot-R)\zeta(R^+,\cdot)-R{\rm Le}^{-1}\beta'\upsilon(0,\cdot)},\\[2mm]
{\mathscr F}_2({\bf v})=&\{\big (\theta_i^{-1}\beta'(\cdot-R)\zeta(R^+,\cdot)-R{\rm Le}^{-1}\beta'\upsilon(0,\cdot)+\theta_i^{-1}\beta_R \zeta_{yy}(R^+,\cdot)-R{\rm Le}^{-1}\beta \upsilon_{yy}(0,\cdot)\big )\\
&\qquad\quad\times(\theta_i^{-1}\beta_R\Theta^{(0)}_{xx}\zeta(R^+,\cdot)-R{\rm Le}^{-1}\beta\Theta^{(0)}_{xx}\upsilon(0,\cdot)+\zeta_x)\\
&\;\,+2(\theta_i^{-1}\beta_R \zeta_y(R^+,\cdot)-R{\rm Le}^{-1}\beta \upsilon_y(0,\cdot))^2\Theta^{(0)}_{xx}\\
&\;\,+2(\theta_i^{-1}\beta_R \zeta_y(R^+,\cdot)-R{\rm Le}^{-1}\beta \upsilon_y(0,\cdot))\zeta_{xy}\}(1+\theta_i^{-1}\beta'(\cdot-R)\zeta(R^+,\cdot)-R{\rm Le}^{-1}\beta'\upsilon(0,\cdot))^{-1}\\
&-\big\{2\big (\theta_i^{-1}\beta_R \zeta_y(R^+,\cdot)-R{\rm Le}^{-1}\beta \upsilon_y(0,\cdot)\big )\big (\theta_i^{-1}\beta'(\cdot-R)\zeta_y(R^+,\cdot)-R{\rm Le}^{-1}\beta'\upsilon_y(0,\cdot)\big )\\
&\qquad\quad\times
\big (\theta_i^{-1}\beta_R\Theta^{(0)}_{xx}\zeta(R^+,\cdot)-R{\rm Le}^{-1}\beta\Theta^{(0)}_{xx}\upsilon(0,\cdot)+\zeta_x\big )\\
&\;\;\;\;\;\;+\big [\big (\theta_i^{-1}\beta_R \zeta_y(R^+,\cdot)-R{\rm Le}^{-1}\beta \upsilon_y(0,\cdot)\big )^2-\big (\theta_i^{-1}\beta'(\cdot-R)\zeta(R^+,\cdot)-R{\rm Le}^{-1}\beta'\upsilon(0,\cdot)\big )^2\big ]\\
&\qquad\quad\;\;\;\;\times\big (\theta_i^{-1}\beta_R \Theta^{(0)}_{xxx}\zeta(R^+,\cdot)-R{\rm Le}^{-1}\beta \Theta^{(0)}_{xxx}\upsilon(0,\cdot)+\Theta^{(0)}_{xx}+\zeta_{xx}\big )\\
&\;\;\;\;\;\;-2\big (\theta_i^{-1}\beta'(\cdot-R)\zeta(R^+,\cdot)-R{\rm Le}^{-1}\beta'\upsilon(0,\cdot)\big )\\
&\qquad\quad\;\;\;\;\times\big [\theta_i^{-1}\beta_R\Theta^{(0)}_{xxx}\zeta(R^+,\cdot)-R{\rm Le}^{-1}\beta\Theta^{(0)}_{xxx} \upsilon(0,\cdot)\\
&\qquad\qquad\quad\;\;\;\;-\big (\theta_i^{-1}\beta_R \zeta_y(R^+,\cdot)-R{\rm Le}^{-1}\beta \upsilon_y(0,\cdot)\big )^2\Theta^{(0)}_{xx}+\zeta_{xx}\big ]\big\}\\
&\qquad\qquad\;\;\;\;\;\;\times(1+\theta_i^{-1}\beta'(\cdot-R)\zeta(R^+,\cdot)-R{\rm Le}^{-1}\beta'\upsilon(0,\cdot))^{-2}\\
&+\big\{\big (\theta_i^{-1}\beta''(\cdot-R)\zeta(R^+,\cdot)-R{\rm Le}^{-1}\beta''\upsilon(0,\cdot)\big )\big [1+\big (\theta_i^{-1}\beta_R \zeta_y(R^+,\cdot)-R{\rm Le}^{-1}\beta \upsilon_y(0,\cdot)\big )^2\big ]\\
&\qquad\quad\times (\theta_i^{-1}\beta_R\Theta^{(0)}_{xx}\zeta(R^+,\cdot)-R{\rm Le}^{-1}\beta\Theta^{(0)}_{xx}\upsilon(0,\cdot)+\zeta_x)\big\}\\
&\qquad\qquad\qquad\times(1+\theta_i^{-1}\beta'(\cdot-R)\zeta(R^+,\cdot)-R{\rm Le}^{-1}\beta'\upsilon(0,\cdot))^{-3},\\[2mm]
\mathscr G_1({\bf v})= &\frac{\theta_i^{-1}\beta_R\Phi^{(0)}_{xx}\zeta(R^+,\cdot)-R{\rm Le}^{-1}\beta\Phi^{(0)}_{xx}\upsilon(0,\cdot)+\upsilon}{1+\theta_i^{-1}\beta'(\cdot-R)\zeta(R^+,\cdot)-R{\rm Le}^{-1}\beta'\upsilon(0,\cdot)},\\[2mm]
\mathscr G_2({\bf v})= &
\{[{\rm Le}\,\theta_i^{-1}\beta'(\cdot-R)\zeta(R^+,\cdot)-R\beta'\upsilon(0,\cdot)+\theta_i^{-1}\beta_R \zeta_{yy}(R^+,\cdot)-R{\rm Le}^{-1}\beta \upsilon_{yy}(0,\cdot)]\\
&\qquad\quad\times[\theta_i^{-1}\beta_R\Phi^{(0)}_{xx}\zeta(R^+,\cdot)-R{\rm Le}^{-1}\beta\Phi^{(0)}_{xx}\upsilon(0,\cdot)+\upsilon]\\
&\;\,+2(\theta_i^{-1}\beta_R \zeta_y(R^+,\cdot)-R{\rm Le}^{-1}\beta \upsilon_y(0,\cdot))^2\Phi^{(0)}_{xx}\\
&\;\,+2(\theta_i^{-1}\beta_R \zeta_y(R^+,\cdot)-R{\rm Le}^{-1}\beta \upsilon_y(0,\cdot))\upsilon_y\}
(1+\theta_i^{-1}\beta'(\cdot-R)\zeta(R^+,\cdot)-R{\rm Le}^{-1}\beta'\upsilon(0,\cdot))^{-1}\\
&-\big\{2\big (\theta_i^{-1}\beta_R \zeta_y(R^+,\cdot)-R{\rm Le}^{-1}\beta \upsilon_y(0,\cdot)\big )\big (\theta_i^{-1}\beta'(\cdot-R)\zeta_y(R^+,\cdot)-R{\rm Le}^{-1}\beta'\upsilon_y(0,\cdot)\big )\\
&\qquad\quad\times
\big (\theta_i^{-1}\beta_R\Phi^{(0)}_{xx}\zeta(R^+,\cdot)-R{\rm Le}^{-1}\beta\Phi^{(0)}_{xx}\upsilon(0,\cdot)+\upsilon\big )\\
&\;\;\;\;\;\;+\big [\big (\theta_i^{-1}\beta_R \zeta_y(R^+,\cdot)-R{\rm Le}^{-1}\beta \upsilon_y(0,\cdot)\big )^2-\big (\theta_i^{-1}\beta'(\cdot-R)\zeta(R^+,\cdot)-R{\rm Le}^{-1}\beta'\upsilon(0,\cdot)\big )^2\big ]\\
&\qquad\quad\;\;\;\;\times\big (\theta_i^{-1}\beta_R\Phi^{(0)}_{xxx}\zeta(R^+,\cdot)-R{\rm Le}^{-1}\beta \Phi^{(0)}_{xxx}\upsilon(0,\cdot)+\Phi^{(0)}_{xx}+\upsilon_x\big )\\
&\;\;\;\;\;\;-2\big (\theta_i^{-1}\beta'(\cdot-R)\zeta(R^+,\cdot)-R{\rm Le}^{-1}\beta'\upsilon(0,\cdot)\big )\\
&\qquad\quad\;\;\;\;\times\big [\theta_i^{-1}\beta_R\Phi^{(0)}_{xxx}\zeta(R^+,\cdot)-R{\rm Le}^{-1}\beta\Phi^{(0)}_{xxx} \upsilon(0,\cdot)\\
&\qquad\qquad\quad\;\;\;\;-\big (\theta_i^{-1}\beta_R \zeta_y(R^+,\cdot)-R{\rm Le}^{-1}\beta \upsilon_y(0,\cdot)\big )^2\Phi^{(0)}_{xx}+\upsilon_x\big ]\big\}\\
&\qquad\qquad\;\;\;\;\;\;\times(1+\theta_i^{-1}\beta'(\cdot-R)\zeta(R^+,\cdot)-R{\rm Le}^{-1}\beta'\upsilon(0,\cdot))^{-2}\\
&+\big\{\big (\theta_i^{-1}\beta''(\cdot-R)\zeta(R^+,\cdot)-R{\rm Le}^{-1}\beta''\upsilon(0,\cdot)\big )\big [1+\big (\theta_i^{-1}\beta_R\zeta_y(R^+,\cdot)-R{\rm Le}^{-1}\beta \upsilon_y(0,\cdot)\big )^2\big ]\\
&\qquad\quad\times (\theta_i^{-1}\beta_R\Phi^{(0)}_{xx}\zeta(R^+,\cdot)-R{\rm Le}^{-1}\beta\Phi^{(0)}_{xx}\upsilon(0,\cdot)+\upsilon)\big\}\\
&\qquad\qquad\qquad\times(1+\theta_i^{-1}\beta'(\cdot-R)\zeta(R^+,\cdot)-R{\rm Le}^{-1}\beta'\upsilon(0,\cdot))^{-3},\\[1mm]
\Lambda({\bf v})=& (\theta_i\!-\!\zeta(R^+,\cdot)\!-\!\zeta_x(R^+,\cdot))^{-1}\\
&\qquad\quad\times \big [\zeta_x(R^+,\cdot)+ \Delta \zeta(R^+,\cdot)-\theta_i^{-1}\zeta_{yy}(R,\cdot)\zeta_x(R^+,\cdot)-2\theta_i^{-1}\zeta_y(R,\cdot)\zeta_{xy}(R^+,\cdot)\\
&\qquad\qquad\;\,+\theta_i^{-2}(\zeta_{y}(R,\cdot))^2(\zeta_{xx}(R^+,\cdot)-\zeta(R,\cdot)-\theta_i)-\theta_i^{-1}\zeta(R,\cdot)\zeta_{yy}(R,\cdot)\big ]\\
&-[{\rm Le}^2+R\Le({\rm Le}\, \upsilon(0^+,\cdot)+\upsilon_x(0^+,\cdot))]^{-1}\\
&\qquad\quad\times \big\{\Le\, \upsilon_x(0^+,\cdot) +\Delta \upsilon (0^+,\cdot)\\
&\qquad\qquad\;\,+R\Le^{-1}\big [\upsilon_{yy}(0^+,\cdot)(\Le\, \upsilon(0^+,\cdot)+\upsilon_{x}(0^+,\cdot))+2\upsilon_y(0^+,\cdot) \upsilon_{xy}(0^+,\cdot)\big ]\notag\\
&\qquad\qquad\;\,+R^2\Le^{-2}(\upsilon_{y}(0^+,\cdot))^2(-\Le^2\upsilon(0^+,\cdot)+R^{-1}\Le^2+\upsilon_{xx}(0^+,\cdot))\big\},\\[2mm]
\mathscr H_j({\bf v})=&0~{\rm if}~j\neq 4,7,\qquad
\mathscr H_4({\bf v})=-\frac{R{\rm Le}(\upsilon_y(0,\cdot))^2}{{\rm Le}^2+R^2(\upsilon_y(0,\cdot))^2},\qquad
\mathscr H_7({\bf v})= \frac{{\rm Le}(\zeta_y(R^+,\cdot))^2}{R(\theta_i^2+(\zeta_y(R^+,\cdot))^2)},
\end{align*}
on smooth enough functions ${\bf v}=(\zeta,\upsilon)$, where $\beta_R=\beta(\cdot-R)$.

\begin{rmk}
\label{rmk-opB}
{\rm Note that each smooth enough function $\uu$, which solves problem \eqref{asta}, has its first component $u_1$ which is continuous on $\{R\}\times [-\ell/2,\ell/2]$.
Therefore, the operator ${\mathscr B}$ can be replaced with the operator $\widetilde {\mathscr B}$ which is defined as ${\mathscr B}$ with the fifth equation being replaced by the condition
$\frac{1}{2}(v_1(R^+,\cdot)+v_1(R^-,\cdot))+\theta_iR[D_xv_1(R^+,\cdot)-D_xv_1(R^+,\cdot)]=0$.}
\end{rmk}

We will use the above remark in Subsection \ref{subsection-lifting}.

\section{Tools}
\label{sect-4}
In this section we collect some technical results which are used in the next (sub)sections.

\subsection{Preliminary results needed to prove Theorems \ref{banca} and Proposition \ref{perdita}}

We find it convenient to set
\begin{align*}
({\mathcal F}_{k,\rho}f)(x):=\frac{1}{Z_k}\int_{\R}e^{-\frac{\rho}{2}s}e^{-\frac{1}{2}Z_k|s|}\hat f_k(x-s)ds,\qquad\;\,x\in\R,\;\,f\in C_b(\overline{S};\C),\;\,k\in\Z,
\end{align*}
where $Z_k=Z_k(\lambda,\rho)=\sqrt{\rho^2+4\lambda\rho+4\lambda_{k}}$ for every $k\in\Z$ and
$\lambda_k=(4\ell^2)^{-1}k^2\pi^2$. Moreover, we denote by $\Sigma_0$ the set of $\lambda\in\C$ with positive real part.

\begin{lemm}
\label{finito}
For every $\rho\in (0,+\infty)$, $\lambda\in\C$ with ${\rm Re}\lambda>-\rho^{-1}({\rm Im}\lambda)^2$ and $f\in C_b(\overline{S};\C)$, the series
$\ell^{-1}\sum_{k\in\Z}(\mathcal F_{k,\rho}f)e_k$
defines a bounded and continuous function ${\mathscr R}_{\lambda,\rho}f$ in $\R^2$ which, clearly, is periodic with respect to $y$.
Moreover,
\begin{enumerate}[\rm (i)]
\item
${\mathscr R}_{\lambda,\rho}f\in \bigcap_{p<+\infty} W^{2,p}_{\rm loc}(\R^2;\C)$ and $\lambda {\mathscr R}_{\lambda,\rho}f-\rho^{-1}\Delta {\mathscr R}_{\lambda,\rho}f-D_x{\mathscr R}_{\lambda,\rho}f=f$ in $S$;
\item
$\nabla {\mathscr R}_{\lambda,\rho}f\in C_b(\R^2;\C)\times C_b(\R^2;\C)$ and
\begin{equation}
|\lambda|\|{\mathscr R}_{\lambda,\rho}f\|_{\infty}+\sqrt{|\lambda|}\|\nabla{\mathscr R}_{\lambda,\rho}f\|_{\infty}\le c\|f\|_{\infty},\qquad\;\,\lambda\in\Sigma_0;
\label{braccio}
\end{equation}
\item
if further $\lim_{x\to -\infty}f(x,y)=0$ {\rm (resp.} $\lim_{x\to +\infty}f(x,y)=0)$ for every $y\in [-\ell/2,\ell/2]$, then $({\mathscr R}_{\lambda,\rho}f)(\cdot,y)$
vanishes as $x\to -\infty$ {\rm (resp.} $x\to +\infty)$ for every $y\in\R$;
\item
for every $f\in C^{\alpha}_b(S;\C)$, such that $f(\cdot,-\ell/2)=f(\cdot,\ell/2)$, and $\lambda\in\C$ with ${\rm Re}\lambda>-\rho^{-1}({\rm Im}\lambda)^2$, the function $\mathscr R_{\lambda,\rho}f$ admits classical derivatives up to the second-order which belong to $C^{\alpha}_b(\R^2;\C)$. Moreover,
\begin{align}
\label{cotto}
\|\mathscr R_{\lambda,\rho}f\|_{C^{2+\alpha}_b(\R^2;\C)}\leq c_{\lambda}\|f\|_{C^{\alpha}_b(S;\C)}.
\end{align}
\end{enumerate}
\end{lemm}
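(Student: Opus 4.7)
The plan is to decompose the problem along Fourier modes in $y$. For each $k\in\Z$, the $k$-th Fourier coefficient of the would-be solution must solve a constant-coefficient linear ODE in $x$ with right-hand side $\hat f_k$, and $\mathcal F_{k,\rho}f$ is precisely the bounded solution of that ODE obtained by convolution with the associated Green's function $s\mapsto Z_k^{-1}e^{-\rho s/2-Z_k|s|/2}$: the roots of the characteristic polynomial are $r_\pm=(-\rho\pm Z_k)/2$, and the standing assumption ${\rm Re}\,\lambda>-\rho^{-1}({\rm Im}\,\lambda)^2$ is what forces ${\rm Re}\,Z_k>\rho$ (with the principal branch of the square root) for every $k$, so that the kernel decays exponentially at $\pm\infty$ and $\mathcal F_{k,\rho}f$ is well defined for every $f\in C_b(\overline{S};\C)$.

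The first step is the pointwise bound
\begin{equation*}
\|\mathcal F_{k,\rho}f\|_\infty\leq\|f\|_\infty\,\frac{4\,{\rm Re}\,Z_k}{|Z_k|\big(({\rm Re}\,Z_k)^2-\rho^2\big)},
\end{equation*}
which behaves like $C|k|^{-2}\|f\|_\infty$ for large $|k|$ since $|Z_k|\sim c|k|$; this yields absolute and uniform convergence of the series defining $\mathscr R_{\lambda,\rho}f$ and shows that the limit is bounded, continuous, and $\ell$-periodic in $y$. For (i), one checks that each $\mathcal F_{k,\rho}f$ solves its mode-$k$ ODE in the distributional sense---the first derivative of the kernel has a jump at $s=0$ that generates the Dirac forcing with the correct weight---so $\mathscr R_{\lambda,\rho}f$ satisfies $\lambda u-\rho^{-1}\Delta u-D_x u=f$ as a distribution in $S$, and interior $L^p$ elliptic theory upgrades it to $W^{2,p}_{\mathrm{loc}}$. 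Claim (iii) follows from dominated convergence applied first inside the convolution defining each $\mathcal F_{k,\rho}f$ and then inside the series, using the $C|k|^{-2}$ tail as summable majorant. For (iv), periodically extend $f$ to $f^\sharp\in C^\alpha_b(\R^2;\C)$; then $\mathscr R_{\lambda,\rho}f$ is a bounded distributional solution of $\lambda u-\rho^{-1}\Delta u-D_x u=f^\sharp$ on all of $\R^2$, and classical interior Schauder estimates on a locally finite cover of $\R^2$ by unit balls, combined with the $L^\infty$ bound from (ii), give \eqref{cotto} (with $c_\lambda$ allowed to depend on $\lambda$).

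The main obstacle is (ii): the sharp scaling $|\lambda|\|u\|_\infty\lesssim\|f\|_\infty$ cannot be recovered by mode-by-mode summation, since the naive estimate $|\mathcal F_{k,\rho}f|\lesssim\|f\|_\infty/(|\lambda|\rho+\lambda_k)$ leads to $\sum_k|\lambda|/(|\lambda|\rho+\lambda_k)\sim\sqrt{|\lambda|}$, losing a factor of $\sqrt{|\lambda|}$. The plan is therefore to re-sum the Fourier series into a single 2D convolution $\mathscr R_{\lambda,\rho}f=\mathcal G_\lambda^\ell\ast f$, where $\mathcal G_\lambda^\ell$ is the $y$-periodization of the fundamental solution $\mathcal G_\lambda$ of $\lambda-\rho^{-1}\Delta-D_x$ on $\R^2$; for real $\lambda>0$ the kernel $\mathcal G_\lambda$ is positive and its $L^1$-norm equals $\lambda^{-1}$ (by integrating the resolvent equation over $\R^2$), and the case of complex $\lambda\in\Sigma_0$ follows from the classical sectoriality of $\rho^{-1}\Delta+D_x$ on $C_b(\R^2;\C)$. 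The gradient estimate is handled analogously, via $\|\nabla\mathcal G_\lambda\|_{L^1(\R^2)}\lesssim|\lambda|^{-1/2}$, with an extra care for the interaction between the derivative kernel and the periodization in $y$.
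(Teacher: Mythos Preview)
Your proposal is correct and follows essentially the same strategy as the paper for the convergence of the series, for (i), (iii) and (iv): mode-by-mode $O(|k|^{-2})$ bounds for uniform convergence, distributional verification of the PDE plus interior $L^p$ elliptic regularity for (i), dominated convergence for (iii), and Schauder bootstrapping from the equation for (iv).

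The only point where the route differs in substance is (ii). You correctly diagnose that mode-wise summation loses $\sqrt{|\lambda|}$, and you propose to re-sum into a convolution with the $y$-periodized full-space Green's function $\mathcal G_\lambda^\ell$, then invoke $\|\mathcal G_\lambda\|_{L^1}=\lambda^{-1}$ and $\|\nabla\mathcal G_\lambda\|_{L^1}\lesssim|\lambda|^{-1/2}$. The paper takes the dual (and somewhat cleaner) viewpoint: it first proves \emph{uniqueness} of bounded $y$-periodic solutions of $\lambda u-\rho^{-1}\Delta u-D_xu=f^\sharp$ in a class $\mathcal D\subset C^1_b\cap W^{2,p}_{\mathrm{loc}}$, and then observes that the full-space resolvent $R(\lambda,A_\rho)f^\sharp$ (with $A_\rho=\rho^{-1}\Delta+D_x$ on $L^\infty(\R^2)$) is itself $\ell$-periodic in $y$ by translation invariance and hence must coincide with $\mathscr R_{\lambda,\rho}f$. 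Estimate \eqref{braccio} then follows immediately from the classical resolvent bound $\|R(\lambda,A_\rho)\|\le c|\lambda|^{-1}$ together with the interpolation inequality $\|\nabla v\|_\infty\le c\|v\|_\infty^{1/2}\|A_\rho v\|_\infty^{1/2}$, bypassing any explicit kernel summation. Your approach is equivalent in content but requires you to justify the identification $\mathscr R_{\lambda,\rho}f=\mathcal G_\lambda^\ell\ast f$ (which is precisely the uniqueness step you omit) and to control the periodization of $\nabla\mathcal G_\lambda$, which is the ``extra care'' you allude to but do not spell out; the paper's abstract-resolvent phrasing avoids both of these technicalities.
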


\begin{proof}
To begin with, we claim that, for each $f\in C_b(\overline{S};\C)$ and $\lambda\in\C$, such that $\rho{\rm Re}\lambda+({\rm Im}\lambda)^2>0$, the series in the statement converges uniformly in $\R^2$. To prove the claim, we observe that ${\rm Re}(Z_k)>\rho$ and $|Z_k|\ge {\rm Re}(Z_k)\ge c_{\lambda}(k+1)$ for every $k\in\Z$. Thus, we can estimate
\begin{align}
|(\mathcal F_{k,\rho}f)(x)|\le \sup_{x\in\R}|\hat f_k(x)|
\frac{1}{|Z_k|}\int_{\R}e^{\frac{\rho-{\rm Re}(Z_k)}{2}|s|}ds
\le \frac{c_{\lambda}}{k^2+1}\|f\|_{C_b(\overline{S};\C)},\qquad\;\,x\in\R,\;\,k\in\Z,
\label{spugna}
\end{align}
and this is enough to infer that the series converges locally uniformly on $\R^2$ and, as a byproduct, that the operator ${\mathscr R}_{\lambda,\rho}$ is bounded from $C_b(\overline{S};\C)$ into $C_b(\R^2;\C)$. Moreover, if $f(\cdot,y)$ vanishes at $-\infty$ (resp. $+\infty$) for each $y\in [-\ell/2,\ell/2]$, then by dominated convergence the function ${\mathcal F}_kf$ vanishes at $-\infty$ (resp. $+\infty$) and, in view of the uniform convergence of the series which defines the function ${\mathscr R}_{\lambda,\rho}$, this is enough to conclude that this latter function tends to $0$ as $x\to-\infty$ (resp. $x\to+\infty$) for each $y\in\R$.

Now, we prove properties (i), (ii) and (iv).

(i) Let us prove that the function ${\mathscr R}_{\lambda,\rho} f$ is the unique solution to the equation $\lambda u-\rho^{-1}\Delta u-D_xu=f^{\sharp}$ in
${\mathcal D}=\{u\in C^1_b(\R^2;\C)\cap \bigcap_{p<+\infty}W^{2,p}_{\rm loc}(\R^2;\C): \Delta u\in L^{\infty}(\R^2;\C)\cap C_b(\overline{S};\C), u(\cdot,\cdot+\ell)=u\}$.
For this purpose, for every $n\in\N$ we introduce the functions $u_n=\ell^{-1}\sum_{k=-n}^n(\mathcal F_{k,\rho}f)e_k$ and $f_n=\ell^{-1}\sum_{k=-n}^n\hat f(\cdot,k)e_k$.
Note that $\lambda u_n-\rho^{-1}\Delta u_n-D_xu_n=f_n$ in $\R^2$, for every $n\in\N$, since the function ${\mathcal F}_{k,\rho}f$ $(k\in\Z)$ solves the differential equation $(\lambda+\rho^{-1}\lambda_k)w-\rho^{-1}w''-w'=f_k$ in $\R$. Thus,
\begin{align*}
\langle u_n,\rho^{-1}\Delta\varphi-\varphi_x\rangle
=\int_{\R^2}u_n(\rho^{-1}\Delta\varphi-\varphi_x)dxdy=\int_{\R^2}(\lambda u_n-f_n)\varphi dxdy
=:\langle \lambda u_n-f_n,\varphi\rangle
\end{align*}
for every $\varphi\in C^{\infty}_c(\R^2;\C)$.
Letting $n$ tend to $+\infty$ and applying the dominated convergence theorem, it follows that ${\mathscr R}_{\lambda,\rho}f$ is a distributional solution to the equation $\lambda {\mathscr R}_{\lambda,\rho}f-\rho^{-1}\Delta {\mathscr R}_{\lambda,\rho}f-D_x {\mathscr R}_{\lambda,\rho} f=f^{\sharp}$. By elliptic regularity (see e.g., \cite{gilbarg}), we can infer that ${\mathscr R}_{\lambda,\rho}f\in\bigcap_{p<+\infty}W^{2,p}_{\rm loc}(\R^2;\C)$.
Since ${\mathscr R}_{\lambda,\rho}f$ is bounded and continuous in $\R^2$ and $\Delta {\mathscr R}_{\lambda,\rho}f+D_x{\mathscr R}_{\lambda,\rho}f=\lambda {\mathscr R}_{\lambda,\rho}f-f^{\sharp}$ belongs to $L^{\infty}(\R^2;\C)$, again by classical results we can infer that
${\mathscr R}_{\lambda,\rho}f\in C^{1+\gamma}_b(\R^2;\C)$ for each $\gamma\in (0,1)$ and, as a byproduct, that $\Delta {\mathscr R}_{\lambda,\rho} f\in L^{\infty}(\R^2;\C)\cap C_b(\overline{S};\C)$. We can thus conclude that ${\mathscr R}_{\lambda,\rho} f$ belongs to ${\mathcal D}$.

To prove uniqueness, we assume that $v$ is another solution in ${\mathcal D}$ of the equation $\lambda u-\rho^{-1}\Delta u-u_x=f^{\sharp}$. The smoothness of $v$ implies that, for each $k\in\Z$, the function $\hat v_k$ belongs to $C^1_b(\R;\C)$. Moreover, integrating by parts we obtain that
\begin{align*}
\int_{\R}\hat v_k\varphi''dx=&\frac{1}{\ell}\int_{S}\varphi''v\overline{e_k}dx dy
=\frac{1}{\ell}\int_{S}\varphi v_{xx}\overline{e_k}dx dy
\end{align*}
for each $\varphi\in C^{\infty}_c(\R)$.
By Fubini theorem, $\hat v_k$ belongs to $W^{2,p}_{\rm loc}(\R;\C)$.
Since $\lambda v-\rho^{-1}\Delta v-v_x=f$ in $S$, we can write
\begin{align}
\int_{S}\varphi(x)v_{xx}\overline{e_k}dx dy
=&\lim_{n\to +\infty}\int_{S}\varphi(x)v_{xx}(x,y)\psi_n(y)\overline{e_k(y)}dx dy\notag\\
=&\lambda\rho\int_{\R}\varphi v_kdx
-\lim_{n\to +\infty}\int_{S}\varphi v_{yy}\psi_n\overline{e_k}dx dy-\rho\int_{\R}\varphi\hat v'_kdx
-\rho\int_{\R}\varphi f_kdx,
\label{clonata-1}
\end{align}
where $\psi_n(y)=\psi(n|y|/\ell+1-n/2)$ for each $y\in [-\ell/2,\ell/2)$, $n\in\N$, and $\psi$ is a smooth function such that $\psi=1$ in $(-\infty,1/2]$ and $\psi=0$ outside $(-\infty,3/4]$. Clearly, $\psi_n$ converges to $1$ in $L^1((-\ell/2,\ell/2))$ as $n$ tends to $+\infty$. An integration by parts shows that
\begin{align}
&\lim_{n\to+\infty}\int_{S}\varphi(x)v_{yy}(x,y)\psi_n(y)\overline{e_k(y)}dx dy\notag\\
=&-\lim_{n\to+\infty}\frac{n}{\ell}\int_{\R^2}\varphi(x)\chi_{A}(x){\rm signum}(y)v_y(x,y)\psi'\bigg (\frac{n}{\ell}|y|+1-\frac{n}{2}\bigg )
\chi_{B_n}(y)\overline{e_k(y)}dx dy\notag\\
&-\lambda_k\int_{\R}\varphi(x)v_k(x)dx,
\label{clonata-2}
\end{align}
where $A={\rm supp}(\varphi)$ and $B_n=\left\{y\in\R: \frac{\ell}{2}-\frac{\ell}{2n}\le |y|\le \frac{\ell}{2}-\frac{\ell}{4n}\right\}$ for every $n\in\N$.
We claim that the first term in the last side of \eqref{clonata-2} is zero. For this purpose, we split
\begin{align*}
&\frac{n}{\ell}\int_{\R^2}\varphi(x)\chi_A(x){\rm signum}(y)v_y(x,y)\psi'\bigg (\frac{n}{\ell}|y|+1-\frac{n}{2}\bigg )
\chi_{B_n}(y)\overline{e_k(y)}dx dy\\
=&\frac{n}{\ell}\int_{\R^2}\varphi(x)\chi_{A}(x){\rm signum}(y)(v_y(x,y)-v_y(x,\ell/2))\psi'\bigg (\frac{n}{\ell}|y|+1-\frac{n}{2}\bigg )
\chi_{B_n}(y)\overline{e_k(y)}dx dy\\
&+\frac{n}{\ell}\int_A\varphi(x)v_y(x,\ell/2)dx\int_{\R}{\rm signum}(y)\psi'\bigg (\frac{n}{\ell}|y|+1-\frac{n}{2}\bigg )
\chi_{B_n}(y)\overline{e_k(y)}dy\\
=&\!:{\mathcal K}_{1,n}(x,y)+{\mathcal K}_{2,n}(x,y)
\end{align*}
for every $(x,y)\in\R^2$.
Since $v$ is $\ell$-periodic with respect to $y$, $D_yv$ is $\ell$-periodic with respect to $y$ as well. Moreover, this latter function is $1/2$-H\"older continuous in $\R^2$ since
it belongs to ${\mathcal D}$. Hence, we can estimate
\begin{align*}
|v_y(x,y)-v_y(x,\ell/2)|=|v_y(x,y)-v_y(x,-\ell/2)|
\le [v_y]_{C^{1/2}(A\times [-\ell/2,\ell/2];\C)}\min\{|y-\ell/2|,|y+\ell/2|\}^{\frac{1}{2}}
\end{align*}
for every $x\in A$ and $y\in [-\ell/2,\ell/2]$.
Thus,
\begin{eqnarray*}
|{\mathcal K}_{1,n}|\le c\|\varphi\|_{\infty}\|\psi'\|_{\infty}n^{\frac{1}{2}}m(A\times B_n)
\le c\|\varphi\|_{\infty}\|\psi'\|_{\infty}n^{-\frac{1}{2}},\qquad\;\,n\in\N,
\end{eqnarray*}
where $m(A\times B_n)$ denotes the Lebesgue measure of the set $A\times B_n$,
so that ${\mathcal K}_{1,n}$ vanishes as $n$ tends to $+\infty$. As far as ${\mathcal K}_{2,n}$ is concerned, we observe that
\begin{align*}
{\mathcal K}_{2,n}(x,y)
=&-\frac{2in}{\ell}\int_AD_yv(x,\ell/2)\varphi(x)dx\int_{\frac{\ell}{2}-\frac{\ell}{2n}}^{\frac{\ell}{2}-\frac{\ell}{4n}}\psi'\bigg (\frac{n}{\ell}y+1-\frac{n}{2}\bigg )\sin\bigg (\frac{2k\pi}{\ell}y\bigg )dy\\
=&c\bigg [\frac{4k\pi i}{\ell}\int_{\frac{\ell}{2}-\frac{\ell}{2n}}^{\frac{\ell}{2}-\frac{\ell}{4n}}\psi\bigg (\frac{n}{\ell}y+1-\frac{n}{2}\bigg )\cos\bigg (\frac{2k\pi}{\ell}y\bigg )dy-2(-1)^k\sin\left (\frac{k\pi}{n}\right )\bigg ].
\end{align*}
Hence, ${\mathcal K}_{2,n}$ vanishes as $n$ tends to $+\infty$.
The claim is so proved.

From \eqref{clonata-1} and \eqref{clonata-2}, we can now infer that
$\rho^{-1}\hat v_k''=(\lambda+\rho^{-1}\lambda_k)\hat v_k-\hat v_k'-\hat f_k$. Thus, $v_k={\mathcal F}_kf$ for every $k\in\Z$ and, as a byproduct, we deduce that $v={\mathscr R}_{\lambda,\rho}f$.

(ii) By classical results, the realization $A_{\rho}$ in $L^{\infty}(\R^2;\C)$ of the operator $\rho^{-1}\Delta+D_x$, with domain
 $D(A_{\rho})=\{u\in C^1_b(\R^2;\C)\cap\bigcap_{p<+\infty}W^{2,p}_{\rm loc}(\R^2;\C): \Delta u\in L^{\infty}(\R^2;\C)\}\supset {\mathcal D}$, generates an analytic semigroup. Moreover, $\|R(\lambda,A_{\rho})\|_{L(L^{\infty}(\R^2))}\le c|\lambda|^{-1}$ for every $\lambda\in\Sigma_0$ and $\|\nabla v\|_{\infty}\le c\|v\|_{\infty}^{1/2}\|A_\rho v\|_{\infty}^{1/2}$ for every $v\in D(A_{\rho})$.
Note that the function $R(\lambda,A_{\rho})f^{\sharp}$ is $\ell$-periodic with respect to $y$. Indeed,
$R(\lambda,A_{\rho})f^{\sharp}$ and $(R(\lambda,A_{\rho})f^{\sharp})(\cdot,\cdot+\ell)$ both solve (in $D(A_{\rho})$) the equation $\lambda u-\rho^{-1}\Delta u-u_x=f^{\sharp}$ and, by uniqueness, they coincide. Finally,  since $f^{\sharp}$ is continuous in $\overline{S}$, $\Delta R(\lambda,A_{\rho})f^{\sharp}$ belongs to $C_b(\overline{S};\C)$.
Hence, $R(\lambda,A_{\rho})f^{\sharp}\in {\mathcal D}$ and, by $(i)$, it coincides with ${\mathscr R}_{\lambda,\rho}f^{\sharp}$. Using the above estimates, inequality \eqref{braccio} follows immediately.

(iv) Since $f\in C^{\alpha}_b(S;\C)$ and $f(\cdot,-\ell/2)=f(\cdot,\ell/2)$,
the function $f^{\sharp}$ belongs to $C^{\alpha}_b(\R^2;\C)$. Hence, $\Delta{\mathscr R}_{\lambda,\rho}f=\rho(\lambda {\mathscr R}_{\lambda,\rho}f- f^{\sharp}-D_x{\mathscr R}_{\lambda,\rho}f)$ is an element of $C^{\alpha}_b(\R^2;\C)$.
Classical results (see e.g., \cite{krylov}) yield \eqref{cotto}.
\end{proof}

\begin{lemm}
\label{finire}
For $g\in C_b(\overline{S_0^+};\C)$ and $\lambda\!\in\!\C$, such that ${\rm Re}\lambda>-{\rm Le}^{-1}({\rm Im}\lambda)^2$, the function ${\mathscr S}_{\lambda}g=
\ell^{-1}\sum_{k\in\Z}({\mathcal G}_kg)e_k$ is bounded and continuous in $\R^2$.
Here, ${\mathcal G}_kg:={\mathcal F}_{k,{\rm Le}}(\overline g)$ for each $k\in\Z$, and $\overline{g}$
is the trivial extension of $g$ to $\R\times [-\ell/2,\ell/2]$. Moreover,
\begin{enumerate}[\rm (i)]
\item
${\mathscr S}_{\lambda}g$ belongs to $\bigcap_{p<+\infty} W^{2,p}_{\rm loc}(\R^2;\C)$ and $\lambda {\mathscr S}_{\lambda}g-{\rm Le}^{-1}\Delta {\mathscr S}_{\lambda}g-D_x{\mathscr S}_{\lambda}g=g$ in $S_0^+$;
\item
there exists a positive constant $c_1$, independent of $\lambda$, such that
\begin{equation}
|\lambda|\|{\mathscr S}_{\lambda}g\|_{\infty}+\sqrt{|\lambda|}\|\nabla{\mathscr S}_{\lambda}g\|_{\infty}\le c_1\|g\|_{\infty},\qquad\;\,\lambda\in\Sigma_0;
\label{braccio-3}
\end{equation}
\item
if further $\lim_{x\to +\infty}g(x,y)=0$ for each $y\in [-\ell/2,\ell/2]$, then $({\mathscr S}_{\lambda}g)(\cdot,y)$
vanishes as $x\to +\infty$ for each $y\in\R$;
\item
if $g\in C^{\alpha}_b(S_0^+;\C)$ and $g(\cdot,-\ell/2)=g(\cdot,\ell/2)$, then ${\mathscr S}_{\lambda}g$ belongs to $C^{2+\alpha}_b(\R^2;\C)$
for every $\lambda\in\C$ such that ${\rm Re}\lambda>-{\rm Le}^{-1}({\rm Im}\lambda)^2$. Moreover,
\begin{align}
\label{cotto-11}
\|\mathscr S_{\lambda}g\|_{C^{2+\alpha}_b(\R^2;\C)}\leq c_{2,\lambda}\|g\|_{C^{\alpha}_b(S_0^+;\C)},
\end{align}
with the constant $c_{2,\lambda}$ being independent of $g$.
\end{enumerate}
\end{lemm}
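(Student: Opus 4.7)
The strategy is to reduce most of the statement to Lemma \ref{finito} applied to the trivial extension $\overline g$ with $\rho=\Le$. Formally, $\mathscr S_\lambda g = \mathscr R_{\lambda,\Le}\,\overline g$, but since $\overline g$ is in general only bounded and measurable on $\R \times [-\ell/2,\ell/2]$ (typically jumping at $x=0$ when $g(0,\cdot)\not\equiv 0$), Lemma \ref{finito} cannot be invoked verbatim; rather, its proof must be rerun under the present weaker hypothesis. The key point is that the uniform-convergence estimate \eqref{spugna} used only $\sup_x|\hat f_k(x)|\le \|f\|_\infty$, a bound which survives the extension since $\|\overline g\|_\infty=\|g\|_\infty$, and convolution against the continuous kernel $e^{-Z_k|s|/2}$ still produces $\mathcal G_k g\in C_b(\R;\C)$. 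Hence the Fourier series defining $\mathscr S_\lambda g$ converges uniformly on compact subsets of $\R^2$ to a bounded continuous function there.

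For property (i), each Fourier coefficient $\mathcal G_k g$ satisfies the ODE $(\lambda+\Le^{-1}\lambda_k)w-\Le^{-1}w''-w'=\hat{\overline g}_k$ on $\R$, so the truncated partial sums of $\mathscr S_\lambda g$ solve the PDE $\lambda u-\Le^{-1}\Delta u-u_x=f_n$ with $f_n\to\overline g$ in the sense of dominated convergence. Passing to the distributional limit yields the equation on all of $\R^2$ with right-hand side $\overline g$; classical elliptic regularity then gives $\mathscr S_\lambda g\in W^{2,p}_{\rm loc}(\R^2;\C)$ for every $p<+\infty$, and restricting to $S_0^+$ (where $\overline g=g$) proves the claim. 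Property (ii) follows exactly as in Lemma \ref{finito}(ii), applied to the generator $A_\Le$ of the analytic semigroup associated with $\Le^{-1}\Delta+D_x$ on $L^\infty(\R^2;\C)$: by uniqueness, $\mathscr S_\lambda g$ coincides with $R(\lambda,A_\Le)\overline g^\sharp$, and the standard resolvent and gradient bounds for this generator yield \eqref{braccio-3} with a constant $c_1$ independent of $\lambda$. Property (iii) is obtained from dominated convergence in the defining integral of $\mathcal G_k g$, together with the uniform convergence of the series, exactly as in Lemma \ref{finito}(iii).

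The main obstacle is property (iv), since $\overline g$ is generally \emph{not} H\"older continuous on $\R^2$ due to the jump at $x=0$, so the global Schauder argument of Lemma \ref{finito}(iv) cannot be applied directly. I would circumvent this via a splitting $\mathscr S_\lambda g = w + z$, where $w$ is an explicit smooth function depending linearly on the trace $g(0,\cdot)$, compactly supported near $\{x=0\}$, and engineered so that $\lambda w-\Le^{-1}\Delta w-D_x w$ cancels the jump contribution of $\overline g$. The residual $z$ then satisfies a resolvent equation whose right-hand side is globally H\"older continuous on $\R^2$, with $C^\alpha$ norm controlled by $\|g\|_{C^\alpha_b(S_0^+;\C)}$, so Lemma \ref{finito}(iv) applies to $z$ and gives $z\in C^{2+\alpha}_b(\R^2;\C)$ with the required bound. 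Adding back the smooth lifting $w$ (whose contribution is likewise controlled by $\|g\|_{C^\alpha_b(S_0^+;\C)}$ through its boundary data) yields \eqref{cotto-11}, with the $\lambda$-dependence absorbed into $c_{2,\lambda}$.
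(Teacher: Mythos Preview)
Your treatment of (i)--(iii) is fine and close to the paper's; the paper makes the reduction to Lemma~\ref{finito} rigorous by mollifying in $x$ (setting $g_n=\overline g\star_x\psi_n\in C_b$, so that $\mathscr S_\lambda g_n=\mathscr R_{\lambda,\Le}g_n$) and then passing to the limit via interior $L^p$ estimates and compactness, but your direct route through the $L^\infty$ resolvent of $A_{\Le}$ would also do.

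For (iv), however, your lifting strategy has a genuine gap. If $w$ is truly smooth on $\R^2$ (which you need in order to ``add back the smooth lifting'' at the end), then $\lambda w-\Le^{-1}\Delta w-D_xw$ is continuous across $x=0$ and therefore cannot cancel the jump of $\overline g$, which equals $g(0,\cdot)$ and is generically nonzero; the right-hand side for $z$ thus remains discontinuous and Lemma~\ref{finito}(iv) still does not apply. If instead you allow $[D_{xx}w]_0\neq0$ so that the operator applied to $w$ carries a matching jump, then $w\notin C^{2+\alpha}_b(\R^2)$ and the final sum only gives piecewise regularity. (This is in fact the correct picture: from the ODE for $\mathcal G_kg$ one sees $[(\mathcal G_kg)'']_0=-\Le\,\hat g_k(0)$, hence $D_{xx}\mathscr S_\lambda g$ jumps by $-\Le\,g(0,\cdot)$ across $x=0$; the conclusion should be read on $H_0^+$, and that is what the paper proves and what is used later.) The paper's argument is entirely different and does real work: it first establishes that the \emph{trace} $\zeta=\mathscr S_\lambda g(0,\cdot)$ lies in $C^{2+\alpha}_b(\R)$ by a detailed Fourier decomposition of $\zeta'$ into several pieces, the principal one being a singular convolution whose $C^{\alpha}$ bound requires an explicit kernel analysis; only then does it apply half-space Schauder theory to $v:=\mathscr S_\lambda g-\zeta$, which has homogeneous Dirichlet data on $\{x=0\}$ and $C^\alpha$ right-hand side on $H_0^+$. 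This trace-regularity step is the heart of the lemma and cannot be sidestepped by a local smooth lifting depending only on $g(0,\cdot)\in C^\alpha$.
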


\begin{proof}
We split the proof into two steps: in the first one we prove properties (i), (ii) and (iii) and in the second step we prove property (iv).

{\em Step 1}. Let us fix $g$ and $\lambda$ as in the statement. Arguing as in the first part of the proof of Lemma \ref{finito}, taking the continuity of the functions
${\mathcal G}_kg$ $(k\in\Z)$ into account, it can be checked that the series in the statement converges uniformly in $\R^2$, so that the function ${\mathscr S}_{\lambda}g$ is well defined and
it vanishes as $x\to +\infty$ for every $y\in \R$, if $\lim_{x\to +\infty}g(x,y)=0$ for every $y\in [-\ell/2,\ell/2]$.

To check properties (i) and (ii), for each $n\in\N$ we set $g_n:=\overline{g}\star_x\psi_n$, where $\star_x$ stands for convolution with respect to the variable $x$ and $(\psi_n)$ is a standard sequence of mollifiers.
Clearly, ${\mathscr S}_{\lambda}g_n={\mathscr R}_{\lambda,{\rm Le}}g_n$.
The sequence $(g_n)$ converges to $\overline{g}$ pointwise in $\R^2$ as $n\to +\infty$ and $\|g_n\|_{\infty}\le\|g\|_{\infty}$ for each $n\in\N$. Thus, we can infer that ${\mathcal F}_{k,\rho}g_n$ converges to ${\mathcal G}_kg$ pointwise in $\R$ as $n$ tends to $+\infty$, for every $k\in\N$ and, by dominated convergence, ${\mathscr R}_{\lambda,{\rm Le}}g_n$ tends to ${\mathscr S}_{\lambda}g$ pointwise in $\R^2$.

Applying the classical interior $L^p$-estimates for the operator ${\rm Le}^{-1}\Delta +D_x$ and using \eqref{braccio}, which allows us to write
\begin{align}
& |\lambda|\|\nabla{\mathscr R}_{\lambda, {\rm Le}}g_n\|_{\infty}+
|\lambda|^{\frac{1}{2}}\|\nabla{\mathscr R}_{\lambda, {\rm Le}}g_n\|_{\infty} \leq c\|g_n\|_{\infty}\leq c\|g\|_{\infty},\qquad\;\;n\in\N,
\label{nessuno}
\end{align}
we can estimate
\begin{align*}
\|{\mathscr R}_{\lambda,{\rm Le}} g_n\|_{W^{2,p}(B(0,r);\C)}\le &c_{p,r}(\|{\mathscr R}_{\lambda,{\rm Le}} g_n\|_{L^p(B(0,2r);\C)}+\|{\rm Le}^{-1}\Delta{\mathscr R}_{\lambda,{\rm Le}}g_n+D_x{\mathscr R}_{\lambda,{\rm Le}}g_n\|_{L^p(B(0,2r);\C)})\\
\le &c_{p,r}[(1+|\lambda|)\|{\mathscr R}_{\lambda,{\rm Le}}g_n\|_{C(\overline{B(0,2r)};\C)}+\|g_n\|_{C(\overline{B(0,2r)};\C)}]\\
\le &c_{p,r,\lambda}\|g_n\|_{\infty}\le c_{p,r,\lambda}\|g\|_{\infty}
\end{align*}
for every $p\in [1,+\infty)$ and $r>0$. Hence, by compactness, we conclude that ${\mathscr R}_{\lambda,{\rm Le}} g_n$ converges to ${\mathscr S}_{\lambda}g$ in $C^1(\overline{B(0,r)};\C)$ for each $r>0$, ${\mathscr S}_{\lambda}g\in W^{2,p}_{\rm loc}(\R^2;\C)$ for every $p\in [1,+\infty)$ and $\lambda {\mathscr S}_{\lambda}g-{\rm Le}^{-1}\Delta {\mathscr S}_{\lambda}g-D_x{\mathscr S}_{\lambda}g=g$ in $S_0^+$. Finally, estimate \eqref{braccio-3} follows at once from \eqref{nessuno}.

{\em Step 2.} To complete the proof, here we check property (iv), which demands some additional effort.
We begin by checking that the function $\zeta={\mathscr S}_{\lambda}g(0,\cdot)$ belongs to $C^{2+\alpha}_b(\R;\C)$. For this purpose,
we set $\zeta_n=\ell^{-1}\sum_{k=-n}^n(\mathcal G_kg)(0)e_k$ for $n\in\N$. Clearly, each function $\zeta_n$ is smooth and
\begin{align*}
\zeta_n'=&
\frac{2\pi i}{\ell^2}\sum_{|k|<k_0}\frac{k}{Z_k}e_k\int_0^{+\infty}e^{\frac{{\rm Le}}{2}s}e^{-\frac{1}{2}Z_ks}\hat g_k(s)ds+\frac{2\pi i}{\ell^2}\sum_{k_0\le |k|\le n}\!\!\bigg  (\frac{k}{Z_k}-\frac{\ell}{\pi}\bigg )e_k\int_0^{+\infty}e^{\frac{{\rm Le}}{2}s}e^{-\frac{1}{2}Z_ks}\hat g_k(s)ds\\
&+\frac{2i}{\ell}\!\sum_{|k|=k_0}^n\!\!e_k\!\int_0^{+\infty}\!e^{\frac{{\rm Le}}{2}s}\Big (e^{-\frac{1}{2}Z_ks}\!-\!e^{-\frac{\pi|k|}{2\ell }s}\Big )\hat g_k(s)ds\!
+\!\frac{2i}{\ell}\!\sum_{|k|=k_0}^n\!\!e_k\int_0^{+\infty}\!e^{\frac{\rm Le}{2}s-\frac{\pi|k|}{2\ell}s}\hat g_k(s)ds \\
=: & \mathcal I_0+\sum_{h=1}^3{\mathcal I}_{h,n},
\end{align*}
where $k_0\in\N$ is chosen so that $\pi k_0>\ell {\rm Le}$. As it is easily seen,
\begin{equation}
\left |\frac{k}{Z_k}-\frac{\ell}{\pi}\right |\le \frac{c_{\lambda}}{k^2+1},\qquad\;\,k\in\Z,
\label{grignani}
\end{equation}
so that ${\mathcal I}_{1,n}$ converges uniformly in $\R^2$ as $n\to +\infty$.
On the other hand,
\begin{align*}
\Big |e^{-\frac{1}{2}Z_ks}-e^{-\frac{\pi|k|}{2\ell}s}\Big |
= \bigg |\int_0^1\frac{d}{dr}e^{-\frac{1}{2}Z_k(r)s}dr\bigg |
\le s\int_0^1
\bigg |\frac{{\rm Le}^2+4\lambda{\rm Le}}{4Z_k(r)}\bigg |e^{-\frac{1}{2}{\rm Re}(Z_k(r))s}dr,
\end{align*}
where we have set $Z_k(r)=\left (({\rm Le}^2+4\lambda{\rm Le})r+\frac{k^2\pi^2}{\ell^2}\right )^{\frac{1}{2}}$. Note that
${\rm Re}(Z_k(r))\ge c_{\lambda}|k|$ for $|k|\ge k_0$. For such values of $k$ and for  ${\rm Re}\lambda>0$ (which implies that
${\rm Re}(Z_k(r))>{\rm Le}$ for every $r\in [0,1]$ and $|k|\ge k_0$) we can estimate
\begin{align}
\bigg |\int_0^{+\infty}e^{\frac{{\rm Le}}{2}s}\Big (e^{-\frac{1}{2}Z_ks}-e^{-\frac{\pi|k|}{2\ell }s}\Big )\hat g_k(s)ds\bigg |
\le &\frac{|{\rm Le}^2+4\lambda{\rm Le}|}{4|k|}\|g\|_{\infty}\int_0^1r^{-\frac{1}{2}}dr\int_0^{+\infty}se^{\frac{\rm Le}{2}s-\frac{1}{2}({\rm Re}(Z_k(r))s}ds\notag\\
\le &\frac{c_\lambda}{k^3}\|g\|_{\infty},
\label{tokyo}
\end{align}
so that the sequence $({\mathcal I}_{2,n})$ converges uniformly in $\R^2$.
Next, we observe that
\begin{align*}
{\mathcal I}_{3,n}(y)=\frac{2i}{\ell}\int_0^{+\infty}ds\int_{-\frac{\ell}{2}}^{\frac{\ell}{2}}K_n(s,\eta)g_n(s,y-\eta)d\eta,\qquad\;\,y\in\R,\;\,n\in\N,
\end{align*}
where $K_n(x,y)=H_n(x,y)+H_n(x,-y)$ and
\begin{eqnarray*}
H_n(x,y)=e^{\frac{\rm Le}{2}x}\frac{e^{-\frac{\pi}{2\ell}(x-4iy)(k_0-1)}-e^{-\frac{\pi}{2\ell}(x-4iy)n}}{e^{\frac{\pi}{2\ell}(x-4iy)}-1},\qquad\;\,g_n(x,y)=\ell^{-1}\sum_{k=-n}^n\hat g(x,k)e^{\frac{2k\pi i}{\ell}y}
\end{eqnarray*}
for $x\ge 0$, $y\in\R^2$ and $n\in\N$.
We set
\begin{eqnarray*}
K(x,y)=e^{\left (\frac{\rm Le}{2}-\frac{\pi(k_0-1)}{2\ell}\right )x}\bigg (\frac{e^{-\frac{2\pi (k_0-1) i}{\ell}y}}{
e^{\frac{\pi}{2\ell}(x-4iy)}-1}+\frac{e^{\frac{2\pi(k_0-1) i}{\ell}y}}{
e^{\frac{\pi}{2\ell}(x+4iy)}-1}\bigg ),\qquad\;\,x\ge 0,\;\,y\in\R,
\end{eqnarray*}
and prove that ${\mathcal I}_{3,n}$ converges pointwise in $\R$ to the function ${\mathcal I}_3$, defined by\footnote{${\mathcal I}_3$ belongs to $C_b(\R;\C)$. Indeed, $H\in L^1(S^{+}_0;\C)$ as it follows observing that
$|e^{\frac{\pi}{2\ell}(x\pm 4iy)}-1|\ge e^{\frac{\pi}{2\ell}x}-1$, which implies the inequality
$|K(x,y)|\le e^{\left (\frac{\rm Le}{2}-\frac{\pi(k_0-1)}{2\ell}\right )x}\min \{2(e^{\frac{\pi}{2\ell}x}-1)^{-1},c(x^2+y^2)^{-1/2}\}$ for every $(x,y)\in S_0^{+}$. This shows that ${\mathcal I}_3$ is bounded in $\R^2$. Moreover, ${\mathcal I}_3$
is the uniform limit as $\varepsilon\to 0^+$ of the function $\int_{\varepsilon}^{+\infty}ds\int_{-\ell/2}^{\ell/2}K(s,\cdot-\eta)g(s,\eta)d\eta$, which is clearly continuous in $\R$ thanks to the above estimate for $K$. Hence, ${\mathcal I}_3$ is itself continuous in $\R$.}
\begin{eqnarray*}
{\mathcal I}_3(y)=\frac{i\ell}{\pi}\int_0^{+\infty} ds\int_{-\frac{\ell}{2}}^{\frac{\ell}{2}}K(s,\eta)g^{\sharp}(s,y-\eta)d\eta,\qquad\;\,y\in\R.
\end{eqnarray*}
For this purpose, we split
\begin{align*}
{\mathcal J}_{3,n}-{\mathcal J}_3=&\int_0^{+\infty} ds\int_{-\frac{\ell}{2}}^{\frac{\ell}{2}}(g^{\sharp}_n(s,\cdot-\eta)-g^{\sharp}(s,\cdot-\eta))K_n(s,\eta)d\eta\\
&+\int_0^{+\infty} ds\int_{-\frac{\ell}{2}}^{\frac{\ell}{2}}g^{\sharp}(s,\cdot-\eta)(K_n(s,\eta)-K(s,\eta))d\eta
=:{\mathcal A}_{1,n}+{\mathcal A}_{2,n}
\end{align*}
for every $n\in\N$ and observe that
\begin{align*}
\|{\mathcal A}_{1,n}\|_{\infty}\le \int_0^{+\infty}\|g_n(s,\cdot)-g(s,\cdot)\|_{L^2((-\ell/2,\ell/2))}\|K_n(s,\cdot)\|_{L^2((-\ell/2,\ell/2))}ds,\qquad\;\,n\in\N.
\end{align*}
Since $(i)$ $\|g_n(0,\cdot)-g(0,\cdot)\|_{L^2((-\ell/2,\ell/2);\C)}$ vanishes as $n\to +\infty$,
$(ii)~\|g_n(x,\cdot)-g(x,\cdot)\|_{L^2((-\ell/2,\ell/2);\C)}\le 2\|g(x,\cdot)\|_{L^2((-\ell/2,\ell/2);\C)}\le 2\ell\|g\|_{\infty}$, for $x\ge 0$ and $n\in\N$, and
$(iii)~|K_n|\le 2|K|$ in $\R_+\times\R$ for every $n\in\N$, the dominated convergence theorem shows that
${\mathcal A}_{1,n}$ converges to zero pointwise in $\R$ as $n$ tends to $+\infty$. Moreover, $\|{\mathcal A}_{1,n}\|_{\infty}\le c\|g\|_{\infty}$.
That theorem also shows that ${\mathcal A}_{2,n}$ converges to zero pointwise in $\R$ as $n$ tends to $+\infty$; moreover,
$\|{\mathcal A}_{2,n}\|_{\infty}\le c\|g\|_{\infty}$ for each $n\in\N$. Now, writing
\begin{align*}
\zeta_n(y)=\zeta_n(0)+\int_0^y\bigg(\mathcal I_0(r)+\sum_{h=1}^3{\mathcal I}_{h,n}(r)\bigg)dr,\qquad\;\,y\in\R,\;\,n\in\N,
\end{align*}
and letting $n$ tend to $+\infty$, again by dominated convergence we conclude that
\begin{align*}
\zeta'=&\frac{2\pi i}{\ell^2}\sum_{|k|<k_0}\frac{k}{Z_k}e_k\int_0^{+\infty}e^{\frac{{\rm Le}}{2}s}e^{-\frac{1}{2}Z_ks}\hat g_k(s)ds
+\frac{2\pi i}{\ell^2}\sum_{|k|\ge k_0}\left (\frac{k}{Z_k}-\frac{\ell}{\pi}\right )e_k\int_0^{+\infty}e^{\frac{{\rm Le}}{2}s}e^{-\frac{1}{2}Z_ks}\hat g_k(s)ds\\
&+\frac{2i}{\ell}\sum_{|k|\ge k_0}e_k\int_0^{+\infty}e^{\frac{{\rm Le}}{2}s}\Big (e^{-\frac{1}{2}Z_ks}-e^{-\frac{\pi k}{2\ell }s}\Big )\hat g_k(s)ds+\frac{i\ell}{\pi}\int_0^{+\infty}ds\int_{-\frac{\ell}{2}}^{\frac{\ell}{2}}K(s,\eta)g^{\sharp}(s,\cdot-\eta)d\eta.
\end{align*}
Denote by $\phi_1,\ldots,\phi_4$ the four terms in the right-hand side of the previous formula.
Clearly, $\phi_1$ belongs to $C^{\infty}_b(\R;\C)$. In particular, $\|\phi_1\|_{C^{1+\alpha}_b(\R;\C)}\le c\|g\|_{\infty}$. As far as $\phi_2$ and $\phi_3$ are concerned, using
\eqref{grignani}, \eqref{tokyo}, the same arguments here above and in the first part of the proof of Lemma \ref{finito}, it can be easily shown that such functions belong to $C^{1+\alpha}_b(\R;\C)$ and $\|\phi_2\|_{C^{1+\alpha}_b(\R;\C)}+\|\phi_3\|_{C^{1+\alpha}_b(\R;\C)}\le c\|g\|_{\infty}$.

The function $\phi_4$ is the limit in $C_b(\R;\C)$ of the sequence $(\phi_{4,n})$ defined by
\begin{eqnarray*}
\phi_{4,n}=\frac{i\ell}{\pi}\int_{\frac{1}{n}}^{+\infty}ds\int_{-\frac{\ell}{2}}^{\frac{\ell}{2}}K(s,\eta)g^{\sharp}(s,\cdot-\eta)d\eta,\qquad\;\,n\in\N.
\end{eqnarray*}
Clearly, each function $\phi_{4,n}$ is continuously differentiable in $\R$ and
\begin{eqnarray*}
\phi_{4,n}'=\frac{i\ell}{\pi}\int_{\frac{1}{n}}^{+\infty}ds\int_{-\frac{\ell}{2}}^{\frac{\ell}{2}}K_y(s,\eta)g^{\sharp}(s,\cdot-\eta)d\eta,
\end{eqnarray*}
where
\begin{align}
K_y(x,y)=&\frac{2\pi i}{\ell}(k_0-1)e^{\left (\frac{\rm Le}{2}-\frac{\pi(k_0-1)}{2\ell}\right )x}\bigg (
\frac{e^{\frac{2\pi (k_0-1)i}{\ell}y}}{e^{\frac{\pi}{2\ell}(x+4iy)}-1}-\frac{e^{-\frac{2\pi (k_0-1)i}{\ell}y}}{e^{\frac{\pi}{2\ell}(x-4iy)}-1}\bigg )\notag\\
&+\frac{2\pi i}{\ell}e^{\left (\frac{\rm Le}{2}-\frac{\pi(k_0-1)}{2\ell}\right )x}\bigg (
\frac{e^{\frac{\pi}{2\ell}(x-4ik_0y)}}{(e^{\frac{\pi}{2\ell}(x-4iy)}-1)^2}-
\frac{e^{\frac{\pi}{2\ell}(x+4ik_0y)}}{(e^{\frac{\pi}{2\ell}(x+4iy)}-1)^2}\bigg )=:L_1(x,y)+L_2(x,y)
\label{presepe}
\end{align}
for $(x,y)\in \overline{H_0^+}$. Since $K$ is $\ell$-periodic with respect to $y$, it follows that $\int_{-\ell/2}^{\ell/2}K_y(s,\eta)d\eta=0$. Hence, we can write
\begin{eqnarray*}
\phi_{4,n}'(y)=\frac{i\ell}{\pi}\int_{\frac{1}{n}}^{+\infty}ds\int_{-\frac{\ell}{2}}^{\frac{\ell}{2}}K_y(s,\eta)(g^{\sharp}(s,y-\eta)-g(s,y))d\eta,\qquad\;\,
y\in\R,\;\,n\in\N.
\end{eqnarray*}
By assumptions, $g\in C^{\alpha}_b(S_0^+;\C)$ and this allows us to estimate
\begin{eqnarray*}
|K_y(s,\eta)(g^{\sharp}(s,y-\eta)-g^{\sharp}(s,y))|\le c\min\{(s^2+\eta^2)^{\frac{\alpha}{2}-1},(e^{\frac{\pi}{2\ell}|s|}-1)^{-1}\}\|g\|_{C^{\alpha}_b([0,+\infty)\times\R;\C)}
\end{eqnarray*}
for every $(s,\eta)\in\R\times\R_+$ and $y\ge 0$.
Thus, we can let $n$ tend to $+\infty$ in \eqref{presepe} and conclude that $\phi_{4,n}'$ converges uniformly in $\R^2$.
As a byproduct, $\phi_4$ is continuously differentiable in $\R$,
\begin{eqnarray*}
\phi_4'=\int_0^{+\infty}ds\int_{-\frac{\ell}{2}}^{\frac{\ell}{2}}K_y(s,\eta)(g^{\sharp}(s,\cdot-\eta)-g(s,\cdot))d\eta
\end{eqnarray*}
and $\|\phi_4'\|_{\infty}\le c\|g\|_{C^{\alpha}_b(S^+_0;\C)}$.

To prove that $\phi_4'$ belongs to $C^{\alpha}_b(\R;\C)$ we split
\begin{align}
\phi_4'=&\int_0^{+\infty}ds\int_{-\frac{\ell}{2}}^{\frac{\ell}{2}}(L_1(s,\eta)+L_2(s,\eta)\chi_{(1,+\infty)}(s))(g^{\sharp}(s,\cdot-\eta)-g(s,\cdot))d\eta\notag\\
&+\int_0^1ds\int_{-\frac{\ell}{2}}^{\frac{\ell}{2}}L_2(s,\eta)(g^{\sharp}(s,\cdot-\eta)-g(s,\cdot))d\eta.
\label{alba}
\end{align}
Since the function $L_1+L_2\chi_{(1,+\infty)}\in L^1(S^+_0;\C)$, the first term in the right-hand side of
\eqref{alba} belongs to $C^{\alpha}_b(\R;\C)$ and its $C^{\alpha}_b(\R;\C)$-norm can be estimated from above by $c\|g\|_{C^{\alpha}_b(S_0^+;\C)}$.
To estimate the other term, which we denote by $\Psi$, we observe that
\begin{eqnarray*}
\frac{e^{\frac{\pi}{2\ell}(x-4ik_0y)}}{(e^{\frac{\pi}{2\ell}(x-4iy)}-1)^2}=\frac{\ell^2}{\pi^2}\frac{4x^2+32ixy-64y^2}{(x^2+16y^2)^2}
+\psi(x,y),\qquad (x,y)\in (0,1)\times (-\ell/2,\ell/2),
\end{eqnarray*}
for some function $\psi\in L^1((0,1)\times (-\ell/2,\ell/2);\C)$. Thus,
\begin{align*}
\Psi(y)=&-\frac{128i\ell^2}{\pi^2}\int_0^1ds\int_{-\frac{\ell}{2}}^{\frac{\ell}{2}}\frac{s(y-\eta)}{(s^2+16(y-\eta)^2)^2}(g^{\sharp}(s,\eta)-g(s,y))d\eta\\
&-2\int_0^1ds\int_{-\frac{\ell}{2}}^{\frac{\ell}{2}}(\psi(s,\eta)-\psi(s,-\eta))(g^{\sharp}(s,y-\eta)-g(s,y))d\eta=:\Psi_1(y)+\Psi_2(y)
\end{align*}
for every $y\in\R$.
The function $\Psi_2$ is clearly $\alpha$-H\"older continuous in $[-\ell/2,\ell/2]$ since $\psi\in L^1((0,1)\times (-\ell/2,\ell/2);\C)$.
Moreover, $\|\Psi_2\|_{C^{\alpha}_b(\R;\C)}\le c\|g\|_{C^{\alpha}_b(\R;\C)}$.
As far as the function $\Psi_1$ is concerned, we approximate it with the family of functions
$\Psi_{1,h}$ defined by
\begin{eqnarray*}
\Psi_{1,h}(y)=-\frac{128i\ell^2}{\pi^2}\int_0^1ds\int_{-\frac{\ell}{2}}^{\frac{\ell}{2}}\frac{s(y-\eta)}{(s^2+16(y-\eta)^2+h^2)^2}(g(s,\eta)-g(s,\cdot))d\eta,\qquad\;\,h>0.
\end{eqnarray*}
Each of these functions is continuously differentiable in $\R$ with bounded derivative, so that, we can estimate
\begin{align}
|\Psi_1(y_2)-\Psi_1(y_1)|\le &|\Psi_1(y_2)-\Psi_{1,h}(y_2)|+|\Psi_{1,h}(y_2)-\Psi_{1,h}(y_1)|+|\Psi_{1,h}(y_1)-\Psi_1(y_1)|\notag\\
\le &2\|\Psi_1-\Psi_{1,h}\|_{\infty}+\|\Psi_{1,h}'\|_{\infty}|y_2-y_1|
\label{swear}
\end{align}
for $y_1,y_2\in [-\ell/2,\ell/2]$. Note that
\begin{align*}
\|\Psi_1-\Psi_{1,h}\|_{\infty}\le &\frac{256\ell^2}{\pi^2}\|g\|_{C^{\alpha}_b(S^+_0;\C)}\int_{\R^2}\frac{|s\eta|h^2}{(s^2+16\eta^2+h^2)^2(s^2+16\eta^2)}dsd\eta\\
\le & ch^2\|g\|_{C^{\alpha}_b(S^+_0;\C)}\int_0^{+\infty}\frac{\rho^{\alpha+1}}{(\rho^2+h^2)^2}d\rho= ch^{\alpha}\|g\|_{C^{\alpha}_b(S^+_0;\C)}
\end{align*}
and
\begin{align*}
\|\Psi'_{1,h}\|_{\infty}\le & \frac{128\ell^2}{\pi^2}\|g\|_{C^{\alpha}_b(S_0^+;\C)}\int_{\R^2}
\frac{(s^2+48\eta^2+h^2)|\eta|^{\alpha}}{(s^2+16\eta^2+h^2)^3}dsd\eta\\
\le &c\|g\|_{C^{\alpha}_b(S_0^+;\C)}\int_0^{+\infty}\frac{\rho^{2+\alpha}}{(\rho^2+h^2)^2}d\rho\le c\|g\|_{C^{\alpha}_b(S_0^+;\C)}h^{\alpha-1}.
\end{align*}
Replacing these inequalities into \eqref{swear} and taking $h=|y_2-y_1|$, we conclude that
\begin{eqnarray*}
|\Psi_1(y_2)-\Psi_1(y_1)|\le c\|g\|_{C^{\alpha}_b(S^+_0;\C)}|y_2-y_1|^{\alpha}.
\end{eqnarray*}
Therefore, $\phi_4'\in C^{\alpha}_b(\R;\C)$ and $\|\phi_4'\|_{C^{\alpha}_b(\R;\C)}\le c\|g\|_{C^{\alpha}_b(S^+_0;\C)}$.
Putting everything together it follows that $\zeta\in C^{2+\alpha}_b(\R;\C)$ and $\|\zeta\|_{C^{2+\alpha}_b(\R;\C)}\le c\|g\|_{C^{\alpha}_b(S^+_0;\C)}$.

Finally, we consider the function ${\mathscr S}_{\lambda}g-\zeta=:v\in C_b([0,+\infty)\times\R;\C)\cap\bigcap_{p<+\infty} W^{2,p}_{\rm loc}((0,+\infty)\times\R;\C)$. Since ${\rm Le}^{-1}\Delta v+v_x\in C^{\alpha}_b((0,+\infty)\times\R;\C)$ and by construction $v(0,\cdot)=0$, by classical results (see e.g., \cite{krylov})
$v\in C^{2+\alpha}_b((0,+\infty)\times\R)$ and
\begin{align*}
\|v\|_{C^{2+\alpha}_b(H^+_0;\C)}\le &c(\|v\|_{\infty}+
\|{\rm Le}^{-1}\Delta v+v_x\|_{C_b^{\alpha}(H^+_0)}+\|\zeta\|_{C^{2+\alpha}_b(\R;\C)})\\
\le & c(\|v\|_{\infty}+\|\lambda {\mathscr S}_{\lambda}g-g\|_{C^{\alpha}_b(H^+_0;\C)}
+\|\zeta\|_{C^{2+\alpha}_b(\R;\C)})\le c\|g\|_{C^{\alpha}_b(H^+_0;\C)}.
\end{align*}
Formula \eqref{cotto-11} follows as once.
\end{proof}

\begin{lemm}
\label{finiremo}
For each $\lambda\in \C$ such that ${\rm Re}\lambda>-({\rm Im}\lambda)^2$, $f\in C_b(\overline{S};\C)$ and $g\in C_b(\overline{S_0^+};\C)$, we denote by ${\mathscr T}_{\lambda}^{\pm}f:\overline{H_R^{\mp}}\to\C$ and ${\mathscr U}_{\lambda}g:\overline{H_0^+}\to\C$, respectively, the functions defined by
\begin{align*}
&({\mathscr T}_{\lambda}^{\pm}f)(x,y)= e^{-\frac{{\rm Le}}{2}(x-R)}\sum_{k\in\Z}e^{\pm \frac{1}{2}Y_k(x-R)}\mathcal F_{k,1}f(R)e_k(y),\qquad\;\,(x,y)\in \overline{H_R^{\mp}},\\[1mm]
&({\mathscr U}_{\lambda}g)(x,y)=e^{-\frac{\rm Le}{2}x}\sum_{k\in\Z}e^{-\frac{Y_k}{2}x}({\mathcal G}_kg)(0)e_k(y),\qquad\;\,(x,y)\in \overline{H_0^+}.
\end{align*}
Then, the following properties are satisfied.
\begin{enumerate}[\rm (i)]
\item
${\mathscr T}_{\lambda}^{\pm}f$ belongs to $C^1_b(\overline{H_R^{\mp}};\C)\cap W^{2,p}_{\rm loc}(H_R^{\mp};\C)$, for each $p<+\infty$,
solves the equation $\lambda u-{\rm Le}^{-1}\Delta u-u_x=0$ in $H_R^{\mp}$ and, for each $M>0$, there exists a constant $c_M>0$ such that
\begin{align}
\label{pasta}
|\lambda|\|{\mathscr T}_{\lambda}^{\pm}f\|_{\infty}
+\sqrt{|\lambda|}\|\nabla{\mathscr T}_{\lambda}^{\pm}f\|_{\infty}
\leq c_M\|f\|_{\infty},\qquad\;\,|\lambda|\ge M;
\end{align}
\item
$\displaystyle\lim_{x\to +\infty}({\mathscr T}_{\lambda}^{-}f)(x,y)=\displaystyle\lim_{x\to -\infty}({\mathscr T}_{\lambda}^+f)(x,y)=0$ for every $y\in\R$ and $f\in C_b(\overline{S};\C)$;
\item
if $f\in C^{\alpha}_b(S;\C)$ and $f(\cdot,-\ell/2)=f(\cdot,-\ell/2)$, then the function ${\mathscr T}_{\lambda}^{\pm}f$ belongs to $C_b^{2+\alpha}(\overline{H^{\mp}_R};\C)$ and
$\|\mathscr T^{\pm}_{\lambda}f\|_{C_b^{2+\alpha}(H_R^{\mp})}\leq c\|f\|_{C^{\alpha}_b(S;\C)}$ for each ${\rm Re}\lambda>-({\rm Im}\lambda)^2$;
\item
${\mathscr U}_{\lambda}g$ belongs to $C^1_b(\overline{H_0^+};\C)\cap W^{2,p}_{\rm loc}(H_0^+;\C)$, for each $p<+\infty$, solves the equation $\lambda u-{\rm Le}^{-1}\Delta u-D_xu=0$ and, for every $M>0$ there exists a positive constant $c_M'$ such that
\begin{align*}
|\lambda|\|{\mathscr U}_{\lambda}g\|_{\infty}
+\sqrt{|\lambda|}\|\nabla{\mathscr U}_{\lambda}g\|_{\infty}\leq c_M'\|g\|_{\infty},\qquad\;\,|\lambda|\ge M;
\end{align*}
\item
$\displaystyle\lim_{x\to +\infty}({\mathscr U}_{\lambda}^{-}g)(x,y)=0$ for each $y\in\R$ and $g\in C_b(\overline{S_0^+};\C)$;
\item
if $g\in C^{\alpha}_b(S_0^+;\C)$ is such that $g(\cdot,-\ell/2)=g(\cdot,-\ell/2)$, then the function ${\mathscr U}_{\lambda}g$ belongs to $C_b^{2+\alpha}(H^+_0;\C)$ and
$\|\mathscr U_\lambda g\|_{C_b^{2+\alpha}(H^+_0;\C)}\leq c\|g\|_{C^{\alpha}_b(S_0^+;\C)}$
for each $\lambda\in\C$ such that ${\rm Re}\lambda>-({\rm Im}\lambda)^2$.
\end{enumerate}
\end{lemm}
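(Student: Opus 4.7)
The plan is to follow the blueprint of Lemmas \ref{finito} and \ref{finire}, viewing ${\mathscr T}^{\pm}_\lambda f$ and ${\mathscr U}_\lambda g$ as Poisson-type extensions, into the appropriate half-spaces, of the traces of ${\mathscr R}_{\lambda,1}f$ at $x=R$ and of ${\mathscr S}_\lambda g$ at $x=0$, with respect to the homogeneous operator $\lambda-\Le^{-1}\Delta-D_x$. The condition imposed on $\lambda$ in the statement is exactly what ensures that $\re(Y_k-\Le)\geq 0$ (and $\re(Y_k+\Le)\geq \Le$) for every $k\in\Z$, so that each exponential factor appearing in the three series is bounded by $1$ on the corresponding half-space.

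First I would establish uniform convergence of the defining series on the closed half-spaces. From \eqref{spugna}, $|\mathcal F_{k,1}f(R)|\leq c_\lambda(k^2+1)^{-1}\|f\|_\infty$, and the analogous bound with $\rho=\Le$ gives $|\mathcal G_k g(0)|\leq c_\lambda(k^2+1)^{-1}\|g\|_\infty$; combined with the boundedness of the exponential kernels, this yields absolute uniform convergence. Each summand is, by the defining quadratic relation for $Y_k$, an analytic solution of $\lambda u-\Le^{-1}\Delta u-D_xu=0$. On compact subsets strictly inside the open half-space one gains an extra factor $e^{-\re(Y_k)\delta/2}$ with $\re(Y_k)\geq c_\lambda(1+|k|)$, so term-by-term differentiation of arbitrary order is justified; the limit is thus $C^\infty$ in the interior and solves the homogeneous equation pointwise. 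Combined with boundedness and classical interior $W^{2,p}$ estimates for the uniformly elliptic homogeneous equation, this closes the regularity parts of (i) and (iv). Properties (ii) and (v) follow from the pointwise vanishing of each exponential term at the appropriate infinity together with the uniform convergence. For the Hölder statements (iii) and (vi), I would observe that for $f\in C^\alpha_b(S;\C)$ with matching periodic boundary values, Lemma \ref{finito}(iv) gives ${\mathscr R}_{\lambda,1}f\in C^{2+\alpha}_b(\R^2;\C)$, so its trace at $x=R$ lies in $C^{2+\alpha}_b(\R;\C)$; then ${\mathscr T}^{\pm}_\lambda f$ is the bounded solution of a uniformly elliptic homogeneous equation on the half-space with $C^{2+\alpha}_b$ Dirichlet data, and classical boundary Schauder estimates (as in \cite{krylov}) give the claimed regularity and bound. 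The corresponding argument using Lemma \ref{finire}(iv) settles (vi).

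The main obstacle is the resolvent-type bound \eqref{pasta}. My plan is to reduce it to the estimate \eqref{braccio} already available for ${\mathscr R}_{\lambda,1}$. At $x=R$ the series defining ${\mathscr T}^{\pm}_\lambda f$ collapses to the trace of ${\mathscr R}_{\lambda,1}f$, so $\|{\mathscr T}^{\pm}_\lambda f(R,\cdot)\|_\infty\leq c|\lambda|^{-1}\|f\|_\infty$ immediately. To propagate this to the whole half-space I would use that ${\mathscr T}^{\pm}_\lambda f$ is the unique bounded solution of the homogeneous equation with that trace (uniqueness by the same Fourier argument as in the proof of Lemma \ref{finito}(i)), together with the fact that the restriction $|\lambda|\geq M$ forces each factor $e^{\pm Y_k(x-R)/2-\Le(x-R)/2}$ to be uniformly bounded by $1$ in $k$, giving $\|{\mathscr T}^{\pm}_\lambda f\|_\infty\leq c\|{\mathscr T}^{\pm}_\lambda f(R,\cdot)\|_\infty$. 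For the gradient, differentiating the series brings out factors of order $\sqrt{|\lambda|}+|k|$ (from $\partial_x$, via $-\Le\pm Y_k$) or $|k|$ (from $\partial_y$), which are absorbed using the sharper bound obtained by applying \eqref{braccio} to the derivatives of ${\mathscr R}_{\lambda,1}f$ evaluated at $x=R$; this produces the required $\sqrt{|\lambda|}$ scaling. The argument for ${\mathscr U}_\lambda g$ is analogous, with Lemma \ref{finire} replacing Lemma \ref{finito}. I expect the delicate $\lambda$-uniform bookkeeping here to be the central technical point, since the multiplier $Y_k$ couples $\lambda$ with the transverse eigenvalues $\lambda_k$ nonlinearly and the correct $|\lambda|^{1/2}$ scaling cannot be read off from a direct Poisson-kernel computation.
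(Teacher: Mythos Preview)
Your treatment of the qualitative parts (well-definedness, interior smoothness, the homogeneous equation, the limits at infinity, and the Schauder statements (iii) and (vi)) is correct and in the same spirit as the paper. The one substantive gap is in your argument for the quantitative resolvent bound \eqref{pasta} and its analogue for ${\mathscr U}_\lambda g$.

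You write that since each factor $e^{(-\Le\pm Y_k)(x-R)/2}$ is bounded by $1$, one obtains $\|{\mathscr T}_\lambda^{\pm}f\|_\infty\le c\|{\mathscr T}_\lambda^{\pm}f(R,\cdot)\|_\infty$. This step is not justified: knowing that a Fourier multiplier sequence $(m_k)$ satisfies $|m_k|\le 1$ does \emph{not} imply that $\|\sum_k m_k a_k e_k\|_\infty\le C\|\sum_k a_k e_k\|_\infty$; bounded multipliers on $L^\infty$ must arise from bounded measures, which is a much stronger condition. The same issue recurs in your gradient argument: applying $\partial_x$ produces the multiplier $(-\Le\pm Y_k)/2$, which is not even bounded in $k$ and bears no direct relation to the trace of $\partial_x{\mathscr R}_{\lambda,1}f$ at $x=R$, so the claimed absorption into \eqref{braccio} does not go through as stated. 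One could try to salvage your route by proving a uniform-in-$\lambda$ $L^1$ bound for the associated Poisson-type kernel, but this is nontrivial and is precisely the ``delicate $\lambda$-uniform bookkeeping'' you flag without carrying out.

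The paper sidesteps this multiplier issue entirely by a subtraction trick: set $v^{\pm}={\mathscr T}_\lambda^{\pm}f-{\mathscr R}_{\lambda,1}f$, so that $v^{\pm}$ is bounded, vanishes on $\{R\}\times\R$, and satisfies $\lambda v^{\pm}-\Le^{-1}\Delta v^{\pm}-D_x v^{\pm}=h$ in $H_R^{\mp}$, with $h=-\Le^{-1}f^{\sharp}+(\Le^{-1}-1)(\lambda{\mathscr R}_{\lambda,1}f-D_x{\mathscr R}_{\lambda,1}f)$ satisfying $\|h\|_\infty\le c\|f\|_\infty$ by \eqref{braccio}. Then the classical resolvent estimate for the Dirichlet realization of $\Le^{-1}\Delta+D_x$ on the half-space gives $|\lambda|\|v^{\pm}\|_\infty+\sqrt{|\lambda|}\|\nabla v^{\pm}\|_\infty\le c\|h\|_\infty$ directly, and adding back ${\mathscr R}_{\lambda,1}f$ (again via \eqref{braccio}) yields \eqref{pasta}. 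The same subtraction, with ${\mathscr S}_\lambda g$ in place of ${\mathscr R}_{\lambda,1}f$, handles (iv)--(vi); this also streamlines (iii), since $h\in C^\alpha_b$ by Lemma~\ref{finito}(iv) and the half-space Schauder estimate for $v^{\pm}$ with zero boundary data applies immediately.
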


\begin{proof}
(i) The arguments as in the proof of Lemma \ref{finito} show that the function ${\mathscr T}_{\lambda}^{\pm}f$ is continuous in $\overline{H_R^{\mp}}$ and smooth in its interior, where it solves the equation $\lambda u-{\rm Le}^{-1}\Delta u-u_x=0$. Further, the function $v^{\pm}={\mathscr T}_{\lambda}^{\pm}f-{\mathscr R}_{\lambda,1}f$ is bounded, vanishes on $\{R\}\times\R$ and
$\lambda v^{\pm}-{\rm Le}^{-1}\Delta v^{\pm}-v^{\pm}_x=h$ in $H_R^{\mp}$,
where $L^{\infty}(H_R^{\mp};\C)\ni h=-{\rm Le}^{-1}f^{\sharp}+({\rm Le}^{-1}-1)(\lambda{\mathscr R}_{\lambda,1} f-D_x{\mathscr R}_{\lambda,1}f)$.

By classical results, the realization of the operator ${\rm Le}^{-1}\Delta+D_x$ in $L^{\infty}(H_R^{\mp};\C)$ with homogeneous Dirichlet boundary conditions generates an analytic semigroup with domain $\{u\in C_b(\overline{H_R^{\mp}};\C)\cap\bigcap_{p<+\infty}W^{2,p}_{\rm loc}(H_R^{\mp};\C): {\rm Le}^{-1}\Delta u+D_x u\in L^{\infty}(H_R^{\mp};\C)\}$. In particular, for every $\lambda\in\Sigma_0$ it holds that $|\lambda|\|v^{\pm}\|_{\infty}+\sqrt{|\lambda|}\|\nabla v^{\pm}\|_{\infty}\le c\|h\|_{\infty}$. From the definition of $v^{\pm}$ and taking \eqref{braccio} into account, estimate \eqref{pasta} follows immediately.

(ii) The proof of this property is immediate since the series defining ${\mathscr T}_{\lambda}^+f$ (resp. ${\mathscr T}_{\lambda}^-f$) converges uniformly in $\overline{H_R^-}$ (resp. in $\overline{H_R^+}$) and each of its terms vanishes as $x\to -\infty$ (resp. $x\to +\infty$), uniformly with respect to $y\in\R$.

(iii) Fix $\lambda\in\C$ with ${\rm Re}\lambda>-({\rm Im}\lambda)^2$.
Since $f\in C^\alpha_b(S)$ and $f(\cdot,-\ell/2)=f(\cdot,\ell/2)$, thanks to Lemma \ref{finito}$(iii)$ we can infer that the function $h\in C_b^{\alpha}(\R^2;\C)$. Hence, ${\rm Le}^{-1}\Delta v^{\pm}+v^{\pm}_x\in C_b^\alpha(H_R^{\mp};\C)$ and by classical results it follows that $v\in C_b^{2+\alpha}(H_R^{\mp};\C)$ and
\begin{align*}
\|v^{\pm}\|_{C_b^{2+\alpha}(H_R^{\mp};\C)}\leq c(\|v^{\pm}\|_{\infty}+\|{\rm Le}^{-1}\Delta v^{\pm}+v^{\pm}_x\|_{C_b^\alpha(H_R^{\mp};\C)}).
\end{align*}
From the definition of $v^{\pm}$ and the above estimate, the assertion follows at once.

(iv)-(vi) The proof of these three properties follows applying the procedure of the first part of the proof, with ${\mathscr R}_{\lambda,1}f $ being replaced by the function
${\mathscr S}_{\lambda}g$. The details are left to the reader.
\end{proof}

\subsection{Analytic semigroups and interpolation spaces}
To state the main result of this subsection, for each $k\in\N\cup\{0\}$ we introduce the functions (the so-called dispersion relations)
\begin{align*}
&{\mathcal D}_k(\lambda)=({\rm Le}-Y_k)\bigg [\exp\bigg (\frac{R}{2}({\rm Le}-1-X_k-Y_k)\bigg )-1+\theta_iRX_k\bigg ],\\[1mm]
&\widetilde {\mathcal D}_k(\lambda)=({\rm Le}-Y_k)\bigg [\exp\bigg (\frac{R}{2}({\rm Le}-1-X_k-Y_k)\bigg )-\exp\bigg (\frac{R}{2}({\rm Le}-1+X_k-Y_k)\bigg )+\theta_i R X_k\bigg ],
\end{align*}
where $X_k=X_k(\lambda)=\sqrt{1+4\lambda+4\lambda_k}$, $Y_k=Y_k(\lambda)=\sqrt{{\rm Le}^2+4\lambda {\rm Le}+4\lambda_k}$, $\lambda_k=4\pi^2k^2\ell^{-2}$, and the sets
\begin{align}
&\Omega_k=\{\lambda\in\C: {\rm Re}\lambda\ge -({\rm Im}\lambda)^2-\lambda_k\,{\rm and}\,{\mathcal D}_k(\lambda)=0\},
\label{omega-k}
\\
&\Omega_k'=\{\lambda\in\C: -{\rm Le}^{-1}(({\rm Im}\lambda)^2+\lambda_k)<{\rm Re}\lambda<-({\rm Im}\lambda)^2-\lambda_k,\,{\rm and}\,\widetilde {\mathcal D}_k(\lambda)=0\}.\notag
\end{align}

\begin{thm}\label{banca}
The realization $L$ of the operator ${\mathscr L}$ in $\boldsymbol{\mathcal X}$, with domain
\begin{align*}
D(L)\!=\!\bigg\{&{\bf u}\in \boldsymbol{\mathcal X}: u_j(\cdot,-\ell/2)=u_j(\cdot,\ell/2), j=1,2,\, u_1^{\sharp}\in C^1_b(\overline{H_0^-};\C)\cap \bigcap_{p<+\infty}W^{2,p}_{\rm loc}(H_0^{-};\C)\\
&~u_1^{\sharp},u_2^{\sharp}\!\in\! C^1([0,R]\times\R;\C)\!\cap\! C^1_b(\overline{H_R^+};\C)\!\cap\!\!\bigcap_{p<+\infty}\!\!W^{2,p}_{\rm loc}((\R_+\!\setminus\!\{R\})\!\times\!\R;\C),\;{\mathscr L}{\bf u}\!\in\!\boldsymbol{\mathcal X},\,{\mathscr B}\uu={\bf 0}\bigg\},
\end{align*}
$($see \eqref{operatore-L} and \eqref{boundary-B}$)$ generates an analytic semigroup in $\boldsymbol{\mathcal X}$. Moreover,
\begin{enumerate}[\rm (i)]
\item
the spectrum $\sigma(L)$ of $L$ is the set $\{\lambda\in\C:{\rm Re}\lambda\le -\Le^{-1}({\rm Im}\lambda)^2\}\cup\bigcup_{k\in\N\cup\{0\}}(\Omega_k\cup\Omega_k')$;
\item
there exist two positive constants $M$ and $c$ such that
$\sqrt{|\lambda|}\|\nabla R(\lambda,L)\f\|_{\infty}\le c\|\f\|_{\infty}$
for $\lambda\in\C$ with ${\rm Re}\lambda\ge M$ and $\f\in\boldsymbol{\mathcal X}$;
\item
if $\f\in \boldsymbol{\mathcal X}_{\alpha}$ for some $\alpha\in (0,1)$, then for each $\lambda\in\rho(L)$ the function $R(\lambda,L){\bf f}$ belongs to
$\boldsymbol{\mathcal X}_{2+\alpha}$ and
$\|R(\lambda,L){\bf f}\|_{2+\alpha}\leq c_{\lambda}\|{\bf f}\|_{\alpha}$.
\end{enumerate}
\end{thm}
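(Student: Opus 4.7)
The plan is to solve the resolvent equation $\lambda\mathbf{u}-\mathscr{L}\mathbf{u}=\mathbf{f}$, $\mathscr{B}\mathbf{u}=\mathbf{0}$, by splitting $\mathbf{u}=\mathbf{u}^{(p)}+\mathbf{u}^{(h)}$ into a particular solution of the differential part plus a homogeneous correction absorbing the residual boundary and jump data. For the particular part I would take $u_1^{(p)}$ to be the restriction of $\mathscr{R}_{\lambda,1}f_1$ (Lemma~\ref{finito}) to each of the three substrips $S_0^-$, $[0,R]\times[-\ell/2,\ell/2]$, $S_R^+$, and $u_2^{(p)}=\mathscr{S}_\lambda f_2$ (Lemma~\ref{finire}) on $\overline{S_0^+}$. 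By construction $\lambda\mathbf{u}^{(p)}-\mathscr{L}\mathbf{u}^{(p)}=\mathbf{f}$ with the correct decay at $\pm\infty$, but $\mathscr{B}\mathbf{u}^{(p)}\ne\mathbf{0}$ in general.

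The correction $\mathbf{u}^{(h)}$ is built Fourier mode by Fourier mode from the characteristic roots
\begin{equation*}
\alpha_{1,k}^{\pm}=\tfrac{1}{2}(-1\pm X_k),\qquad \alpha_{2,k}^{\pm}=\tfrac{1}{2}(-\mathrm{Le}\pm Y_k),
\end{equation*}
of $\lambda-\Delta-D_x$ and $\lambda-\mathrm{Le}^{-1}\Delta-D_x$, respectively, with the branches of $X_k,Y_k$ chosen to have nonnegative real part—legal exactly when $\mathrm{Re}\,\lambda>-\mathrm{Le}^{-1}(\mathrm{Im}\,\lambda)^2$. On $S_0^-$ I retain only the component $u_1^{(h)}$ and only its root decaying at $-\infty$; on $S_R^+$ both components use only the roots decaying at $+\infty$; on the middle strip both roots of each component are free. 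This produces exactly seven coefficients per mode $k\in\mathbb{Z}$, matching the seven conditions of $\mathscr{B}\mathbf{u}^{(h)}=-\mathscr{B}\mathbf{u}^{(p)}$, and yields a $7\times 7$ linear system $M_k(\lambda)\mathbf{c}_k=\mathbf{r}_k(\lambda,\mathbf{f})$. A careful row reduction exploiting the block structure of $\mathscr{B}$ at the two interfaces identifies $\det M_k(\lambda)$, up to a nonvanishing exponential factor, with $\mathcal{D}_k(\lambda)\widetilde{\mathcal{D}}_k(\lambda)$. This yields claim~(i): $\sigma(L)$ is the union of the essential part $\{\mathrm{Re}\,\lambda\le-\mathrm{Le}^{-1}(\mathrm{Im}\,\lambda)^2\}$ (where the building blocks of Lemmas~\ref{finito}--\ref{finiremo} break down) with the discrete zero set $\bigcup_{k}(\Omega_k\cup\Omega_k')$.

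For the sectorial estimate (ii), Lemmas~\ref{finito}(ii) and~\ref{finire}(ii) give $|\lambda|\|\mathbf{u}^{(p)}\|_\infty+\sqrt{|\lambda|}\|\nabla\mathbf{u}^{(p)}\|_\infty\le c\|\mathbf{f}\|_\infty$ on $\mathrm{Re}\,\lambda\ge M$. For $\mathbf{u}^{(h)}$ I would use Cramer's rule together with the asymptotic expansion $X_k,Y_k\sim 2\sqrt{\lambda+\lambda_k}$ as $|\lambda+\lambda_k|\to\infty$ to show that $|\det M_k(\lambda)|\ge c>0$ uniformly in $k$ and in $\lambda$ with $\mathrm{Re}\,\lambda\ge M$, with compatible upper bounds on the cofactors. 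Lemma~\ref{finiremo}(i),(iv) then controls each exponential summand in $L^\infty$ and gradient norm, with constants decaying fast enough in $k$ to ensure convergence of the Fourier series. Claim (iii) follows analogously: the Hölder regularity statements of Lemmas~\ref{finito}(iv), \ref{finire}(iv), \ref{finiremo}(iii),(vi) lift $\mathbf{u}^{(p)}$ and each Fourier piece of $\mathbf{u}^{(h)}$ to $C^{2+\alpha}_b$, and for $\lambda\in\rho(L)$ the invertibility of $M_k(\lambda)$ for every $k$, combined with the decay of the Hölder Fourier coefficients of the traces of $\mathbf{u}^{(p)}$, closes the estimate in the norm of $\boldsymbol{\mathcal{X}}_{2+\alpha}$.

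The hard part is the combined algebraic and asymptotic analysis of $\det M_k(\lambda)$. The identification with $\mathcal{D}_k\widetilde{\mathcal{D}}_k$ demands patient bookkeeping through seven rows mixing traces, jumps of traces, and jumps of derivatives at two interfaces, made delicate by the asymmetric treatment of the trailing interface in Subsection~\ref{sect-3.2} (where $g$ is recovered from a derivative of $v$ rather than from a trace of $u$). Proving the uniform lower bound $|\det M_k(\lambda)|\ge c>0$ for $\mathrm{Re}\,\lambda\ge M$ is what ultimately delivers sectoriality and hence the analytic semigroup; once it is in hand, the remaining $L^\infty$ and $C^{2+\alpha}_b$ estimates are routine consequences of the preparatory lemmas.
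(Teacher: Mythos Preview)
Your overall strategy — particular solution via $\mathscr{R}_{\lambda,1}f_1$ and $\mathscr{S}_\lambda f_2$, homogeneous correction built mode by mode from the characteristic roots, seven free coefficients matched to the seven conditions in $\mathscr{B}$ — is exactly what the paper does, and your use of Lemmas~\ref{finito}--\ref{finiremo} for the sectorial and Schauder bounds in (ii)--(iii) is on target.

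There is, however, a genuine misconception in your treatment of (i). The determinant of your $7\times 7$ system is \emph{not} $\mathcal{D}_k(\lambda)\widetilde{\mathcal{D}}_k(\lambda)$. The two dispersion relations do not arise as factors of a single determinant; they come from two \emph{different} $7\times 7$ systems, valid in two disjoint $\lambda$-regions, because the choice of admissible exponentials changes with $\lambda$. Your ansatz (one decaying root for $u_1$ on $S_0^-$, one each for $u_1,u_2$ on $S_R^+$, both roots for each on the middle strip) is correct only when $\mathrm{Re}\,X_k>1$, i.e.\ roughly when $\mathrm{Re}\,\lambda>-(\mathrm{Im}\,\lambda)^2-\lambda_k$; there the determinant is, up to a nonvanishing factor, $\mathcal{D}_k(\lambda)$ alone — and this is the set $\Omega_k$. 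In the strip $-\mathrm{Le}^{-1}((\mathrm{Im}\,\lambda)^2+\lambda_k)<\mathrm{Re}\,\lambda<-((\mathrm{Im}\,\lambda)^2+\lambda_k)$ the sign of $\mathrm{Re}\,\nu_k^+$ flips, so \emph{no} exponential is admissible for $u_1$ on $S_0^-$ while \emph{both} $e^{\nu_k^\pm x}$ are admissible on $S_R^+$; the resulting $7\times 7$ system (still seven unknowns, but distributed $0+4+3$ instead of $1+4+2$) has determinant $\widetilde{\mathcal{D}}_k(\lambda)$, and this is where $\Omega_k'$ lives. Finally, inside the parabola $\mathrm{Re}\,\lambda\le -\mathrm{Le}^{-1}(\mathrm{Im}\,\lambda)^2$ the count jumps to eight free coefficients against seven conditions, so the homogeneous problem has a nontrivial kernel for \emph{every} such $\lambda$ — this is how the paper obtains the essential part of $\sigma(L)$, not merely by observing that the lemmas ``break down''. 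Once you separate the three regimes, the rest of your outline goes through essentially as in the paper.
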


\begin{proof}
Since it is rather long, we split the proof into four steps. In Steps 1 and 2, we characterize $\sigma(L)$, whereas in Step 3 we prove that $L$
generates an analytic semigroup in $\boldsymbol{\mathcal X}$ as well as the estimate for the spatial gradient for the resolvent operator. Finally, in Step 4, we
prove property (iii).

{\em Step 1.} Fix ${\bf f}\in\boldsymbol{\mathcal X}$ and $\lambda\in\C$ such that ${\rm Re}\lambda>-({\rm Im}\lambda)^2$ and $\lambda\not\in\Omega_k$ for each $k\in\N\cup\{0\}$,
and assume that the equation $\lambda\uu-{\mathscr L}\uu=\f$ admits a solution $\uu=(u_1,u_2)$ in $D(L)$.
The arguments in the proof of Lemma \ref{finito}(i) show that for every $k\in\Z$ the functions $\hat u_{1,k}$ and $\hat u_{2,k}$ (see Subsection \ref{sub-notation}), solve, respectively, the differential equations $\lambda \hat u_k-\hat u_k'-\hat u''_k+\lambda_k\hat u_k=\hat f_{1,k}$ in $\R\setminus\{0,R\}$
and $\lambda \hat v_k-\hat v'_k-{\rm Le}^{-1}\hat v_k''+{\rm Le}^{-1}\lambda_k\hat v_k=\hat f_{2,k}$ in $\R_+\setminus\{R\}$. Moreover, they belong to $C^1_b(\R;\C)$ and
$C^1_b([0,+\infty);\C)$, respectively. Thus,
\begin{align*}
\hat u_{1,k}(x)=({\mathcal F}_{k,1}f_1)(x)+c_{1,k}e^{\nu^{+}_kx}\chi_{(-\infty,0]}(x)+
(c_{2,k}e^{\nu^{-}_kx}+c_{3,k}e^{\nu^{+}_kx})\chi_{(0,R)}(x)+
c_{4,k}e^{\nu^{-}_kx}\chi_{[R,+\infty)}(x)
\end{align*}
for every $x\in\R$, $k\in\Z$ and
\begin{align}
\hat u_{2,k}(x)=({\mathcal G}_kf_2)(x)+(d_{1,k}e^{\mu^{-}_kx}+d_{2,k}e^{\mu^{+}_kx})\chi_{[0,R)}(x)+d_{3,k}e^{\mu^{-}_kx}\chi_{[R,+\infty)}(x)
\label{u2k}
\end{align}
for every $x\ge 0$ and $k\in\Z$, where $\mu^{\pm}_k=-\frac{{\rm Le}}2\pm\frac{1}{2}Y_k$, $\nu^{\pm}_k = -\frac{1}{2}\pm\frac{1}{2}X_k$ and $c_{1,k},c_{2,k},c_{3,k},c_{4,k}$, $d_{1,k},d_{2,k},d_{3,k}$ ($k\in\Z$) are complex constants determined imposing the conditions ${\mathscr B}(\hat u_{1,k},\hat u_{2,k})={\bf 0}$ that the infinitely many functions $\hat u_{1,k}$ and $\hat u_{2,k}$ have to satisfy.
It turns out that the above constants are uniquely determined if and only if $\mathcal D_k(\lambda)\neq 0$, as we are assuming, and as a byproduct,
\begin{align}
&u_1={\mathscr R}_{\lambda,1} f_1+\frac{1}{\ell}\sum_{k\in\Z}p_{1,k}({\mathcal F}_{k,1}f_1)(R)e_k+\frac{2}{{\rm Le}\, \ell}\sum_{k\in\Z}p_{2,k}({\mathcal G}_kf_2)(0)e_k,\qquad
{\rm in}~\overline{H_0^-},
\label{piove}\\[2mm]
&u_1={\mathscr R}_{\lambda,1} f_1+\frac{1}{\ell}\sum_{k\in\Z}p_{3,k}({\mathcal F}_{k,1}f_1)(R)e_k
+\frac{2}{{\rm Le}\,\ell}\sum_{k\in\Z}p_{4,k}({\mathcal G}_kf_2)(0)e_k,\label{piove-1} \\[2mm]
&u_2={\mathscr S}_{\lambda}f_2-\frac{{\rm Le}}{2\theta_iR\ell}{\mathscr T}_{\lambda}^{+}f_1+\frac{1}{\ell}{\mathscr U}_{\lambda}^-f_2
+\frac{{\rm Le}}{2\ell}\sum_{k\in\Z}q_{1,k}({\mathcal F}_{k,1}f_1)(R)e_k+\frac{1}{\ell}\sum_{k\in\Z}q_{2,k}({\mathcal G}_kf_2)(0)e_k,\label{vento}
\end{align}
in $(0,R)\times\R$,
and
\begin{align}
&u_1={\mathscr R}_{\lambda,1} f_1+\frac{1}{\ell}\sum_{k\in\Z}p_{5,k}({\mathcal F}_{k,1}f_1)(R)e_k+\frac{2\theta_iR}{{\rm Le}\,\ell}\sum_{k\in\Z}p_{6,k}({\mathcal G}_kf_2)(0)e_k,\label{sole-0}\\[2mm]
&u_2={\mathscr S}_{\lambda}f_2+\frac{{\rm Le}}{2\theta_iR\ell}{\mathscr T}_{\lambda}^{-}f_1+\frac{{\rm Le}}{2\ell}\sum_{k\in\Z}q_{3,k}({\mathcal F}_{k,1}f_1)(R)e_k+\frac{1}{\ell}\sum_{k\in\Z}q_{4,k}({\mathcal G}_kf_2)(0)e_k, \label{sole}
\end{align}
in $\overline{H_R^+}$, where ${\mathscr R}_{\lambda,1}f$ and ${\mathscr S}_{\lambda}g$ have been introduced in Lemmata \ref{finito} and \ref{finire},
\begin{align*}
&p_{1,k}(x)=\frac{e^{-\nu_k^{+}R}-e^{-\mu_k^{+}R}}{W_k}e^{\nu_k^+x},\qquad\;\;\;\, p_{2,k}(x)=\frac{(\theta_iRX_k-1+e^{-X_kR})Y_k}{X_kW_k(Y_k-{\rm Le})}e^{\nu_k^{+}x}\\
&p_{3,k}(x)=\frac{e^{\nu_k^{+}(x-R)}-e^{\nu_k^{-}x-\mu_k^{+}R}}{W_k},\qquad\;\,p_{4,k}(x)=\frac{Y_k[(\theta_iRX_k-1)e^{\nu^{-}_kx}+e^{-X_kR}e^{\nu^{+}_kx}]}{X_kW_k(Y_k-{\rm Le})},\\
&p_{5,k}(x)=\frac{e^{-\nu_k^{-}R}-e^{-\mu_k^{+}R}}{W_k}e^{\nu^{-}_kx},\qquad\;\;\;\, p_{6,k}(x)=\frac{Y_k}{(Y_k-{\rm Le})W_k}e^{\nu^{-}_kx},\\
&q_{1,k}(x)=\frac{e^{(\nu_k^{-}-\mu_k^+)R}-1}{\theta_iRW_k}e^{\mu_k^{+}(x-R)}-\frac{X_ke^{-\mu_k^{+}R}}{Y_kW_k}[({\rm Le}+Y_k)e^{\mu_k^{-}x}-{\rm Le}\,e^{\mu_k^{+}x}],\\
&q_{2,k}(x)=\frac{2{\rm Le}(\theta_iRX_k-1)}{W_k(Y_k-{\rm Le})}e^{\mu^{-}_kx}-\frac{e^{(\nu_k^{-}-\mu_k^+)R}}{W_k}(e^{\mu^{-}_kx}+e^{\mu^{+}_kx}),\\
&q_{3,k}(x)=\frac{1-e^{(\nu_k^{-}-\mu_k^{+})R}}{\theta_iRW_k}e^{\mu^{-}_k(x-R)}-\frac{{\rm Le}(e^{\mu^{-}_kx-\mu_k^{+}R}-e^{\mu_k^{-}(x-R)})X_k}{Y_kW_k}
-\frac{X_ke^{\mu^{-}_kx-\mu_k^{+}R}}{W_k},\\
&q_{4,k}(x)=\frac{Y_k(\theta_iRX_k-1)e^{\mu^{-}_kx}-{\rm Le}\, e^{\mu^{-}_kx+(\nu_k^{-}-\mu_k^{+})R}+(Y_k+{\rm Le})e^{\nu_k^{-}R+\mu_k^{-}(x-R)}}{W_k(Y_k-{\rm Le})}
\end{align*}
and $W_k=W_k(\lambda)=\theta_iRX_k-1+e^{(\nu^{-}_k-\mu_k^{+})R}$.

In view of Lemmata \ref{finito} to \ref{finiremo}, to prove that the pair $\uu$ defined by \eqref{piove}-\eqref{sole} belongs to $D(L)$ and $\lambda {\bf u}-\mathscr L{ \bf u}={\bf f}$ we just need to consider the series in the above formulae, which we denote, respectively by ${\mathscr P}_{\lambda,2k+j}f_j$ ($j=1,2$, $k=0,1,2$), ${\mathscr Q}_{\lambda,2h+j}f_j$ ($j=1,2$, $h=0,1$).
To begin with, we observe that, by $(i)$ in the proof of Lemma \ref{finito}, we already know that ${\rm Re}(X_k)+{\rm Re}(Y_k)\ge c_{\lambda}|k|$ and $|X_k|+|Y_k|\le c_{\lambda}(|k|+1)$ for each $k\in\Z$. As a byproduct, taking also \eqref{spugna} into account, we can infer that
$|({\mathcal F}_{k,1}f_1)(R)|\le c_{\lambda}(1+k^2)^{-1}\|f_1\|_{\infty}$ and $|({\mathcal G}_kf_2)(0)|\le c_{\lambda}(1+k^2)^{-1}\|f_2\|_{\infty}$ for each $k\in\Z$.
Moreover, we can also estimate
\begin{align}
|W_k|&\ge \theta_iR|X_k|-1-e^{{\rm Re}(\nu_k^{-}-\mu_k^{+})R}\ge \theta_iR|X_k|-1-e^{\frac{{\rm Le}-1}{2}R}\ge c_{\lambda}|k|,\qquad\;\,k\in\Z\setminus\{0\}.
\label{aaa}
\end{align}
Putting everything together, we conclude that $\|p_{1,k}\|_{C^h_b((-\infty,0];\C)}\le c_1e^{-c_2k}$ for each $h\in\N$ and
\begin{align*}
&\|p_{2,k}\|_{C_b^j((-\infty,0];\C)}+\sum_{i=1}^2[\|(p_{i+2,k},q_{i,k})\|_{C^j([0,R];\C^2)}+\|(p_{4+i,k},q_{i+2,k})\|_{C_b^j([R,+\infty);\C^2)}]
\le c_{\lambda}|k|^{j-1}
\end{align*}
for each $k\in\Z\setminus\{0\}$. Using these estimates, it is easy to check that
${\mathscr P}_{\lambda,1}f_1\in C^{\beta}_b(\overline{H^{-}_0};\C)$ for $\beta>0$ and ${\mathscr P}_{\lambda,2}f_2\in C^{\infty}(H^{-}_0;\C)\cap C^1_b(\overline{H^{-}_0};\C)$. Moreover, they solve the equation $\lambda w-\Delta_xw-D_xw=0$ in $H^{-}_0$. Since the series which define ${\mathscr P}_{\lambda,1}f_1$ and ${\mathscr P}_{\lambda,2}f_2$ converge uniformly in $H^{-}_0$ and each term vanishes as $x\to -\infty$,
 uniformly with respect to $y\in\R$, we immediately infer that $\lim_{x\to -\infty}({\mathscr P}_{\lambda,1}f_1)(x,y)=\lim_{x\to -\infty}({\mathscr P}_{\lambda,1}f_2)(x,y)=0$ for each $y\in\R$.
 On the other hand, the functions ${\mathscr P}_{\lambda,3}f_1$, ${\mathscr P}_{\lambda,4}f_2$
and ${\mathscr Q}_{\lambda,1}f_1$, ${\mathscr Q}_{\lambda,2}f_2$ belong to $C^{\infty}((0,R)\times\R;\C)\cap C^1_b([0,R]\times\R;\C)$ and solve, in $(0,R)\times\R$, the equations $\lambda w_1-\Delta_xw_1-D_xw_1=0$ and $\lambda w_2-{\rm Le}^{-1}\Delta_xw_2-D_xw_2=0$, respectively.
Finally, the functions ${\mathscr P}_{\lambda,5}f_1$, ${\mathscr P}_{\lambda,6}f_2$ and ${\mathscr Q}_{\lambda,3}f_1$, ${\mathscr Q}_{\lambda,4}f_2$ belong to $C^{\infty}(H^+_R;\C)\cap C^1_b(\overline{H^+_R};\C)$ solves, in $H^+_R$, the equations $\lambda w_1-\Delta_xw_1-D_xw_1=0$ and $\lambda w_2-{\rm Le}^{-1}\Delta_xw_2-D_xw_2=0$, respectively, and vanish as $x$ tends to $+\infty$ for each $y\in\R$. Therefore, the function $\uu$ defined by \eqref{piove}-\eqref{sole} belongs to $\bigcap_{p<+\infty}W^{2,p}_{\rm loc}((\R\setminus\{0,R\})\times\R)\times\bigcap_{p<+\infty}W^{2,p}_{\rm loc}((\R_+\setminus\{R\})\times\R)$, solve the equation $\lambda\uu-{\mathscr L}\uu=\f$ and $\lim_{x\to \pm\infty}u_1(x,y)=\lim_{x\to +\infty}u_2(x,y)=0$ for each $y\in\R$.
Moreover, $\|\nabla{\bf u}\|_{\infty}\le c\|\f\|_{\infty}$.
To conclude that $\uu\in D(L)$, we have to check that ${\mathscr B}\uu={\bf 0}$, but this is an easy task taking into account that all the series appearing in the definition of $\uu$ may be differentiated term by term and ${\mathscr B}(\hat u_{1,k},\hat u_{2,k})={\bf 0}$ for every $k\in\Z$.
We have so proved that $\uu\in D(L)$ and that
\begin{eqnarray*}
\bigcup_{k\in\N\cup\{0\}}\Omega_k\subset\sigma(L)\subset \{\lambda\in\C:{\rm Re}\lambda\le -({\rm Im}\lambda)^2\}\cup\bigcup_{k\in\N\cup\{0\}}\Omega_k.
\end{eqnarray*}

{\em Step 2.} To complete the characterization of $\sigma(L)$, let us check that $\sigma(L)\supset\{\lambda\in\C:{\rm Re}\lambda\le -{\rm Le}^{-1}({\rm Im}\lambda)^2\}\cup\bigcup_{k\in\N\cup\{0\}}\Omega_k'$.
Clearly, each $\lambda\in\C$ such that ${\rm Re}\lambda\le -{\rm Le}^{-1}({\rm Im}\lambda)^2$ belongs to $\sigma(L)$, since in this case $\nu_0^{\pm}$ and $\mu_0^{\pm}$ have nonpositive real parts so that the more general solution to the equation $\lambda{\bf u}-{\mathcal L}{\bf u}={\bf 0}$, which belongs to
$\boldsymbol{\mathcal X}$ and is independent of $y$, is determined up to $8$ arbitrary complex constants and we have just $7$ boundary condition.
Thus, the previous equation admits infinitely many solutions in $\boldsymbol{\mathcal X}$.
Similarly, if $\lambda\in\Omega_k'$ for some $k\in\N\cup\{0\}$, then the pair $\uu=(\hat u_{1,k}e_k,\hat u_{2,k}e_k)$, where
\begin{align*}
\hat u_{1,k}(x)=({\mathcal F}_{k,1}f_1)(x)\!+\!(c_{1,k}e^{\nu^{-}_kx}\!+\!c_{2,k}e^{\nu^{+}_kx})\chi_{(0,R)}(x)\!+\!
(c_{3,k}e^{\nu^{-}_kx}\!+\!c_{4,k}e^{\nu^{+}_kx}\chi_{[R,+\infty)}(x),\qquad\;\,x\in\R,
\end{align*}
and $\hat u_{2,k}$ is still given by \eqref{u2k}, is smooth, belongs to $\boldsymbol{\mathcal X}$ and solves the differential equation $\lambda\uu-{\mathscr L}\uu={\bf f}$ for
every choice of $c_{i,k}$, $d_{j,k}$ ($i=1,\ldots,4$, $j=1,2,3$). Imposing the condition ${\mathscr B}\uu={\bf 0}$, we get to a linear
system of $7$ equations in $7$ unknowns whose determinant is $\widetilde {\mathcal D}_k(\lambda)$. Since $\lambda\in\Omega_k'$, the above equation
admits infinitely many solutions in $D(L)$.

{\em Step 3.} Since the roots of the dispersion relation have bounded from above real part (see also the forthcoming computations),
Step 1 shows that the resolvent set $\rho(L)$ contains a right-halfplane. Hence, to prove that $L$ generates an analytic semigroup it remains to prove that $|\lambda| \|R(\lambda,L)\f\|_{\infty}\leq c\|{\bf f}\|_{\infty}$ for each $\lambda$ in a suitable right-halfplane. Again, in view of Lemmata \ref{finito}, \ref{finire} and \ref{finiremo}, we can limit ourselves to dealing with the functions ${\mathscr P}_{\lambda,2k+j}f_j$ and ${\mathscr Q}_{\lambda,2h+j}f_j$.

For each $\lambda\in\C$ with positive real part, we can refine the estimate for ${\rm Re}(X_k)$ and ${\rm Re}(Y_k)$; it turns out that
\begin{align*}
&|X_k|\ge {\rm Re}(X_k)=\sqrt{\frac{|1\!+\!4\lambda\!+\!4\lambda_k|\!+\!{\rm Re}(1+4\lambda+4\lambda_k)}{2}}\ge \sqrt{2|\lambda|}\vee 1\vee 2\sqrt{\lambda_k}\ge \frac{\sqrt{3}}{3}\sqrt{|\lambda|+1+\lambda_k},\\[1mm]
&|X_k|\le 2\sqrt{1+|\lambda|+\lambda_k}
\end{align*}
and, similarly, $c_1\sqrt{1+|\lambda|+\lambda_k}\le {\rm Re}(Y_k)\le|Y_k|\le c_2\sqrt{1+|\lambda|+\lambda_k}$ for each $k\in\Z$ and $\lambda\in\C$ with positive real
part. As a byproduct, we get $|(\mathcal F_{k,1}f_1)(R)|+|({\mathcal G}_kf_2)(0)|\leq c_R(|\lambda|+1+k^2)^{-1}\|\bf f\|_{\boldsymbol{\mathcal X}}$ for each $k$ and $\lambda$ as above. Moreover, using \eqref{aaa} we can also estimate
$|W_k|\ge c_R\sqrt{1+|\lambda|+\lambda_k}$
for each $k\in\Z$ and $\lambda\in\Sigma_M:=\{\lambda\in\C: {\rm Re}\lambda\ge M\}$ with $M$ large enough. Finally,
\begin{align*}
|Y_k-{\rm Le}|=&\bigg |\frac{4{\rm Le}\lambda+4\lambda_k}{\sqrt{{\rm Le}^2+4{\rm Le}\lambda+\lambda_k}+{\rm Le}}\bigg |
\ge \frac{{\rm Le}|\lambda|+\lambda_k}{\sqrt{{\rm Le}^2+{\rm Le}|\lambda|+\lambda_k}}
\ge \frac{{\rm Le}}{2}\sqrt{1+|\lambda|+\lambda_k}
\end{align*}
for each $\lambda\in\Sigma_1$ and $k\in\Z$, since ${\rm Le}\in (0,1)$. Hence, up to replacing $M$ with $M\vee 1$, if necessary, we can estimate
\begin{align*}
&\sum_{j=1}^2(|p_{j,k}(x)|+|q_{j,k}(x')|)+\sum_{j=3}^4(|p_{j,k}(x')|+|q_{j,k}(x'')|)+\sum_{j=5}^6|p_{j,k}(x'')|
\le c_R(1+|\lambda|+\lambda_k)^{-\frac{1}{2}}
\end{align*}
for each $k\in\Z$, $\lambda\in\Sigma_M$, $x<0$, $x'\in (0,R)$ and $x''>R$.
We are almost done. Indeed, taking the above estimates and the fact that
\begin{align*}
\sum_{k\in\Z}\frac{\|{\bf f}\|_{\infty}}{(|\lambda|+1+k^2)^{3/2}}
\leq & c\|{\bf f}\|_{\infty}\int_0^{+\infty}(|\lambda|+1+r^2)^{-\frac{3}{2}}dr
\leq  \frac{c}{1+|\lambda|}\|{\bf f}\|_{\infty}
\end{align*}
into account, we easily conclude that
\begin{align*}
&\sum_{k=1}^3(\|\mathscr P_{\lambda,2k-1}f_1\|_{\infty}+\|\mathscr P_{\lambda,2k}f_2\|_{\infty})
+\sum_{k=1}^2(\|\mathscr Q_{\lambda,2k-1}f_1\|_{\infty}+\|\mathscr Q_{\lambda,2k}f_2\|_{\infty})\leq c_{M}|\lambda|^{-1}\|\bf f\|_{\boldsymbol{\mathcal X}}
\end{align*}
for every $\lambda\in\C$ with ${\rm Re}\lambda\ge M$ and some positive constant $c_M$ independent of $\lambda$. Similarly,
\begin{align*}
&k^{1-j}\{|D_x^{(j)}p_{i,k}(x)|+|D_{x'}^{(j)}p_{i+2,k}(x')|+|D_{x'}^{(j)}q_{i,k}(x')|+|D_{x''}^{(j)}q_{i,k}(x'')|+|D_{x''}^{(j)}p_{i+4,k}(x'')|\}
\le c_M,
\end{align*}
for each $x\le 0$, $x'\in [0,R]$, $x''\ge 0$, $i=1,2$, $j=0,1$ and
\begin{align*}
\sum_{k\in\Z}\frac{\|{\bf f}\|_{\infty}}{|\lambda|+1+k^2}
\leq & c\|{\bf f}\|_{\infty}\int_0^{+\infty}(|\lambda|+1+r^2)^{-1}dr
\leq  \frac{c}{\sqrt{1+|\lambda|}}\|{\bf f}\|_{\infty}.
\end{align*}
Thus, we deduce that
\begin{align*}
&\sum_{k=1}^3(\|\nabla\mathcal P_{\lambda,2k-1}f_1\|_{\infty}+\|\nabla\mathcal P_{\lambda,2k}f_2\|_{\infty})
+\sum_{k=1}^2(\|\nabla\mathscr Q_{\lambda,2k-1}f_1\|_{\infty}+\|\nabla\mathscr Q_{\lambda,2k}f_2\|_{\infty})\leq c_{M}|\lambda|^{-\frac{1}{2}}\|\bf f\|_{\infty}.
\end{align*}

{\em Step 4.} Finally, we show that if ${\bf f}\in \boldsymbol{\mathcal X}_{\alpha}$ then ${\bf u}\in \boldsymbol{\mathcal X}_{2+\alpha}$. Again, in view of Lemmata
\ref{finito}-\ref{finiremo} and the estimate $\|p_{1,k}\|_{C^h_b((-\infty,0];\C)}\le c_1e^{-c_2k}$ (for every $h\in\N$) in Step 3, which shows that the
function ${\mathscr P}_{\lambda,1}f_1$ belongs to $C^{\beta}_b(H_0^-)$ for every $\beta>0$ and $\|{\mathscr P}_{\lambda,1}f_1\|_{C^{\beta}_b(H_0^-)}\le c_{\lambda}\|f_1\|_{\infty}$,
it suffices to deal with the other functions ${\mathscr P}_{\lambda,2k+j}f_j$ and ${\mathscr Q}_{\lambda,2h+j}f_j$. We adapt the arguments in Step 2 of the proof of
Lemma \ref{finire}. To begin with, we consider the function
${\mathscr P}_{\lambda,2}f_2\in C^2_b(\overline{H^-_0})$ which solves the equation $\lambda {\mathscr P}_{\lambda,2}f_2-\Delta {\mathscr P}_{\lambda,2}f_2-D_x{\mathscr P}_{\lambda,2}f_2=0$ in $H^-_0$. To prove that it belongs to $C^{2+\alpha}_b(H^-_0)$, we check that $({\mathscr P}_{\lambda,2 }f_2)(0,\cdot)\in C^{2+\alpha}_b(\R)$. Note that $p_{2,k}(0)= \ell(\pi k)^{-1}+\widetilde p_{2,k}$ for every $k\in\Z\setminus\{0\}$, where $\widetilde p_{2,k}=O(k^{-2})$. Therefore, we can split
\begin{eqnarray*}
({\mathscr P}_{\lambda,2}f_2)(0,\cdot)=\frac{2}{{\rm Le}\,\pi}\sum_{k\in\Z}k^{-1}({\mathcal G}_kf_2)(0)e_k
+\frac{2}{{\rm Le}\, \ell}\sum_{k\in\Z}\widetilde p_{2,k}({\mathcal G}_kf_2)(0)e_k=\psi_1+\psi_2.
\end{eqnarray*}
Since $|\widetilde p_{2,k}({\mathcal G}_kf_2)(0)|\le c|k|^{-4}\|f_2\|_{\infty}$ for every $k\in\Z\setminus\{0\}$, it follows immediately that
$\psi_2\in C^{2+\alpha}_b(\R)$ and $\|\psi_2\|_{C^{2+\alpha}_b(\R)}\le c\|f_2\|_{\infty}$.
As far as $\psi_1$ is concerned, a straightforward computation reveals that
$\psi_1'=c({\mathscr S}_{\lambda}f_2)(0,\cdot)$ so that, by Lemma \ref{finire}, $\psi_1'\in C^{1+\alpha}_b(\R)$ and
$\|\psi_1'\|_{C^{1+\alpha}_b(\R)}\leq c\|\f\|_{\alpha}$. Thus, $({\mathscr P}_{\lambda,2}f_2)(0,\cdot)$ belongs to $C^{2+\alpha}_b(\R)$ and $\|({\mathscr P}_{\lambda,2}f_2)(0,\cdot)\|_{C^{2+\alpha}_b(\R)}\le c_{\lambda}\|\f\|_{\alpha}$.
By classical results for elliptic problems (see \cite{krylov}), ${\mathscr P}_{\lambda,2}f_2$ belongs to $C^{2+\alpha}_b(H_0^-)$  and $\|{\mathscr P}_{\lambda,2}f_2\|_{C^{2+\alpha}_b(H_0^-)}\le c_{\lambda}\|\f\|_{\alpha}$.

Next, we split ${\mathscr P}_{\lambda,3}f_1$ into the sum of the functions
\begin{align*}
{\mathscr P}_{3,\lambda,1}f_1=\frac{1}{\ell}\sum_{k\in\Z}\frac{e^{\nu_k^+(\cdot-R)}}{W_k}({\mathcal F}_{k,1}f_1)(R)e_k,\qquad\;\,
{\mathscr P}_{3,\lambda,2}f_1=\frac{1}{\ell}\sum_{k\in\Z}\frac{e^{-\mu_k^+R}}{W_k}({\mathcal F}_{k,1}f_1)(R)e^{\nu_k^-\cdot}e_k.
\end{align*}
The first function belongs to $C^2_b(\overline{H^{-}_R};\C)$ and
$\lambda {\mathscr P}_{3,\lambda,1}f_1-\Delta {\mathscr P}_{3,\lambda,1}f_1-D_x{\mathscr P}_{3,\lambda,1}f_1=0$ in $H^-_R$.
Since
\begin{eqnarray*}
(({\mathscr P}_{3,\lambda,1}f_1)(R,\cdot))'=c({\mathscr R}_{\lambda,1}f_1)(R,\cdot)+\frac{1}{\ell}\sum_{k\in\Z}\widetilde p_{3,k}(\mathcal F_{k,1}f_1)(R)e_k
\end{eqnarray*}
and $|\widetilde p_{3,k}|\le ck^{-1}$, the same arguments as above and Lemma \ref{finito} allow to show first that the function $(({\mathscr P}_{3,\lambda,1}f_1)(R,\cdot))'$ belongs to $C^{1+\alpha}_b(\R)$ and then to
conclude that ${\mathscr P}_{3,\lambda,1}f_1\in C^{2+\alpha}_b(H^-_R)$ and $\|{\mathscr P}_{3,\lambda,1}f_1\|_{C^{2+\alpha}_b(H^-_R)}\le c_{\lambda}\|\f\|_{\alpha}$.
The smoothness of the function ${\mathscr P}_{3,\lambda,2}f_1$ is easier to prove, due to the uniform (in $[0,R]$) exponential decay to zero of the terms of the series. It turns out that $\|{\mathscr P}_{3,\lambda,2}f_1\|_{C^{2+\alpha}_b(H^-_R)}\le c_{\lambda}\|f_1\|_{\infty}$.

Let us consider the function ${\mathscr P}_{4,\lambda}f_2$, which we split it into the sum of the functions ${\mathscr P}_{4,\lambda,1}f_2$ and ${\mathscr P}_{4,\lambda,2}f_2$ defined, respectively, by
\begin{align*}
{\mathscr P}_{4,\lambda,j}f_2=\frac{2}{{\rm Le} \ell}\sum_{k\in\N}p_{4,j,k}(\mathcal G_kf_2)(0)e_k,\qquad\;\,j=1,2,
\end{align*}
where
\begin{align*}
p_{4,1,k}(x)= \frac{\theta_iR}{W_k}e^{\nu_k^-x}, \qquad\;\,
p_{4,2,k}(x)= \frac{\theta_iR{\rm Le}\,e^{\nu_k^-x}}{W_k(Y_k-{\rm Le})}-\frac{Y_ke^{\nu_k^-x}}{X_kW_k(Y_k-{\rm Le})}+\frac{Y_ke^{-X_kR+\nu_k^+x}}{X_kW_k(Y_k-{\rm Le})}.
\end{align*}
Function ${\mathscr P}_{4,\lambda,1}f_2$ belongs to $C^2_b(\overline{H_0^+})$ and
$\lambda {\mathscr P}_{4,\lambda,1}f_2-\Delta {\mathscr P}_{4,\lambda,1}f_2-D_x{\mathscr P}_{4,\lambda,1}f_2=0$ in $H^+_0$. Moreover,
$({\mathscr P}_{4,\lambda,1}f_2)(0,\cdot)$ is an element of $C^{2+\alpha}_b(\R)$ and $\|({\mathscr P}_{4,\lambda,1}f_2)(0,\cdot)\|_{C^{2+\alpha}_b(\R)}\le c_{\lambda}\|\f\|_{\alpha}$, so that ${\mathscr P}_{4,\lambda,1}f_2\in C^{2+\alpha}_b(H^+_0)$ and $\|{\mathscr P}_{4,\lambda,1}f_2\|_{C^{2+\alpha}_b(H^+_0)}\le c_{\lambda}\|\f\|_{\alpha}$. On the other hand, the series, which defines
${\mathscr P}_{4,\lambda,2}f_2$ is easier to analyze since it converges in $C^{2+\alpha}_b(H^-_R)$ and $\|{\mathscr P}_{4,\lambda,2}f_2\|_{C^{2+\alpha}_b(H^-_R)}\leq c_{\lambda}\|\f\|_{\alpha}$.

All the remaining functions ${\mathscr P}_{\lambda,2k+j}f_j$ and ${\mathscr Q}_{\lambda,2h+j}f_j$ can be analyzed in the same way. The details are left to the reader.
\end{proof}

Now, we characterize the interpolation spaces $D_L(\alpha/2,\infty)$ and $D_L(1+\alpha/2,\infty)$.
To simplify the notation, we introduce the operator ${\mathscr B}_0$, defined by
\begin{equation}
{\mathscr B}_0{\bf u}=(u_1(0^+,\cdot)-u_1(0^-,\cdot),u_1(R^+,\cdot)-u_1(R^-,\cdot)),\qquad\;\,{\bf u}\in\boldsymbol{\mathcal X},
\label{operatore-B0}
\end{equation}
and the sets $\boldsymbol{\mathcal X}_{\alpha,{\mathscr B}_0}=\{{\bf u}\in \boldsymbol{\mathcal X}_{\alpha}: {\mathscr B}_0{\bf u}={\bf 0}\}$ $(\alpha\in (0,1])$ and $\boldsymbol{\mathcal X}_{2+\alpha,{\mathscr B}}=\{{\bf u}\in \boldsymbol{\mathcal X}_{2+\alpha}: {\mathscr B}{\bf u}={\bf 0}, {\mathscr B}_0L{\bf u}={\bf 0},\, \lim_{x\to \pm\infty}(L\uu)_1(x,y)=
\lim_{x\to +\infty}(L\uu)_2(x,y)=0\}$ ($\alpha\in (0,1)$), equipped with the norm of $\boldsymbol{\mathcal X}_{\alpha}$ and $\boldsymbol{\mathcal X}_{2+\alpha}$, respectively.

\begin{prop}
\label{perdita}
For each $\alpha\in (0,1)$ the following characterizations hold:
\begin{align}
(i)~D_L(\alpha/2,\infty)=\boldsymbol{\mathcal X}_{\alpha,{\mathscr B}_0},\qquad\;\, (ii)~D_L(1+\alpha/2,\infty)
= & \boldsymbol{\mathcal X}_{2+\alpha,\mathscr B},
\label{penna}
\end{align}
with equivalence of respective norms. Moreover,
\begin{align}
\label{doccia}
\boldsymbol{\mathcal X}_{1,\mathscr B_0}\hookrightarrow D_L(1/2,\infty).
\end{align}
\end{prop}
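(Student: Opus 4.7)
The plan is to rely on the resolvent characterization of real interpolation spaces for the sectorial operator $L$: for every $\theta\in(0,1)$,
\[
\mathbf{f}\in D_L(\theta,\infty)\iff \sup_{\lambda\ge M}\|\lambda^{\theta}LR(\lambda,L)\mathbf{f}\|_{\infty}<\infty,
\]
where $M$ is the constant appearing in Theorem \ref{banca}(ii). Since $LR(\lambda,L)=\lambda R(\lambda,L)-I$, this reduces the task to proving $\lambda$-uniform bounds on $\|\lambda(\lambda R(\lambda,L)\mathbf{f}-\mathbf{f})\|_\infty$, for which I will rely on the explicit representation of $R(\lambda,L)\mathbf{f}$ provided by \eqref{piove}--\eqref{sole} and Lemmas \ref{finito}--\ref{finiremo}.

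For the inclusion $\boldsymbol{\mathcal X}_{\alpha,\mathscr B_0}\subseteq D_L(\alpha/2,\infty)$ in (i), I would decompose the resolvent into its ``bulk'' contributions $\mathscr R_{\lambda,\rho}f_1$, $\mathscr S_{\lambda}f_2$, $\mathscr T_{\lambda}^{\pm}f_1$, $\mathscr U_{\lambda}f_2$ and the series pieces $\mathscr P_{\lambda,j}f_i$, $\mathscr Q_{\lambda,h}f_i$. The Hölder-regularity parts (iv) of Lemmas \ref{finito}--\ref{finiremo}, combined with the standard interpolation identity $\|\lambda(\lambda R(\lambda,A)g-g)\|_{\infty}\le c\|g\|_{C^{\alpha}}\lambda^{1-\alpha/2}$ valid for scalar second-order operators with Hölder data, yields the correct $\lambda^{-\alpha/2}$-decay for the bulk terms. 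For the series terms, the key is to exploit the Hölder regularity of $\mathbf{f}$ together with the jump condition $\mathscr B_0\mathbf{f}=\mathbf{0}$ to obtain the improved bound $|(\mathcal F_{k,1}f_1)(R)|+|(\mathcal G_kf_2)(0)|\le c\|\mathbf{f}\|_{\alpha}|k|^{-2-\alpha}$ (by one extra integration-by-parts in $y$ and an $\alpha$-Hölder estimate of the remainder); together with the $\lambda$-uniform decay of the coefficients $p_{j,k}, q_{h,k}$ obtained in Step~3 of the proof of Theorem \ref{banca}, this delivers the required bound after summing in $k$. The reverse inclusion $D_L(\alpha/2,\infty)\subseteq \boldsymbol{\mathcal X}_{\alpha,\mathscr B_0}$ follows from the gradient estimate of Theorem \ref{banca}(ii): indeed, $(\boldsymbol{\mathcal X},D(L))_{\alpha/2,\infty}$ embeds into the space of $\alpha$-Hölder continuous functions on each of the three smoothness components $\overline{S_0^-}$, $[0,R]\times[-\ell/2,\ell/2]$, $\overline{S_R^+}$ by the classical Hölder interpolation lemma; the jump condition $\mathscr B_0\mathbf{f}=\mathbf{0}$ and the periodicity in $y$ are inherited from $D(L)$ by density in the continuous topology.

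For (ii), I would invoke the reiteration theorem in the form
\[
D_L(1+\alpha/2,\infty)=\{\mathbf{u}\in D(L):L\mathbf{u}\in D_L(\alpha/2,\infty)\},
\]
together with (i). Any such $\mathbf{u}$ then satisfies $L\mathbf{u}\in\boldsymbol{\mathcal X}_{\alpha,\mathscr B_0}$, hence $\mathscr B_0 L\mathbf{u}=\mathbf{0}$ and $L\mathbf{u}$ vanishes at $\pm\infty$; by the Schauder-type bound of Theorem \ref{banca}(iii), $\mathbf{u}=R(\lambda,L)(\lambda\mathbf{u}-L\mathbf{u})\in\boldsymbol{\mathcal X}_{2+\alpha}$, and all the defining conditions of $\boldsymbol{\mathcal X}_{2+\alpha,\mathscr B}$ are satisfied. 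The converse is a direct verification using the same Schauder estimate and the definition of $\boldsymbol{\mathcal X}_{2+\alpha,\mathscr B}$. For the embedding \eqref{doccia}, I would use a classical Lipschitz-to-interpolation argument: for $\mathbf{f}\in\boldsymbol{\mathcal X}_{1,\mathscr B_0}$, the bound $\|\lambda R(\lambda,L)\mathbf{f}-\mathbf{f}\|_{\infty}\le c|\lambda|^{-1/2}\|\mathbf{f}\|_1$ follows by combining $\|\nabla R(\lambda,L)\mathbf{g}\|_\infty\le c|\lambda|^{-1/2}\|\mathbf{g}\|_\infty$ from Theorem \ref{banca}(ii) with the identity $\lambda R(\lambda,L)\mathbf{f}-\mathbf{f}=R(\lambda,L)L\mathbf{f}$ applied after suitable first-order approximation in the $x$-direction on each strip, where on each strip $\mathbf{f}$ is Lipschitz and $\mathscr B_0\mathbf{f}=\mathbf{0}$ ensures global Lipschitz continuity of $f_1$ at $x=0,R$.

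The main obstacle is the ``$\supseteq$'' direction of (i): controlling the series terms $\mathscr P_{\lambda,j}$ and $\mathscr Q_{\lambda,h}$ asks for a simultaneous tracking of (a) the $O(|k|^{-2-\alpha})$ decay of the traces $(\mathcal F_{k,1}f_1)(R)$, $(\mathcal G_kf_2)(0)$ coming from the Hölder regularity of $\mathbf{f}$, (b) the uniform-in-$\lambda$ estimates on the coefficients $p_{j,k},q_{h,k}$ from Step~3 of the proof of Theorem \ref{banca}, and (c) the essential use of $\mathscr B_0\mathbf{f}=\mathbf{0}$ to prevent loss of decay at the interfaces; without the jump conditions, the traces would only be bounded by $c\|\mathbf{f}\|_{\infty}(1+k^2)^{-1}$ and the $\lambda^{\alpha/2}$-weighted estimate would fail.
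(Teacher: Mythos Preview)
Your overall structure for (ii) and for the reverse inclusion $D_L(\alpha/2,\infty)\subseteq\boldsymbol{\mathcal X}_{\alpha,\mathscr B_0}$ matches the paper's: both use the gradient bound of Theorem~\ref{banca}(ii) to place $\boldsymbol{\mathcal X}_{1,\mathscr B_0}$ in the class $J_{1/2}$ between $\boldsymbol{\mathcal X}$ and $D(L)$, then reiteration and a $K$-functional argument, and both deduce (ii) from (i) via the Schauder estimate Theorem~\ref{banca}(iii). This part is fine.

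Where your proposal diverges is the inclusion $\boldsymbol{\mathcal X}_{\alpha,\mathscr B_0}\subseteq D_L(\alpha/2,\infty)$ and the embedding \eqref{doccia}. The paper does \emph{not} go through the explicit Fourier pieces of the resolvent; instead it uses a mollification splitting. Given $\mathbf f\in\boldsymbol{\mathcal X}_{\alpha,\mathscr B_0}$, one builds smooth approximants $\mathbf f_t$ by convolution (on each of the three $x$-strips, using that $\mathscr B_0\mathbf f=\mathbf 0$ and the $y$-periodicity make $f_1^\sharp\in C^\alpha_b(\R^2)$ and $\widetilde f_2,f_2^\sharp$ piecewise $C^\alpha$), so that $\|\mathbf f-\mathbf f_t\|_\infty\le ct^\alpha\|\mathbf f\|_\alpha$ and $\|D^\gamma\mathbf f_t\|_\infty\le ct^{\alpha-|\gamma|}\|\mathbf f\|_\alpha$. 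Then one writes $\omega^{\alpha/2}LR(\omega,L)\mathbf f=\omega^{\alpha/2}LR(\omega,L)(\mathbf f-\mathbf f_{\omega^{-1/2}})+\omega^{\alpha/2}R(\omega,L)L\mathbf f_{\omega^{-1/2}}$ and bounds each piece by $c\|\mathbf f\|_\alpha$ using only $\|LR(\omega,L)\|\le M$ and $\|\omega R(\omega,L)\|\le M$. The same mollification gives \eqref{doccia} when $\alpha=1$. This avoids any term-by-term analysis of the series $\mathscr P_{\lambda,j}$, $\mathscr Q_{\lambda,h}$.

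Your direct-Fourier route is plausible in outline but has two concrete inaccuracies. First, the role of $\mathscr B_0\mathbf f=\mathbf 0$ is not to improve the $k$-decay of the traces $(\mathcal F_{k,1}f_1)(R)$, $(\mathcal G_kf_2)(0)$: those involve Fourier coefficients in $y$ at a \emph{fixed} $x$, so continuity of $f_1$ across $x=0,R$ is irrelevant to them. The place where $\mathscr B_0\mathbf f=\mathbf 0$ is essential is the \emph{bulk} term $\mathscr R_{\lambda,1}f_1$: without it $f_1^\sharp$ has jumps in $x$ and is not globally $C^\alpha$ on $\R^2$, and the classical estimate $\|\lambda\mathscr R_{\lambda,1}f_1-f_1\|_\infty\le c\lambda^{-\alpha/2}\|f_1\|_{C^\alpha}$ fails. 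Second, the extra $|k|^{-\alpha}$ you invoke does not come from ``one extra integration by parts in $y$'' (that would need a full derivative), but from the standard H\"older shift $2\hat f_k=\ell^{-1}\int(f(y)-f(y+\ell/(2k)))e^{-2\pi iky/\ell}\,dy$. Finally, for \eqref{doccia} the identity $\lambda R(\lambda,L)\mathbf f-\mathbf f=R(\lambda,L)L\mathbf f$ is not available since $\mathbf f\in\boldsymbol{\mathcal X}_{1,\mathscr B_0}$ need not lie in $D(L)$; the paper's mollification sidesteps this by applying $L$ only to the smooth approximant $\mathbf f_{\omega^{-1/2}}$.
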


\begin{proof}
Throughout the proof, we assume that $\alpha$ is arbitrarily fixed in $(0,1)$.

{\em Step 1: proof of \eqref{penna}$(i)$ and \eqref{doccia}}. Given ${\bf f}=(f_1,f_2)\in\boldsymbol{\mathcal X}_{\alpha,{\mathscr B}_0}$
and $t>0$, we introduce the functions $f_{t,1}$ and $f_{t,2}$ defined by
\begin{align*}
&f_{t,1}(x,y)=\int_{H_0^+}f_1^{\sharp}(x+tx',y+ty')\varphi(x',y')dx'dy',\qquad\;\,(x,y)\in\R^2,\\
&f_{t,2}(x,y)=\int_{H_0^+}\widetilde{f_2}(x-tx',y+ty')\varphi(x',y')dx'dy',\qquad\;\,(x,y)\in [0,R]\times\R,\\
&f_{t,2}(x,y)=\int_{H_0^+}f_2^{\sharp}(x+tx',y+ty')\varphi(x',y')dx'dy',\qquad\;\,(x,y)\in H_R^+,
\end{align*}
where $\varphi$ is a positive smooth function with compact support in $(0,R)\times\R$, $\|\varphi\|_{L^1(\R^2)}=1$
and $\widetilde f_2:\overline{H^-_R}\to\C$ equals the function $f_2^{\sharp}\vartheta$ in $[0,R]\times\R$, whereas
$\widetilde f_2(x,\cdot)=f_2^{\sharp}(0,\cdot)\vartheta(x)$ if $x<0$ and $\vartheta$ is a smooth function compactly supported in $(-1,+\infty)$ and equal to $1$ in $(-1/2,+\infty)$. Since ${\mathscr B}_0{\bf f}={\bf 0}$ and $f_j(\cdot,-\ell/2)=f_j(\cdot,\ell/2)$ for $j=1,2$, the function
$f_1^{\sharp}$ belongs to $C^{\alpha}_b(\R^2;\C)$. On the other hand, the functions $\widetilde f_2$
and $f_2^{\sharp}$ belong to $C^\alpha(H^-_R;\C)$ and to $C_b^{\alpha}(H^+_R;\C)$, respectively. Moreover, $f_1^{\sharp}(\cdot,y)$ and $f_2^{\sharp}(\cdot,y)$ vanish at $\pm\infty$ and at $+\infty$, respectively, for every $y\in\R$. Hence, if we set ${\bf f}_t=(f_{t,1},f_{t,2})$, then $\f_t$ belongs to $\boldsymbol{\mathcal X}$, $\|{\bf f-f}_t\|_{\infty}\leq ct^\alpha\|{\bf f}\|_{\alpha}$ and $\|{\bf f}_t\|_{\infty}\le c\|{\bf f}\|_{\alpha}$ for every $t>0$.
Similarly, since $\int_{H_0^+}D^{\gamma}\varphi(x',y')dx'dy'=0$ for every multi-index $\gamma$, we can write
\begin{eqnarray*}
D^{\gamma}f_{t,1}(x,y)=t^{-|\gamma|}\int_{H_0^+}(f_1^{\sharp}(x+tx',y+ty')-f_1^{\sharp}(x,y))D^{\gamma}\varphi(x',y')dx'dy',\qquad\;\,(x,y)\in\R^2,
\end{eqnarray*}
so that $|D^{\gamma}f_{t,1}(x,y)|\le ct^{\alpha-|\gamma|}\|{\bf f}\|_{\alpha}$
for every $(x,y)\in\R^2$. In the same way we can estimate the derivatives of the function $f_{t,2}$, and conclude that
$t^{|\gamma|-\alpha}\|D^{\gamma}{\bf f}_t\|_{\infty}\leq c_T\|{\bf f}\|_{\alpha}$ for every $t\in (0,T]$ and $T>0$.

Now, we split $\omega^{\alpha/2}LR(\omega, L){\bf f}=\omega^{\alpha/2}LR(\omega, L)({\bf f}-{\bf f}_{\omega^{-1/2}})+\omega^{\alpha/2}LR(\omega, L){\bf f}_{\omega^{-1/2}}$ for each $\omega\in\rho(L)\cap\R$. By Theorem \ref{banca}, for $\omega\in\R$ sufficiently large (let us say $\omega\ge\omega_0>0$), it holds that $\|LR(\omega,L)\|_{L(\boldsymbol{\mathcal X})}\leq M$ for some positive constant $M$, independent of $\omega$. Hence, using the above estimates, we can infer that
$\|\omega^{\alpha/2}LR(\omega, L)({\bf f-f}_{\omega^{-1/2}})\|_{\infty}\le c\|\f\|_{\alpha}$. Next, we consider the term ${\bf v}=LR(\omega, L){\bf f}_{\omega^{-1/2}}$, which belongs to $D(L)$ and solves the equation  $\omega {\bf v}-L{\bf v}=L{\bf f}_{\omega^{-1/2}}$. If $\omega\ge\omega_0$, then we can estimate
$\omega\|{\bf v}\|_{\infty}\leq c\|L{\bf f}_{\omega^{-1/2}}\|_{\infty}\leq c\omega^{1-\alpha/2}\|{\bf f}\|_{\alpha}$.
Putting everything together, we conclude that $\omega^{\alpha/2}\|LR(\omega, L){\bf f}\|_{\infty}\leq c\|{\bf f}\|_{\alpha}$ for each $\omega\ge\omega_0$ and this
shows that ${\bf f}\in D_L(\alpha/2,\infty)$ and $\|{\bf f}\|_{D_L(\alpha/2,\infty)}\leq c\|{\bf f}\|_{\alpha}$.

Formula \eqref{doccia} can be proved just in the same way observing that, if ${\bf f}$ belongs to $\boldsymbol{\mathcal X}_{1,{\mathscr B}_0}$, then
the functions $f_1^{\sharp}$, $\widetilde f_2$ and $f_2^{\sharp}$ are bounded and Lipschitz continuous in $\R^2$ in $[0,R]\times\R$ and in $H_R^+$, respectively.

The embedding ``$\hookrightarrow$'' in \eqref{penna}(i) is a straightforward consequence of two properties:
\begin{align*}
(a)~\|\nabla{\bf u}\|_{\infty}\leq c\|{\bf u}\|_{\infty}^{\frac{1}{2}}\|{\bf u}\|_{D(L)}^{\frac{1}{2}},\quad\;\,{\bf u}\in D(L),\qquad\;\,(b)~(\boldsymbol{\mathcal X},\boldsymbol{\mathcal X}_{1,{\mathscr B}_0})_{\alpha,\infty}\hookrightarrow \boldsymbol{\mathcal X}_{\alpha,{\mathscr B}_0}.
\end{align*}
Indeed, property (a) shows that $\boldsymbol{\mathcal X}_{1,{\mathscr B}_0}$ belongs to the class $J_{1/2}$ between $\boldsymbol{\mathcal X}$ and $D(L)$, so that applying the reiteration theorem, we get
$D_L(\alpha/2,\infty)=(\boldsymbol{\mathcal X},D(L))_{\alpha/2,\infty}\subset (\boldsymbol{\mathcal X},\boldsymbol{\mathcal X}_{1,{\mathscr B}_0})_{\alpha,\infty}$ and conclude using (b).

{\em Proof of $(a)$}. It is an almost straightforward consequence of the estimate $\|\nabla R(\lambda,L)\f\|_{\infty}\le c|\lambda|^{-1/2}\|\f\|_{\infty}$ for each $\lambda\in\C$ with ${\rm Re}\lambda\ge M$, contained in Theorem \ref{banca}.
Indeed, let $L_M=L-MI$. As it is easily seen, $\rho(L_M)\supset (0,+\infty)$ and
$R(\lambda,L_M)=R(\lambda+M,L)$ for each $\lambda>0$. It thus follows that
$\sqrt{\lambda}\|\nabla R(\lambda,L_M)\|_{L(\boldsymbol{\mathcal X};\boldsymbol{\mathcal X}\times{\boldsymbol{\mathcal X}})}\le c$ for each $\lambda>0$,
so that we can estimate
\begin{eqnarray*}
\|\nabla {\bf u}\|_{\infty}=\|\nabla R(\lambda,L_M)(\lambda {\bf u}-L_M{\bf u})\|_{\infty}
\le c\lambda^{-\frac{1}{2}}\|\lambda {\bf u}-L_M{\bf u}\|_{\infty}\le c(\lambda^{\frac{1}{2}}\|{\bf u}\|_{\infty}+
\lambda^{-\frac{1}{2}}\|L_M{\bf u}\|_{\infty})
\end{eqnarray*}
for each ${\bf u}\in D(L)=D(L_M)$ and $\lambda>0$. Minimizing with respect to $\lambda>0$, we conclude the proof.

{\em Proof of $(b)$}. Let us fix a nontrivial ${\bf f}\in (\boldsymbol{\mathcal X},\boldsymbol{\mathcal X}_{1,{\mathscr B}_0})_{\alpha,\infty}$.
Since $(\boldsymbol{\mathcal X},\boldsymbol{\mathcal X}_{1,{\mathscr B}_0})_{\alpha,\infty}\hookrightarrow (\boldsymbol{\mathcal X},\boldsymbol{\mathcal X}_{1,{\mathscr B}_0})_{\alpha/2,1}$ and $\boldsymbol{\mathcal X}_{1,{\mathscr B}_0}$ is dense in $(\boldsymbol{\mathcal X},\boldsymbol{\mathcal X}_{1,{\mathscr B}_0})_{\alpha/2,1}$, we immediately deduce that $f_j(\cdot,-\ell/2)=f_j(\cdot,\ell/2)$ for $j=1,2$ and ${\mathscr B}_0{\bf f}={\bf 0}$.
Next, we recall that
\begin{eqnarray*}
\inf\left\{\|{\bf g}\|_{\infty}+t\|{\bf h}\|_{1}: {\bf f}={\bf g}+{\bf h},\,
{\bf g}\in\boldsymbol{\mathcal X}, {\bf h}\in\boldsymbol{\mathcal X}_{1,{\mathscr B}_0}\right\}
\le t^{\alpha}\|{\bf f}\|_{(\boldsymbol{\mathcal X},\boldsymbol{\mathcal X}_{1,{\mathscr B}_0})_{\alpha,\infty}},\qquad\;\,t>0.
\end{eqnarray*}
Fix $(x_j,y_j)\in S^-_0$, $j=1,2$, and take $t=\sqrt{|x_2-x_1|^2+|y_2-y_1|^2}$. Then, we can determine
${\bf g}\in\boldsymbol{\mathcal X}$ and ${\bf h}\in\boldsymbol{\mathcal X}_{1,{\mathscr B}_0}$ such that
${\bf f}={\bf g}+{\bf h}$ and
\begin{eqnarray*}
\|{\bf g}\|_{\infty}+\sqrt{|x_2-x_1|^2+|y_2-y_1|^2}\|{\bf h}\|_1\le
2\|{\bf f}\|_{(\boldsymbol{\mathcal X},\boldsymbol{\mathcal X}_{1,{\mathscr B}_0})_{\alpha,\infty}}(|x_2-x_1|^2+|y_2-y_1|^2)^{\frac{\alpha}{2}}.
\end{eqnarray*}
From this estimate we can infer that
\begin{align*}
&|f_1(x_2,y_2)-f_1(x_1,y_1)|\\
\le & |g_1(x_2,y_2)-g_1(x_1,y_1)|+|h_1(x_2,y_2)-h_1(x_1,y_1)|
\le 2\|{\bf g}\|_{\infty}+\sqrt{|x_2-x_1|^2+|y_2-y_1|^2}\|{\bf h}\|_1\\
\le & 4\|{\bf f}\|_{(\boldsymbol{\mathcal X},\boldsymbol{\mathcal X}_{1,{\mathscr B}_0})_{\alpha,\infty}}(|x_2-x_1|^2+|y_2-y_1|^2)^{\frac{\alpha}{2}}.
\end{align*}
Hence, $f_1\in C_b^{\alpha}(S^-_0)$ and
$\|f_1\|_{C_b^{\alpha}(S^-_0)}\le 5\|{\bf f}\|_{(\boldsymbol{\mathcal X},\boldsymbol{\mathcal X}_{1,{\mathscr B}_0})_{\alpha,\infty}}$.

The same argument can be used to prove that $f_1, f_2\in C^{\alpha}((0,R)\times (-\ell/2,\ell/2);\C)\cap C^{\alpha}_b(S_R^+;\C)$ and
\begin{align*}
&\|f_1\|_{C^{\alpha}_b(S_R^+;\C)}+\sum_{j=1}^2\|f_j\|_{C^{\alpha}((0,R)\times (-\ell/2,\ell/2);\C)}+\|f_2\|_{C^{\alpha}_b(S_R^+;\C)}\le c
\|{\bf f}\|_{(\boldsymbol{\mathcal X},\boldsymbol{\mathcal X}_{1,{\mathscr B}_0})_{\alpha,\infty}}.
\end{align*}

The proof of (b) is now complete.

{\em Step 2: proof of \eqref{penna}$(ii)$.} The embedding ``$\hookleftarrow$'' easily follows from the first part of the proof.
The other embedding follows from Theorem \ref{banca}. Indeed, fix ${\bf u}\in D_L(1+\alpha/2,\infty)$ and $\lambda\in\rho(L)$. Then, the function
${\bf f}:=\lambda {\bf u}-L{\bf u}$ belongs to $D_L(\alpha/2,\infty)=\boldsymbol{\mathcal X}_{\alpha,{\mathscr B}_0}$ and
$\|{\bf f}\|_{\alpha}\le c\|{\bf u}\|_{D_L(1+\alpha/2,\infty)}$ for some positive constant $c$, independent of ${\bf u}$.
Since ${\bf u}=R(\lambda,L){\bf f}$, Theorem \ref{banca} implies that ${\bf u}\in\boldsymbol{\mathcal X}_{2+\alpha}$ and
$\|{\bf u}\|_{2+\alpha}\le c\|{\bf f}\|_{\alpha}\le c\|{\bf u}\|_{D_L(1+\alpha/2,\infty)}$. Clearly, ${\mathscr B}{\bf u}={\bf 0}$, ${\mathscr B}_0L{\bf u}={\bf 0}$
and $\lim_{x\to\pm\infty}(L\uu)_1(x,y)=\lim_{x\to +\infty}(L\uu)_2(x,y)=0$, and this completes the proof.
\end{proof}

\begin{rmk}
\label{rmk-5.6}
{\rm From the classical theory of analytic semigroups (see e.g., \cite{lunardi}) and Proposition \ref{perdita} it follows that the part $L_{\alpha}$ of $L$ in
${\boldsymbol {\mathcal X}}_{\alpha,{\mathscr B}_0}$,
i.e., the restriction of $L$ to ${\boldsymbol {\mathcal X}}_{2+\alpha,{\mathscr B}_0}$,
generates an analytic semigroup for each $\alpha\in (0,1)$.}
\end{rmk}

\subsection{The lifting operators}
\label{subsection-lifting}
In this subsection we introduce some lifting operators which are used in the proof of the Main Theorem and Theorem \ref{soliera}.

To begin with, we consider the operator $\mathscr M$ defined by $\mathscr M\boldsymbol\psi=(0,M\psi_1+\frac{1}{2}(M\psi_2)(\cdot-R,\cdot))$ on functions $\boldsymbol\psi\in C([-\ell/2,\ell/2];\R^2)$ such that $\boldsymbol{\psi}(-\ell/2)=\boldsymbol{\psi}(\ell/2)$, where
\begin{align*}
(M\psi_1)(x,y) & :=|x|\eta (x)\int_{\R}\varphi(\xi)\psi_1^{\sharp}(y+\xi x)d\xi,\qquad\;\,(x,y)\in\R^2.
\end{align*}
Here, $\eta$ and $\varphi$ are smooth functions such that $\chi_{(-R/4,R/4)}\le\eta\le\chi_{(-R/2,R/2)}$,  $\varphi$ is an even nonnegative function compactly supported in $(-1,1)$ with $\|\varphi\|_{L^1(\R)}=1$. As it is easily seen, ${\mathscr M}\boldsymbol\psi\in\boldsymbol{\mathcal X}_{2+\alpha}$,
$\mathscr B\mathscr M\boldsymbol\psi=(0,0,0,\psi_1,0,0,\psi_2)$ for each $\boldsymbol\psi$ as above.

Next, we introduce the operator ${\mathscr N}$ defined by
\begin{align*}
(\mathscr N\boldsymbol h)_1=&N_1h_1+(N_1h_2)\circ\tau_R+\frac{N_2h_3}{2{\rm Le}}+\frac{(N_2h_5)\circ\tau_R}{2\theta_i R}+N_3\bigg (h_8-\frac1{{\rm Le}}h_3-h_1''\bigg )\\
&+\bigg [N_3\bigg (h_9-\frac{1}{\theta_i R}h_5-h_2''\bigg )\bigg ]\circ\tau_R,\\
({\mathscr N}\boldsymbol h)_2=& \bigg [N_1\bigg (h_6-\frac{\rm Le}{\theta_i R}h_5\bigg )\bigg ]\circ\tau_R+N_2h_4+\frac{1}{2}\bigg [N_3\bigg (h_7-{\rm Le} h_6+\frac{{\rm Le}^2}{\theta_i R}h_5\bigg )\bigg ]\circ\tau_R
\end{align*}
on smooth enough functions $\boldsymbol h:[-\ell/2,\ell/2]\to\R^9$, where $\tau_R(x,y)=(x-R,y)$,
\begin{align*}
(N_1\zeta)(x,y)=&\frac{1}{2}\eta(x)[\chi_{[0,+\infty)}(x)-\chi_{(-\infty,0]}(x)]\int_{\R}\varphi(\sigma)\zeta^\sharp(y+\sigma x^3)d\sigma,\\[1mm]
(N_2\zeta)(x,y)=&|x|\eta (x)\int_{\R}\varphi(\sigma)\zeta^{\sharp}(y+\sigma |x|)d\sigma,
\qquad  (N_3\zeta)(x,y)=\frac{x}{4}(N_2\zeta)(x,y),
\end{align*}
for each $(x,y)\in\R^2$. Moreover, we set ${\mathscr B}_*{\bf v}=(\widetilde{\mathscr B}{\bf v},{\mathscr B}_0{\mathscr L}{\bf v})$ for each ${\bf v}\in\boldsymbol{\mathcal X}_{2+\alpha}$, where $\widetilde{\mathscr B}$ is the operator in Remark \ref{rmk-opB} and the operator ${\mathscr B}_0$ is defined in \eqref{operatore-B0}.

In the next lemma, we deal with real valued spaces. In particular, by $\widetilde D(L_{\alpha})$ we denote the subset of $D(L_{\alpha})$ of real valued functions.

\begin{lemm}
\label{lemma-5.7}
The following properties are satisfied.
\begin{enumerate}[\rm (i)]
\item
The operator ${\mathscr N}$ is bounded from the set $(\boldsymbol{\mathcal X}_{2+\alpha})^2\times (\boldsymbol{\mathcal X}_{1+\alpha})^5\times (\boldsymbol{\mathcal X}_{\alpha})^2$ into $\boldsymbol{\mathcal X}_{2+\alpha}$. Moreover, the operator $P=I-{\mathscr N}{\mathscr B}_*:\boldsymbol{\mathcal X}_{2+\alpha}\to \boldsymbol{\mathcal X}_{2+\alpha}$ is a projection onto the kernel of $B_*$ which coincides with $\widetilde D(L_\alpha)$\hspace{3pt}\footnote{see Remark \ref{rmk-5.6}}.
\item
Let $\boldsymbol{\mathcal I}$ denote  the set of all functions $\uu\in {\boldsymbol {\mathcal X}}_{2+\alpha}$ such that ${\mathscr B}{\bf u}=\mathscr H({\bf u})$, $\mathscr B_0(\mathcal L{\bf u}+\mathscr F({\bf u}))={\bf 0}$.
Then, there exist $r_0, r_1>0$ such that $\boldsymbol{\mathcal I}\cap  B(0,r_0)$ is the graph of a smooth function $\Upsilon:B(0,r_1)\subset\widetilde D(L_{\alpha})\to (I-P)(\boldsymbol{\mathcal X}_{2+\alpha})$ such that $\Upsilon({\bf 0})={\bf 0}$.
\end{enumerate}
\end{lemm}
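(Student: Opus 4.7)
For part (i), my plan is to treat the boundedness of $\mathscr N$ and the identity $\mathscr B_*\mathscr N=I$ separately, then read off the projection property from the latter. The boundedness reduces to a Seeley--Stein type estimate for each building block $N_1,N_2,N_3,M$: writing for example $(N_2\zeta)(x,y)=|x|\eta(x)(\varphi_{|x|}\ast\zeta^{\sharp})(y)$ with $\varphi_{|x|}$ a rescaled mollifier, and the analogous representations for $N_1,N_3,M$, one checks that the convolution in $y$ gains regularity at the rate dictated by $|x|$ while the prefactor $|x|$, $x$ or $x|x|/4$ supplies the correct vanishing order on the interface. Standard H\"older estimates for convolutions then yield that each $N_i$ sends the scalar interface space of the appropriate regularity into $\boldsymbol{\mathcal X}_{2+\alpha}$ continuously, and summing the nine contributions in the definition of $\mathscr N$ gives the stated mapping property. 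The combinatorial identities $\widetilde{\mathscr B}_j(\mathscr N\boldsymbol h)=h_j$ for $j=1,\dots,7$ and $(\mathscr B_0\mathscr L\mathscr N\boldsymbol h)_j=h_{7+j}$ for $j=1,2$ are then verified by direct evaluation of the one-sided traces of $\mathscr N\boldsymbol h$ and of $\mathscr L\mathscr N\boldsymbol h$ at $x=0^\pm$ and $x=R^\pm$; the subtractions $h_8-\Le^{-1}h_3-h_1''$ and $h_9-(\theta_iR)^{-1}h_5-h_2''$ inside the $N_3$ terms are engineered precisely to cancel the spurious contributions of the first four pieces of $(\mathscr N\boldsymbol h)_1$ under $\mathscr B_0\mathscr L$. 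From $\mathscr B_*\mathscr N=I$ one deduces $P^2=I-2\mathscr N\mathscr B_*+\mathscr N(\mathscr B_*\mathscr N)\mathscr B_*=P$, as well as $\mathscr B_*P=\mathscr B_*-(\mathscr B_*\mathscr N)\mathscr B_*=0$ and $P\uu=\uu$ whenever $\mathscr B_*\uu={\bf 0}$, so that $P$ is a projection onto $\ker\mathscr B_*$. Finally, $\ker\mathscr B_*\cap\boldsymbol{\mathcal X}_{2+\alpha}=\boldsymbol{\mathcal X}_{2+\alpha,\mathscr B}=\widetilde D(L_\alpha)$ by Remark~\ref{rmk-opB} (which allows the replacement of $\mathscr B$ by $\widetilde{\mathscr B}$ on $\boldsymbol{\mathcal X}_{2+\alpha}$) together with Proposition~\ref{perdita}(ii).

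For part (ii), I will apply the implicit function theorem to the map
\begin{align*}
\Phi(\uu_0,\uu_1)=\bigl(\widetilde{\mathscr B}(\uu_0+\uu_1)-\mathscr H(\uu_0+\uu_1),\;\mathscr B_0\bigl(\mathscr L(\uu_0+\uu_1)+\mathscr F(\uu_0+\uu_1)\bigr)\bigr),
\end{align*}
defined on a neighborhood of $({\bf 0},{\bf 0})$ in $\widetilde D(L_\alpha)\times(I-P)\boldsymbol{\mathcal X}_{2+\alpha}$, valued in the target space of $\mathscr B_*$ established in part (i). Inspection of the explicit formulas for $\mathscr F,\mathscr F_1,\mathscr F_2,\mathscr G_1,\mathscr G_2,\Lambda$ and $\mathscr H$ shows that each component is a rational expression whose denominator equals a positive constant plus a perturbation linear in traces of $\uu$ and of its derivatives (e.g.\ $1+\theta_i^{-1}\beta'(\pt-R)\zeta(R^+,\cdot)-R\Le^{-1}\beta'\upsilon(0,\cdot)$, or $\theta_i-\zeta(R^+,\cdot)-\zeta_x(R^+,\cdot)$, or $\Le^2+R\Le(\Le\upsilon(0^+,\cdot)+\upsilon_x(0^+,\cdot))$), hence bounded away from zero for $\uu$ sufficiently small in $\boldsymbol{\mathcal X}_{2+\alpha}$; therefore $\mathscr F$ and $\mathscr H$ are real analytic in a neighborhood of $\uu={\bf 0}$. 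Moreover, every monomial in the numerators of the components of $\mathscr F$ and $\mathscr H$ contains at least two factors that vanish at $\uu={\bf 0}$ (the traces $\zeta(R,\cdot)$, $\upsilon(0,\cdot)$, $\zeta_x$, $\upsilon_x$ and their $y$-derivatives always occur multiplied by another such quantity, possibly paired with the coefficients $\Theta^{(0)}_{xx},\Phi^{(0)}_{xx}$ and similar), so that $\mathscr F({\bf 0})=\mathscr H({\bf 0})={\bf 0}$ and $D\mathscr F({\bf 0})=D\mathscr H({\bf 0})=0$. Consequently, $\Phi({\bf 0},{\bf 0})={\bf 0}$ and $D_{\uu_1}\Phi({\bf 0},{\bf 0})=(\widetilde{\mathscr B},\mathscr B_0\mathscr L)=\mathscr B_*$. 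By part (i), the restriction of $\mathscr B_*$ to $(I-P)\boldsymbol{\mathcal X}_{2+\alpha}$ is a topological isomorphism onto its range with bounded inverse $\mathscr N$, so the implicit function theorem yields $r_0,r_1>0$ and a smooth map $\Upsilon\colon B({\bf 0},r_1)\subset\widetilde D(L_\alpha)\to(I-P)\boldsymbol{\mathcal X}_{2+\alpha}$ with $\Upsilon({\bf 0})={\bf 0}$ such that $\boldsymbol{\mathcal I}\cap B({\bf 0},r_0)$ is the graph of $\Upsilon$.

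The hardest step is certainly the explicit verification of $\mathscr B_*\mathscr N=I$: the seven equations composing $\widetilde{\mathscr B}$ and the two jumps composing $\mathscr B_0\mathscr L$ involve traces, normal derivatives and second derivatives of both components of $\uu$ at the two distinct interfaces $x=0$ and $x=R$, while the nine building blocks assembled in $\mathscr N$ are calibrated so that each one targets a single prescribed quantity while producing only controllable contamination elsewhere. One must compute the values and the first and second normal derivatives of each $N_ih_j$ at $x=0^\pm$ and $x=R^\pm$---using in particular that $N_1h$ creates a value jump $[N_1h]_0=h$ at the interface while $N_2h$ and $N_3h$ both vanish at $x=0$, that $[\partial_xN_2h]_0$ and $[\partial_{xx}N_3h]_0$ are proportional to $h$, and that the tangential derivatives $\partial_yN_2h$, $\partial_yN_3h$ at $x=0$ vanish thanks to the mollifier scaling---and then accumulate the contributions through all nine rows of $\mathscr B_*$. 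Only once this bookkeeping is complete does the projection and implicit-function machinery outlined above apply.
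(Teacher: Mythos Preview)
Your proposal is correct and follows essentially the same route as the paper: for (i) you verify boundedness of $\mathscr N$ and the right-inverse identity $\mathscr B_*\mathscr N=I$ by direct computation (the paper calls these ``long but straightforward computations'' and leaves them to the reader), then deduce the projection property and the identification $\ker\mathscr B_*=\widetilde D(L_\alpha)$; for (ii) you apply the implicit function theorem to the map $\Phi(\uu_0,\uu_1)=(\widetilde{\mathscr B}(\uu_0+\uu_1)-\mathscr H(\uu_0+\uu_1),\mathscr B_0(\mathscr L(\uu_0+\uu_1)+\mathscr F(\uu_0+\uu_1)))$, exactly as the paper does with its map $\mathscr K$. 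Your added detail---the Seeley--Stein type estimates for $N_1,N_2,N_3$, the explicit algebra $P^2=P$, and the observation that every monomial in $\mathscr F,\mathscr H$ is at least quadratic---fleshes out points the paper leaves implicit, but the architecture is the same.
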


\begin{proof}
(i) Showing that ${\mathscr N}$ is a bounded operator is an easy task.
Some long but straightforward computations reveal that ${\mathscr B}_*{\mathscr N}=I$ on $(\boldsymbol{\mathcal X}_{2+\alpha})^2\times (\boldsymbol{\mathcal X}_{1+\alpha})^5\times (\boldsymbol{\mathcal X}_{\alpha})^2$ and allow to prove that $P$ is a projection onto
of ${\rm Ker}(B_*)=\widetilde D(L_{\alpha})$. In particular, we can split $\boldsymbol{\mathcal X}_{2+\alpha}=\widetilde D(L_{\alpha})\oplus (I-P)(\boldsymbol{\mathcal X}_{2+\alpha})$.
The details are left to the reader.

(ii) Let ${\mathscr K}:B(0,r_0)\subset\widetilde D(L_\alpha)\oplus (I-P)(\boldsymbol{\mathcal X}_{2+\alpha})\to (\boldsymbol{\mathcal X}_{2+\alpha})^2\times (\boldsymbol{\mathcal X}_{1+\alpha})^5\times (\boldsymbol{\mathcal X}_{\alpha})^2$ be the operator defined by
${\mathscr K}({\bf u},{\bf v})=(\mathscr B({\bf u}+{\bf v})-\mathscr H({\bf u}+{\bf v}), \mathscr B_0(\mathcal L{\bf u}+\mathcal L{\bf v}+\mathscr F({\bf u}+{\bf v})))$ for each $(\uu,{\bf v})\in B(0,r_0)$, with $r_0$ small enough to guarantee that ${\mathscr K}$ is well defined. Since the functions $\mathscr F$ and $\mathscr H$ are quadratic at ${\bf 0}$, it follows that ${\mathscr K}({\bf 0},{\bf 0})={\bf 0}$ and ${\mathscr K}$ is Fr\'echet differentiable at $({\bf 0},{\bf 0})$, with ${\mathscr K}_{\bf u}({\bf 0},{\bf 0})={\mathscr B}_*$. In view of (i), ${\mathscr B}_*$ is an isomorphism from $(I-P)(\boldsymbol{\mathcal X}_{2+\alpha})$ to $(\boldsymbol{\mathcal X}_{2+\alpha})^2\times (\boldsymbol{\mathcal X}_{1+\alpha})^5\times (\boldsymbol{\mathcal X}_{\alpha})^2$. Thus, we can invoke the implicit function theorem to complete the proof.
\end{proof}

\section{Solving the nonlinear problem \eqref{asta}}
\label{sect-5}

Now we are able to solve the nonlinear Cauchy problem
\begin{align}
\left\{
\begin{array}{lll}
D_t{\bf u}={\mathscr L}{\bf u}+\mathscr F({\bf u}),\\[1mm]
\mathscr B{\bf u}=\mathscr H({\bf u}), \\[1mm]
D^{\gamma_1}_xD^{\gamma_2}_y{\bf u}(\cdot,\cdot,-\ell/2)=D^{\gamma_1}_xD^{\gamma_2}_y{\bf u}(\cdot,\cdot,\ell/2), &&\gamma_1+\gamma_2\le 2,
\end{array}
\right.
\label{asta1}
\end{align}
for the unknown ${\bf u}=(u,w)$. Also in this section we assume that the function spaces that we deal with are real valued ones.
\begin{thm}\label{soliera}
Fix $\alpha\in(0,1)$ and $T>0$. Then, there exists $r_0=r_0(T)>0$ such that, for each ${\bf u}_0\in B(0,r_0)\subset \boldsymbol{\mathcal X}_{2+\alpha}$ satisfying the compatibility conditions $\mathscr B{\bf u}_0=\mathscr H({\bf u}_0)$, $\mathscr B_0({\mathscr L}{\bf u}_0+\mathscr F({\bf u_0}))={\bf 0}$ and $D^{\gamma}{\bf u}_0(\cdot,-\ell/2)=D^{\gamma}{\bf u_0}(\cdot,\ell/2)$ for each multi-index $\gamma$ with length at most two,
Problem \eqref{asta1} admits a unique solution ${\bf u}\in\boldsymbol{\mathcal Y}_{2+\alpha}$ with ${\bf u}(0,\cdot)={\bf u}_0$. Moreover,
$\|{\bf u}\|_{\boldsymbol{\mathcal Y}_{2+\alpha}}\leq c\|{\bf u}_0\|_{2+\alpha}$.
\end{thm}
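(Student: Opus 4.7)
The strategy is a contraction--mapping argument in the parabolic H\"older space $\boldsymbol{\mathcal Y}_{2+\alpha}(0,T)$. Given an admissible initial datum $\mathbf{u}_0$, the plan is to define $T(\mathbf{u}) = \mathbf{z}$, where $\mathbf{z}$ is the unique solution of the \emph{linear} non-autonomous problem
\begin{align*}
D_t\mathbf{z} = \mathscr L\mathbf{z} + \mathscr F(\mathbf{u}), \qquad \mathscr B\mathbf{z} = \mathscr H(\mathbf{u}), \qquad \mathbf{z}(0,\cdot,\cdot) = \mathbf{u}_0,
\end{align*}
and to show that, for $\mathbf{u}_0$ sufficiently small, $T$ has a unique fixed point in a small ball of the affine set $\{\mathbf{u}\in\boldsymbol{\mathcal Y}_{2+\alpha}(0,T) : \mathbf{u}(0,\cdot,\cdot) = \mathbf{u}_0\}$. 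The two key ingredients will be (a) maximal H\"older regularity for the linear problem, and (b) superlinear Lipschitz estimates for the nonlinearities $\mathscr F$ and $\mathscr H$.

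For (a), the homogeneous case $\mathscr B\mathbf{z} = \mathbf{0}$ fits into the classical abstract H\"older theory of analytic semigroups applied to the generator $L$ of Theorem \ref{banca}: the identifications $D_L(\alpha/2,\infty) = \boldsymbol{\mathcal X}_{\alpha,\mathscr B_0}$ and $D_L(1+\alpha/2,\infty) = \boldsymbol{\mathcal X}_{2+\alpha,\mathscr B}$ provided by Proposition \ref{perdita} are exactly what is needed to read the optimal bound as
\begin{align*}
\|\mathbf{z}\|_{\boldsymbol{\mathcal Y}_{2+\alpha}} \le c_T\bigl(\|\mathbf{u}_0\|_{2+\alpha} + \|\mathbf{f}\|_{\boldsymbol{\mathcal Y}_\alpha}\bigr).
\end{align*}
Inhomogeneous boundary data will then be handled by a time-dependent version of the lifting operator $\mathscr N$ from Lemma \ref{lemma-5.7}(i): one constructs $\mathcal N\mathbf{h}\in\boldsymbol{\mathcal Y}_{2+\alpha}$ with $\mathscr B_\ast(\mathcal N\mathbf{h}) = \mathbf{h}$, and applies the homogeneous estimate to $\mathbf{z} - \mathcal N\mathbf{h}$. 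For (b), the essential observation, already used in the proof of Lemma \ref{lemma-5.7}(ii), is that $\mathscr F$ and $\mathscr H$ vanish at $\mathbf{0}$ together with their first Fr\'echet derivatives, being quadratic in $\mathbf{u}$ and in (traces of) its derivatives up to order two. A term-by-term inspection of the explicit formulas, combined with the Banach-algebra structure of parabolic H\"older spaces and the continuity of the traces $\mathbf{u}(\cdot,0^\pm,\cdot)$, $\mathbf{u}(\cdot,R^\pm,\cdot)$ into spaces of higher regularity, should yield bounds of the form
\begin{align*}
\|\mathscr F(\mathbf{u}_1) - \mathscr F(\mathbf{u}_2)\|_{\boldsymbol{\mathcal Y}_\alpha} + \|\mathscr H(\mathbf{u}_1) - \mathscr H(\mathbf{u}_2)\|_{\ast} \le C\bigl(\|\mathbf{u}_1\|_{\boldsymbol{\mathcal Y}_{2+\alpha}} + \|\mathbf{u}_2\|_{\boldsymbol{\mathcal Y}_{2+\alpha}}\bigr)\|\mathbf{u}_1 - \mathbf{u}_2\|_{\boldsymbol{\mathcal Y}_{2+\alpha}},
\end{align*}
where $\|\cdot\|_\ast$ is the appropriate norm for the lifting step.

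The main obstacle I expect is the compatibility bookkeeping. For $\mathbf{z} = T(\mathbf{u})$ to belong to $\boldsymbol{\mathcal Y}_{2+\alpha}$ up to $t = 0$, the traces at $t = 0$ must match: $\mathscr B\mathbf{u}_0 = \mathscr H(\mathbf{u})|_{t=0}$ and $\mathscr B_0(\mathscr L\mathbf{u}_0 + \mathscr F(\mathbf{u})|_{t=0}) = \mathbf{0}$. The first is automatic on the affine subset $\mathbf{u}(0,\cdot,\cdot) = \mathbf{u}_0$, while the two compatibility conditions imposed on $\mathbf{u}_0$ in the statement are exactly what guarantees the second one and the non-emptiness of the relevant ball (for instance, the time-independent extension of $\mathbf{u}_0$ must itself sit in $\boldsymbol{\mathcal Y}_{2+\alpha}$). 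Once this is sorted out, the linear estimate composed with the Lipschitz bound on the nonlinear terms produces a contraction constant proportional to $\rho + \|\mathbf{u}_0\|_{2+\alpha}$, so $T$ becomes a strict contraction on a ball of radius $\rho \sim \|\mathbf{u}_0\|_{2+\alpha}$ whenever $\|\mathbf{u}_0\|_{2+\alpha}$ is small. Banach's fixed-point theorem then delivers the unique solution $\mathbf{u}\in\boldsymbol{\mathcal Y}_{2+\alpha}$ satisfying $\|\mathbf{u}\|_{\boldsymbol{\mathcal Y}_{2+\alpha}}\le c\|\mathbf{u}_0\|_{2+\alpha}$.
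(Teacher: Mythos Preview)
Your overall strategy---maximal regularity for the linear problem plus quadratic Lipschitz bounds on $\mathscr F,\mathscr H$, then contraction---is exactly what the paper does. But there is a genuine gap in how you handle the inhomogeneous boundary data, and it is precisely where the paper's proof invests its effort.

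You propose to construct $\mathcal N\mathbf h\in\boldsymbol{\mathcal Y}_{2+\alpha}$ with $\mathscr B_\ast(\mathcal N\mathbf h)=\mathbf h$, where $\mathbf h=\mathscr H(\mathbf u)$, and then apply the homogeneous estimate to $\mathbf z-\mathcal N\mathbf h$. This does not work: $\mathscr H(\mathbf u)$ involves traces of \emph{first}-order spatial derivatives of $\mathbf u$, so for $\mathbf u\in\boldsymbol{\mathcal Y}_{2+\alpha}$ it only lies in $C^{(1+\alpha)/2,1+\alpha}$. Any spatial lifting operator inherits this time regularity, so $\mathcal N\mathbf h$ is merely $C^{(1+\alpha)/2}$ in time and in particular has no classical time derivative. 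Hence $\mathcal N\mathbf h\notin\boldsymbol{\mathcal Y}_{2+\alpha}$, and the right-hand side of the equation for $\mathbf z-\mathcal N\mathbf h$ would contain the undefined term $D_t\mathcal N\mathbf h$.

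The paper fixes this with a different decomposition. Using the simpler lifting $\mathscr M$ (not $\mathscr N$; note $\mathscr H$ has only its 4th and 7th components nonzero), it writes $\mathbf u=\mathbf v+\mathbf w$ where $\mathbf v$ solves the homogeneous-boundary problem $D_t\mathbf v=L\mathbf v+\mathbf f+\mathscr L\mathscr M\boldsymbol\psi$, while $\mathbf w$ carries the boundary data and is obtained as $\mathbf w=-L\widetilde{\mathbf w}+\mathscr M\boldsymbol\psi(0,\cdot)$ with $\widetilde{\mathbf w}(t)=\int_0^t e^{(t-s)L}(\mathscr M\boldsymbol\psi(s,\cdot)-\mathscr M\boldsymbol\psi(0,\cdot))\,ds$. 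The point is that the embedding $\boldsymbol{\mathcal X}_{1,\mathscr B_0}\hookrightarrow D_L(1/2,\infty)$ (formula \eqref{doccia}) places $\mathscr M\boldsymbol\psi$ in $C^{(1+\alpha)/2}([0,T];D_L(1/2,\infty))$, and the analytic smoothing of the semigroup then makes $L\widetilde{\mathbf w}$ regular enough that $\mathbf w\in\boldsymbol{\mathcal Y}_{2+\alpha}$. No time differentiation of $\boldsymbol\psi$ is needed. Once this linear estimate is in place, your Step (b) and the contraction argument go through as you describe.
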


\begin{proof}
The proof can be obtained arguing as in the proof of \cite[Theorem 4.1]{lorenzi-1}. For this purpose, we just sketch the main points.

We first need to prove optimal regularity results for the linear version of Problem \eqref{asta1}, i.e., with the problem
\begin{equation}
\label{wallace}
\left\{
\begin{array}{lll}
D_t{\bf u}(t,\cdot,\cdot)={\mathscr L}{\bf u}(t,\cdot,\cdot)+{\bf f}(t,\cdot,\cdot), &t\in [0,T],\\[1mm]
\mathscr B({\bf u}(t,\cdot,\cdot))={\bf h}(t,\cdot), &t\in [0,T],\\[1mm]
D^{\gamma_1}_xD^{\gamma_2}_y{\bf u}(t,\cdot,-\ell/2)=D^{\gamma_1}_xD^{\gamma_2}_y{\bf u}(t,\cdot,\ell/2), &t\in [0,T], &\gamma_1+\gamma_2\le 2,\\[1mm]
{\bf u}(0,\cdot)={\bf u_0},
\end{array}
\right.
\end{equation}
when ${\bf f}\in \boldsymbol{\mathcal Y}_{\alpha}$, ${\bf u_0}\in\boldsymbol{\mathcal X}_{2+\alpha}$,
when $h_j\equiv 0$ if $j\neq 4,7$, $h_4=\psi_1$, $h_7=\psi_2$, $\boldsymbol{\psi}=(\psi_1,\psi_2)\in C^{(1+\alpha)/2,1+\alpha}((0,T)\times (-\ell/2,\ell/2);\R^2)$ satisfy the compatibility conditions
\begin{itemize}
\item
$\mathscr B{\bf u_0}={\bf h}(0,\cdot)$, $\mathscr B_0({\mathscr L}{\bf u_0}(0,\cdot)+{\bf f}(0,\cdot))={\bf 0}$;
\item
${\bf f}(0,\cdot,-\ell/2)={\bf f}(0,\cdot,\ell/2)$, $D^{\gamma}{\bf u_0}(\cdot,-\ell/2)=D^{\gamma}{\bf u_0}(\cdot,\ell/2)$ and
 $D_y^{(j)}\boldsymbol\psi(\cdot,-\ell/2)=D_y^{(j)}\boldsymbol\psi(\cdot,\ell/2)$ for every multi-index $\gamma$ with length at most two and $j=0,1$.
\end{itemize}
We also need to show the estimate
\begin{align}
\|{\bf u}\|_{\boldsymbol{\mathcal Y}_{2+\alpha}}\leq c_0(\|{\bf f}\|_{\boldsymbol{\mathcal Y}_{\alpha}}
+\|{\bf u_0}\|_{2+\alpha}+\|\boldsymbol{\psi}\|_{C^{(1+\alpha)/2,1+\alpha}((0,T)\times (-\ell/2,\ell/2);\R^2)}).
\label{argine}
\end{align}
for its unique solution ${\bf u}\in\boldsymbol{\mathcal Y}_{2+\alpha}$. This is the content of Steps 1 to 3.

{\em Step 1.} To begin with, we note that $Mh\in C^{(1+\alpha)/2,2+\alpha}_b((0,T)\times H_0^-)\cap C^{(1+\alpha)/2,2+\alpha}_b((0,T)\times H_0^+)$
for all $h\in C^{(1+\alpha)/2,1+\alpha}((0,T)\times (-\ell/2,\ell/2))$ such that $D_y^{(j)}h(\cdot,-\ell/2)=D_y^{(j)}h(\cdot,\ell/2)$ ($j=0,1$), and
\begin{align*}
\|Mh\|_{C^{(1+\alpha)/2,2+\alpha}_b((0,T)\times H_0^-)}
+\|Mh\|_{C^{(1+\alpha)/2,2+\alpha}_b((0,T)\times H_0^+)}\le c\|h\|_{C^{(1+\alpha)/2,1+\alpha}((0,T)\times (-\ell/2,\ell/2))}.
\end{align*}
Thus, the function ${\bf f}+{\mathscr L}{\mathscr M}\boldsymbol\psi$ belongs to $C^{\alpha/2}([0,T];\boldsymbol{\mathcal X})$.
Moreover, by Proposition \ref{perdita} and the compatibility conditions on ${\bf u}_0$ it follows that
${\bf u}_0-{\mathscr M}(\boldsymbol{\psi}(0,\cdot))\in D(L)$,  ${\mathscr L}{\bf u}_0+{\bf f}(0,\cdot)\in D_L(\alpha/2,\infty)$. The theory of analytic semigroups (see e.g., \cite[Chapter 4]{lunardi}),
Theorem \ref{banca} and Proposition \ref{perdita} show that there exists a unique function ${\bf v}\in C^{1+\alpha/2}([0,T];\boldsymbol{\mathcal X})\cap C([0,T];D(L))$ which solves the equation
$D_t{\bf v}={\mathscr L}{\bf v}+{\bf f}+{\mathscr L}\mathscr M\boldsymbol{\psi}$ and satisfies the condition ${\bf v}(0,\cdot)={\bf u}_0$. In addition $D_t{\bf v}$ is bounded with values in
$\boldsymbol{\mathcal X}_{\alpha,{\mathscr B}_0}$ (which implies that $D_t{\bf v}\in \boldsymbol{\mathcal Y}_{\alpha}$) and ${\mathscr L}{\bf v}\in C^{\alpha}([0,T];\boldsymbol{\mathcal X})$.
By difference, ${\mathscr L}{\bf v}=L{\bf v}$ is bounded in $[0,T]$ with values in $\boldsymbol{\mathcal X}_{\alpha}$ and, in view of Theorem \ref{banca},
${\bf v}$ is bounded in $[0,T]$ with values in $\boldsymbol{\mathcal X}_{2+\alpha}$. In particular, ${\mathscr B}{\bf v}={\bf 0}$ and $D^{\gamma}{\bf v}(\cdot,\cdot,-\ell/2)=D^{\gamma}{\bf v}(\cdot,\cdot,\ell/2)$ for every $|\gamma|\le 2$. Further, $\|{\bf v}(t,\cdot)\|_{2+\alpha}+\|D_t{\bf v}\|_{\boldsymbol{\mathcal Y}_{\alpha}}\le
c(\|{\bf f}\|_{\boldsymbol{\mathcal Y}_{\alpha}}
+\|{\bf u_0}\|_{2+\alpha}+\|\boldsymbol{\psi}\|_{C^{(1+\alpha)/2,1+\alpha}((0,T)\times (-\ell/2,\ell/2);\R^2)})$ for every $t\in [0,T]$.

{\em Step 2.} Let $\widetilde{\bf w}$ be the function defined by
$\widetilde{\bf w}(t,\cdot,\cdot)=\int_0^te^{(t-s)L}({\mathscr M}\boldsymbol\psi(s,\cdot)-{\mathscr M}\boldsymbol\psi(0,\cdot))ds$ for every $t\in [0,T]$,
where $\{e^{tL}\}$ is the analytic semigroup generated by $L$ in $\boldsymbol{\mathcal X}$.
Taking \eqref{doccia} into account, it follows that $\|\mathscr M\boldsymbol{\psi}\|_{C^{(1+\alpha)/2}([0,T], D_L(1/2,\infty))}
\leq c\|\boldsymbol\psi\|_{C^{(1+\alpha)/2,1+\alpha}((0,T)\times (-\ell/2,\ell/2);\R^2)}$. Again by the theory of analytic semigroups we infer that
$\widetilde{\bf w}\in C([0,T];D(L))$, $D_t\widetilde{\bf w}$ is bounded in $[0,T]$ with values in $\boldsymbol{\mathcal X}_{2+\alpha,{\mathscr B}}$, $L\widetilde{\bf w}\in C^{1+\alpha/2}([0,T];\boldsymbol{\mathcal X})$,
$D_t\widetilde{\bf w}=L\widetilde{\bf w}+{\mathscr M}\boldsymbol\psi-{\mathscr M}\boldsymbol\psi(0,\cdot)$ and
$\|D_t\widetilde{\bf w}\|_{C^{1+\alpha/2}([0,T];D(L))}+\sup_{t\in [0,T]}\|LD_t\widetilde{\bf w}(t,\cdot)\|_{\boldsymbol {\mathcal X}_{\alpha,{\mathscr B}_0}}\le c\|\boldsymbol{\psi}\|_{C^{(1+\alpha)/2,1+\alpha}((0,T)\times (-\ell/2,\ell/2);\R^2)}$.
From these properties and using the same arguments as above, it can be easily checked that the function ${\bf w}=-L\widetilde{\bf w}+{\mathscr M}(\boldsymbol{\psi}(0,\cdot))$ is as smooth as ${\bf v}$ is. Moreover, $D_t{\bf w}={\mathscr L}{\bf w}-{\mathscr L}{\mathscr M}\boldsymbol\psi$, ${\mathscr B}{\bf w}=(0,0,0,\psi_1,0,0,\psi_2)$,
${\bf w}(0,\cdot,\cdot)=\mathscr M(\boldsymbol{\psi}(0,\cdot))$ and $D^{\gamma}{\bf w}(\cdot,-\ell/2)=D^{\gamma}{\bf w}(\cdot,\ell/2)$ for every $|\gamma|\le 2$.

{\em Step 3.} Clearly, the function ${\bf u}={\bf v}+{\bf w}\in \boldsymbol{\mathcal Y}_{2+\alpha}$ solves the Cauchy problem \eqref{wallace}, satisfies \eqref{argine} and it is the unique solution to the above problem in $C^1([0,T];\boldsymbol{\mathcal X})\cap C([0,T];D(L))$. Moreover,
\begin{equation}
\sup_{t\in [0,T]}\|{\bf u}(t,\cdot)\|_{2+\alpha}+\|D_t{\bf u}\|_{\boldsymbol{\mathcal Y}_{\alpha}}\le
c(\|{\bf f}\|_{\boldsymbol{\mathcal Y}_{\alpha}}
+\|{\bf u_0}\|_{2+\alpha}+\|\boldsymbol{\psi}\|_{C^{(1+\alpha)/2,1+\alpha}((0,T)\times (-\ell/2,\ell/2);\R^2)}).
\label{negramaro}
\end{equation}

To conclude that ${\bf u}\in \boldsymbol{\mathcal Y}_{2+\alpha}$ and it satisfies estimate \eqref{argine}, we use an interpolation argument. It is well known that
$\|\cdot\|_{C^2_b(H^+_R;\R^2)}\le c\|\cdot\|_{C^{\alpha}_b(H_R^+;\R^2)}^{\alpha/2}\|\cdot\|_{C^{2+\alpha}_b(H^+_R;\R^2)}^{1-\alpha/2}$. Using this estimate and the formula
\begin{eqnarray*}
\uu(t,x,y)-\uu(s,x,y)=\int_s^tD_t\uu(r,x,y)dr,\qquad\;\,s,t\in [0,T],\;\,(x,y)\in H_R^+,
\end{eqnarray*}
to show that $\|\uu(t,\cdot)-\uu(s,\cdot)\|_{C^{\alpha}_b(H_R^+;\R^2)}\le\|D_t\uu\|_{\boldsymbol{\mathcal Y}_{\alpha}}|t-s|$,
we get $\|\uu(t,\cdot)-\uu(s,\cdot)\|_{C^2_b(H^+_R;\R^2)}\le c\|D_t\uu\|_{\boldsymbol{\mathcal Y}_{\alpha}}^{\alpha/2}\sup_{r\in [0,T]}\|\uu(r,\cdot)\|_{2+\alpha}^{1-\frac{\alpha}{2}}|t-s|^{\alpha/2}$.

In the same way, we can show that $u_1\in C^{\alpha/2}([0,T];C^2_b(\overline{H_R^-};\R^2))$,
$\uu\in C^{\alpha/2}([0,T];C^2_b([0,R]\times [-\ell/2,\ell/2];\R^2))$ and
\begin{eqnarray*}
\|\uu\|_{C^{\alpha/2}([0,T];C^2_b([0,R]\times [-\ell/2,\ell/2];\R^2))}+\|u_1\|_{C^{\alpha/2}([0,T];C^2_b(\overline{H_R^-};\R^2))}
\le c\bigg (\|D_t\uu\|_{\boldsymbol{\mathcal Y}_{\alpha}}+\sup_{t\in [0,T]}\|\uu(t,\cdot)\|_{2+\alpha}\bigg ).
\end{eqnarray*}

Taking \eqref{negramaro} into account we complete this step of the proof. In particular, from all the above results it follows that
\begin{equation}
\uu(t,\cdot,\cdot)=e^{tL}\uu_0+\int_0^te^{(t-s)L}[{\bf f}(s,\cdot,\cdot)+{\mathscr L}{\mathscr M}(\boldsymbol\psi(s,\cdot))]ds
-L\int_0^te^{(t-s)L}{\mathscr M}(\boldsymbol\psi(s,\cdot))ds,\qquad\;\,t\in [0,T].
\label{repr-formula}
\end{equation}

{\em Step 4.} Let $r>0$ and $\boldsymbol{\mathcal C}_r$ be the space of all ${\bf u}\in\boldsymbol{\mathcal Y}_{2+\alpha}$ such that $D^{\gamma_1}_xD^{\gamma_2}_y{\bf u}(\cdot,\cdot,-\ell/2)=D^{\gamma_1}_xD^{\gamma_2}_y{\bf u}(\cdot,\cdot,\ell/2)$, for every $0\le\gamma_1, \gamma_2$ such that $\gamma_1+\gamma_2\le 2$,
$\|{\bf u}\|_{\boldsymbol{\mathcal Y}_{2+\alpha}}\le r$ and ${\bf u}(0,\cdot,\cdot)={\bf u}_0$.

In view of Steps 1-3, for every ${\bf u}_0\in B(0,r_0)\subset \boldsymbol{\mathcal Y}_{2+\alpha}$ satisfying the compatibility conditions in the statement of the theorem, we can define the operator $\Gamma$, which to every ${\bf u}\in \boldsymbol{\mathcal C}_r$ (with $r$ sufficiently small norm to guarantee that the nonlinear terms ${\mathscr F}({\bf u}(t,\cdot,\cdot))$ and ${\mathscr H}({\bf u}(t,\cdot,\cdot))$ are well defined for every $t\in [0,T]$) associates the unique solution ${\bf v}$ of the Cauchy problem \eqref{wallace} with ${\bf f}={\mathscr F}({\bf u})$ and $\boldsymbol{\psi}={\mathscr H}(\uu)$.
Since the maps $\uu\mapsto {\mathscr F}(\uu)$, $\uu\mapsto {\mathscr H}(\uu)$ are smooth in $\boldsymbol{\mathcal C}_r$ and quadratic at $\uu={\bf 0}$, we can estimate
\begin{eqnarray}
\begin{array}{l}
\|{\mathscr F}(\uu)\|_{\boldsymbol{\mathcal Y}_{\alpha}}\le c\|\uu\|_{\boldsymbol{\mathcal Y}_{2+\alpha}}^2,\qquad\;\, \|{\mathscr F}(\uu)-{\mathscr F}({\bf v})\|_{\boldsymbol{\mathcal Y}_{\alpha}}\le cr\|\uu-{\bf v}\|_{\boldsymbol{\mathcal Y}_{\alpha}},\\[1mm]
\|{\mathscr H}(\uu)\|_{C^{(1+\alpha)/2,1+\alpha}((0,T)\times (-\ell/2,\ell/2);\R^2)}\le c\|\uu\|_{\boldsymbol{\mathcal Y}_{2+\alpha}}^2,\\[1mm]
\|{\mathscr H}(\uu)\!-\!{\mathscr H}({\bf v})\|_{C^{(1+\alpha)/2,1+\alpha}((0,T)\times (-\ell/2,\ell/2);\R^2)}\le cr\|\uu-{\bf v}\|_{\boldsymbol{\mathcal Y}_{\alpha}}.
\end{array}
\label{vita}
\end{eqnarray}
These estimates combined with \eqref{argine} show that $r$ and $r_0$ can be determined small enough such that $\Gamma$ is a contraction in
$\boldsymbol{\mathcal C}_{\rho}$.

Uniqueness of the solution ${\bf u}$ to \eqref{asta1} follows from standard arguments, which we briefly sketch here. At first, for every $t_0\in[0,T]$, $R,\delta>0$ and ${\bf u}_1\in\boldsymbol {\mathcal X}_{2+\alpha}$, which satisfies the compatibility conditions ${\mathscr B}({\bf u}_1)={\mathscr H}({\bf u}_1)$, ${\mathscr B}_0({\mathscr L}{\bf u}_1+{\mathscr F}({\bf u}_1))=0$ and $D^{\gamma}{\bf u}_1(\cdot,-\ell/2)=D^{\gamma}{\bf u}_1(\cdot,\ell/2)$ for each multi-index $\gamma$ with length at most two, we set
\begin{align*}
\boldsymbol{\mathcal Z}_{\delta,R}^{t_0}({\bf u}_1):=\{{\bf u}\in \boldsymbol{\mathcal Y}_{2+\alpha}(t_0,t_0+\delta):{\bf u}(t_0,\cdot)={\bf u}_1, \ \|{\bf u}-{\bf u}_1\|_{\boldsymbol{\mathcal Y}_{2+\alpha}(t_0,t_0+\delta)}\leq R\}.
\end{align*}
Given $R>0$ we can determine $r_1>0$ and $\delta>0$ (independent of $t_0$) with $\delta^{\alpha/2}R$ sufficiently small such that, if ${\bf u}_1$ belongs to
$B(0,r_1)\subset\boldsymbol{\mathcal X}_{2+\alpha}$, then the Cauchy problem
\begin{align}
\left\{
\begin{array}{lll}
D_t{\bf w}={\mathscr L}{\bf w}+\mathscr F({\bf w}),\\[1mm]
\mathscr B{\bf w}=\mathscr H({\bf w}), \\[1mm]
D^{\gamma_1}_xD^{\gamma_2}_y{\bf w}(\cdot,\cdot,-\ell/2)=D^{\gamma_1}_xD^{\gamma_2}_y{\bf w}(\cdot,\cdot,\ell/2), &&\gamma_1+\gamma_2\le 2 \\ [1mm]
{\bf w}(t_0,\cdot)={\bf u}_1,
\end{array}
\right.
\label{pitenji}
\end{align}
admits a unique solution ${\bf w}\in \mathscr Z_{\delta,R}^{t_0}({\bf u}_1)$. We are almost done. Indeed, let ${\bf u}\in\boldsymbol{\mathcal C}_r$ be the unique fixed point of $\Gamma$, and take $r_0$
small enough such that ${\bf u}\in B({\boldsymbol 0},\rho_1)\subset \boldsymbol{\mathcal Y}_{2+\alpha}$. Assume that ${\bf v}\in\boldsymbol{\mathcal Y}_{2+\alpha}$
is another solution to \eqref{asta1}, and let $t_0>0$ denote the supremum of the set $\{\tau\in[0,T]:{\bf u}(t,\cdot)={\bf v}(t,\cdot), t\in[0,\tau]\}$.
Suppose by contradiction that $t_0<T$. Then, both ${\bf u}$ and ${\bf v}$ are solutions in $\boldsymbol{\mathcal Y}_{2+\alpha}(t_0,t_0+\delta)$ to the Cauchy problem \eqref{pitenji}, with ${\bf u_1}:={\bf u}(t_0,\cdot)={\bf v}(t_0,\cdot)$. Taking $R\geq 2\max\{\|{\bf u}\|_{\boldsymbol{\mathcal Y}_{2+\alpha}}, \|{\bf v}\|_{\boldsymbol{\mathcal Y}_{2+\alpha}}\}$ large enough and $\delta>0$ small enough, it follows that $\uu$ and ${\bf v}$
 both belong to $\boldsymbol{\mathcal Z}_{\delta,R}^{t_0}({\bf u}_1)$, so that they do coincide, leading us to a contradiction.
\end{proof}

\begin{rmk}
\label{remark-nonlinear}
{\rm Since Problem \eqref{asta1} is autonomous, under the same assumptions as in Theorem \ref{soliera}, for each $a>0$ and $T>0$ there exists a unique solution $\uu\in\boldsymbol{\mathcal Y}_{2+\alpha}(a,a+T)$ such that $\uu(a,\cdot)=\uu_0$.}
\end{rmk}

\section{Proof of the main result}
\label{sect-6}

\subsection{Study of the dispersion relation and the point spectrum}
Since, we are interested in the instability of the travelling wave solution $(\Theta^{(0)},\Phi^{(0)})$ to Problem \eqref{system-1}-\eqref{infty},
we need to determine a range of Lewis numbers $\Le$ which lead to eigenvalues of $L_{\alpha}$ (see Remark \ref{rmk-5.6}) with positive real part.
In view of Theorem \ref{banca}, such eigenvalues will lie in $\Omega_k$ (see \eqref{omega-k}). For simplicity, we will look for positive real elements of $\Omega_k$.
Note that
$\Le-Y_k$ vanishes for no $\lambda$'s with positive real part. To determine elements of $\sigma(L_{\alpha})$ with positive real part we need to analyze the reduced dispersion relation
\begin{eqnarray*}
{\mathcal D}_{0,k}(\lambda, \Le)=\exp\bigg (\frac{R}{2}({\rm Le}-1-X_k(\lambda)-Y_k(\lambda,\Le))\bigg )-1+\theta_iRX_k(\lambda).
\end{eqnarray*}
We recall that $X_k(\lambda)=\sqrt{1+4\lambda+4\lambda_k}$, $Y_k(\lambda,\Le)=Y_k(\lambda)=\sqrt{{\rm Le}^2+4\lambda {\rm Le}+4\lambda_k}$ and $\lambda_k=4\pi^2k^2\ell^{-2}$
for each $k\in\N\cup\{0\}$. Throughout this subsection we assume $\theta_i$ is fixed in $(0,1)$; so is $R>0$ via \eqref{theta-i-R}.

\begin{lemm}
\label{leading_mode}
There exists ${\ell}_0(\theta_i)$ such that, for all $\ell >{\ell}_0(\theta_i)$, ${\mathcal D}_{0,1}(0, \Le)=0$ has a unique root $\Le_c=\Le_c(1) \in (0,1)$.
Moreover, for each $\ell$ fixed as above, there exists a maximal integer $K \geq 1$ such that, for
$k\in\{1,\ldots,K\}$, ${\mathcal D}_{0,k}(0, \Le)=0$ has a unique root $\Le_c(k) \in(0,1)$. Finally, it holds:
\begin{align}\label{Le_decreasing}
0< \Le_c(K) \leq \ldots \leq \Le_c(2) \leq \Le_c(1).
\end{align}\end{lemm}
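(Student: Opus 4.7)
Let $\mu := \lambda_k = 4\pi^2 k^2/\ell^2$, $a := \sqrt{1+4\mu}$, $b := \sqrt{\Le^2 + 4\mu}$, and
\[
\Phi(\mu,\Le) := \mathcal{D}_{0,k}(0,\Le) = \exp\!\Big(\tfrac{R}{2}(\Le-1-a-b)\Big) - 1 + \theta_i R\,a;
\]
by \eqref{theta-i-R} one has $\Phi(0,\cdot) \equiv 0$, so only $\mu > 0$ is meaningful. For each such $\mu$, $\partial_{\Le}\Phi = (R/2)(1-\Le/b)\exp(\cdots) > 0$ on $(0,1)$ (since $b > \Le$), so $\Phi(\mu,\cdot)$ is strictly increasing there and any root is unique. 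At $\Le = 1$, $a = b$ and $\Phi(\mu,1) = g(a)$ with $g(x) := e^{-Rx} - 1 + \theta_i R x$; by \eqref{theta-i-R}, $g(0) = g(1) = 0$, and strict convexity of $g$ ($g''(x) = R^2 e^{-Rx} > 0$) forces $g > 0$ on $(1,+\infty)$, so $\Phi(\mu,1) > 0$ for $\mu > 0$. At $\Le = 0$, $b = 2\sqrt{\mu}$ and a Taylor expansion in $\sqrt{\mu}$ gives $\Phi(\mu,0) = -R(1-\theta_i R)\sqrt{\mu} + O(\mu) < 0$ for small $\mu > 0$, while $\Phi(\mu,0) \to +\infty$ as $\mu \to +\infty$ (the exponential vanishes, $\theta_i R a$ diverges); continuity therefore yields a smallest positive zero $\mu_\ast = \mu_\ast(\theta_i) \in (0,+\infty)$ of $\mu \mapsto \Phi(\mu,0)$, with $\Phi(\cdot,0) < 0$ on $(0,\mu_\ast)$. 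Choosing $\ell_0(\theta_i)$ with $4\pi^2/\ell_0^2 < \mu_\ast$ and setting $K := \max\{k \ge 1 : \lambda_k < \mu_\ast\}$ (finite because $\lambda_k \to +\infty$), the intermediate value theorem combined with the strict monotonicity in $\Le$ yields a unique $\Le_c(k) \in (0,1)$ for each $k \in \{1,\ldots,K\}$ and $\ell > \ell_0$.

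For \eqref{Le_decreasing}, implicit differentiation of $\Phi(\mu,\Le_c(\mu)) = 0$ together with $\partial_{\Le}\Phi > 0$ reduces the claim that $\Le_c$ is nonincreasing in $\mu$ to the inequality $\partial_{\mu}\Phi|_{\Phi=0} \ge 0$. Substituting the root identity $\exp(\cdots) = 1 - \theta_i R a$ (which forces $\theta_i R a < 1$) into $\partial_{\mu}\Phi = -R(1/a + 1/b)\exp(\cdots) + 2\theta_i R/a$, the sign reduces to that of the bracket $\theta_i R\,a(a+b) - a - (1 - 2\theta_i)\,b$. I then reparametrize by $a$: solving $\Phi = 0$ for $b$ yields $b = \Le_c(a) - F(a)$ with $F(a) := 1 + a + (2/R)\log(1 - \theta_i R a)$, and implicit differentiation gives the closed expression $\Le_c'(a) = -(a + bF'(a))/F(a)$. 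Since $F(1) = 0$, $F'(1) = 1 - 2\theta_i e^R < 0$, and $F'' < 0$, one has $F < 0$ on $(1, a_\ast)$ (with $a_\ast := \sqrt{1+4\mu_\ast}$), and the required inequality becomes $b/a \ge \psi(a) := (1 - \theta_i R a)/(2\theta_i - 1 + \theta_i R a)$.

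\emph{Main obstacle.} The technical core is this last inequality $b/a \ge \psi(a)$ on the implicit branch. Equality holds at $a = 1$: applying L'H\^opital to the closed form $\Le_c(a) = (F(a)^2 + 1 - a^2)/(2F(a))$ gives $\Le_c(1) = -1/F'(1) = (1-\theta_i R)/(2\theta_i - 1 + \theta_i R) = \psi(1)$, a value lying in $(0,1)$ thanks to $\theta_i(1+R) > 1$ (itself a consequence of $e^R > 1 + R$). The function $\psi$ is explicit and strictly decreasing ($\psi'(a) = -2\theta_i^2 R/(2\theta_i - 1 + \theta_i R a)^2 < 0$). My plan is to show, by differentiating the closed formula for $\Le_c$ and exploiting the concavity of $F$, that $b/a = \sqrt{1 - (1 - \Le_c(a)^2)/a^2}$ is nondecreasing along the branch; combined with the strict decrease of $\psi$ and equality at $a = 1$, this forces $b/a \ge \psi$ on $[1, a_\ast)$, equivalent to $\Le_c(k+1) \le \Le_c(k)$.
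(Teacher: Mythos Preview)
Your treatment of existence and uniqueness is correct and in fact cleaner than the paper's. The paper instead writes down the explicit quadratic formula \eqref{critic1} for $\Le_c(k)$, checks it via an asymptotic expansion that $\Le_c(1)\in(0,1)$ for large $\ell$, and defines $K$ through the condition $1-\theta_iRX_k(0)>0$. Your monotonicity-in-$\Le$ argument together with the sign computation at $\Le=0,1$ achieves the same thing without formulas. One small loose end: you define $K$ via the \emph{smallest} zero $\mu_\ast$ of $\Phi(\cdot,0)$, but you do not exclude further sign changes of $\Phi(\cdot,0)$ beyond $\mu_\ast$; so your $K$ is a priori only a lower bound for the maximal $K$ in the statement. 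This is harmless for the lemma as used downstream, but worth a line.

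The genuine gap is in the monotonicity step. Your reduction of $\Le_c'\le 0$ to $b/a\ge\psi(a)$ is correct and elegant (indeed $-F'(a)=1/\psi(a)$ makes this transparent), and the boundary identity $b/a|_{a=1}=\psi(1)=\Le_0$ is right. But your stated plan---show that $b/a$ is nondecreasing along the branch---is not carried out, and as written it is circular: the sign of $\tfrac{d}{da}(b/a)^2=\tfrac{2}{a^3}\big[(1-\Le_c^2)+a\Le_c\Le_c'\big]$ depends on $\Le_c'$, which is exactly what you are trying to determine, and there is no obvious reason the bracket stays nonnegative once $\Le_c'<0$. The paper sidesteps this by quoting an estimate from \cite{BGZ} that forces $d\Le_c/d\lambda_k<0$. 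If you want a self-contained completion along your lines, a cleaner route than monotonicity of $b/a$ is a bootstrap on $h(a):=b/a-\psi(a)$: differentiating the closed formula $\Le_c=(F^2+1-a^2)/(2F)$ at $a=1$ gives $\Le_c'(1^+)<0$, hence $h>0$ just to the right of $1$; if $a_0>1$ were the first zero of $h$, then $\Le_c'(a_0)=0$, so $b'(a_0)=-F'(a_0)=1/\psi(a_0)$, and a direct computation yields
\[
h'(a_0)=\frac{1}{a_0}\Big(\frac{1}{\psi(a_0)}-\psi(a_0)\Big)-\psi'(a_0)>0,
\]
contradicting $h'(a_0)\le 0$. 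This closes the argument without ever needing $b/a$ to be globally monotone.
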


\begin{proof}
An easy but formal computation shows that, if $\Le_c(k)$ is a root of ${\mathcal D}_{0,k}(0,\cdot)$, then
\begin{align*}
\Le_c(k) = 1 + X_k(0) +Y_k(0, \Le_c(k)) + 2R^{-1}\ln(1-\theta_i R X_k(0)),
\end{align*}
or equivalently:
\begin{equation}\label{critic1}
\Le_c(k) = \displaystyle\frac{(1+X_k(0))[R^2 +2R \log(1-\theta_i R X_k(0))] +2|\log(1-\theta_i R X_k(0))|^2}{R^2(1+ X_k(0)) + 2R \log(1-\theta_i R X_k(0))}.
\end{equation}
However, Formula \eqref{critic1} makes sense only if $1-\theta_i R X_k(0) >0$. Hence, for each fixed $\ell>0$ there exists $K\in\N$ such that
$1-\theta_i R X_k(0) >0$ if and only if $k\le K$.

Further, $\Le_c(k)$ is required to meet the physical requirement that $0<\Le_c(k)<1$. In this respect, $\ell$ should be large enough:
\begin{align*}
\Le_c(1) = \Le_0 + \frac{16 \pi^2 \theta_i e^R (1 - \theta_i e^R)}{(2 \theta_i e^R -1) \ell^2}  + o(\ell^{-2})
\end{align*}
as $\ell\to +\infty$, where $\Le_0 = R(2e^R-R-2)^{-1}$ belongs to $(0,1)$ see \cite[Formula (43), p. 2083]{BGKS15}.
Thus, there exists $\ell_0(\theta_i)>0$ such that, if $\ell>\ell_0(\theta_i)$, then $\Le_c(1)\in (0,1)$.

Finally, it remains to prove property \eqref{Le_decreasing}, for a fixed $\ell>\ell_0(\theta_i)$ which in turn defines the integer $K \geq 1$. The latter property follows from the following estimate (see \cite[Proposition 3.1]{BGZ}):
\begin{align*}
\left (\sqrt{(\Le_c (k))^2+4\lambda_k}-\Le_c (k)\right )\frac{d\gap\Le_c(k)}{d\lambda_k} < 4\bigg(1-\frac{\theta_i}{1-\theta_i R}\bigg ).
\end{align*}
Obviously,
$ \displaystyle \frac{\theta_i}{1-\theta_i R}= \frac{1-e^{-R}}{Re^{-R}}=\frac{e^{R}-1}{R}  >1$,
which implies that $\displaystyle\frac{d\gap\Le_c(k)}{d\lambda_k}<0$.
\end{proof}

In view of Lemma \ref{leading_mode} our focus will be on the case when $\Le\in (0,\Le_c(1))$. Hereafter
we will simply denote the critical value $\Le_c(1)$ by $\Le_c$, keeping in mind that $\Le_c$ at fixed $0<\theta_i<1$ depends on $\ell > {\ell}_0(\theta_i)$.

\begin{lemm}
\label{lem-pasqua}
The function ${\mathcal D}_{0,1}$ is smooth in $[0,\sqrt{\lambda_1}]\times [0,\Le_c]$. Moreover, $\displaystyle\frac{\partial {\mathcal D}_{0,1}}{\partial \lambda}$ is positive
in $[0,\sqrt{\lambda_1}]\times [0,\Le_c]$, $\displaystyle\frac{\partial {\mathcal D}_{0,1}}{\partial \Le}$ is positive in $[0,\sqrt{\lambda_1})\times [0,\Le_c]$ and vanishes on
$\sqrt{\lambda_1}\times [0,\Le_c]$.
\end{lemm}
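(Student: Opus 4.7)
The smoothness assertion is immediate: on $[0,\sqrt{\lambda_1}]\times[0,\Le_c]$ the radicand $1+4\lambda+4\lambda_1$ defining $X_1(\lambda)$ is bounded below by $1$ and the radicand $\Le^2+4\lambda\Le+4\lambda_1$ defining $Y_1(\lambda,\Le)$ is bounded below by $4\lambda_1>0$, so $X_1$, $Y_1$, and hence ${\mathcal D}_{0,1}$ are $C^\infty$ on the closed rectangle.

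For the $\Le$-derivative I would compute directly. Setting $f(\lambda,\Le):=\frac{R}{2}(\Le-1-X_1-Y_1)$ so that ${\mathcal D}_{0,1}=e^f-1+\theta_iRX_1$, a short computation yields $\partial_\Le f=\frac{R}{2Y_1}(Y_1-\Le-2\lambda)$. The key algebraic identity is $Y_1^2-(\Le+2\lambda)^2=4(\lambda_1-\lambda^2)$; rationalizing the numerator gives
\[
\frac{\partial {\mathcal D}_{0,1}}{\partial\Le}=\frac{2R(\lambda_1-\lambda^2)\,e^f}{Y_1(Y_1+\Le+2\lambda)},
\]
which is strictly positive on $[0,\sqrt{\lambda_1})\times[0,\Le_c]$ and vanishes exactly at $\lambda=\sqrt{\lambda_1}$, as claimed.

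The delicate part is the sign of $\partial_\lambda {\mathcal D}_{0,1}$. A direct calculation gives
\[
\frac{\partial {\mathcal D}_{0,1}}{\partial \lambda}=\frac{R}{X_1 Y_1}\bigl[2\theta_i Y_1 - e^f(Y_1+\Le X_1)\bigr],
\]
so the target is the inequality $e^f(Y_1+\Le X_1)<2\theta_i Y_1$ on the rectangle. My plan chains four elementary estimates. First, the previous step gives $\partial_\Le f>0$ on the interior, while $\partial_\lambda f=-R(X_1^{-1}+\Le Y_1^{-1})<0$; hence $e^f$ attains its maximum on the rectangle at the corner $(0,\Le_c)$, where ${\mathcal D}_{0,1}(0,\Le_c)=0$ forces $e^{f(0,\Le_c)}=1-\theta_i RX_1(0)$. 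Second, since $X_1(0)\ge 1$ and $\theta_i R=1-e^{-R}$, this yields the uniform bound $e^f\le 1-\theta_iRX_1(0)\le e^{-R}$. Third, the inequality $e^R>1+R$ is equivalent to $e^{-R}<(1-e^{-R})/R=\theta_i$, so $e^f<\theta_i$ throughout. Fourth, the hypothesis $\ell>\ell_0(\theta_i)$ forces $\Le_c<1$, and the identity $Y_1^2-\Le^2 X_1^2=4(1-\Le)[\lambda\Le+\lambda_1(1+\Le)]\ge 0$ shows $\Le X_1\le Y_1$ on the rectangle. Combining these yields $e^f(Y_1+\Le X_1)\le 2Y_1\, e^f<2\theta_i Y_1$, as required.

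The main obstacle lies precisely in this last step: none of the four bounds is obvious in isolation, and it is the interplay between the defining relations $\theta_iR=1-e^{-R}$, ${\mathcal D}_{0,1}(0,\Le_c)=0$, and $\Le_c<1$ (the last of which encodes the assumption $\ell>\ell_0(\theta_i)$) that provides just enough room for strict positivity uniformly on the rectangle.
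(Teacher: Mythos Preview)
Your proof is correct and matches the paper's in all essential ingredients: both reduce positivity of $\partial_\lambda {\mathcal D}_{0,1}$ to the three facts $\Le X_1\le Y_1$ (from $\Le\le\Le_c<1$), $e^f\le e^{-R}$, and $e^{-R}<\theta_i$ (equivalently $(R+1)e^{-R}<1$), and handle $\partial_{\Le}{\mathcal D}_{0,1}$ via the identity $Y_1^2-(\Le+2\lambda)^2=4(\lambda_1-\lambda^2)$. The one place the paper is more direct is the bound on $e^f$: rather than your monotonicity argument that locates the maximum of $f$ at $(0,\Le_c)$ and then invokes ${\mathcal D}_{0,1}(0,\Le_c)=0$, the paper simply notes $\Le-1-X_1-Y_1<-2$ (since $X_1\ge 1$ and $Y_1\ge\Le$, with strict inequality because $\lambda_1>0$), giving $f<-R$ without using the definition of $\Le_c$ at all.
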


\begin{proof}
The proof of the positivity of $\displaystyle\frac{\partial {\mathcal D}_{0,1}}{\partial \Le}$ is straightforward and based on the observation that $Y_1(\lambda, \Le) - \rm Le - 2 \lambda >0$ for $\lambda> \sqrt{\lambda_1}$ and $Y_1(\sqrt{\lambda_1}), \Le)=\Le +2\sqrt{\lambda_1}$.
On the other hand, we observe that
\begin{align*}
\frac{\partial {\mathcal D}_{0,1}}{\partial \lambda}(\lambda, \Le) &= -R\exp\bigg (\frac{R}{2}({\rm Le}-1-X_1(\lambda)-Y_1(\lambda, \Le)) \bigg)(X_1^{-1}
+\Le\, Y_1^{-1})+2(1-e^{-R})X_1^{-1}.
\end{align*}
Since $\Le\, Y_1^{-1}<(X_1)^{-1}$ and ${\rm Le}-1-X_1(\lambda)-Y_1(\lambda, \Le)<-2$, we can estimate
\begin{align*}
\frac{\partial {\mathcal D}_{0,1}}{\partial \lambda}(\lambda, \Le) > 2(1-(R+1)e^{-R})X_1^{-1},\qquad\;\,(\lambda,\Le_c)\in [0,\sqrt{\lambda_1}]\times [0,\Le_c],
\end{align*}
and the positivity of $\displaystyle\frac{\partial {\mathcal D}_{0,1}}{\partial \lambda}$ in $[0,\sqrt{\lambda_1}]\times [0,\Le_c]$ follows immediately.
\end{proof}

We can now prove the following result.
\begin{prop}
\label{thm-spectrum}
Under the assumptions of Lemma $\ref{leading_mode}$, there exist $\lambda_*\in (0,\sqrt{\lambda_1})$ and  a decreasing,
continuously differentiable function $\widetilde{\varphi}:(0,\Le_c) \to (0,\lambda_*)$ such that ${\mathcal D}_{0,1}(\widetilde{\varphi}(\Le), \Le)=0$ for all $\Le\in (0,\Le_c)$.
\end{prop}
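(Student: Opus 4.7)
The idea is to apply the implicit function theorem globally on the whole interval $(0,\Le_c)$, using the sign information on $\mathcal{D}_{0,1}$ at the endpoints $\lambda=0$ and $\lambda=\sqrt{\lambda_1}$ together with the strict monotonicity properties of Lemma \ref{lem-pasqua}. First, I would fix $\Le\in(0,\Le_c)$ and study the one-variable function $\lambda\mapsto\mathcal{D}_{0,1}(\lambda,\Le)$ on $[0,\sqrt{\lambda_1}]$. By Lemma \ref{lem-pasqua}, $\partial_\Le\mathcal{D}_{0,1}(0,\cdot)>0$ on $[0,\Le_c]$, so that $\mathcal{D}_{0,1}(0,\Le)<\mathcal{D}_{0,1}(0,\Le_c)=0$. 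At $\lambda=\sqrt{\lambda_1}$, a direct computation shows $X_1(\sqrt{\lambda_1})=1+2\sqrt{\lambda_1}$ and $Y_1(\sqrt{\lambda_1},\Le)=\Le+2\sqrt{\lambda_1}$, so that
\begin{equation*}
\mathcal{D}_{0,1}(\sqrt{\lambda_1},\Le)=e^{-R(1+2\sqrt{\lambda_1})}-1+\theta_iR(1+2\sqrt{\lambda_1}),
\end{equation*}
which is \emph{independent} of $\Le$ (this is consistent with the vanishing of $\partial_\Le\mathcal{D}_{0,1}$ on $\{\sqrt{\lambda_1}\}\times[0,\Le_c]$ noted in Lemma \ref{lem-pasqua}). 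Using $\theta_iR=1-e^{-R}$ from \eqref{theta-i-R}, the right-hand side rewrites as $2\sqrt{\lambda_1}(1-(R+1)e^{-R})-e^{-R}(1-e^{-2R\sqrt{\lambda_1}}-2R\sqrt{\lambda_1})$; both terms are strictly positive since $1-(R+1)e^{-R}>0$ for $R>0$ and $1-e^{-t}-t<0$ for $t>0$ (used with $t=-2R\sqrt{\lambda_1}$, giving the sign with the minus sign out front). Hence there exists a constant $C_0>0$, independent of $\Le\in(0,\Le_c)$, such that $\mathcal{D}_{0,1}(\sqrt{\lambda_1},\Le)=C_0$.

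Combining the two endpoint signs with the strict monotonicity $\partial_\lambda\mathcal{D}_{0,1}>0$ on $[0,\sqrt{\lambda_1}]\times[0,\Le_c]$ (Lemma \ref{lem-pasqua}), the intermediate value theorem yields, for every $\Le\in(0,\Le_c)$, a \emph{unique} zero $\widetilde{\varphi}(\Le)\in(0,\sqrt{\lambda_1})$ of $\mathcal{D}_{0,1}(\cdot,\Le)$. The implicit function theorem, applied using $\partial_\lambda\mathcal{D}_{0,1}\ne 0$, then implies that $\widetilde\varphi$ is continuously differentiable on $(0,\Le_c)$, with
\begin{equation*}
\widetilde\varphi'(\Le)=-\frac{\partial_\Le\mathcal{D}_{0,1}(\widetilde\varphi(\Le),\Le)}{\partial_\lambda\mathcal{D}_{0,1}(\widetilde\varphi(\Le),\Le)}<0,
\end{equation*}
where strict negativity uses that $\widetilde\varphi(\Le)<\sqrt{\lambda_1}$, on the open set where $\partial_\Le\mathcal{D}_{0,1}>0$ by Lemma \ref{lem-pasqua}. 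This gives the strict monotonicity claimed.

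It remains to produce $\lambda_*\in(0,\sqrt{\lambda_1})$ with $\widetilde\varphi(\Le)\le\lambda_*$ uniformly in $\Le$. Since $\partial_\lambda\mathcal{D}_{0,1}$ is continuous, hence bounded, on the compact set $[0,\sqrt{\lambda_1}]\times[0,\Le_c]$, say by $M>0$, the mean value theorem gives
\begin{equation*}
\mathcal{D}_{0,1}(\lambda,\Le)\ge C_0-M(\sqrt{\lambda_1}-\lambda),\qquad (\lambda,\Le)\in[0,\sqrt{\lambda_1}]\times[0,\Le_c].
\end{equation*}
Choosing $\lambda_*:=\sqrt{\lambda_1}-C_0/(2M)$ (assumed positive; otherwise any $\lambda_*\in(\sqrt{\lambda_1}-C_0/M,\sqrt{\lambda_1})$ works), we obtain $\mathcal{D}_{0,1}(\lambda,\Le)\ge C_0/2>0$ for all $\lambda\in[\lambda_*,\sqrt{\lambda_1}]$ and all $\Le\in[0,\Le_c]$. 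Since $\widetilde\varphi(\Le)$ is a zero of $\mathcal{D}_{0,1}(\cdot,\Le)$, necessarily $\widetilde\varphi(\Le)<\lambda_*$, as required.

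\textbf{Main obstacle.} The only non-routine step is verifying $\mathcal{D}_{0,1}(\sqrt{\lambda_1},\Le)>0$. It is tempting to fear a sign condition depending delicately on $\theta_i$ and $\ell$, but the observation that the quantity is \emph{independent} of $\Le$ (which is implicit in Lemma \ref{lem-pasqua}) together with the defining relation $\theta_iR=1-e^{-R}$ reduces the matter to the one-variable inequality $e^{-R}(1+R)<1$, valid for every $R>0$. Once that is settled, the rest is intermediate value theorem plus implicit function theorem and a compactness argument to uniformize the bound away from $\sqrt{\lambda_1}$.
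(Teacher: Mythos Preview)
Your argument is correct and, in fact, somewhat more direct than the paper's. The paper starts from the point $(0,\Le_c)$, applies the implicit function theorem there (using $\partial_{\Le}\mathcal D_{0,1}(0,\Le_c)>0$) to obtain a local branch $\lambda=\varphi(\Le)$, and then continues this branch to a maximal interval $(\Le_*,\Le_c]$; the crux is to rule out $\Le_*>0$, which is done by showing that the only way continuation could fail is $\lambda_*=\sqrt{\lambda_1}$, whereas the same computation you carried out gives $\mathcal D_{0,1}(\sqrt{\lambda_1},\Le)\neq 0$. You instead use the two endpoint signs $\mathcal D_{0,1}(0,\Le)<0$ and $\mathcal D_{0,1}(\sqrt{\lambda_1},\Le)=C_0>0$ together with $\partial_\lambda\mathcal D_{0,1}>0$ to produce, for every $\Le\in(0,\Le_c)$ at once, a unique zero $\widetilde\varphi(\Le)\in(0,\sqrt{\lambda_1})$, and then invoke the implicit function theorem (now via $\partial_\lambda\mathcal D_{0,1}\neq 0$) only for the $C^1$ regularity and the sign of $\widetilde\varphi'$. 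Your compactness step to push the range uniformly below $\sqrt{\lambda_1}$ is a clean replacement for the paper's continuation endpoint analysis. Both proofs hinge on the same computation of $\mathcal D_{0,1}(\sqrt{\lambda_1},\Le)$ and its positivity via \eqref{theta-i-R}; your decomposition into two positive terms is a valid alternative to the paper's observation that $f(x)=e^{-x(1+2\sqrt{\lambda_1})}+2\sqrt{\lambda_1}-e^{-x}(1+2\sqrt{\lambda_1})$ satisfies $f(0)=0$ and $f'>0$ on $(0,\infty)$ (though your parenthetical ``used with $t=-2R\sqrt{\lambda_1}$'' is a slip --- you mean $t=2R\sqrt{\lambda_1}$).
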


\begin{proof}
From Lemma \ref{lem-pasqua} it follows that $\frac{\partial {\mathcal D}_{0,1}}{\partial \Le}(0, \Le_c)>0$. We can thus apply the implicit function theorem, which shows that
there exist $\delta_1, \delta_2>0$ and a unique function $\varphi\in C^1([\Le_c-\delta_1,\Le_c+\delta_1])$ such that, if $(\lambda,\Le)\in [\Le_c-\delta_1, \Le_c+\delta_1]\times [-\delta_2,\delta_2]$ is a root of
${\mathcal D}_{0,1}$, then $\lambda=\varphi(\Le)$. In view of the previous lemma, $\varphi$ is a decreasing function.
As a byproduct, taking the restriction of $\varphi$ to $[\Le_c-\delta_1, \Le_c]$, we have constructed a (small) branch $\lambda=\varphi (\Le)$ of positive
roots of ${\mathcal D}_{0,1}(\lambda, \Le)=0$. We may reiterate the implicit function theorem and continue this branch up to a left endpoint $(\lambda_*,\Le_*) \in [0,\sqrt{\lambda_1}]\times [0,{\rm Le}_c]$. By continuity, ${\mathcal D}_{0,1}(\lambda_*,\Le_*)=0$. This maximal extension of $\varphi$ is a non-increasing, $C^1$-function $\widetilde{\varphi}:(\Le_*,\Le_c]\to [0,\lambda_*)$.

To complete the proof, we need to show that $\Le_*=0$. If $\Le_*>0$ then $\lambda_*=\sqrt{\lambda_1}$ otherwise, applying the implicit function theorem again,
we could extend $\widetilde\varphi$ in a left-neighborhood of $\Le_*$, contradicting the maximality of $\widetilde\varphi$.
On the other hand, it is not difficult to check that ${\mathcal D}_{0,1}(\sqrt{\lambda_1},\Le_*)\neq 0$ whenever $R>0$.
Indeed, using condition \eqref{theta-i-R} we can easily show that
\begin{eqnarray*}
{\mathcal D}_{0,1}(\sqrt{\lambda_1},\Le_*)=e^{-R(1+2\sqrt{\lambda_1})}-1+\theta_iR(1+2\sqrt{\lambda_1})=
e^{-R(1+2\sqrt{\lambda_1})}+2\sqrt{\lambda_1}-e^{-R}(1+2\sqrt{\lambda_1})
\end{eqnarray*}
and the function $x\mapsto f(x)=e^{-x(1+2\sqrt{\lambda_1})}+2\sqrt{\lambda_1}-e^{-x}(1+2\sqrt{\lambda_1})$ vanishes at $x=0$ and its derivative is positive in $(0,+\infty)$.
\end{proof}

\begin{figure}[ht]
\centering
\includegraphics[height=5.5cm,width=8cm]{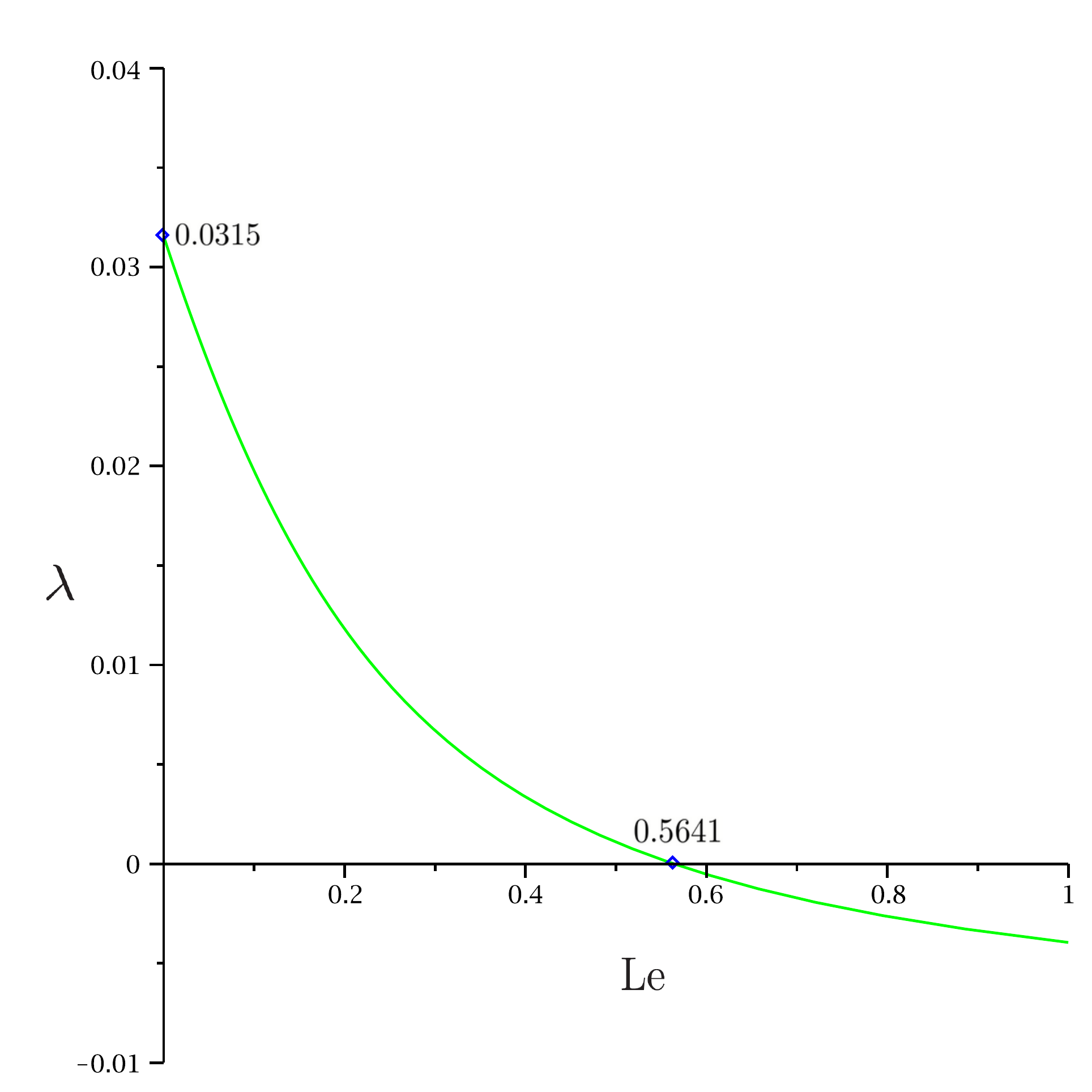}
\caption{Numerical computation of the implicit curve $\lambda = \widetilde{\varphi}(\Le)$ for $\Le \in (0,\Le_c)$, extended beyond $\Le_c$. Here $\theta_i=0.75$, $\ell=100$, $\Le_c\simeq 0.5641$,
$\lambda_* \simeq 0.0315$. Note that
$\sqrt{\lambda_1}=\pi/50 \simeq 0.0628$.} \label{Implicit_curve}
\end{figure}

\begin{coro}
\label{coro-spettro}
The spectrum of the operator $L$ contains elements with positive real parts. Moreover, the part of $\sigma(L)$ in the right halfplane $\{\lambda\in\C: {\rm Re}\lambda\ge 0\}$ consist of $0$ and
a finite number of eigenvalues.
\end{coro}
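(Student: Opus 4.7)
The starting point is the explicit description of $\sigma(L)$ given by Theorem \ref{banca}(i):
\begin{align*}
\sigma(L) = \{\lambda\in\C:{\rm Re}\lambda\le -{\rm Le}^{-1}({\rm Im}\lambda)^2\}\cup\bigcup_{k\ge 0}(\Omega_k\cup\Omega_k').
\end{align*}
The parabolic region meets the closed right halfplane only at $\lambda=0$, while by the very definition each $\Omega_k'$ is contained in $\{{\rm Re}\lambda<-\lambda_k\}$, hence strictly in the open left halfplane. Therefore
\begin{align*}
\sigma(L)\cap\{{\rm Re}\lambda\ge 0\}=\{0\}\cup\bigcup_{k\ge 0}\bigl(\Omega_k\cap\{{\rm Re}\lambda\ge 0\}\bigr),
\end{align*}
and the first assertion follows immediately from Proposition \ref{thm-spectrum}: the number $\widetilde\varphi(\Le)>0$ zeros ${\mathcal D}_{0,1}$, and since ${\rm Le}-Y_1(\widetilde\varphi(\Le))\ne 0$ on the right halfplane, it also zeros ${\mathcal D}_1$, so $\widetilde\varphi(\Le)\in\Omega_1\subset\sigma(L)$.

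For the finiteness claim, I would split the index set into ``small $k$'' and ``large $k$''. For each fixed $k$, the principal branches of $X_k$ and $Y_k$ are analytic on a neighbourhood of the closed right halfplane, so ${\mathcal D}_k$ is analytic there. As $|\lambda|\to\infty$ with ${\rm Re}\lambda\ge 0$, both ${\rm Re}\, X_k(\lambda)$ and ${\rm Re}\, Y_k(\lambda)$ diverge; consequently $|\theta_iRX_k-1|\to\infty$ while $|\exp(R({\rm Le}-1-X_k-Y_k)/2)|\to 0$, so $|{\mathcal D}_k(\lambda)|\to\infty$. Thus the zero set of ${\mathcal D}_k$ in the right halfplane is bounded and, by analyticity, finite.

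The step I expect to be the main obstacle is showing that $\Omega_k\cap\{{\rm Re}\lambda\ge 0\}=\emptyset$ for all sufficiently large $k$, since this requires estimates uniform in $\lambda$. The plan is to exploit ${\rm Re}(X_k^2)=1+4{\rm Re}\lambda+4\lambda_k\ge 4\lambda_k$ together with the identity ${\rm Re}(X_k)=\sqrt{(|X_k|^2+{\rm Re}(X_k^2))/2}$, which gives ${\rm Re}(X_k)\ge\sqrt{2\lambda_k}$, and analogously ${\rm Re}(Y_k)\ge\sqrt{2\lambda_k}$. Then
\begin{align*}
|\theta_iRX_k-1|\ge \theta_iR\sqrt{2\lambda_k}-1,\qquad |e^{R({\rm Le}-1-X_k-Y_k)/2}|\le e^{R({\rm Le}-1)/2}e^{-R\sqrt{2\lambda_k}},
\end{align*}
and for $k$ large the linear term dominates the exponential one, so the bracket in ${\mathcal D}_k$ stays bounded away from zero uniformly on $\{{\rm Re}\lambda\ge 0\}$; since also ${\rm Le}-Y_k\ne 0$, ${\mathcal D}_k$ has no zero there. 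Combining the two steps yields the desired finiteness. Finally, the points of $\Omega_k$ produced above are genuine eigenvalues: the singular linear system encountered in Step 2 of the proof of Theorem \ref{banca} delivers a nontrivial element $(\hat u_{1,k}e_k,\hat u_{2,k}e_k)\in\ker(\lambda I-L)$.
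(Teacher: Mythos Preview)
Your proof is correct and follows essentially the same route as the paper's: use the spectral description from Theorem~\ref{banca}(i), invoke Proposition~\ref{thm-spectrum} for the existence of a positive eigenvalue, and for finiteness show that ${\rm Re}\,X_k$ and ${\rm Re}\,Y_k$ grow like $\sqrt{\lambda_k}$ uniformly over $\{{\rm Re}\,\lambda\ge 0\}$ so that ${\mathcal D}_k$ cannot vanish there once $k$ is large. Your argument is in fact slightly more complete than the paper's in two places: you make explicit why each individual ${\mathcal D}_{0,k}$ has only finitely many zeros in the closed right halfplane (by showing $|{\mathcal D}_{0,k}(\lambda)|\to\infty$ as $|\lambda|\to\infty$, which analyticity alone does not give on an unbounded set), and you spell out the existence part via Proposition~\ref{thm-spectrum}, which the paper's proof leaves implicit.
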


\begin{proof}
By Theorem \ref{banca} we know that if $\lambda\neq 0$ is an element in the spectrum of $L$ with nonnegative real part, then it is an eigenvalue and it belongs to $\Omega_k$ for some $k\in\N\cup\{0\}$. Hence,
$D_k(\lambda)=0$ or, equivalently, ${\mathcal D}_{0,k}(\lambda)=0$ for some $k\in\N\cup\{0\}$. As it is immediately seen, each function $\lambda\mapsto {\mathcal D}_{0,k}(\lambda)$ is
holomorphic in the halfplane $\Pi=\{\lambda\in\C:{\rm Re}\lambda\ge 0\}$ and it does not identically vanish in it. Therefore, its zeroes in $\Pi$ are at most finitely many.
Moreover, for each $\lambda\in\Pi$ and $k\in\N\cup\{0\}$ we can estimate
\begin{eqnarray*}
{\rm Re}X_k(\lambda)\ge \sqrt{\frac{1}{2}+2\lambda_k},\qquad\;\,{\rm Re}Y_k(\lambda,{\rm Le})\ge \sqrt{\frac{Le^2}{2}+2\lambda_k},
\end{eqnarray*}
so that the real part of ${\mathcal D}_{0,k}(\cdot,{\rm Le})$ diverges to $+\infty$, as $k\to +\infty$, uniformly with respect to $\lambda\in\Pi$. As a byproduct, we deduce that there exists $k_0\in\N$ such that
the nontrivial eigenvalues $\lambda\in\Pi$ lie in $\bigcup_{k=0}^{k_0}\Omega_k$ and this completes the proof.
\end{proof}

To prove the main result of this section, we also need the following result which is a variant of \cite[Theorem 5.1.5]{H80} and \cite[Theorem 4.3]{BL00}.

\begin{lemm}
Let $X$ be a complex Banach space, $r>0$ and $T_n:B(0,r)\subset X\to X$ $(n\in\N)$ be a bounded operator such that
$T_n(x)=Mx+O(\|x\|^p)$ as $\|x\|\to 0$, for some $p>1$ and some bounded linear operator $M$ on $X$ with spectral radius $\rho>1$.
Further, assume that there exists an eigenvector $u$ of $M$ with eigenvalue $\lambda\in\C$ such that $|\lambda|^p>\rho$ and that there exists $x'\in X'$ such that $x'(u)\neq 0$. Then, there exist $c>0$ and, for any $\delta>0$, $x_0\in B(0,\delta)$ and $n_0\in\N$ $($depending on $\delta)$ such that the sequence $x_0,\ldots,x_{n_0}$, where $x_n=T_n(x_{n-1})$ for any $n=1,\ldots,n_0$, is well defined and $|x'(x_{n_0})|\ge c|x'(u)|$.
\label{pisapia}
\end{lemm}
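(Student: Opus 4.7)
The plan is to run a controlled iteration starting from a scalar multiple of the eigenvector $u$ and to show that the linear growth $\lambda^n$ along the eigendirection dominates the accumulation of the higher-order perturbations for a suitably chosen number of steps. Fix $\rho_*\in(\rho,|\lambda|^p)$ (possible because $|\lambda|^p>\rho$) and, by Gelfand's formula, pick $K>0$ such that $\|M^n\|_{L(X)}\le K\rho_*^n$ for every $n\in\N\cup\{0\}$. Let $C>0$ and $r_0\in(0,r]$ be such that $\|T_n(x)-Mx\|\le C\|x\|^p$ for every $x\in B(0,r_0)$ and every $n\in\N$. Given $\delta>0$, set $x_0=\varepsilon u$ with $\varepsilon>0$ (much) smaller than both $\delta/\|u\|$ and $r_0/\|u\|$, and, for $n\ge 1$, define $x_n=T_n(x_{n-1})$ as long as $x_{n-1}\in B(0,r_0)$. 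A telescoping argument then gives the representation
\begin{equation*}
x_n=M^n x_0+\sum_{k=1}^n M^{n-k}\phi_k, \qquad \phi_k:=T_k(x_{k-1})-Mx_{k-1}.
\end{equation*}

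The first key step is to establish, by induction on $n$, the norm bound $\|x_n\|\le A|\lambda|^n\varepsilon$ for a constant $A>0$ independent of $\varepsilon$ and of $n$, valid as long as $n\le n_0$, where $n_0$ is defined below. Indeed, using $Mu=\lambda u$, the bound $\|\phi_k\|\le C\|x_{k-1}\|^p$ and the inductive hypothesis, one estimates
\begin{equation*}
\|x_n\|\le K|\lambda|^n\varepsilon\|u\|+KC A^p\varepsilon^p\sum_{k=1}^n\rho_*^{n-k}|\lambda|^{p(k-1)}
\le K|\lambda|^n\varepsilon\|u\|+\frac{KC A^p|\lambda|^p}{|\lambda|^p-\rho_*}\,\varepsilon^p|\lambda|^{p(n-1)},
\end{equation*}
where the geometric sum is controlled by its last term precisely because $|\lambda|^p>\rho_*$. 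Dividing by $|\lambda|^n\varepsilon$, the induction closes provided $A\ge 2K\|u\|$ and $\varepsilon^{p-1}|\lambda|^{(p-1)n}$ is bounded by a constant depending only on $A,K,C,\rho_*,|\lambda|$. This is precisely what determines the stopping time: define $n_0=n_0(\varepsilon)$ as the largest integer $n$ for which $\varepsilon|\lambda|^n\le\eta$, where $\eta>0$ is a small but fixed threshold (chosen below), and observe that $n_0$ diverges as $\varepsilon\to 0^+$. With this choice, the inductive bound holds for $n\le n_0$, and, in particular, the sequence is well defined, with each $x_n$ in $B(0,r_0)$.

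The second key step is to evaluate $x'(x_{n_0})$. Applying $x'$ and using $Mu=\lambda u$, the principal term is
\begin{equation*}
x'(M^{n_0}x_0)=\lambda^{n_0}\varepsilon\,x'(u), \qquad|x'(M^{n_0}x_0)|=|\lambda|^{n_0}\varepsilon\,|x'(u)|\ge \frac{\eta}{|\lambda|}|x'(u)|
\end{equation*}
by the very definition of $n_0$ (and the fact that $|\lambda|>\rho^{1/p}>1$). The remainder is controlled by the same geometric sum as above, yielding
\begin{equation*}
\bigg|\sum_{k=1}^{n_0} x'(M^{n_0-k}\phi_k)\bigg|\le \|x'\|\frac{KCA^p|\lambda|^p}{|\lambda|^p-\rho_*}\,\varepsilon^p|\lambda|^{p(n_0-1)}\le C'(\varepsilon|\lambda|^{n_0})^p\le C'\eta^p,
\end{equation*}
for a constant $C'>0$ independent of $\varepsilon$. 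It now suffices to fix $\eta$ once and for all so small that $C'\eta^p\le\tfrac{1}{2|\lambda|}\eta|x'(u)|$; this choice depends on $x'(u)$ (hence on $u$ and $x'$) but is independent of $\delta$. Setting $c=\eta/(2|\lambda|)$ then yields $|x'(x_{n_0})|\ge c|x'(u)|$, completing the proof.

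The only delicate point is the interplay between the three parameters $\varepsilon$, $n_0$ and $\eta$: the threshold $\eta$ must be fixed first and kept independent of $\delta$, whereas $\varepsilon$ (and therefore $n_0$) is then tuned to $\delta$. Once $\eta$ is frozen so as to dominate the $O(\eta^p)$ nonlinear error by the $O(\eta)$ linear growth, the argument is robust and reduces to the two geometric-series estimates above.
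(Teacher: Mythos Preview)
Your proof is correct and follows essentially the same approach as the paper's: start from a small multiple of the eigenvector, use the telescoping formula $x_n=M^nx_0+\sum_{k}M^{n-k}\phi_k$, control the nonlinear remainders by an inductive bound $\|x_n\|\lesssim |\lambda|^n\varepsilon$ via the geometric series (exploiting $|\lambda|^p>\rho_*>\rho$), and compare the principal term $\lambda^{n_0}\varepsilon\, x'(u)$ against the accumulated error at the stopping time. The only cosmetic difference is parametrization: the paper fixes a scale $\sigma$ (your $\eta$) first, then chooses $n_0$ from $\delta$ and sets $x_0=\sigma|\lambda|^{-n_0}u$, whereas you fix $\eta$ first, pick $\varepsilon$ from $\delta$, and let $n_0$ be the largest $n$ with $\varepsilon|\lambda|^n\le\eta$; these are equivalent reindexings of the same argument.
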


\begin{proof}
Without loss of generality, we assume that $\|u\|= 1$ and $\|x'\|\leq 1$. Moreover, we choose $a,b>0$ such that
$\|T_n(x)-Mx\|\leq b\|x\|^p$ for each $x\in B(0,a)\subset X$ and $n\in\N$.
Since $|\lambda|^p>\rho$, we can fix $\eta>0$ such that $|\lambda|^p>\rho+\eta$ and, from the definition of the spectral radius of a bounded operator, we can also determine a positive constant $K$ such that $\|M^n\|_{L(X)}\leq K(\rho+\eta)^n$ for any $n\in\N$. Finally, we fix $\delta>0$, choose $n_0\in\N$ be such that $|\lambda|^{-n_0}<\delta$, and set
$x_0:=\sigma u |\lambda|^{-n_0}$, where $\sigma\in (0,1)$ is chosen so as to satisfy the conditions
\begin{align}
\sigma\leq \frac{a}{2}, \qquad\;\, \frac{2^p b K}{|\lambda|^p-\rho-\eta}\sigma^{p-1}\leq \frac{1}{2}|x'(u)|.
\label{brasile-mex}
\end{align}

To begin with, we prove that the sequence $x_0,\ldots,x_{n_0}$ is well defined. For this purpose, in view of the condition in \eqref{brasile-mex} it suffices to check that, if $x_k$ is well defined, then  $\|x_k\|\le 2 \sigma |\lambda|^{k-n_0}$. We prove by recurrence. Clearly, $x_0$ satisfies this property. Suppose that the claim is true for $k=0,\ldots,n-1$. Then, $x_n$ is well defined and it easy to check that
\begin{equation}
x_n=M^nx_0+\sum_{k=0}^{n-1}M^{n-1-k}(x_{k+1}-Mx_k)=M^nx_0+\sum_{k=0}^{n-1}M^{n-1-k}(T_{k+1}(x_k)-Mx_k).
\label{form-1}
\end{equation}
Thus, we can estimate
\begin{align}
\label{riverdale}
\|x_n\|\le |\lambda|^n\|x_0\|+Kb\sum_{k=0}^{n-1}(\rho+\eta)^{n-1-k}\|x_k\|^p.
\end{align}
Let us consider the second term in the right-hand side of \eqref{riverdale}, which we denote by $S_n$. Since, we are assuming that $\|x_k\|\le 2 \sigma |\lambda|^{k-n_0}$ for each $k=0,\ldots,n-1$,
we can write
\begin{align*}
S_n\le & 2^pKb\sigma^p |\lambda|^{p(n-n_0-1)}\sum_{k=0}^{n-1}\bigg (\frac{\rho+\eta}{|\lambda|^p}\bigg )^{n-1-k}\le \sigma |\lambda|^{n-n_0}\frac{2^pKb}{|\lambda|^p-\rho-\eta}\sigma^{p-1}
\end{align*}
and, using the second condition in \eqref{brasile-mex} and the fact that $x'$ has norm which does not exceed $1$, we conclude that
\begin{align}
\label{suits}
S_n \le \frac{1}{2}\sigma|\lambda|^{n-n_0}|x'(u)|\leq \frac{1}{2}\sigma |\lambda|^{n-n_0}.
\end{align}
Since $|\lambda|^n\|x_0\|\leq \sigma|\lambda|^{n-n_0}$, from \eqref{riverdale} and \eqref{suits} the claim follows at once.

To conclude the proof, it suffices to use \eqref{form-1} with $n=n_0$, as well as \eqref{riverdale} and \eqref{suits} again, to estimate
\begin{align*}
|x'(x_{n_0})|
\geq & |x'(M^{n_0}x_0)|-|x'(S_{n_0})|\geq  \sigma |x'(u)|-\frac{1}{2}\sigma|x'(u)|=\frac{1}{2}\sigma|x'(u)|.
\end{align*}
The assertion follows with $c=\sigma/2$.
\end{proof}

Now, we can state and prove the following theorem.

\begin{thm}
\label{thm-main-1}
Let $0<\theta_i<1$ be fixed, $\ell>\ell_0(\theta_i)$ as in Lemma $\ref{leading_mode}$, $\Le_c= \Le_c(1)$ defined by \eqref{critic1}.
Then, for each $\Le\in (0,\Le_c)$, the null solution $\uu$ of Problem \eqref{asta1} is poinwise unstable with respect to small perturbations in $\boldsymbol{\mathcal X}_{2+\alpha}$.
More precisely, there exists a positive constant $C$ such that for each $y_0\in\R$ and $\delta>0$ there exist $ \widetilde {\bf u}_0, {\bf u}_0^*\in B(0,\delta)\subset\boldsymbol{\mathcal X}_{2+\alpha}$ and $\widetilde n, n_*\in\N$ depending on $\delta$ such that $\min\{|\widetilde u_2(\widetilde n,0,y_0)|, |u_1^*(n_*,R,y_0)|\}\ge C$, where
$\widetilde{\bf u}=(\widetilde u_1,\widetilde u_2)$ and ${\bf u}_*=(u_1^*,u_2^*)$ denote the solution to the Cauchy problem with initial datum $\widetilde {\bf u}_0$ and ${\bf u}_0^*$, respectively.
\end{thm}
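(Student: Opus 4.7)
The plan is to apply Lemma \ref{pisapia} to the time-one map of the nonlinear flow on the compatibility manifold $\boldsymbol{\mathcal I}$, parametrised by its tangent space $\widetilde D(L_\alpha)$ at the origin via Lemma \ref{lemma-5.7}(ii). First I isolate an unstable linear mode: for $\Le\in(0,\Le_c)$, Proposition \ref{thm-spectrum} yields a positive real eigenvalue $\widetilde\lambda=\widetilde\varphi(\Le)$ of $L$ in Fourier mode $k=1$, and Corollary \ref{coro-spettro} implies that the spectral bound $s(L):=\sup\{{\rm Re}\,\mu:\mu\in\sigma(L)\}$ is attained at some $\mu_*\in\sigma(L)$ with ${\rm Re}\,\mu_*\ge\widetilde\lambda>0$. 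Analyticity of the semigroup generated by $L$ then gives $M:=e^{L_\alpha}$ the spectral radius $\rho=e^{s(L)}>1$, with $e^{\mu_*}$ an eigenvalue on $|z|=\rho$; since $\mathscr F,\mathscr H$ vanish to second order at ${\bf 0}$, Lemma \ref{pisapia}'s exponent is $p=2$, and the required strict inequality $|e^{\mu_*}|^p=\rho^2>\rho$ holds automatically.

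Next I build the discrete map and verify its linearisation. Set $T({\bf w}):=P\,{\bf u}(1,\cdot)$ for ${\bf w}\in\widetilde D(L_\alpha)$ small, where ${\bf u}$ is the solution of \eqref{asta1} emanating from ${\bf u}_0={\bf w}+\Upsilon({\bf w})\in\boldsymbol{\mathcal I}$ produced by Theorem \ref{soliera}. The compatibility conditions defining $\boldsymbol{\mathcal I}$ are preserved by any classical solution, so $T$ is a self-map of a small ball in $\widetilde D(L_\alpha)$ whose iterates sample the autonomous orbit at integer times. The identities $\Upsilon({\bf 0})={\bf 0}$ and $\Upsilon'({\bf 0})=0$ (the latter follows by differentiating the defining relation in Lemma \ref{lemma-5.7}(ii) at ${\bf 0}$, using the isomorphism property of $\mathscr B_*$ from Lemma \ref{lemma-5.7}(i) and the quadratic vanishing of $\mathscr F,\mathscr H$), combined with the representation \eqref{repr-formula} and the nonlinear bounds \eqref{vita}, deliver $T({\bf w})=M{\bf w}+O(\|{\bf w}\|_{2+\alpha}^2)$ uniformly on a small ball; this quadratic remainder estimate, phrased in the strong $\|\cdot\|_{2+\alpha}$ norm demanded by Lemma \ref{pisapia}, is the main technical point and the chief obstacle of the proof.

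For the conclusion, fix $y_0\in\R$ and define the bounded linear functionals $\psi_R({\bf v}):=v_1(R,y_0)$ and $\psi_0({\bf v}):=v_2(0,y_0)$ on $\widetilde D(L_\alpha)\hookrightarrow\boldsymbol{\mathcal X}_{2+\alpha}$; continuity follows from that of the traces on $\{0,R\}\times[-\ell/2,\ell/2]$. Real eigenfunctions of $L$ attached to $\mu_*$ have the separated form $u(x,y)={\bf v}_*(x)\cos(2\pi k_*y/\ell+\theta)$ (obtained from real and imaginary parts of the complex eigenpair $(\mu_*,\bar\mu_*)$); tuning $\theta=\theta(y_0)$ so that $\cos(2\pi k_*y_0/\ell+\theta)=1$ yields $|\psi_R(u)|=|v_{*,1}(R)|$ and $|\psi_0(u)|=|v_{*,2}(0)|$, which are positive constants independent of $y_0$ once one checks that the ODE profile does not vanish at the interfaces (read off the explicit matching system behind Lemma \ref{leading_mode}). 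Applying Lemma \ref{pisapia} twice, with $x'=\psi_R$ and $x'=\psi_0$, gives for every $\delta>0$ initial parameters $x_0^{(1)},x_0^{(2)}\in\widetilde D(L_\alpha)$ and integers $n_*,\widetilde n$ such that $|\psi_R(T^{n_*}x_0^{(1)})|\ge c|v_{*,1}(R)|$ and $|\psi_0(T^{\widetilde n}x_0^{(2)})|\ge c|v_{*,2}(0)|$. Lifting through ${\bf u}_0=x+\Upsilon(x)$, shrinking the Lemma's constant $\sigma$ so that the $O(\|x\|^2)$ correction from $\Upsilon$ is absorbed into the linear lower bound, yields ${\bf u}_0^*,\widetilde{\bf u}_0\in B(0,\delta)\subset\boldsymbol{\mathcal X}_{2+\alpha}$ and the pointwise estimate $\min\{|\widetilde u_2(\widetilde n,0,y_0)|,|u_1^*(n_*,R,y_0)|\}\ge C$ with $C:=\tfrac{c}{2}\min\{|v_{*,1}(R)|,|v_{*,2}(0)|\}$ independent of $\delta$ and $y_0$, which is exactly the claim.
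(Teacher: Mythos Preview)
Your overall strategy matches the paper's: reduce to the time-one map on the local chart $\widetilde D(L_\alpha)\ni{\bf w}\mapsto P\,{\bf u}(1,\cdot)$, show the quadratic remainder via \eqref{repr-formula} and \eqref{vita}, and feed Lemma~\ref{pisapia} a suitable eigenvector and point-evaluation functional. However, there is a genuine gap in how you produce the eigenvector.

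You take $\mu_*$ with ${\rm Re}\,\mu_*=s(L)$ and then assert that ``real eigenfunctions of $L$ attached to $\mu_*$ have the separated form ${\bf v}_*(x)\cos(2\pi k_*y/\ell+\theta)$''. This is only correct when $\mu_*$ is real. Nothing in Corollary~\ref{coro-spettro} or Proposition~\ref{thm-spectrum} forces the eigenvalue of largest real part to be real; if $\mu_*\notin\R$, the real and imaginary parts of a complex eigenfunction span a two-dimensional $M$-invariant subspace but are \emph{not} eigenvectors of $M=e^{L_\alpha}$, and moreover the $x$-profile $w_j(x)$ (built from $e^{\nu_1^\pm x},e^{\mu_1^\pm x}$ with complex exponents) does not have constant phase, so the separated cosine form fails. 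Lemma~\ref{pisapia} requires a genuine eigenvector, so your application breaks down in this case. You also cannot fall back on the real eigenvalue $\widetilde\lambda=\widetilde\varphi(\Le)$ from Proposition~\ref{thm-spectrum}, because the hypothesis $|\lambda|^p>\rho$ of Lemma~\ref{pisapia} becomes $2\widetilde\lambda>s(L)$, which is not guaranteed. The paper's fix is to work in the complex space $D(L_\alpha)$, define the nonlinear map on complex data by acting with the real flow on real and imaginary parts separately, apply Lemma~\ref{pisapia} there, and only at the very end split ${\bf v}_0={\bf v}_{0,1}+i{\bf v}_{0,2}$ to extract a real initial datum satisfying the pointwise bound (see \eqref{russia}).

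A secondary point: your passage from $\psi_0(T^{\widetilde n}x_0)$ to $|\widetilde u_2(\widetilde n,0,y_0)|$ via the $\Upsilon$-correction is valid, but the paper does this more cleanly by observing from the explicit form of $\mathscr N$ that $(\mathscr N{\boldsymbol h})_2(0^+,\cdot)=0$ and $(\mathscr N{\boldsymbol h})_1(R,\cdot)=0$, so the projection $P$ leaves the relevant trace values untouched and no correction is needed.
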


\begin{proof}
We split the proof into two steps. The first one is devoted to prove an estimate which will allow us to apply Lemma \ref{pisapia}. Then, in Step 2, we prove the pointwise instability.

\vspace{1mm}
{\em Step 1}.
The smoothness of $\Upsilon$ implies that there exists $c>0$ such that $\|\Upsilon({\bf v}_0)\|_{2+\alpha}\leq c\|{\bf v}_0\|_{2+\alpha}$, for each ${\bf v}_0\in D(L_{\alpha})$ with sufficiently small norm. Here, $\Upsilon$ is defined in Lemma \ref{lemma-5.7}$(ii)$. Fix $r$ so small such that $\|{\bf v}_0+\Upsilon({\bf v}_0)\|_{2+\alpha}\le r_0$ for each ${\bf v}_0\in \overline{B(0,r)}\subset D(L_\alpha)$, where
$r_0=r_0(1)$ is defined in the statement of Theorem \ref{soliera}.

For $n\in\N$, let $R_n:B(0,\rho)\subset \widetilde D(L_\alpha)\to \widetilde D(L_\alpha)$ be the map defined by $R_n({\bf v}_0)=P({\bf u}_n(n,\cdot,\cdot))$ for each ${\bf v_0}\in B(0,\rho)$ (see Lemma \ref{lemma-5.7}), where ${\bf u}_n$ is the solution to problem \eqref{asta1} with initial condition ${\bf  u}_0={\bf v}_0+\Upsilon({\bf v}_0)$ at $t=n-1$. (Note that, by Lemma \ref{lemma-5.7}$(ii)$, ${\bf u}_0$ satisfies the compatibility conditions in Theorem \ref{soliera}. Further, by Remark \ref{remark-nonlinear}, ${\bf u}_n$ is well defined in the time domain $[n-1,n]$.)
We claim that
\begin{align}
\label{treno}
\|R_n({\bf v_0})-e^{L_\alpha}{\bf v}_0\|_{2+\alpha}\leq c\|{\bf v}_0\|^2_{2+\alpha},\qquad\;\,{\bf v}_0\in B(0,r).
\end{align}
Estimate \eqref{treno} follows from the integral representation of the solution of Problem \eqref{asta1} and estimates \eqref{vita}. Indeed, again by
Remark \ref{remark-nonlinear}, ${\bf u}_n(n,\cdot,\cdot)$ is the value at $t=1$ of the solution $\uu$ to Problem \eqref{asta1}, with $\uu(0,\cdot)=\uu_0$
and, by the proof of Theorem \ref{soliera} (see, in particular, formula \eqref{repr-formula}),
\begin{align*}
\uu(1,\cdot,\cdot)-e^L\uu_0=\int_0^1e^{(1-s)L}[{\mathscr F}(\uu(s,\cdot,\cdot))\!+\!{\mathscr L}{\mathscr M}({\mathscr H}(\uu(s,\cdot,\cdot))]ds\!-\!
L\int_0^1e^{(1-s)L}{\mathscr M}({\mathscr H}(\uu(s,\cdot,\cdot))ds.
\end{align*}
Since ${\mathscr F}$ and ${\mathscr H}$ are quadratic at $0$, it follows immediately that
$\|{\bf u}(1,\cdot,\cdot)-e^L{\bf u}_0\|_{2+\alpha}\leq c\|{\bf v}_0\|^2 _{2+\alpha}$. Noting that $P({\bf u}(1,\cdot,\cdot)-e^{L_{\alpha}}{\bf u}_0)=
R_n({\bf v}_0)-e^{L_{\alpha}}{\bf v}_0$, formula \eqref{treno} follows at once.

{\em Step 2.} Let us begin by proving that there exists $C>0$ such that for any $y_0\in\R$ and $\delta>0$ there exists ${\bf u}_0\in B(0,\delta)\subset \boldsymbol{\mathcal X}_{2+\alpha}$ and $n_0\in\N$ depending on $\delta$ such that $|u_2(n_0,0^+,y_0)|\geq C$, where ${\bf u}=(u_1,u_2)$ is the solution to \eqref{asta1} with initial datum ${\bf u}_0$ at time $t=0$. For this purpose, we want to apply Lemma \ref{pisapia} with $X=D(L_{\alpha})$ endowed with the norm of $\boldsymbol{\mathcal X}_{2+\alpha}$.
To begin with, we observe that, by Corollary \ref{coro-spettro}, there exists only a finite number of eigenvalues of $L$ (and hence of $L_{\alpha}$) with positive real part.
From the spectral mapping theorem for analytic semigroups it thus follows that the spectral radius $\rho$
of the operator $M=e^{L_{\alpha}}$ is larger than one and there exists an eigenvalue $\lambda$ such that $|\lambda|=\rho$. Let us fix $y_0\in\R$ and $\delta>0$. It is not difficult to show that a corresponding eigenfunction is the function
${\bf w}=(w_1 e_1(\cdot - 2\pi\ell^{-1}y_0)), w_2 e_1(\cdot - 2\pi\ell^{-1}y_0)))$, where
\begin{align*}
&w_1(x)=e^{\nu_1^+x}\chi_{(-\infty,0]}(x)+(c_1e^{\nu_1^- x}+c_2e^{\nu_1^+x})\chi_{(0,R)}(x)+c_3e^{\nu_1^-x}\chi_{[R,+\infty)}(x),\\
&w_2(x)=(d_1e^{\mu_1^- x}+d_2e^{\mu_1^+ x})\chi_{[0,R)}(x)+d_3 e^{\mu_1^-x}\chi_{[R,+\infty)}(x),
\end{align*}
for every $x\in\R$ and
\begin{eqnarray*}
\begin{array}{llll}
&\displaystyle c_1=\frac{e^{(X_1+\mu_1^+)R}(\theta_iRX_1-1)}{e^{\mu_1^+R}-e^{\nu_1^+R}},\qquad\;\,
&\displaystyle c_2=\frac{e^{\mu_1^+R}}{e^{\mu_1^+R}-e^{\nu_1^+R}},\qquad\;\, &\displaystyle c_3 = \frac{\theta_iRe^{(X_1+\mu_1^+)R}X_1}{e^{\mu_1^+R}-e^{\nu_1^+R}}\vspace{2mm} \\
&\displaystyle d_1=-\frac{{\rm Le}({\rm Le}+\mu_1^+)e^{\nu_1^+R}X_1}{(e^{\mu_1^+R}-e^{\nu_1^+R})Y_1},\qquad\;\, &\displaystyle d_2 = -(\Le+\mu_1^-)d_1,\qquad\;\,
&\displaystyle d_3 = (1-e^{Y_1R})d_1, \vspace{2mm} \\
& \displaystyle \nu_1^{\pm}=-\frac12\pm\sqrt{1+4\tilde\lambda+4\lambda_1},
& \displaystyle \mu_1^{\pm}=-\frac{\rm le}2\pm\sqrt{{\rm Le}^2+4{\rm Le}\tilde\lambda+4\lambda_1}.
\end{array}
\end{eqnarray*}
Note that
\begin{eqnarray*}
w_2(0^+,y_0)=(d_1+d_2)=\frac{{\rm Le}X_1e^{\nu_1^+R}X_1}{(e^{\mu_1^+R}-e^{\nu_1^+R})Y_1}\neq 0.
\end{eqnarray*}
Hence, if we set $x'({\bf f})=f_2(0^+,y_0)$ for any ${\bf f}\in D(L_\alpha)$, then $|x'({\bf w})|\neq 0$. As in Step 1, we fix $r>0$ such that
$\|{\bf v}_{0,j}+\Upsilon({\bf v}_{0,j})\|_{{2+\alpha}}\le r_0$ for $j=1,2$ for each ${\bf v}_0={\bf v}_{0,1}+i{\bf v}_{0,2}\in B(0,r)\subset D(L_{\alpha})$.
By Theorem \ref{soliera} both ${\bf u}(n,{\bf v}_{0,1}+\Upsilon({\bf v}_{0,1}),n-1)$ and ${\bf u}(n,{\bf v}_{0,2}+\Upsilon({\bf v}_{0,2}),n-1)$ are well defined for any $n\in\N$. We can thus introduce the operator
$T_n:B(0,r)\subset D(L_{\alpha})\to D(L_{\alpha})$ ($n\in\N$) by setting $T_n({\bf v}_0)=P{\bf u}(n,{\bf v}_{0,1}+\Upsilon({\bf v}_{0,1}),n-1)+iP{\bf u}(n,{\bf v}_{0,2}+\Upsilon({\bf v}_{0,2}),n-1)$ for
each ${\bf v}_0\in B(0,r)$, where $P$ is the projection in Lemma \ref{lemma-5.7}(i). By the arguments in Step 1 we deduce that
$\|T_n({\bf v}_0)-e^{L_\alpha}{\bf v}_0\|_X\leq C\|{\bf  v}_0\|_X^2$ for some positive constant $C$ and each ${\bf v}_0\in B(0, r)$.
We can thus apply Lemma \ref{pisapia} with $M=e^{L_\alpha}$, $p=2$ and conclude that there exist $c>0$ and, for each $\delta>0$, a function ${\bf v}_0={\bf v}_{0,1}+i{\bf v}_{0,2}\in B(0,\delta)\subset D(L_{\alpha})$ and $n_0\in\N$ such that ${\bf v}_n=T_n({\bf v}_{n-1})$ is well defined for each $n\in\{1,\ldots,n_0\}$ and $|x'({\bf v}_{n_0})|\ge c$.
Since ${\bf v}_{n_0}=P{\bf u}(n_0,{\bf v}_{0,1}+\Upsilon({\bf v}_{0,1}),0)+iP{\bf u}(n_0,{\bf v}_{0,2}+\Upsilon({\bf v}_{0,2}),0)$, where ${\bf u}(n_0,{\bf u}_0,0)$ denotes the value at $n_0$ of the unique solution to problem \eqref{asta} with initial datum ${\bf u}_0$
at time $t=0$, we have so proved that
\begin{align}
|(P{\bf u}(n_0,{\bf v}_{0,1}+\Upsilon({\bf v}_{0,1}),0))_2(0^+,y_0)|^2+|(P{\bf u}(n_0,{\bf v}_{0,2}+\Upsilon({\bf v}_{0,2}),0))_2(0^+,y_0)|^2\ge c^2.
\label{russia}
\end{align}
By definition, $P=I-{\mathscr N}{\mathcal B}_*$ (see Lemma \ref{lemma-5.7}(i)) and $({\mathscr N}{\bf u})_2(0^+,\cdot)=0$ for any function ${\bf u}$. Hence,
$(P{\bf u}(n_0,{\bf v}_{0,j}+\Upsilon({\bf v}_{0,j}),0))_2(0^+,y_0)=({\bf u}(n_0,{\bf v}_{0,j}+\Upsilon({\bf v}_{0,j}),0))_2(0^+,y_0)$ for $j=1,2$.
From \eqref{russia} it thus follows that there exists $\bar j$ such that
$|({\bf u}(n_0,{\bf v}_{0,\bar j}+\Upsilon({\bf v}_{0,\bar j}),0))_2(0^+,y_0)|\ge c/2$ and the thesis follows with $C=c/2$, $\widetilde n=n_0$ and $\widetilde {\bf u}={\bf u}(\widetilde n,{\bf v}_{0,\bar j}+\Upsilon({\bf v}_{0,\bar j}),0)$.

To prove the existence of ${\bf u}_*$ as in the statement of the theorem, it suffices to take as $x'$ the functional defined by $x'({\bf f})=f_1(R,y_0)$ for each ${\bf f}\in D(L_\alpha)$.
The missing easy details are left to the reader.
\end{proof}

\begin{rmk}
{\rm Clearly, Theorem \ref{thm-main-1} implies the $C^{2+\alpha}$-instability of the null solution $\uu$ to \eqref{asta1}.}
\end{rmk}
\begin{rmk}
{\rm The $C^{2+\alpha}$-instability of the null solution $\uu$ to \eqref{asta1} can be directly obtained by applying \cite[Theorem 5.1.5]{H80}, taking advantage of Step $1$ of Theorem \ref{thm-main-1} and arguing as in \cite[Corollary 4.5]{BL00}. Finally, it can also be proved in a slightly different way adapting the arguments in \cite[Theorem 3.4]{lorenzi-3}.}
\end{rmk}

From Theorem \ref{thm-main-1} we can now easily derive the proof of the main result of this paper.

\begin{proof}[Proof of Main Theorem]
Taking the changes of variables and unknown in Subsections \ref{fix-domain} and \ref{sect-3.2} into account, the result in Theorem \ref{thm-main-1} allows us to conclude easily that the normalized temperature $\Theta$ and the normalized concentration of deficient reactant in problem \eqref{system-1}-\eqref{infty} are unstable with respect to two dimensional $C^{2+\alpha}$ perturbations.
Similarly, using formulae \eqref{elim-f} and \eqref{elim-g} and again Theorem \ref{thm-main-1}, we can infer that there exist initial data $(\widetilde \Theta,\widetilde \Phi)$ and $(\Theta_*,\Phi_*)$ with $C^{2+\alpha}$-norm, arbitrarily close to the travelling wave solution \eqref{planar-front} such that the trailing front $G$ (resp. the ignition front $F$) to problem \eqref{system-1}-\eqref{infty} with initial datum $(\Theta(0,\cdot),\Phi(0,\cdot))=(\widetilde \Theta,\widetilde \Phi)$ (resp. $(\Theta(0,\cdot),\Phi(0,\cdot))=(\Theta_*,\Phi_*)$) is not arbitrarily close to $0$ (resp. $R$).
\end{proof}

\section{Numerical simulation}
\label{sect-7}
In this section, we are going to use some high resolution numerical methods, including Chebyshev collocation and Fourier spectral method (see, e.g., \cite{STW, WWQ, WZW}). We consider the problem \eqref{asta} in the finite domain $\Omega = \Omega_- \cup \gap\Omega_0 \cup \gap\Omega_+ = ([-A,0] \cup [0,R] \cup [R, B]) \times [-\ell/2,\ell/2]$, where $A>0$ and $B>0$ are large enough, see Figure \ref{domain}. The independent variables are $-A\le\xi\le B$, $-\ell/2 < \eta < \ell/2$.

\begin{figure}[h]
\centering
\includegraphics[height=3cm,width=7cm]{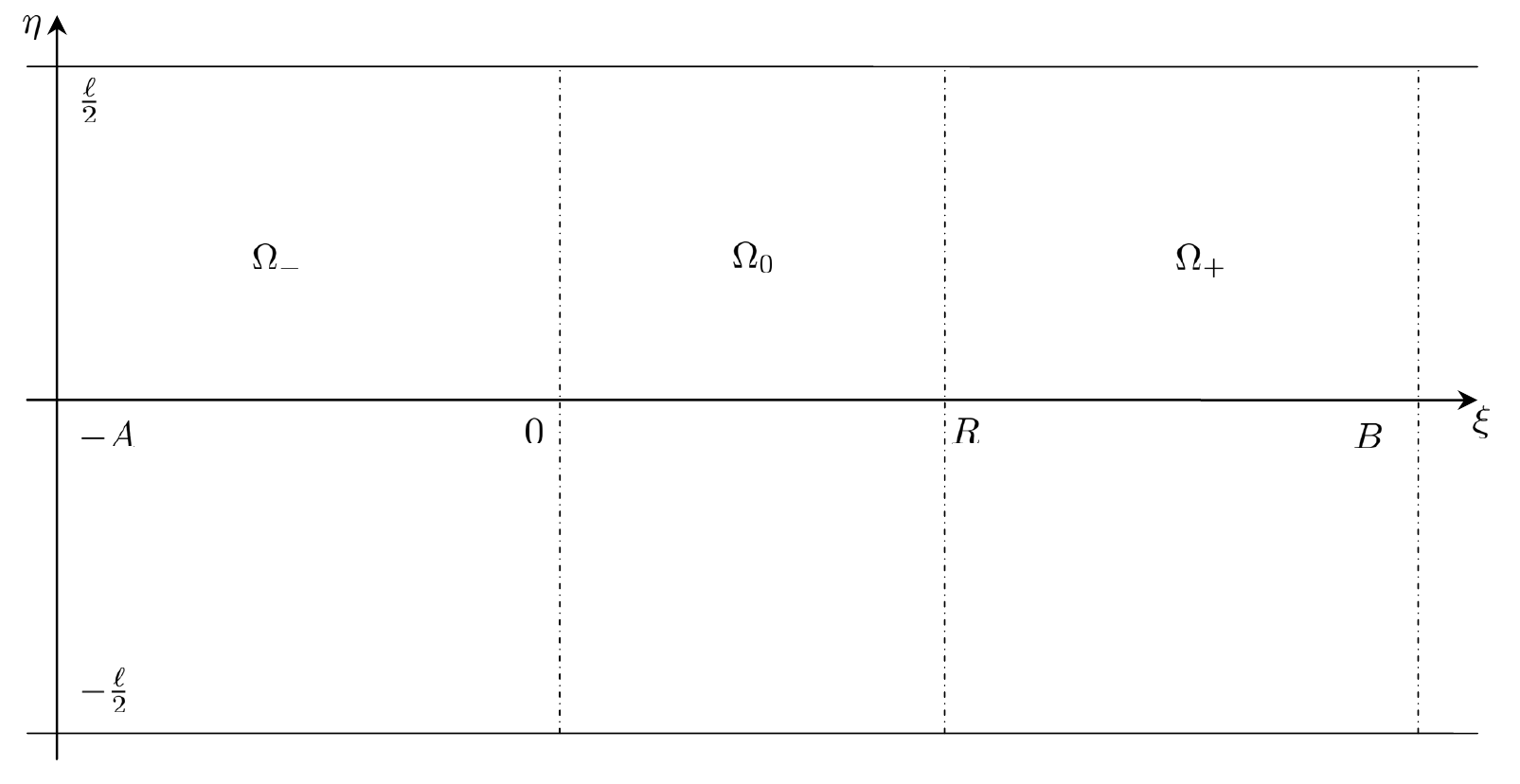}		
\caption{Computational domain.}
\label{domain}
\end{figure}

\subsection{The linear system}
The linearized system around the null solution of System \eqref{asta} reads:
\begin{align}
\left\{
\begin{array}{ll}
u_\tau = u_\xi + u_{\xi\xi} + u_{\eta\eta}, & \mbox{in} \quad \Omega,\\[1mm]
w=0, & \mbox{in} \quad \Omega_-, \\[1mm]
w_\tau =  w_\xi + \Le^{-1}(w_{\xi\xi} + w_{\eta\eta}), &\mbox{in} \quad  \Omega_0 \cup \Omega_+,\\[1mm]
\end{array}
\right.
\label{linear}
\end{align}
with
\begin{align}
{\mathscr B}(u,w)=0.
\label{IC-0R}
\end{align}

We map $\Omega_-, \Omega_0$ and $\Omega_+$ to $\mathbb{D} = [-1,1] \times [0, 2 \pi]$. Then, we consider in $\mathbb{D}$ the system for the three pairs of unknowns $(u_1, w_1)$, $(u_2, w_2)$ and $(u_3, w_3)$, corresponding respectively to $(u,w)$ in
$\Omega_-, \Omega_0$ and $\Omega_+$. The new independent variables are denoted by $x\in [-1,1]$ and $y \in [0, 2 \pi]$.

Therefore, System  \eqref{linear}-\eqref{IC-0R} is equivalent to:
\begin{align}
\left\{
\begin{array}{ll}
D_\tau u_{1} = \frac{2}{A} D_x u_{1} + \frac{4}{A^2} D_{xx} u_{1} + \frac{4\pi^2}{\ell^2} D_{yy} u_{1}, \; w_1 \equiv 0, & \\[1mm]
D_\tau u_{2} = \frac{2}{R} D_x u_{2} + \frac{4}{R^2} D_{xx} u_{2} + 4\frac{\pi^2}{\ell^2} D_{yy} u_{2},  	& \\[1mm]
D_\tau w_{2} = \frac{2}{R} D_x w_{2} + \frac{4}{\Le R^2} D_{xx} w_{2}+ \frac{4 \pi^2}{\Le \gap\ell^2} D_{yy} w_{2}, & \\[1mm]
D_\tau u_{3} = \frac{2}{B-R} D_x u_{3} + \frac{4}{(B-R)^2} D_{xx} u_{3} + \frac{4\pi^2}{\ell^2} D_{yy} u_{3}, & \\[1mm]
D_\tau w_{3} = \frac{2}{B-R} D_x w_{3} + \frac{4}{\Le\,(B-R)^2} D_{xx} w_{3}+ \frac{4 \pi^2}{\Le\,\ell^2} D_{yy} w_{3} ,&
\end{array}
\right. \label{systemin3domain}
\end{align}
together with the boundary conditions:
\begin{align}
\left\{
\begin{array}{ll}
u_1(-1) = u_3(1) = w_3(1) = 0, \quad u_1(1) = u_2(-1), \\[1mm]
w_2(-1) = \frac{2\Le}{A} D_x u_{1}(1) -\frac{2\Le}{R} D_x u_{2}(-1), \quad D_x w_{2}(-1)= -\frac{\Le R}{2} w_2(-1),\\[1mm]
D_x w_{2}(1) = \frac{R}{B-R} D_x w_{3}(-1)+\frac{\Le R}{2} ( w_3(-1)- w_2(1) ), \\[1mm]
D_x u_{3}(-1) = \frac{B-R}{R} D_x u_{2}(1)-\frac{B-R}{2\Le} ( w_3(-1)-w_2(1)), \\[1mm]
u_2(1) = -\frac{2\theta_i R}{B-R} D_x u_{3}(-1) + 2 \theta_i D_x u_{2}(1) , \quad u_3(-1) = u_2(1).
\end{array}
\right. \label{systemin3domain-IC}
\end{align}

Let us give a brief overview of the numerical method. Hereafter, we denote by $(u,w)$ any pair of unknowns $(u_i, w_i)$,
$1\leq i \leq 3$. We discretize System \eqref{systemin3domain}-\eqref{systemin3domain-IC} using a forward-Euler explicit scheme in time. Then, we use a discrete Fourier transform in the direction $y \in (0, 2 \pi)$, namely:
\begin{align*}
	u(x,y) = \sum_{k=-N_y/2}^{N_y/2} \hat{u}_k(x) e^{ik y},  \qquad\;\, w(x,y) = \sum_{k=-N_y/2}^{N_y/2} \hat{w}_k(x) e^{ik y},
\end{align*}
 and
\begin{align*}
D_{yy}u(x,y) = -\sum_{k=-N_y/2}^{N_y/2} k^2 \hat{u}_k(x) e^{ik y},  \qquad\;\, D_{yy}w(x,y) = - \sum_{k=-N_y/2}^{N_y/2} k^2 \hat{w}_k(x) e^{ik y}.
\end{align*}
Finally,  we use a Chebyshev collocation method in $x \in (-1, 1)$. Let $\{ l_j(x)\}_{j=0}^{N_x}$ be the Lagrange polynomials based on the Chebyshev-Gauss-Lobatto points $\{ x_j\}_{j=0}^{N_x} = \{ \cos(j\pi/N_x)\}_{j=0}^{N_x}$. We set:
\begin{align*}
\hat{u}_k(x) = \sum_{j=0}^{N_x}\hat{u}_{kj} l_j(x),  \qquad\;\, \hat{w}_k(x) = \sum_{j=0}^{N_x}\hat{w}_{kj} l_j(x).
\end{align*}
Denoting the differential matrix of order $m$ associated to $\{ x_j\}_{j=0}^{N_x} $ by $D^m = (d_{ij}^{(m)})_{i,j = 0,\cdots,N_x}$, where $d_{ij}^{(m)} = l_j^{(m)}(x_i)$ and $l_j(x_i) = \delta_{ij}$, we eventually obtain
\begin{align*}
& \hat{u}_k(x_i) = \sum_{j=0}^{N_x}\hat{u}_{kj} \delta_{ij}, \qquad\;\, D_x \hat{u}_k(x_i) = \sum_{j=0}^{N_x}\hat{u}_{kj} d_{ij}^{(1)}, \qquad\;\, D_{xx} \hat{u}_k(x_i) = \sum_{j=0}^{N_x}\hat{u}_{kj} d_{ij}^{(2)},\\
& \hat{w}_k(x_i) = \sum_{j=0}^{N_x}\hat{w}_{kj} \delta_{ij},  \qquad\;\, D_x\hat{w}_k(x_i) = \sum_{j=0}^{N_x}\hat{w}_{kj} d_{ij}^{(1)},  \qquad\;\, D_{xx}\hat{w}_k(x_i) = \sum_{j=0}^{N_x}\hat{w}_{kj} d_{ij}^{(2)}.
\end{align*}

As initial data, we take $w_2(0,\cdot)= \varepsilon \big(1+\sin^2(y)\big)$, $y \in [0, 2 \pi] $, which corresponds to $\xi = {R}/{2}$;  the other unknowns are taken as $0$. The following pictures are for $\varepsilon = 10^{-2}, A=B=10, \ell =100, \Delta t = 10^{-3}$. As expected, the two profiles blow up for Lewis number below critical.
\begin{figure}[h]
	\subfloat[Evolution of $u(t,0,y)$]{
		\begin{minipage}[t]{0.45\textwidth}
			\centering
			\includegraphics[height=4cm,width=5.3cm]{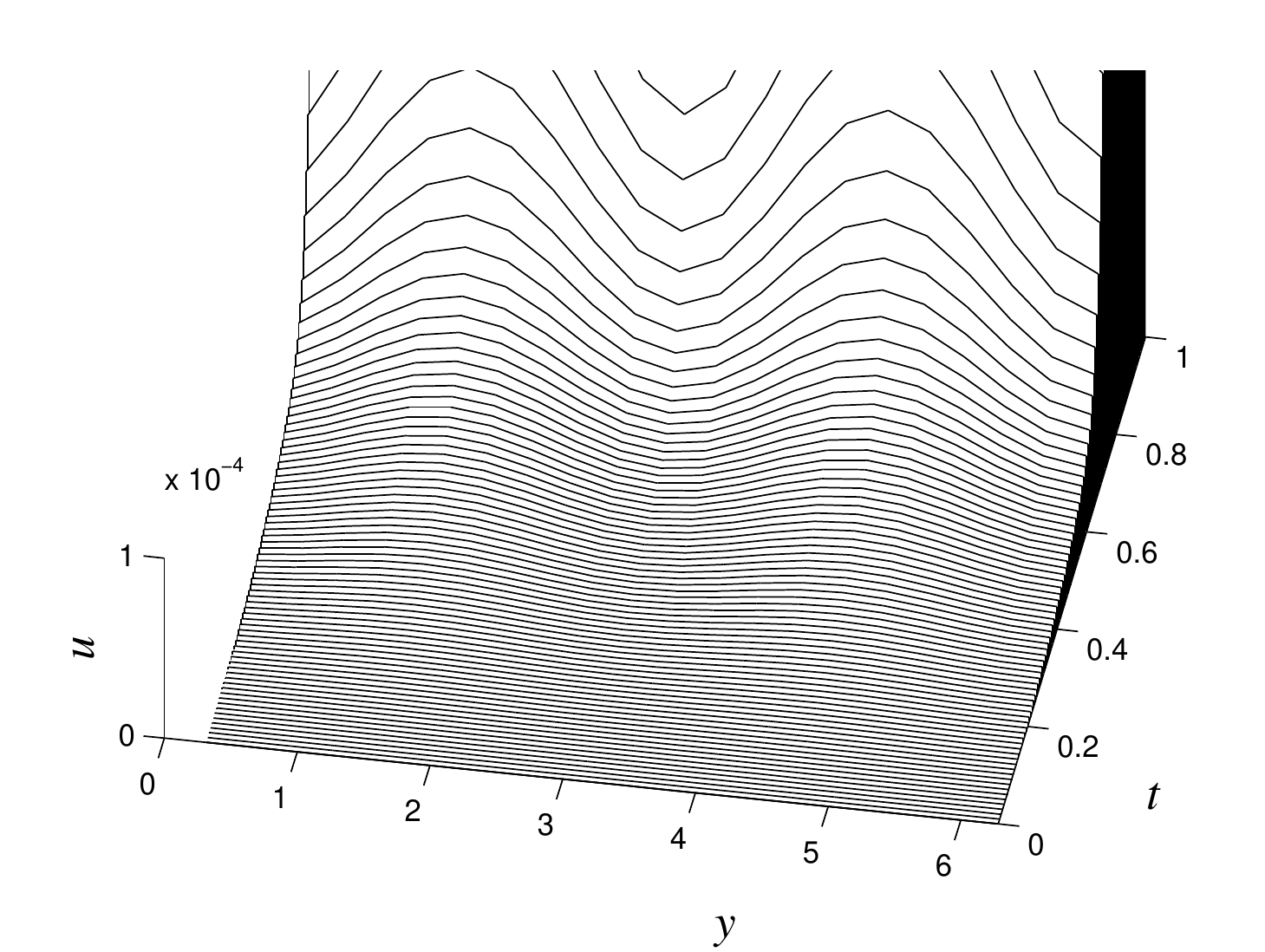}
		\end{minipage}}
	\subfloat[Evolution of $w(t,0,y)$] {
		\begin{minipage}[t]{0.45\textwidth}
			\centering
			\includegraphics[height=4cm,width=5.3cm]{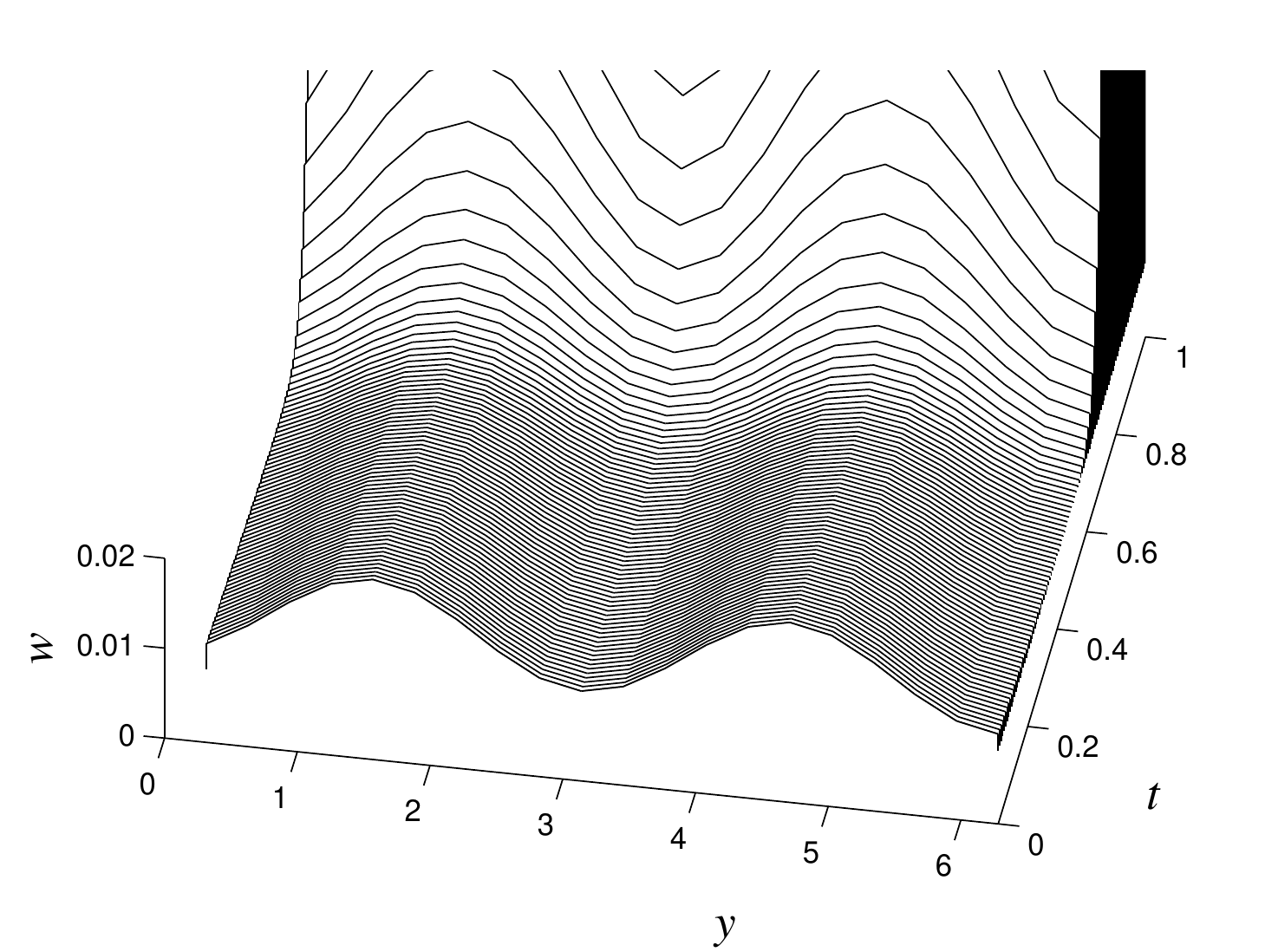}
		\end{minipage}}
	\caption{Evolution of ($u,w$) solution of the linear problem \eqref{linear}-\eqref{IC-0R} for $\theta_i = 0.75, \Le = 0.3 < \Le_c \simeq 0.56$.
		(A) $u(t,0,y)$ varies from $0$ to $10^{-4}$, $0<t<1, 0<y< 2\pi$,
		(B) $w(t,0,y)$ varies from $0$ to $2 . 10^{-2}$, $0<t<1, 0<y< 2\pi$.	 }
	\label{linear-unstable}
\end{figure}

\subsection{The fully nonlinear system}
By treating the nonlinearities explicitly, we can use the same algorithm as in the linear case. In the coordinates $(\xi,\eta)$, we approximate the mollifier $\beta(\xi)$ by the following trapezoid, see Figure \ref{fig-mollifiers}:
\begin{align*}
\beta(\xi) = \left\{
\begin{array}{ll}
2 + {\xi / \delta}, \quad & -2 \delta <\xi< -\delta, \\[1mm]
1, \quad & -\delta \le \xi \le \delta, \\[1mm]
2 - {\xi / \delta}, \quad  &\delta <\xi< 2 \delta, \\[1mm]
0, \quad &\text{elsewhere,}
\end{array}\right.
\end{align*}
\begin{figure}[h]
\centering
\includegraphics[height=3cm,width=10cm]{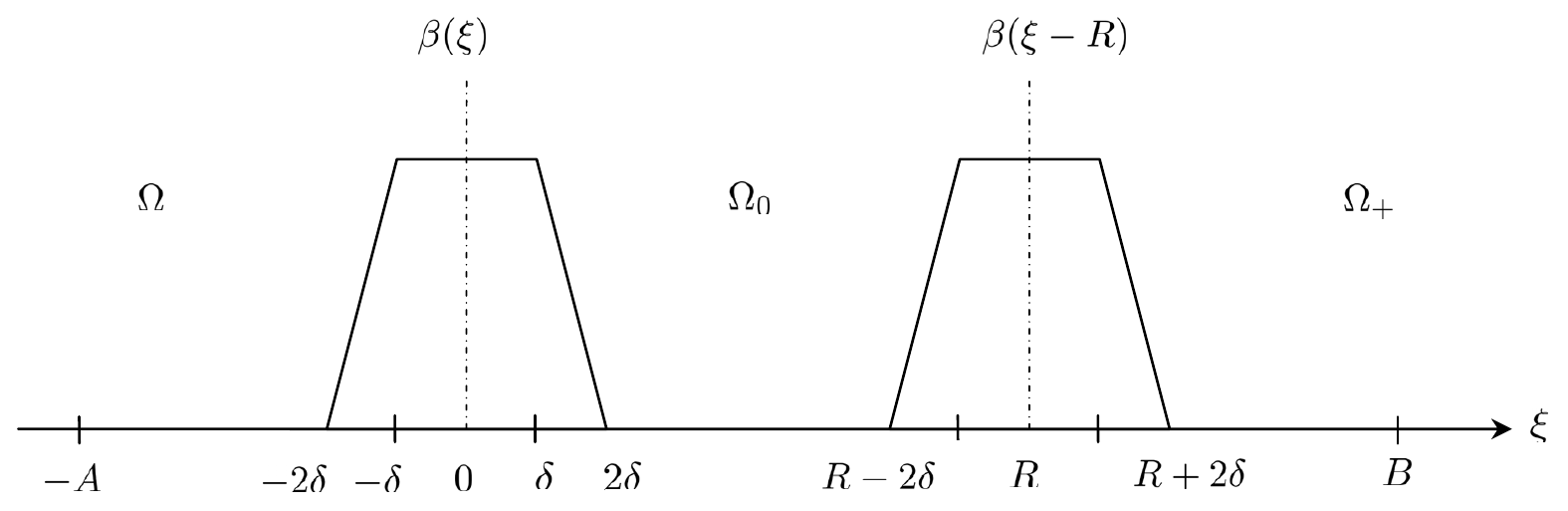}		
\caption{Approximation of the mollifier $\beta(\xi)$.}
\label{fig-mollifiers}
\end{figure}
Then, the fully nonlinear terms in System \eqref{asta}, namely ${\mathscr F}_1$, ${\mathscr F}_2$, ${\mathscr G}_1$, ${\mathscr G}_2$, ${\mathscr G}_3$, as well as $\vp_\tau, \ \vp_{\tau\xi}$,
have to be computed separately in eight intervals, as they are zero elsewhere: $[-2 \delta, - \delta] \cup \cdots \cup [R+\delta, R+2 \delta]$, see Figure \ref{fig-mollifiers}.  We refer to the Appendix for the formulas.

Hereafter, we present some typical numerical results for the fully nonlinear problem.
Simulations were performed using a standard pseudo-spectral method with small time step $\Delta t = 10^{-5}$ and small amplitude of initial perturbations (of order $10^{-4}$ to $10^{-3}$), to ensure sufficient accuracy.

We consider the situation when ignition temperature is fixed at $\theta_i=0.75$ and $\ell=100$, in such a case ${\Le}_c \simeq 0.5641 $. Three significant values of the Lewis number have been chosen in the interval $(0,\Le_c)$, namely $\Le=0.10$, $\Le=0.20$ and $\Le=0.50$.
Figures \ref{ignition-trailing-fronts-1050} and \ref{ignition-trailing-temperature-1050} represent the interface patterns and temperature levels. Numerically, we observe that, after a rapid transition period, a steady configuration consisting of ``two-cell'' patterns for the ignition and trailing interfaces is established. These simulations confirm the theoretical analysis, that is instability of the planar fronts for $\Le\in (0,\Le_c)$.

\begin{figure}[h]
\subfloat[${\rm Le} = 0.10$]{
	\begin{minipage}[t]{0.3\textwidth}
	\centering \label{igfront:7510}
	\includegraphics[height=3cm,width=4cm]{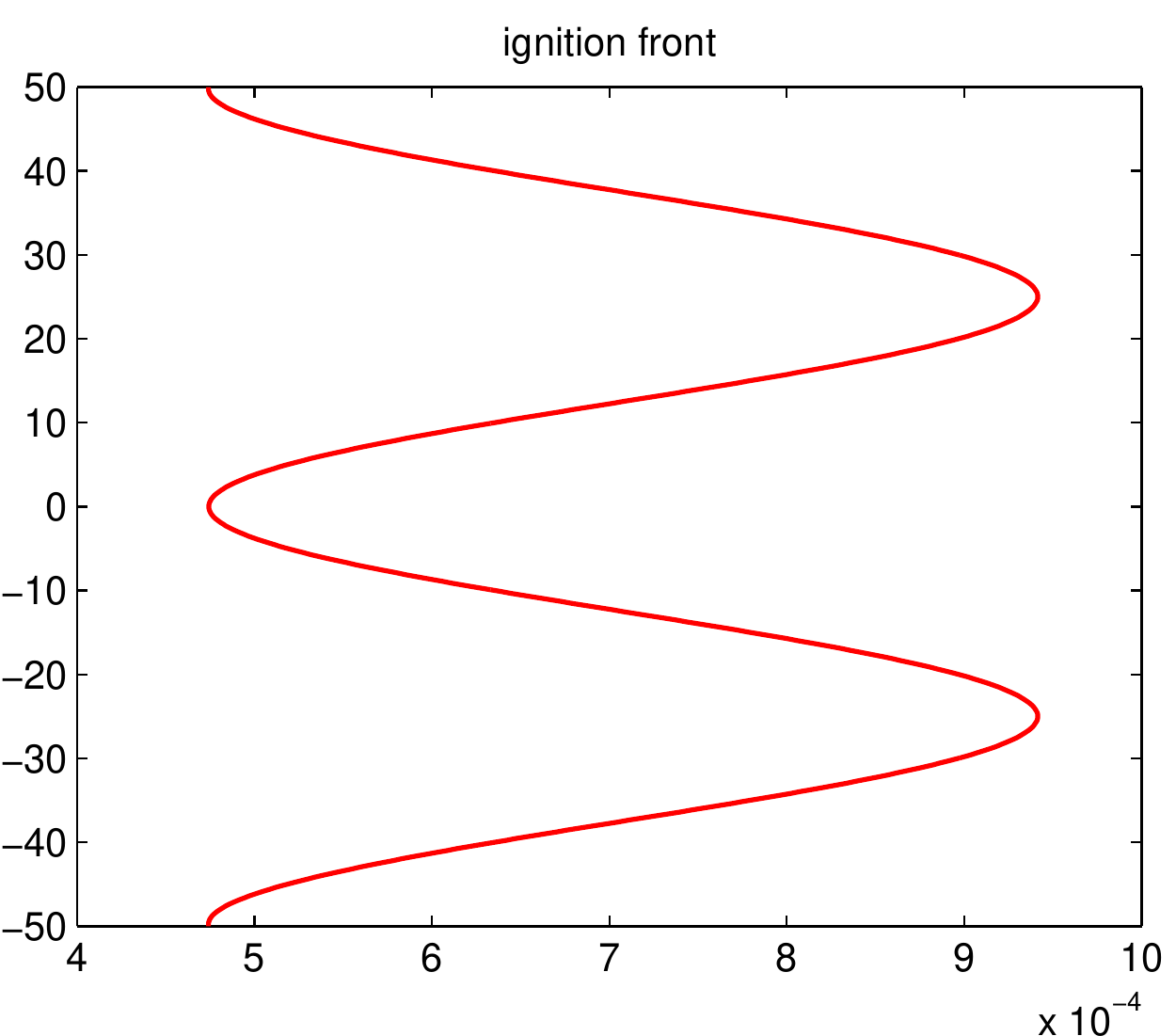}
	\end{minipage}}
\subfloat[${\rm Le} = 0.20$]{
	\begin{minipage}[t]{0.3\textwidth}
	\centering \label{igfront:7520}
	\includegraphics[height=3cm,width=4cm]{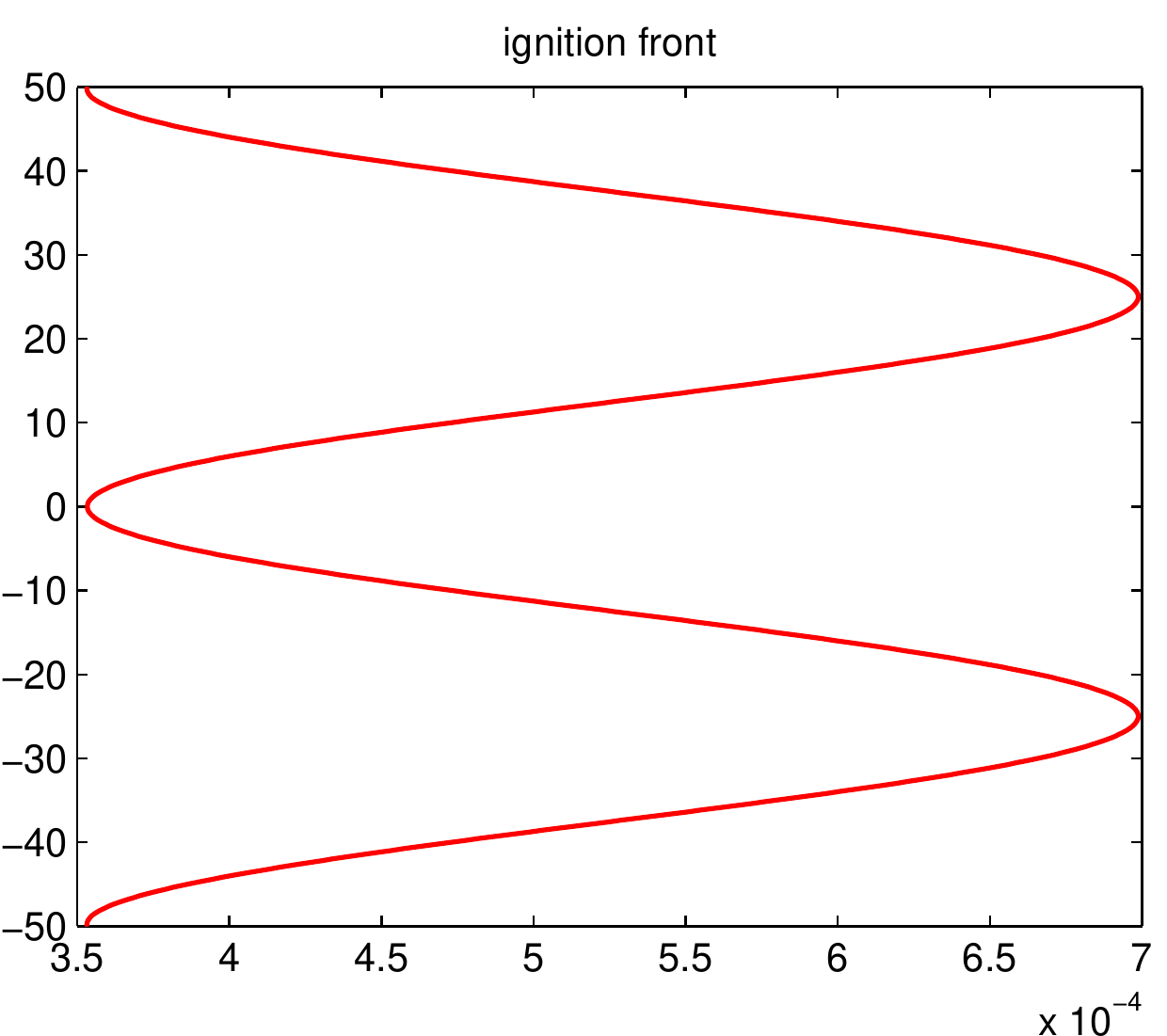}
	\end{minipage}}
\subfloat[${\rm Le} = 0.50$]{
	\begin{minipage}[t]{0.3\textwidth}
	\centering \label{igfront:7550}
	\includegraphics[height=3cm,width=4cm]{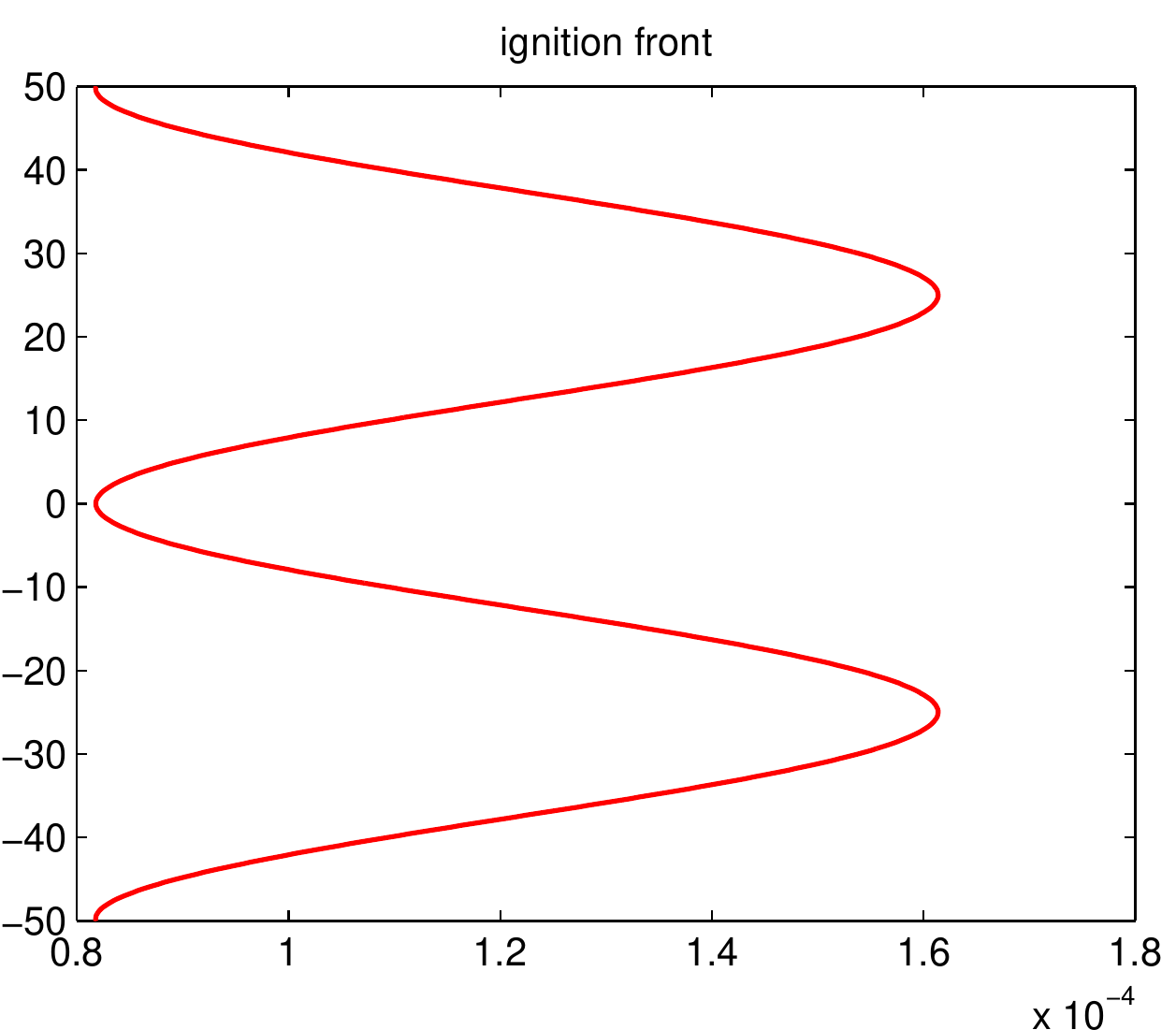}
	\end{minipage}} \\
\subfloat[${\rm Le} = 0.10$] {
	\begin{minipage}[t]{0.3\textwidth}
	\centering \label{trfront:7510}
	\includegraphics[height=3cm,width=4cm]{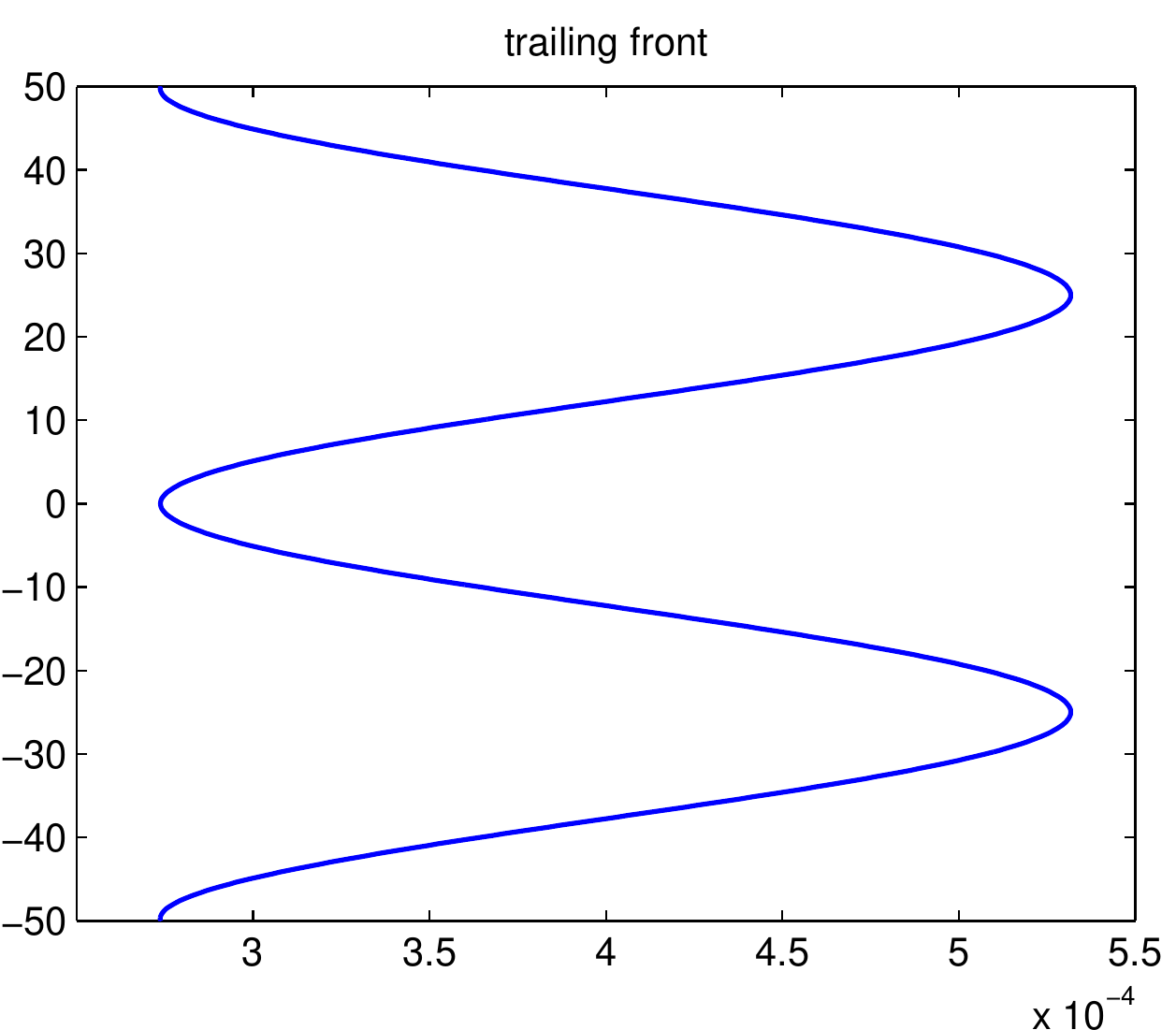}
	\end{minipage}}
\subfloat[${\rm Le} = 0.20$ ]{
	\begin{minipage}[t]{0.3\textwidth}
	\centering \label{trfront:7520}
	\includegraphics[height=3cm,width=4cm]{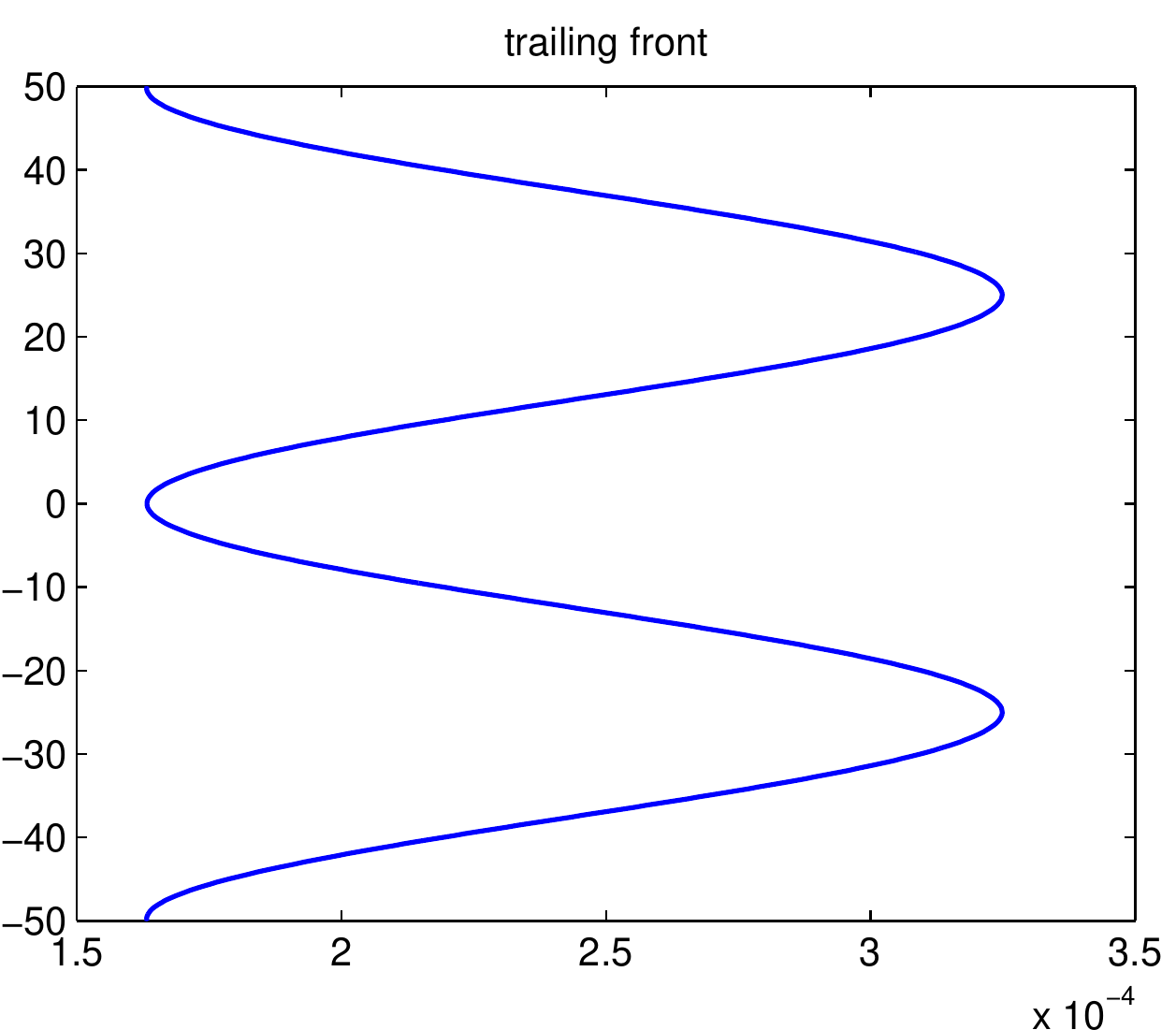}
	\end{minipage}}
\subfloat[${\rm Le} = 0.50$ ]{
	\begin{minipage}[t]{0.3\textwidth}
	\centering \label{trfront:7550}
	\includegraphics[height=3cm,width=4cm]{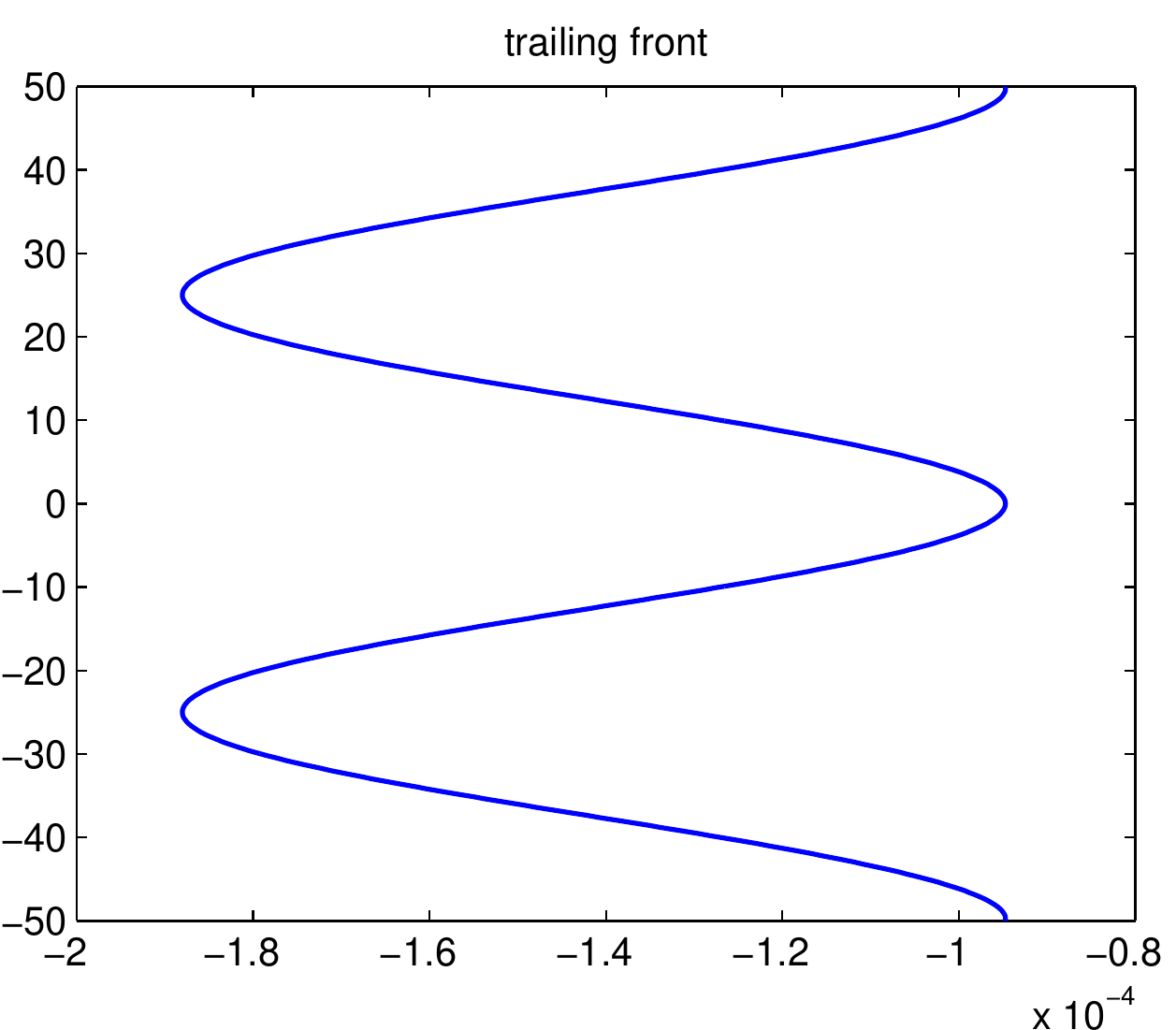}
	\end{minipage}}\\
\caption{Patterns of ignition (above) and trailing (below) interfaces.
	Here $\theta_i = 0.75$, $\ell=100$, ${\Le}_c \simeq 0.5641$.}
\label{ignition-trailing-fronts-1050}
\end{figure}

\begin{figure}[h]
\subfloat[${\rm Le} = 0.10$]{
	\begin{minipage}[t]{0.3\textwidth}
	\centering \label{igtemp:7510}
	\includegraphics[height=3cm,width=4cm]{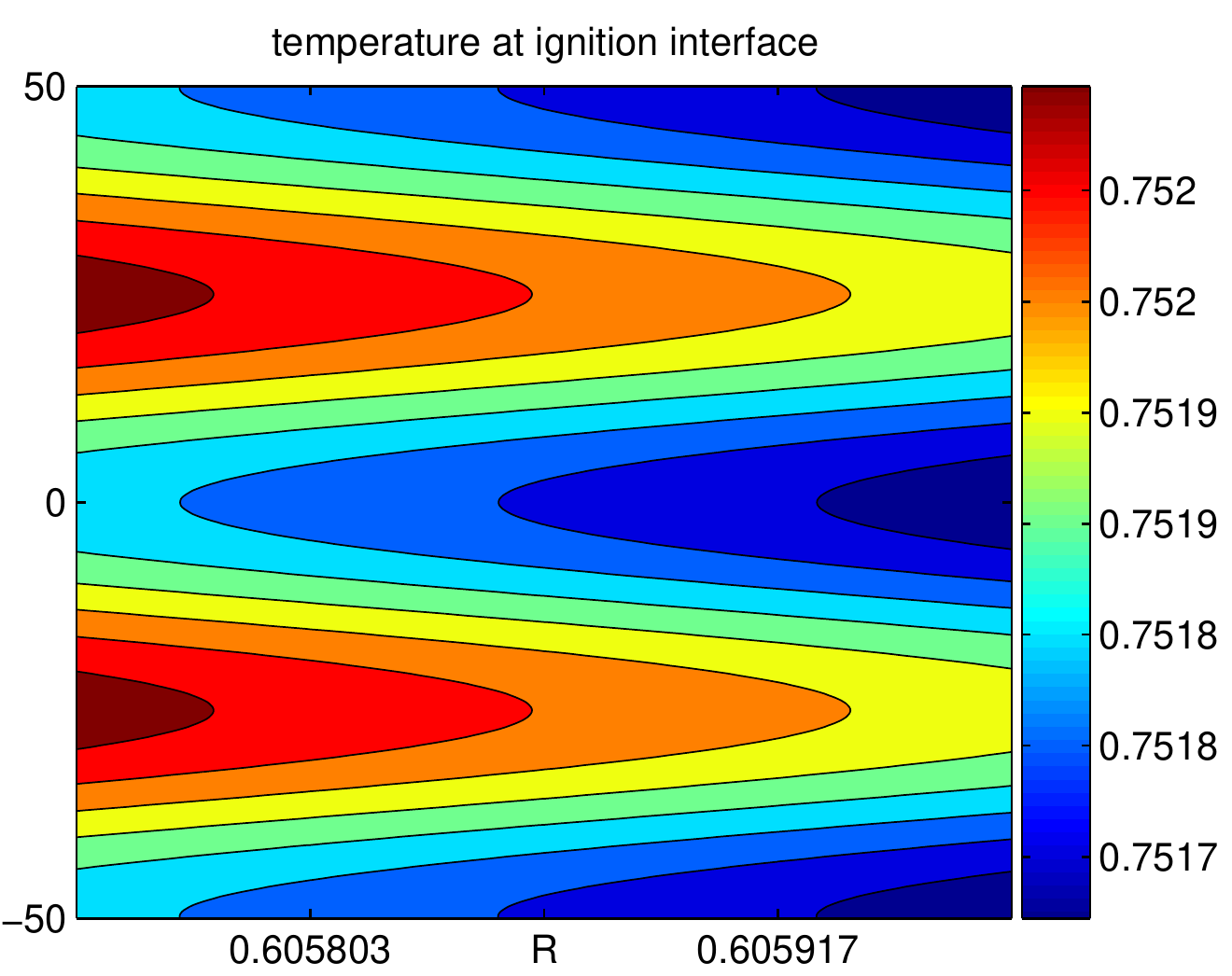}
	\end{minipage}}
\subfloat[${\rm Le} = 0.20$]{
	\begin{minipage}[t]{0.3\textwidth}
	\centering \label{igtemp:7520}
	\includegraphics[height=3cm,width=4cm]{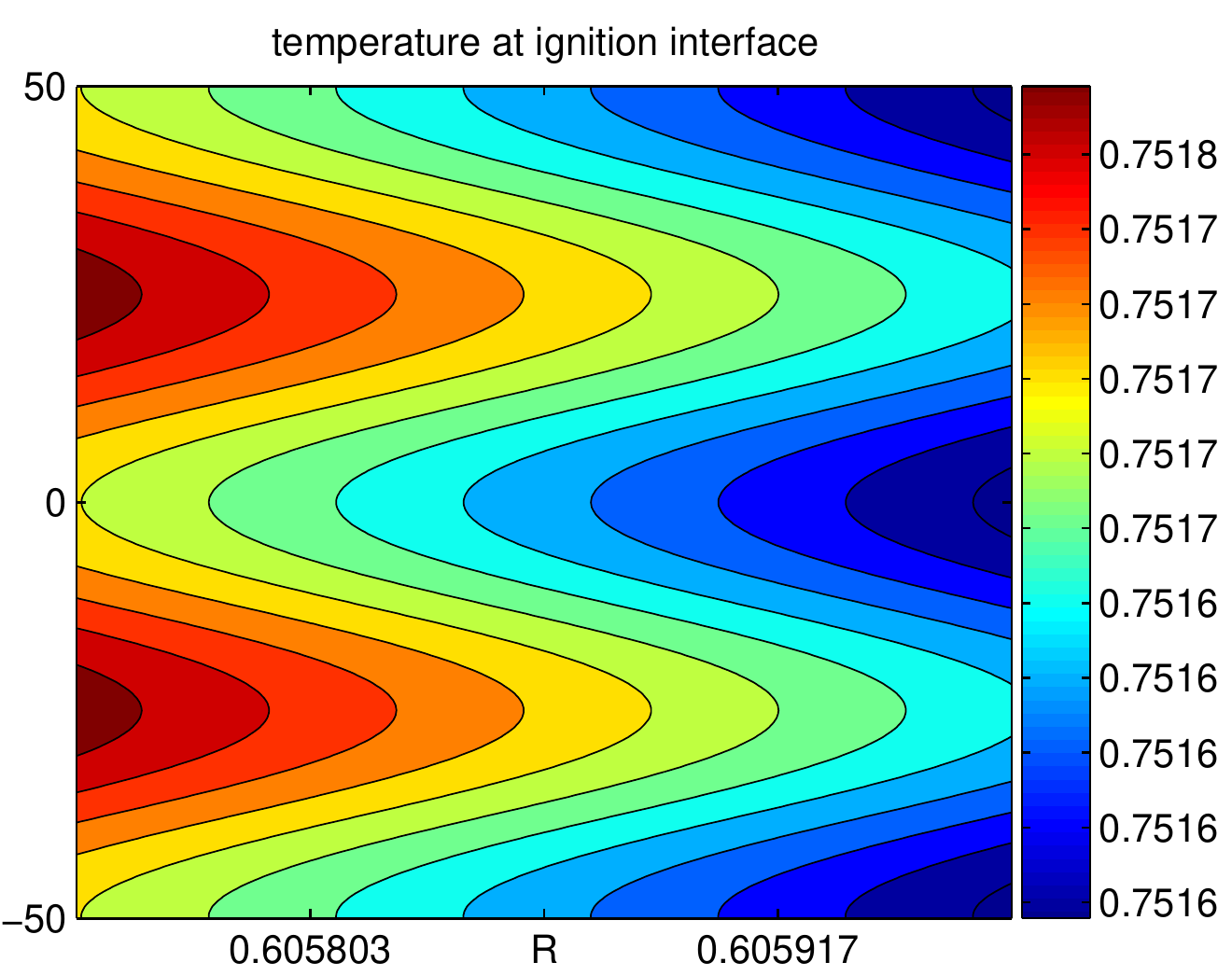}
	\end{minipage}}
\subfloat[${\rm Le} = 0.50$]{
	\begin{minipage}[t]{0.3\textwidth}
	\centering \label{igtemp:7550}
	\includegraphics[height=3cm,width=4cm]{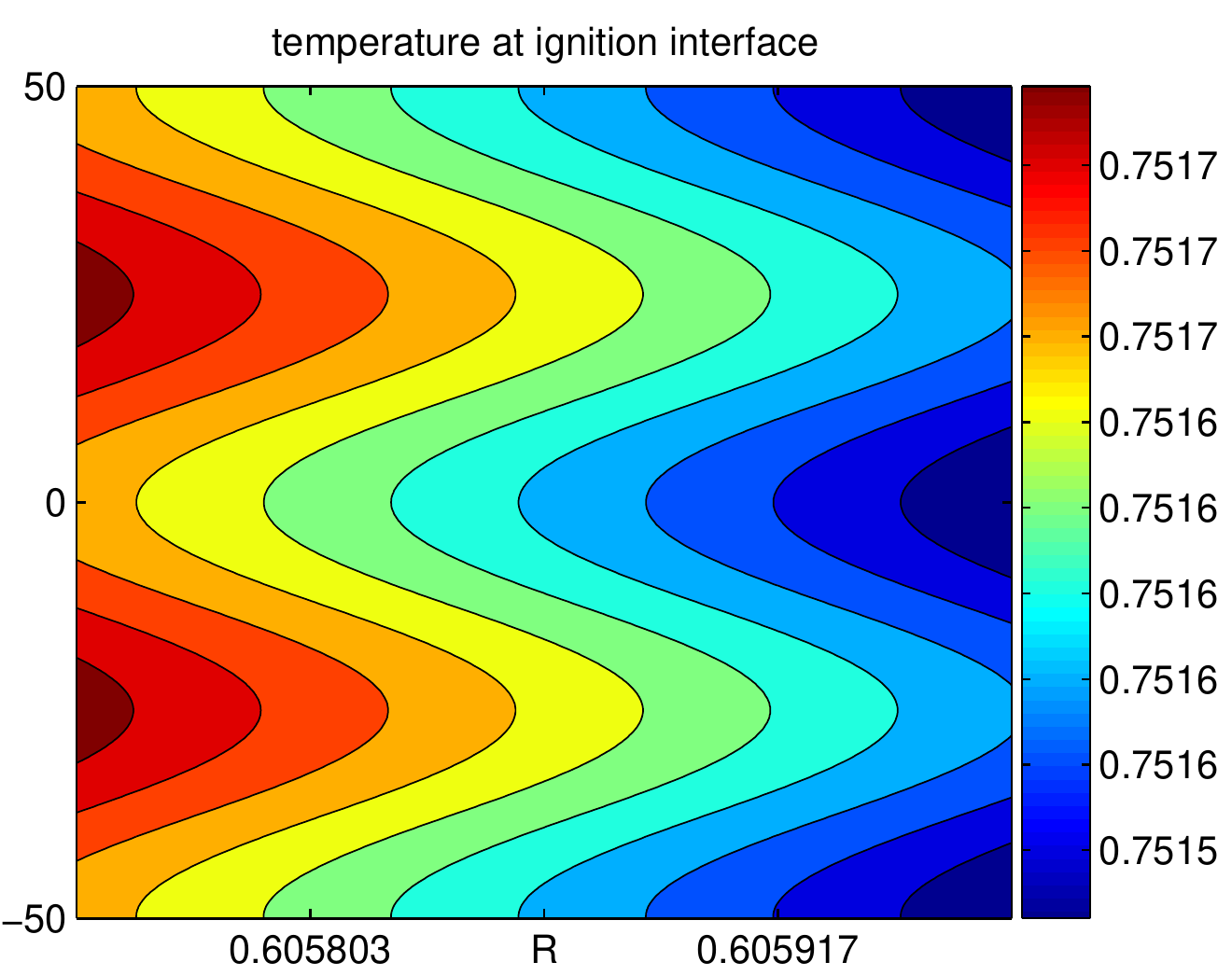}
	\end{minipage}} \\
\subfloat[${\rm Le} = 0.10$] {
	\begin{minipage}[t]{0.3\textwidth}
	\centering \label{trtemp:7510}
	\includegraphics[height=3cm,width=4cm]{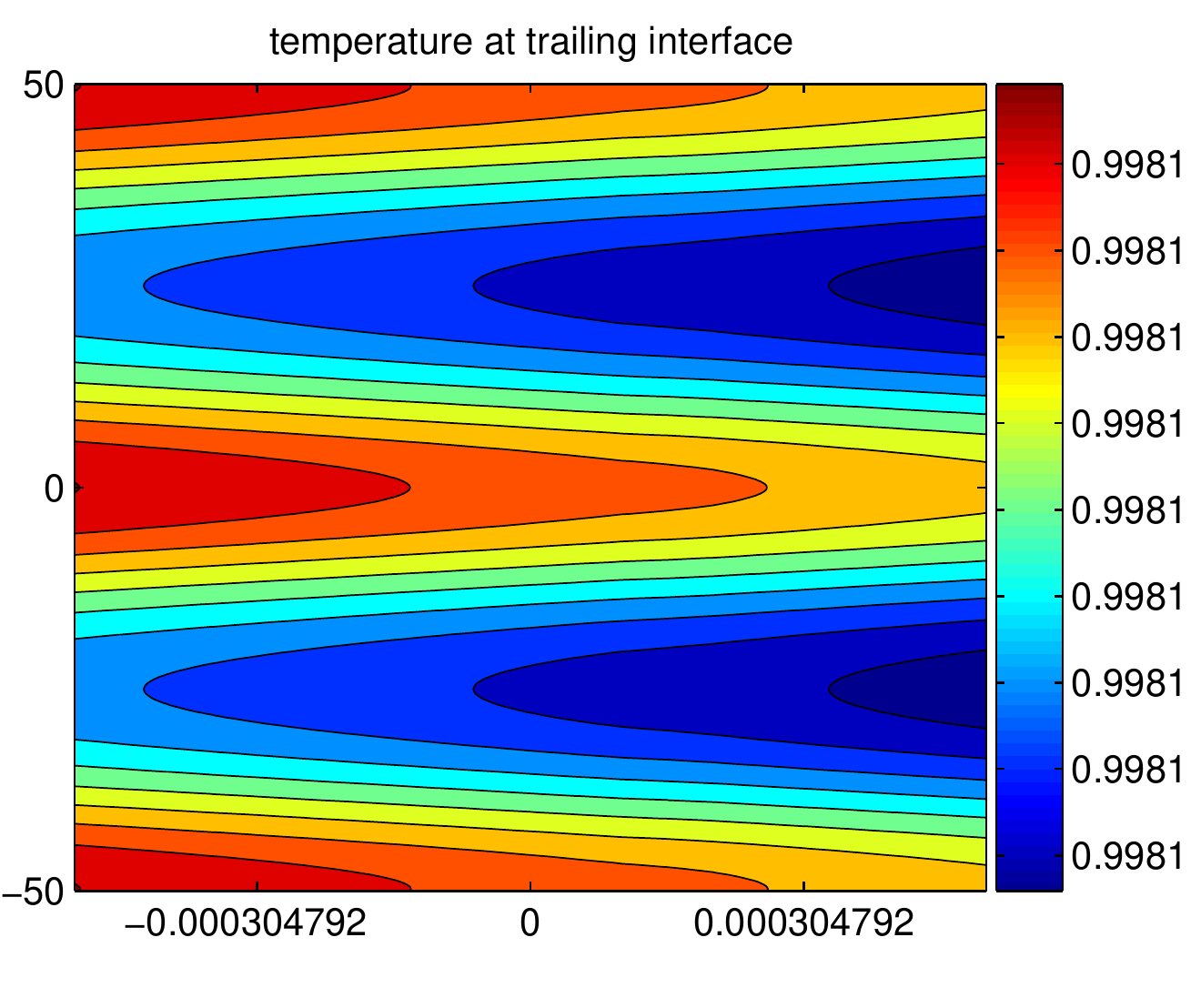}
	\end{minipage}}
\subfloat[${\rm Le} = 0.20$ ]{
	\begin{minipage}[t]{0.3\textwidth}
	\centering \label{trtemp:7520}
	\includegraphics[height=3cm,width=4cm]{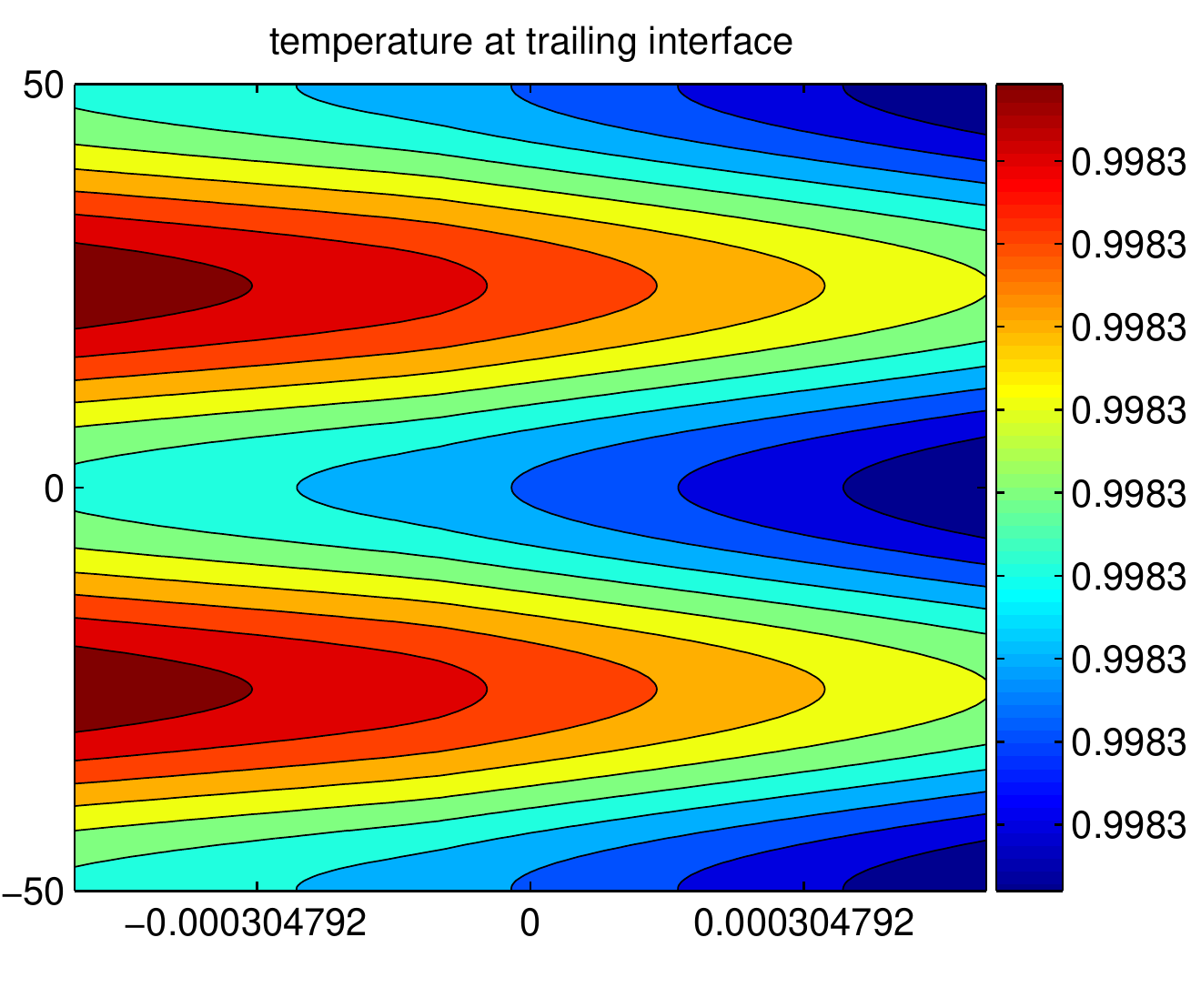}
	\end{minipage}}
\subfloat[${\rm Le} = 0.50$ ]{
	\begin{minipage}[t]{0.3\textwidth}
	\centering \label{trtemp:7550}
	\includegraphics[height=3cm,width=4cm]{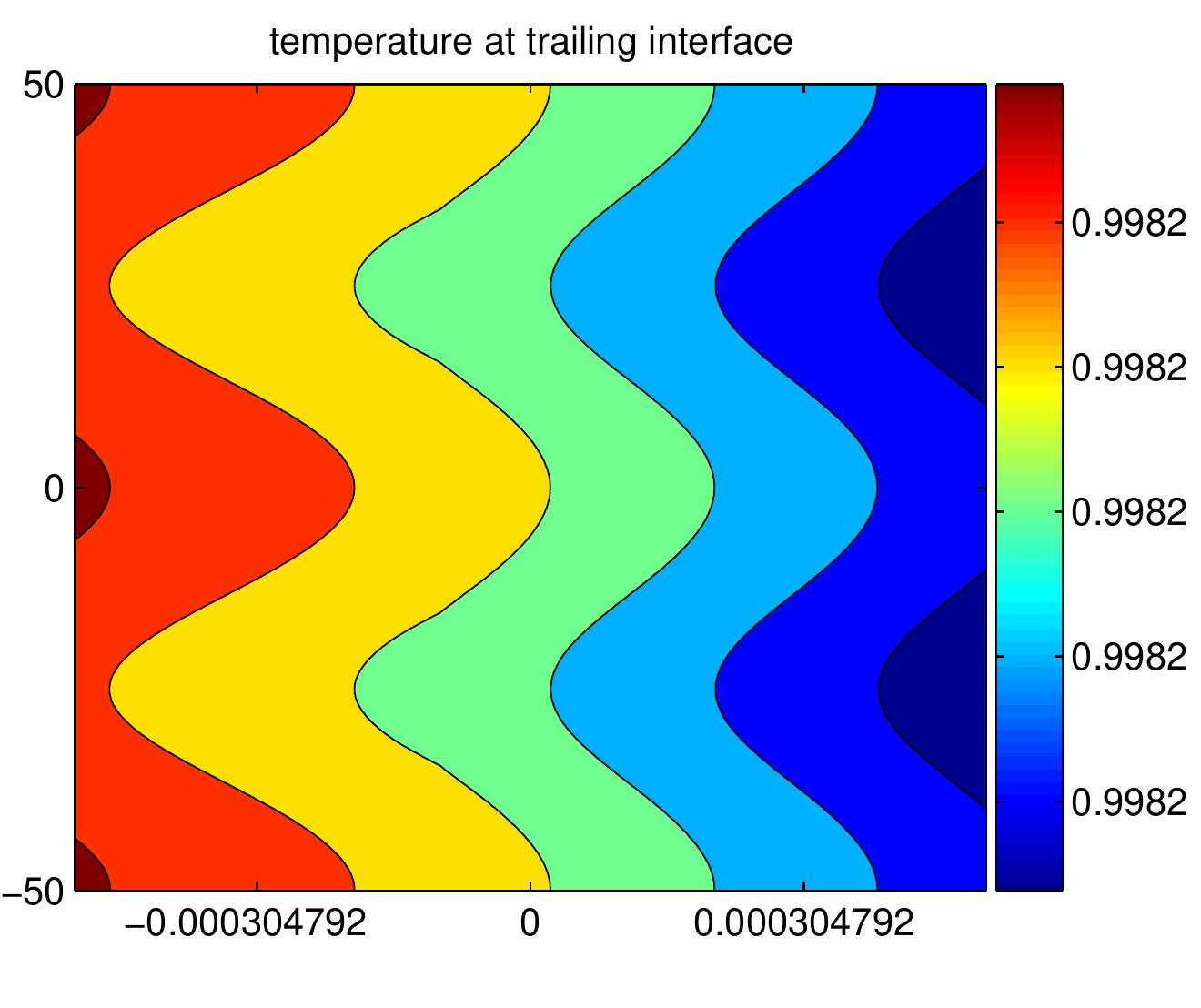}
	\end{minipage}}
\caption{Temperature levels around the ignition (above)
	and trailing (below) interfaces. Parameters are as in Figure \ref{ignition-trailing-fronts-1050}.}
\label{ignition-trailing-temperature-1050}
\end{figure}

\section*{Acknowlegments}
We are grateful to Grisha Sivashinsky for bringing reference \cite{BGKS15} to our attention. We also thank Peter Gordon for fruitful discussions and advices. The work of Z.W. was supported by National Natural Science Foundation of China (Grants No. 11761007, 11661004), Research Foundation of Education Bureau of Jiangxi Province (Grant No. GJJ160564) and Doctoral Scientific Research Foundation (Grant No. DHBK2017148).

\section{Appendix}
\footnotesize{
In this Appendix, we write explicitly the formulas for the fully nonlinear terms in system \eqref{asta}, namely ${\mathscr F}_1$, ${\mathscr F}_2$, ${\mathscr G}_1$, ${\mathscr G}_2$, ${\mathscr G}_3$.
They have to be computed separately in 8 intervals, as they are zero elsewhere: $[-2 \delta, - \delta] \cup [-\delta,0] \cup [0, \delta] \cup [\delta, 2 \delta] \cup [R- 2 \delta, R-\delta] \cup [R-\delta, R] \cup [R, R+\delta] \cup [R+\delta, R+2 \delta]$, see Figure \ref{fig-mollifiers}. The formulas for $\vp_\tau$ and $\vp_{\tau\xi}$ can be easily derived.

\medskip

{\bf (1)} For $\xi \in [-2 \delta, -\delta]$ and $\xi = \frac{A}{2} (x - 1)$,
\begin{align*}
{\mathscr F}_1(u, w) = & \frac{2 \Le}{A \Le -aR w(0^+)}u_x,\\[1mm]
{\mathscr F}_2(u, w) = &\frac{1}{\Le - R w(0^+)}\bigg\{\frac{2 Rw(0^+)}{A}u_x+\frac{8 \pi^2 R \beta}{A \ell^2}[w_{yy}(0^+)u_x +2w_y(0^+)u_{xy}]\bigg\} \\
& + \frac{\Le}{(\Le - R w(0^+))^2} \bigg\{\frac{16\pi^2 R^2 \beta w_y^2(0^+)}{\Le A \ell^2}\bigg (u_x + \frac{\beta}{A}u_{xx}\bigg )+
\frac{4Rw(0^+)}{A^2}\bigg (2 - \frac{R w(0^+)}{\Le} u_{xx}\bigg )\bigg\}.
\end{align*}
\medskip
\par

{\bf (2)} For $\xi \in [- \delta, 0]$ and $\xi = \frac{A}{2} (x - 1)$,
\begin{align*}
{\mathscr F}_1(u) = & \frac{2}{A}u_x, \qquad\;\,
{\mathscr F}_2(u, w) = \frac{8R \pi^2}{A\ell^2\Le}\bigg(w_{yy}(0^+) u_{xx} + w_y(0^+) u_{xy} + \frac{2 R w_y^2(0^+)}{A \Le}u_{xx}\bigg ).
\end{align*}

\medskip
\par

{\bf (3)} For $\xi \in [0, \delta]$ and $\xi = \frac{R}{2} (x + 1)$,
\begin{align*}
{\mathscr F}_1(u) = & \frac{1}{\Le}e^{-\frac{R}{2}(x+1)}+\frac{2}{R}u_x, \qquad\;\,
{\mathscr G}_1(w) =  \Le\ e^{-\frac{\Le R}{2}(x+1)} w(0^+) + \frac{2}{R} w_x,\\[1mm]
{\mathscr F}_2(u, w) = & \frac{4\pi^2 R}{\ell^2\Le^2}e^{-\frac{R}{2}(x+1)}\bigg [-w_{yy}(0^+) w(0^+) + w_y^2(0^+)\bigg (1-\frac{Rw(0^+)}{\Le}\bigg )\bigg ]\\
&+\frac{8\pi^2}{\ell^2 \Le}\bigg[w_{yy}(0^+) u_x + 2 w_y(0^+)\bigg (u_{xy} + \frac{w_y(0^+)}{\Le} u_{xx}\bigg )\bigg ],\\[1mm]
{\mathscr G}_3(w) = & \frac{4\pi^2R}{\Le\ell^2}e^{-\frac{\Le R}{2}(x+1)}\bigg [w_{yy}(0^+)w(0^+) + w_y^2(0^+)\bigg (\frac{2\pi}{\ell} - w(0^+)-1\bigg )\bigg ]\\
&+\frac{8\pi^2}{\Le^2 \ell^2}\bigg(w_{yy}(0^+)w_x  + 2\pi^2w_y(0^+)w_{xy}+\frac{2w_y^2(0^+)}{\Le}w_{xx} \bigg ).
\end{align*}
\medskip
\par

{\bf (4)} For $\xi \in [\delta, 2\delta]$ and $\xi = \frac{R}{2} (x + 1)$,
\begin{align*}
{\mathscr F}_1(u, w) = & \frac{\beta w(0^+)}{\Le + R w(0^+)}e^{-R (x+1)\over 2} + \frac{2\Le}{R(\Le + R w(0^+))}u_x, \\[1mm]
{\mathscr F}_2(u, w) = & \frac{1}{\Le + R w(0^+)} \bigg\{\frac{R \beta}{\Le} e^{-\frac{R}{2}(x+1)}\bigg [\frac{4\pi^2 \beta}{\ell^2}\big (2 w_y^2(0^+)-w(0^+) w_{yy}(0^+)\big )-w^2(0^+)\bigg ]\\
&\phantom{\frac{1}{\Le + R w(0^+)} \bigg\{\;} + 8\pi^2 \beta \big(2 w_y(0^+) u_{xy} - w_{yy}(0^+) u_x\big)-2w(0^+) u_x\bigg\} \\
& +\frac{\Le}{(\Le + Rw(0^+))^2}\bigg\{e^{-\frac{R}{2}(x+1)}\bigg [-\frac{4\pi^2 R\beta}{\ell^2\Le }\bigg (\frac{4\beta w(0^+) w_{xy}(0^+) w_y(0^+)}{\Le}+ R w_y^2(0^+)\bigg (\frac{\beta w(0^+)}{\Le} + \frac{1}{R}\bigg )\\
&\phantom{+\frac{\Le}{(\Le + Rw(0^+))^2}\bigg\{e^{-\frac{R}{2}(x+1)}\bigg [\;}+ 2 R w(0^+) w_y^2(0^+)\bigg  )
+\frac{R^2 w^2(0^+)}{\Le}\bigg (\frac{\beta w(0^+)}{\Le}+\frac{1}{R}\bigg )+2 R w^2(0^+)\bigg ]\\
&\phantom{+\frac{\Le}{(\Le + Rw(0^+))^2}\bigg\{\;}+\frac{16 \pi^2 \beta w_y(0^+)}{\Le \ell^2}[-2w_{xy}(0^+)u_x + w_y(0^+) u_{xx}]-4 u_{xx} w(0^+)\bigg (\frac{w(0^+)}{\Le} +\frac{2}{R}\bigg )\bigg\},\\[1mm]
{\mathscr G}_1(w) = & \frac{\Le}{\Le + R w(0^+)}\bigg [e^{-\frac{\Le R}{2}(x+1)} \beta w(0^+)\bigg (\Le - \frac{R w(0^+)}{\Le}\bigg )+ \frac{2}{R}w_x\bigg ],\\[1mm]
{\mathscr G}_2(w) = &\frac{\Le}{\Le + R w(0^+)}w - \frac{\beta w(0^+)\Le}{\Le + R w(0^+)}e^{-\frac{\Le R}{2} (x+1)},\\[1mm]
{\mathscr G}_3(w) = & \frac{1}{\Le + R w(0^+)} \bigg\{e^{-\frac{\Le R}{2} (x+1)}\bigg [\frac{4\pi^2 R \beta (2+\beta \Le)}{\Le \ell^2}[w(0^+) w_{yy}(0^+) + 2w_y^2(0^+)]
-R w(0^+) (1+\beta \Le)\bigg ]\\
&\phantom{\frac{1}{\Le + R w(0^+)} \bigg\{\;}+\frac{4\pi^2}{\ell^2\Le}\big [w_{yy}(0^+) (2\beta w_x -Rw)+ 2 w_y(0^+) (2\beta w_{xy} - R w_y)]-2 w(0^+) w_x\bigg\} \\
& + \frac{\Le}{(\Le\!+\! R w(0^+))^2} \bigg\{e^{-\frac{\Le R}{2}(x\!+\!1)}\bigg [\frac{4\pi^2 R \beta w_y^2(0^+)}{\Le^2\ell^2}\bigg (Rw(0^+) \big(2\beta\!-\!2\!-\!4 \Le\,\beta\!-\!\beta\Le(1\!+\!\beta \Le)\big)\!+\!\Le
(\beta\Le\!-\!2)\bigg ) \\
&\phantom{+ \frac{\Le}{(\Le + R w(0^+))^2} \bigg\{e^{-\frac{\Le R}{2} (x+1)}\bigg [\;}+ R w^2(0^+) \bigg(1+ (1+\beta \Le)\bigg (2+ \frac{R w(0^+)}{\Le}\bigg )\bigg )\bigg ]\\
&\phantom{+ \frac{\Le}{(\Le + R w(0^+))^2} \bigg\{\;}
+ \frac{4\pi^2 R w_y^2(0^+)}{\Le \ell^2}\bigg (\frac{2Rw}{\Le} +\frac{4\beta^2}{\Le R} w_{xx}-8\beta w_x\bigg )
- \frac{4 w(0^+)}{\Le R}\bigg (2+\frac{R w(0^+)}{\Le}\bigg ) w_{xx}\bigg\},
\end{align*}

\medskip
\par

{\bf (5)} For $\xi \in [R-2 \delta, R-\delta]$ and $\xi = \frac{R}{2} (x + 1)$,
\begin{align*}
{\mathscr F}_1(u) =  & \frac{1}{R (\theta_i+u(R^+))}\Big (2 \theta_i u_x - \beta_R u(R^+) e^{-\frac{R}{2}(x+1)}\Big ), \\[1mm]
 {\mathscr F}_2(u) = & \frac{1}{\theta_i + u(R^+)}\bigg\{e^{-\frac{R}{2}(x+1)}\bigg [\frac{4 \pi^2 \beta_R^2}{\theta_i \ell^2 R} \big(u(R^+) u_{yy}(R^+) + 2 u_y^2(R^+)\big)
+ \frac{\beta_R u(R^+)}{\theta_i R}\bigg ]\\
&\phantom{\frac{1}{\theta_i + u(R^+)}\bigg\{\;}-\frac{8 \pi^2 \beta_R}{\ell^2 R} \big(u_{yy}(R^+)u_x  + 2u_y(R^+)u_{xy} \big )\bigg\} \\
&+\frac{\theta_i}{(\theta_i + u(R^+))^2}\bigg\{e^{-\frac{R}{2}(x+1)}\bigg [\frac{4 \pi^2 \beta_R^2 u_y^2(R^+)}{\theta_i \ell^2 R}\bigg (\frac{\beta_R-4}{\theta_i} u(R^+)-1\bigg )\\
&\phantom{+\frac{\theta_i}{(\theta_i + u(R^+))^2}\bigg\{e^{-\frac{R}{2}(x+1)}\bigg [}\;\;\,-\frac{u^2(R^+)}{\theta_i R}\bigg (\frac{\beta_R u(R^+)}{\theta_i} -1 + 2 \beta_R\bigg )\bigg ]\\
&\phantom{+\frac{\theta_i}{(\theta_i + u(R^+))^2}\bigg\{;}\;+ \frac{16 \pi^2 \beta_R u_y^2(R^+)}{\theta_i \ell^2 R}\bigg (u_x + \frac{\beta_R}{R}u_{xx}\bigg )-\frac{4 u(R^+)}{R^2}u_{xx}\bigg (\frac{u(R^+)}{\theta_i} + 2\bigg )\bigg\},\\[1mm]
{\mathscr G}_1(u, w) = & \frac{\theta_i}{\theta_i + u(R^+)}\bigg [e^{-\frac{\Le R}{2}(x+1)}\frac{\Le\, \beta_R u(R^+)}{\theta_i R}\bigg (\frac{u(R^+)}{\theta_i} - \Le\bigg )
+\frac{2}{R}w_x\bigg ],\\[1mm]
{\mathscr G}_2(u,w) = & \frac{\theta_i}{\theta_i + u(R^+)} \bigg (e^{-\frac{\Le R}{2}(x+1)}\frac{\Le \beta_R u(R^+)}{\theta_i R} + w\bigg ),\\[1mm]
{\mathscr G}_3(u,w) = & \frac{\theta_i}{\Le(\theta_i+u(R^+))}\bigg\{e^{-\frac{\Le R}{2} (x+1)}\bigg [\frac{4\pi^2 \beta_R \Le}{\theta_i \ell^2 R}
(\Le \beta_R -2)\big(u_{yy}(R^+) u(R^+) + 2 u_y^2(R^+)\big)\\
&\phantom{\frac{\theta_i}{\Le(\theta_i+u(R^+))}\bigg\{e^{-\frac{\Le R}{2} (x+1)}\bigg [}\;\;\,+ \frac{\Le^2 u^2(R^+)}{\theta_i R} (\Le \beta_R-1)\bigg ]\\
&\phantom{\frac{\theta_i}{\Le(\theta_i+u(R^+))} \bigg\{\;}- \frac{4\pi^2 u_{yy}(R^+)}{\ell}\bigg (\frac{w}{\ell} + \frac{2\beta_R}{R} w_x\bigg )- \frac{2 \Le}{R} w_x
- \frac{8\pi^2 u_y(R^+)}{\ell^2} \bigg (w_y + \frac{2 \beta_R}{R}w_{xy}\bigg )\bigg\} \\
& +\frac{\theta_i}{\Le (\theta_i+u(R^+))^2} \bigg\{e^{- \frac{\Le R}{2}(x+1)}\bigg[\frac{8\pi^2 \beta_R \Le\, u_y^2(R^+)}{\theta_i \ell^2 R}
\frac{u(R^+)}{\theta_i}\bigg (1\!+\!\frac{\Le\, \beta_R}{2}(\Le \beta_R\!-\!5\!+\!2\Le)\bigg )\\
&\phantom{+\frac{\theta_i}{\Le (\theta_i+u(R^+))^2} \bigg\{e^{- \frac{\Le R}{2}(x+1)}\bigg[}\;\;\,
+\frac{8\pi^2 \beta_R\Le\, u_y^2(R^+)}{\theta_i \ell^2 R}\bigg (1+\frac{\Le^2 \beta_R}{2}\bigg )\bigg ] \\
&\phantom{+\frac{\theta_i}{\Le (\theta_i+u(R^+))^2} \bigg\{e^{-\frac{\Le R}{2}(x+1)}\bigg[\;}\;-\frac{\Le^2 u^2(R^+)}{\theta_i R}\bigg[\bigg (2+\frac{u(R^+)}{\theta_i}\bigg )
(\Le \beta_R-1) +\Le\bigg )\bigg ] \\
& \phantom{+\frac{\theta_i}{\Le (\theta_i+u(R^+))^2} \bigg\{\;}\;+\frac{8\pi^2 u_y^2(R^+)}{\theta_i \ell^2}\bigg (w\!+\! \frac{4 \beta_R}{R}w_x\!+\!
\frac{2 \beta_R^2}{R^2}w_{xx}\bigg )\!-\!\frac{4u(R^+)}{R^2}\bigg (2\!+\!\frac{u(R^+)}{\theta_i}\bigg )w_{xx}\bigg\},
\end{align*}

\medskip
\par

{\bf (6)} For $\xi \in [R-\delta, R]$ and  $\xi = \frac{R}{2} (x + 1)$,
\begin{align*}
 {\mathscr F}_1(u) = & -\frac{u(R^+)}{\theta_i R} e^{-\frac{R}{2}(x+1)} + \frac{2}{R}u_x, \quad {\mathscr G}_1(u, w) =  -\frac{\Le^2 u(R^+)}{\theta_i R} e^{- \Le R (x+1) \over 2} + \frac{2}{R}w_x, \\[1mm]
 {\mathscr F}_2(u) = &  e^{-\frac{R}{2}(x+1)}\frac{4 \pi^2}{\theta_i^2 \ell^2 R}\bigg [u(R^+) u_{yy}(R^+) + u_y^2(R^+)\bigg (1+ \frac{u(R^+)}{\theta_i}\bigg )\bigg]\\
 & + \frac{8 \pi^2}{\theta_i \ell^2 R}\bigg (\frac{2 u_y^2(R^+)}{\theta_i R} u_{xx}- u_x u_{yy}(R^+) - 2u_y(R^+) u_{xy}\bigg ),
\\[1mm]
{\mathscr G}_3(u, w) = & e^{-\frac{\Le R}{2}(x+1)}\frac{4 \pi^2}{\theta_i^2 \ell^2 R}\big [u(R^+) u_{yy}(R^+) + \Le\, u_y^2(R^+) (1+\Le\, u(R^+))\big]\\
&+\frac{8\pi^2}{\Le \theta_i \ell^2 R}\bigg (\frac{2 u_y^2(R^+)}{\theta_i R}w_{xx}- w_x u_{yy}(R^+) - 2u_y(R^+) w_{xy}\bigg ),
\end{align*}

\medskip
\par

{\bf (7)} For $\xi \in [R, R+\delta]$ and $\xi = \frac{B-R}{2}(x+1) + R$,

\begin{align*}
{\mathscr F}_1(u) = & u(R^+) e^{-\frac{B-R}{2}(x+1)} + \frac{2}{B-R}u_x,\\[1mm]
{\mathscr F}_2(u) = & -e^{-\frac{B-R}{2}(x+1)}\frac{4\pi^2}{\theta_i \ell^2}\bigg [u(R^+) u_{yy}(R^+) + \big (1+\frac{u(R^+)}{\theta_i}\big )u_y^2(R^+)\bigg ]\\
&-\frac{8 \pi^2}{\theta_i \ell^2 (B-R)}\big [u_{yy}(R^+) u_x + 2u_y(R^+) u_{xy} - \frac{2 u_y^2(R^+)}{\theta_i (B-R)}u_{xx}\big ],\\[1mm]
{\mathscr G}_1(u, w) = & -\frac{\Le^2 (e^{-\Le R}-1) u(R^+)}{\theta_i R}e^{-\frac{\Le(B-R)}{2}(x+1)} + \frac{2}{B-R}w_x, \\[1mm]
{\mathscr G}_3(u, w) = & e^{-\frac{\Le(B-R)}{2}(x+1)}\frac{4 \pi^2 \Le (e^{-\Le R} -1)}{\theta_i^2 \ell^2 R}\bigg [u(R^+) u_{yy}(R^+) + \bigg (\frac{\Le u(R^+)}{\theta_i} +1\bigg )u_y^2(R^+)\bigg ]\\
& +\frac{8 \pi^2}{\Le \theta_i \ell^2 (B-R)}\bigg [-u_{yy}(R^+) w_x -2u_y(R^+) w_{xy} + \frac{2 u_y^2(R^+)}{\theta_i (B-R)}w_{xx}\bigg ]
\end{align*}

\medskip
\par
{\bf (8)} For $\xi \in [R + \delta, R+2\delta]$ and $\xi = \frac{B-R}{2}(x+1)+R$,
\begin{align*}
{\mathscr F}_1(u) = & \frac{\theta_i}{\theta_i -u(R^+)}\bigg (\beta_R u(R^+) e^{-\frac{B-R}{2}(x+1)} + \frac{2}{B-R}u_x \bigg ),\\[1mm]
{\mathscr F}_2(u) = & \frac{1}{\theta_i -u(R^+)}\bigg\{e^{-\frac{B-R}{2}(x+1)}\bigg [-\frac{4\pi^2 \beta_R^2}{\ell^2}\big (u(R^+)u_{yy}(R^+) + 2 u_y^2(R^+)\big)
+\beta_R u^2(R^+)\bigg ]\\
& \phantom{\frac{1}{\theta_i -u(R^+)}\bigg\{\;}-\frac{8 \pi^2 \beta_R}{\ell^2 (B-R)}\big(u_{yy}(R^+)u_x+2 u_y(R^+)u_{xy}\big)+\frac{2u(R^+)}{B-R}u_x\bigg\} \\
&+\frac{\theta_i^2}{(\theta_i\!-\!u(R^+))^2}\bigg\{e^{-\frac{B\!-\!R}{2}(x\!+\!1)}\bigg [\frac{u^2(R^+)}{\theta_i}\bigg (\frac{\beta_R u(R^+)}{\theta_i}\!-\!1\!-\!2\beta_R\bigg )\\
&\phantom{+\frac{\theta_i^2}{(\theta_i\!-\!u(R^+))^2}\bigg\{e^{-\frac{B\!-\!R}{2}(x\!+\!1)}\bigg [}\;\;\,
-\frac{4\pi^2\beta_R^2 u_y^2(R^+)}{\theta_i^2 \ell^2}\big (3\beta_Ru(R^+)\!-\!\theta_i\!+\!2 u(R^+)\big )\bigg ] \\
&\phantom{\frac{\theta_i^2}{(\theta_i -u(R^+))^2}\bigg\{}\,
-\frac{16 \pi^2 \beta_R^2 u_y^2(R^+)}{\theta_i^2 \ell^2 (B-R)}\bigg (u_x - \frac{1}{B-R}u_{xx}\bigg )-\frac{4}{(B-R)^2}\bigg (\frac{u^2(R^+)}{\theta_i} -1\bigg )u_{xx}\bigg\},\\[1mm]
{\mathscr G}_1(u, w) = & \frac{\theta_i}{\theta_i -u(R^+)}\bigg [-e^{-\frac{\Le (B-R)}{2}(x+1)}\frac{\beta_R\Le (e^{-\Le R} -1) u(R^+)}{\theta_i R}\bigg (\frac{u(R^+)}{\theta_i} +\Le\bigg )+{2\over B-R}w_x\bigg],\\[1mm]
{\mathscr G}_2(u, w) = & \frac{\theta_i}{\theta_i -u(R^+)}\bigg (e^{-\frac{\Le (B-R)}{2}(x+1)}\frac{\beta_R \Le (e^{-\Le R} -1) u(R^+)}{\theta_i R}+w\bigg ),\\[1mm]
{\mathscr G}_3(u, w) = & \frac{1}{\theta_i-u(R^+)} \bigg\{e^{-\frac{\Le (B-R)}{2}(x+1)}\frac{e^{-\Le R}-1}{\theta_i R}\bigg[\frac{4\pi^2\beta_R (2+\beta_R\,\Le)}{\ell^2}\big(u_{yy}(R^+)u(R^+)+2u_y^2(R^+)\big)\\
&\phantom{\frac{1}{\theta_i-u(R^+)} \bigg\{e^{-\frac{\Le (B-R)}{2}(x+1)}\frac{e^{-\Le R}-1}{\theta_i R}\!\bigg[}\;\;\,
-\Le\, (1+\!\beta_R\Le)u^2(R^+)\bigg ]\\
&\phantom{\frac{1}{\theta_i -u(R^+)} \bigg\{\;}+\frac{4 \pi^2}{\Le \ell^2}\bigg [u_{yy}(R^+)\bigg (w - \frac{2 \beta_R}{B-R}w_x\bigg )+2 u_y(R^+)\bigg (w_y - \frac{2\beta_R}{B-R}w_{xy}\bigg )\bigg ]+\frac{2u(R^+)}{B-R}w_x\bigg\} \\
&+\frac{\theta_i}{(\theta_i - u(R^+))^2}\bigg\{e^{-\frac{\Le (B-R)}{2}(x+1)}\frac{e^{-\Le R}-1}{\theta_i R}\\
&\phantom{+\frac{\theta_i}{(\theta_i - u(R^+))^2}\bigg\{\quad}\times
\bigg [\frac{4 \pi^2 u_y^2(R^+)}{\ell^2}\bigg(\frac{\beta_R^2 \Le (7+ \beta_R\Le) + \beta_R(2- \Le^2) - \Le}{\theta_i}u(R^+)-2\beta_R-\Le(\beta_R^2 -1)\bigg ) \\
&\phantom{+\frac{\theta_i}{(\theta_i - u(R^+))^2}\bigg\{\quad\times
\bigg [\;}+ 2(1+ \beta_R\Le) \Le\, u^2(R^+)\bigg ]\\
&\phantom{+\frac{\theta_i}{(\theta_i - u(R^+))^2}\bigg\{\;} +\frac{8 \pi^2 u_y^2(R^+)}{\Le \theta_i \ell^2}\bigg (w-\frac{4 \beta_R}{B-R}w_x+\frac{2 (\beta_R^2 -1)}{(B-R)^2}w_{xx}\bigg )
+\frac{8 u(R^+)}{\Le (B-R)^2}w_{xx}\bigg\}.
\end{align*}
}
\end{document}